\documentclass[a4paper,10pt]{amsart}
\usepackage[a4paper]{geometry}
\usepackage{enumerate}
\usepackage[all, knot]{xy}
\usepackage{tikz}
\usepackage{wrapfig}
\usetikzlibrary{matrix,arrows,decorations.pathreplacing}
\usepackage{color}
\usetikzlibrary{calc,shapes} 
\xyoption{rotate}
\usepackage{amsthm,amsmath,amssymb,amsfonts}
\usepackage{subcaption}
\usepackage{array}
\usepackage[disable]{todonotes} 
\usepackage[backend=bibtex, firstinits=true,doi=false,url=false]{biblatex}
\usepackage[pdftex, citecolor=black, urlcolor=black,
  linkcolor=black, colorlinks=true, bookmarksopen=true,bookmarksdepth=3]{hyperref}
\usepackage{imakeidx}
\addbibresource{thbib.bib}
\newtheorem{thm}{Theorem}[section]
\newtheorem{lemma}[thm]{Lemma}
\newtheorem{cor}[thm]{Corollary}
\newtheorem{prop}[thm]{Proposition}

\theoremstyle{definition}
\newtheorem{defi}[thm]{Definition}

\newtheorem{rmk}[thm]{Remark}

\tikzset{verta/.style={shape=regular polygon,regular polygon sides=3,shape border rotate=180, inner sep=1pt, draw }}
\tikzset{verto/.style={shape=diamond, draw,inner sep=1.2pt }}
\tikzset{vertb/.style={shape=regular polygon,regular polygon sides=3,shape border rotate=-90, inner sep=1pt,draw }}
\tikzset{vert/.style={shape=circle, draw }}
\tikzset{vertf/.style={shape=circle, fill=white, inner sep=1pt, draw,scale=0.7}}
\tikzset{vertn/.style={shape=circle, fill=white, inner sep=1pt, draw}}
\tikzset{vertg/.style={shape=rectangle,inner sep=1.5pt, draw }}
\tikzset{vertgo/.style={shape=ellipse,inner sep=1.5pt, draw }}
\tikzset{lea/.style={fill=white,inner sep=1pt}}
\tikzset{leag/.style={fill=white, rounded corners=2pt,inner sep=1pt}}
\tikzset{edg/.style={draw,line width=0.2mm}}
\tikzset{scall/.style={every node/.style={transform shape}}}
\newcommand{\outcheckpt}[4]
{
\foreach \x in {1,...,#2}
    {
     \node (out#1\x) at ($(#1)+(#3*2*\x-#3*#2-#3,#4)$) {};
    }
}
\newcommand{\incheckpt}[4]
{
\foreach \x in {1,...,#2}
    {
     \node (in#1\x) at ($(#1)+(#3*2*\x-#3*#2-#3,#4)$) {};
    }
}
\newcommand{\outports}[3]
{
\foreach \x in {1,...,#2}
    {
     \node (outp\x) at ($(#1)+(#3*2*\x-#3*#2-#3,0)$) {};
    }
}
\newcommand{\inports}[3]
{
\foreach \x in {1,...,#2}
    {
     \node (inp\x) at ($(#1)+(#3*2*\x-#3*#2-#3,0)$) {};
    }
}
\newcommand{\connectnodes}[3]
{
\foreach \x / \y / \z in {#3}
 \draw (#1) .. controls (out#1\x) and (in#2\y) .. node[lea] {\z} (#2);
}
\newcommand{\connectnodess}[3]
{
\foreach \x / \y in {#3}
 \draw (#1) .. controls (out#1\x) and (in#2\y) .. (#2);
}
\newcommand{\connectntoop}[2]
{
\foreach \x / \y / \z in {#2}
 \draw(#1) .. controls (out#1\x) .. node[lea] {\z} (outp\y) ;
}
\newcommand{\connectntoops}[2]
{
\foreach \x / \y in {#2}
 \draw(#1) .. controls (out#1\x) .. (outp\y) ;
}
\newcommand{\connectntoinp}[2]
{
\foreach \x / \y / \z in {#2}
 \draw  (inp\x) .. controls (in#1\y) .. node[lea] {\z} (#1) ;
}
\newcommand{\connectntoinps}[2]
{
\foreach \x / \y in {#2}
 \draw  (inp\x) .. controls (in#1\y) .. (#1) ;
}
\newcommand{\grnode}[5]
{
\foreach \x / \y / \s / \t in {#2}
    {
     \node[vertg] (fn#1\x) at ($(#1)+(#4*2*\x-#4*#3-#4,#5)$) {$ \y$};
     \node (in#1\x) at ($(#1)+(#4*2*\x-#4*#3-#4,2*#5)$) {};
     \draw[line width=0.7mm] (#1) -- (fn#1\x) node [midway,leag] {\footnotesize $ \t$}; 
     \draw[line width=0.7mm] (fn#1\x) -- (in#1\x) node [midway,leag] {\footnotesize $ \s$};
    }
}

\newcommand{\grout}[3]
{
     \node (out#1) at ($(#1)-(0,#3)$) {};
     \draw[line width=0.7mm] (#1) -- (out#1) node [midway,leag] {\footnotesize $#2$}; 
}

\newcommand{\Enr}[1]{#1 \text{-}}
\newcommand{\Cat}{\mathsf{Cat}}
\newcommand{\Catadj}{\mathsf{Cat}_{\mathrm{adj}}}
\newcommand{\ECat}[1]{#1 \text{-}\Cat}
\newcommand{\ECatfc}[2]{\ECat{#1}_{#2}}
\newcommand{\Oper}{\mathsf{Oper}}
\newcommand{\OPER}{\mathsf{OPER}}
\newcommand{\EOper}[1]{#1 \text{-} \Oper}
\newcommand{\EOPER}[1]{#1 \text{-} \OPER}
\newcommand{\EOperfc}[2]{\EOper{#1}_{#2}}
\newcommand{\Prop}{\mathsf{Prop}}
\newcommand{\PROP}{\mathsf{PROP}}
\newcommand{\EProp}[1]{#1 \text{-} \Prop}
\newcommand{\EPROP}[1]{#1 \text{-} \PROP}
\newcommand{\EPropfc}[2]{\EProp{#1}_{#2}}
\newcommand{\Cl}[1]{\mathbf{cl}(#1)}
\newcommand{\Opop}[1]{\mathsf{Op}_{#1}}
\newcommand{\Opcat}[1]{\mathsf{Cat}_{#1}}
\newcommand{\fOpprop}{\mathsf{PROP}}
\newcommand{\Opprop}[1]{\mathsf{PROP}_{#1}}
\newcommand{\Opcfprop}[1]{\mathsf{cfPROP}_{#1}}
\newcommand{\Opafprop}[1]{\mathsf{afPROP}_{#1}}

\newcommand{\EMGraph}[1]{#1 \text{-} \mathsf{MultiGraph}}
\newcommand{\EGraph}[1]{#1 \text{-} \mathsf{Graph}}

\newcommand{\N}{\mathbb{N}}
\newcommand{\id}{\mathrm{id}}
\newcommand{\Set}{\mathcal{S}et}

\newcommand{\HoC}[1]{\mathrm{Ho}(#1)}
\newcommand{\cell}[1]{#1 \text{-}\mathrm{cell}}
\newcommand{\inj}[1]{#1 \text{-}\mathrm{inj}}

\newcommand{\Mm}{\mathcal{V}}
\newcommand{\Un}{\mathbb{I}}
\newcommand{\arr}[2]{#1 \rightarrow #2}
\newcommand{\arro}[3]{#2 \!\overset{#1}\longrightarrow \! #3}

\newcommand{\lmorp}[3]{#1 \colon #2 \longrightarrow ~#3}
\newcommand{\morp}[3]{#1 \colon #2 \!\rightarrow\!\!\! ~#3}
\newcommand{\ntra}[3]{#1 \colon #2 \Longrightarrow #3}
\newcommand{\gfun}[5]{
\begin{array}{r@{}c@{\hspace{0.1cm}}c@{\hspace{0.1cm}}l}
#1 \colon & #2 & \longrightarrow & #3 \\
             & #4 & \longmapsto & #5\\
\end{array}
}

\newcommand{\adjpair}[4]{#1 \colon #3 \rightleftarrows #4 \colon #2}
\newcommand{\indadjpair}[3]{\adjpair{{#1}_!}{{#1}^*}{#2}{#3}}
\newcommand{\diagc}[9]{\xymatrix{#1 \ar @{} [dr] |{#9} \ar[d]_{#5} \ar[r]^-{#6} &  #2 \ar[d]^-{#7}\\
          #3 \ar[r]_-{#8} & #4}}
\newcommand{\diagcn}[9]{\xymatrix{#1 \ar[d]_{#5} \ar[r]^-{#6} &  #2 \ar@{}[dl]^(.3){}="a"^(.7){}="b" \ar@2{->}_-{#9} "a";"b" \ar[d]^-{#7}\\
          #3 \ar[r]_-{#8} & #4}}
\newcommand{\coeq}[4]{\xymatrix{#1 \ar@<0.7ex>[r]^-{#3} \ar@<-0.7ex>[r]_-{#4} &  #2}}
\newcommand{\coeqv}[4]{\xymatrix{#1 \ar@<0.7ex>[d]^-{#3} \ar@<-0.7ex>[d]_-{#4} \\  #2}}


\newcommand{\verx}[1]{\mathrm{vert}(#1)}
\newcommand{\card}[1]{|#1|}
\newcommand{\arity}[1]{a(#1)}


\newcommand{\opc}[1]{#1^{\mathrm{op}}}

\newcommand{\Sqofcf}{\mathrm{Sign}}

\newcommand{\Sqofc}[1]{\Sqofcf(#1)}


\newcommand{\ECfProp}[1]{#1 \text{-} \mathsf{cfProp}}
\newcommand{\EAfProp}[1]{#1 \text{-} \mathsf{afProp}}

\newcommand{\Ob}[1]{\mathbf{ob}(#1)}

\newcommand{\Clf}{\mathbf{cl}}
\newcommand{\trf}[1]{#1^{u}}
\newcommand{\trc}[1]{#1_{u}}

\newcommand{\Alg}[2]{\mathrm{Alg}_{#1}(#2)}
\newcommand{\fAlg}{\mathrm{Alg}}




\newcommand{\fSet}{\mathsf{Fin}}

\newcommand{\fcar}[1]{\overline{#1}} 
\newcommand{\Gr}{\mathsf{Gr}} 
\newcommand{\resd}[1]{\mathrm{res}(#1)}
\newcommand{\Val}[1]{\mathrm{Val}(#1)}
\newcommand{\fVal}[1]{\mathbf{Bi}(#1)}

\newcommand{\cval}[1]{\mathbf{val}(#1)}
\newcommand{\ToBi}[2]{\AlgF{#1}{#2}}


  \newcommand{\fsm}[1]{k_!#1}
  \newcommand{\FSm}[1]{w_!#1}
  \newcommand{\adjsm}[1]{\widetilde{#1}}
\newcommand{\Cor}[1]{\mathrm{Cor}(#1)}

  \newcommand{\BVt}{\underset{BV} \otimes}
\newcommand{\GraK}{\mathfrak{G}\mathrm{r}}
  \newcommand{\Fey}{\mathfrak{F}\mathrm{eyn}}
  \newcommand{\EFey}[1]{\Enr{#1}\Fey}
\newcommand{\otw}[2]{\mathrm{tw}(#1,#2)}
\newcommand{\tw}[2]{\mathrm{tw}(#1,#2)}
\newcommand{\ModCat}{\mathrm{ModCat}}
\newcommand{\disf}[1]{\Cl{#1}}

\newcommand{\inva}[1]{{#1}_{\mathrm{in}}}
 
\newcommand{\outva}[1]{{#1}_{\mathrm{out}}}

\newcommand{\ioval}[2]{
(#1 ; #2)}
\newcommand{\iovalv}[2]{
\left[\!
    \begin{array}{c}
      #1 \\
      #2
    \end{array}
  \!\right]}
\newcommand{\iport}[1]{\mathbf{in}(#1)}
\newcommand{\oport}[1]{\mathbf{out}(#1)}
\newcommand{\port}[1]{\mathbf{port}(#1)}

\newcommand{\resop}[2]{{#1}|_{#2}}
\newcommand{\dPo}{\mathbf{P}}
  \newcommand{\Str}[1]{\mathsf{sq}(#1)}
  \newcommand{\covf}[3]{{#1}_{{#2}^{-1}(#3)}}
\newcommand{\cie}[2]{\mathbf{cie}(#1,#2)}
\newcommand{\ror}[2]{#1 |_{#2}}

  \newcommand{\ope}{\mathcal}
  \newcommand{\prp}{\mathcal}
  \newcommand{\cat}{\mathcal}
  \newcommand{\ofam}{\mathbf}
  \newcommand{\Mf}{\mathcal{F}}
  \newcommand{\Mfg}{\mathcal{G}}
  \newcommand{\Mop}{\ope{O}}
  \newcommand{\End}[1]{\mathrm{End}_{#1}} 
  \newcommand{\Endsm}[1]{\mathrm{End}_{#1}} 
\newcommand{\IHom}[3]{\mathrm{Hom}_{#1}(#2,#3)} 

\newcommand{\CAT}{\mathbf{CAT}} 
\newcommand{\CATadj}{\CAT_{\mathrm{adj}}} 

\newcommand{\str}{\mathbf}
\newcommand{\EndP}[1]{\mathsf{End}_{#1}} 
\newcommand{\coa}{coa }
\newcommand{\Aut}{\mathrm{Aut}}
  \newcommand{\FSmf}{T}
  
  \newcommand{\Foff}{S}
  
\newcommand{\block}[1]{\mathbf{b}(#1)}
  \newcommand{\preFey}{\mathsf{pre}\mathfrak{F}}
  \newcommand{\EpreFey}[1]{#1 \text{-} \preFey}
  \newcommand{\LKan}[1]{\mathsf{Lan}_{#1}}
    \newcommand{\Wcolim}[3]{#2 \star #3}
    \newcommand{\LaxM}[2]{\mathrm{LaxMon}(#1,#2)}  
\newcommand{\stpt}[1]{\mathsf{st}(#1)}
\newcommand{\ept}[1]{\mathsf{end}(#1)}

  \newcommand{\alf}[2]{#1[#2]}
  \newcommand{\AlgF}[2]{\Alg{#1}{#2}}
  
  \newcommand{\gc}{\textstyle\int}
  \newcommand{\gco}{\textstyle\int}

  \newcommand{\smxylabel}[1]{{\makebox[.02\textwidth][c]{$#1$}}}

  \newcommand{\Wlim}[3]{\{#2,#3\}}

  
\newcommand{\comcat}[2]{{#1}\downarrow {#2}} 
\newcommand{\fSigma}{\mathbf{\Sigma}}
\newcommand{\hof}[2]{{#1}|_{#2}}
\newcommand{\fbv}[3]{\gco (\hof{#1}{#3})}
  \newcolumntype{M}[1]{ >{\centering\arraybackslash} m{#1} }
\newcounter{tikzl}
\setcounter{tikzl}{0}
\newcommand{\grnodes}[5]
{
\node[vertgo,fill=white] (#1-lab) at (#1) {#5};
\foreach \x in {1,...,#2}
    {
     \node (#1-\x) at ($(#1)+(#3*\x-0.5*#2*#3-0.5*#3,#4)$) {};
     \draw[line width=0.7mm] (#1-lab) -- (#1-\x.center); 
    }
}
\newcommand{\grouts}[3]
{
     \coordinate (out#1) at ($(#1)-(0,#2)$);
     \draw[line width=0.7mm] (#1-lab) -- (out#1) node [midway,lea] {$#3$}; 
}
\newcommand{\permg}[2]
{
\foreach \x / \y in {#1}
    {
     \coordinate (\x-up) at ($(\x)+(0,#2)$);
     \draw[line width=0.7mm] (\x-up.center) -- (\y.center); 
    }
}
\newcommand{\permgl}[2]
{
	\foreach \x / \y /\z in {#1}
	{
		\node[lea] (\y-l) at (\y) {\z};   
		\coordinate (\x-up) at ($(\x)+(0,#2)$);
		\draw[line width=0.7mm] (\x-up.center) -- (\y); 
	}
}
\newcommand{\permgrl}[2]
{
	\foreach \x / \y /\z in {#1}
	{   
		\coordinate (\x-up) at ($(\x)+(0,#2)$);
		\draw[line width=0.7mm] (\x-up.center) -- (\y) node [midway,lea] {\z}; 
	}
}
\newcommand{\centerp}[4]
 {
 	\foreach \x in {1,...,#2}
 	    \coordinate (#1-\x) at ($(#1)+(#3*\x-0.5*#2*#3-0.5*#3,#4)$);
 }
\newcommand{\upcopyp}[3]
{
	\foreach \x in {#1}
	{
		\coordinate (\x#3) at ($(\x)+(0,#2)$);
	}
} 
\newcommand{\upcopy}[2]
{
 \upcopyp{#1}{#2}{-up}
}

\newcommand{\conx}[5]
{
	\foreach \x / \y in {#1}
	{
		\draw[line width=0.7mm] (#2\x#3) -- (#4\y#5);
	}
}

\newcommand{\groupn}[2]
{
    \setcounter{tikzl}{0};
    \foreach \x in {#1}
    {
        \addtocounter{tikzl}{1};
        \coordinate (#2\the\value{tikzl}) at (\x);
    }
}
\numberwithin{equation}{subsection}

\title{The Dwyer-Kan model structure for enriched coloured PROPs}
\author{Giovanni Caviglia}
\address{Radboud Universiteit Nijmegen, Institute for Mathematics, Astrophysics, and Particle Physics\\ Heyendaalseweg 135\\ 6525 AJ Nijmegen\\ the Netherlands}
\email{g.caviglia@math.ru.nl}

\subjclass[2010]{Primary 55U35; Secondary 18D10, 18D50.}
\keywords{Enriched coloured PROPs, coloured operads, model category}

\begin{document} 
    
\begin{abstract}
    Coloured PROPs are a generalisation of coloured operads.
    In this article, we prove the existence of a Dwyer-Kan model structure on the category of small  coloured PROPs enriched in a (sufficiently nice) monoidal model category $\mathcal{V}$. This model structure extends the known Dwyer-Kan model structures on the categories of (small) $\Mm$-enriched categories and $\Mm$-enriched coloured operads. 
\end{abstract}    

\maketitle

\setcounter{tocdepth}{1}
\tableofcontents
\section{Introduction}

PROPs were invented by Mac Lane and Adams (see \cite{McL65} for the original definition) and provide an effective way to encode algebraic structures with operations with multiple inputs and multiple outputs in symmetric monoidal categories.

In homotopy theory, (topological) PROPs (called \emph{categories of operators with permutations} in \cite{BV68}) were used by Boardman and Vogt in the study of homotopy invariance of algebraic structures
in topological spaces \cite{BV73}; in their work they mainly restrict themselves to PROPs whose set of operations is generated
by those with only one output; these special PROPs (also called \emph{categories of operators in standard form} in \cite{BV68})
correspond to the well-known notion of operad.

Operads were defined by May for the formulation of his recognition principle for iterated loop spaces \cite{Ma72} and 
have been proved to be an effective tool to study the homotopy theories of algebraic structures; see for example 
\cite{BV73}, Rezk \cite{Re96} for simplicial operads, Markl \cite{Ma04} for dg-operads and the works of Berger and Moerdijk \cite{BM03}, \cite{BM05},\cite{BM06} for the more general case; one of the insights of this theory is that homotopy invariant structures are described by cofibrant operads (in a suitable sense). 

Many algebraic structures such as (commutative) monoids, Lie algebras, Poisson algebras, etc. can be described by operads;
still other important structures such as (Frobenius) bialgebras and Hopf algebras can only be modeled by general 
PROPs. 

Coloured PROPs (and operads), i.e. PROPs with multiple objects, are useful to describe (and study the homotopy theory of) more involved algebraic structures such as
morphisms and diagrams between algebras over (uncoloured) PROPs (see \cite{BM05}) and their resolutions; the theories of PROPs and operads themselves can also be described via coloured operads (\cite{C14} and \S\ \ref{sec.opprop}). 

As for operads, PROPs can be enriched in any symmetric monoidal category $\Mm$.
If $\Mm$ has a (sufficiently nice) model structure, the category of $\Mm$-enriched $C$-coloured PROPs (i.e. with a fixed set of colours $C$) inherits a (semi-)model structure (see \cite{Fr10}). 

In analogy with operads, it is therefore natural to investigate, for example, weather cofibrant PROPs have homotopy invariant algebras. 

The homotopy invariance of algebras over enriched (coloured) PROPs in the context of model categories
 was studied by Fresse in \cite{Fr10} and Johnson and Yau in \cite{JY09}. 
The higher degree of complexity of the algebraic structures presented by PROPs makes their study more difficult
with respect to operadic theories; for example, there is not always a free construction for this kind of algebras, thus
there is no way to transfer the model structure from the ground category to the category of algebras in general.  
In the dg-context, Yalin (\cite{Ya14}) was able to circumvent this lack of model structures, proving a homotopy invariance
result at the level of simplicial localizations of the categories of algebras. 

As we mention, under suitable conditions on $\Mm$, the category of $\Mm$-enriched $C$-coloured PROPs, that will be denoted by
$\EPropfc{\Mm}{C}$, admits a model structure. However, in this article we are interested in $\EProp{\Mm}$, the category of all (small) $\Mm$-enriched coloured PROPs, where the set of colours is allowed to vary. There are natural inclusions of the category of (small) $\Mm$-enriched categories and of the category of (small) $\Mm$-enriched coloured operads in $\EProp{\Mm}$:
\[
 \ECat{\Mm} \longrightarrow \EOper{\Mm} \longrightarrow \EProp{\Mm}.
\]

\noindent The $2$-categorical structure on $\ECat{\Mm}$ (and $\EOper{\Mm}$) extends to a $2$-categorical structure on $\EProp{\Mm}$ in such
a way that the above inclusions become inclusions of $2$-categories. We thus get a notion of equivalence between coloured 
PROPs which is weaker than the isomorphism relation. 
When $\Mm$ is a monoidal model category the homotopical notion corresponding to an equivalence of $\Mm$-enriched PROPs 
is the one of Dwyer-Kan weak equivalence ( def. \ref{defi.DKequi}).

Dwyer-Kan weak equivalences between simplicial categories were first introduced by Dwyer and Kan in their seminal work on simplicial localization (\cite{DK80}).
Bergner (\cite{Be07}) proved that Dwyer-Kan weak equivalences are the weak equivalences for a model structure on the category of simplicial categories that models $\infty$-categories.
 
The same result for $\Mm$-enriched categories was proven by Lurie \cite[A.3]{Lu09}, Berger and Moerdijk \cite{BM12} and Muro \cite{Mu14} for 
$\Mm$ an arbitrary monoidal model category (satisfying certain hypotheses). 
 
Cisinski and Moerdijk \cite{CM11} and Robertson \cite{Ro11} independently proved that simplicial coloured operads can be endowed with a model structure where 
the weak equivalences are the Dwyer-Kan weak equivalences. In \cite{C14} the author addressed the same problem for coloured operads
enriched in an arbitrary monoidal model category. 

The main goal of this paper, achieved in Theorem \ref{main.thm}, is to prove that under suitable conditions on a combinatorial 
monoidal model category $\Mm$, there exists a combinatorial model structure on $\EProp{\Mm}$ in which the weak equivalences are the Dwyer-Kan weak equivalences; the same result holds also for $\ECfProp{\Mm}$ (resp. $\EAfProp{\Mm}$), the category of constant-free (resp. augmentation-free) coloured $\Mm$-PROPs (\S \ref{sec2constantfree}).

Examples of model categories $\Mm$ for which the theorem applies are: simplicial sets (with the Kan-Quillen model structure),
chain complexes over a field of characteristic zero (with the projective model structure) and symmetric spectra with the positive stable model structure.

As in \cite{BM12} we also show that, under more restrictive hypotheses on $\Mm$, the fibrant objects for our model structure
coincide with the locally fibrant one. 

In the case $\Mm=\Set$ (with the trivial model structure) the Dwyer-Kan model structure on $\EProp{\Set}$ has equivalences of PROPs
(in the $2$-categorical sense) as weak equivalences (in analogy with the ``folk'' model structure on the category of small categories).

The case $\Mm=\mathrm{sSet}$, the category of simplicial sets with the Kan-Quillen model structure, was investigated by Hackney and Robertson in \cite{HR14}.

As in \cite{C14}, the proof of Theorem \ref{main.thm} relies on the Interval Cofibrancy Theorem of \cite{BM12} and its analogue in \cite[Theorem 7.13, 7.14]{Mu14} and a careful analysis of push-outs in the category of PROPs (theorem \ref{main.po}).

Another crucial point is that, even though $\EProp{\Mm}$ (as $\EOper{\Mm}$, $\ECat{\Mm}$) can not be described as the category of algebras for an operad, it is the total category of a bifibration over $\Set$ and each fiber of this bifibration is the category of algebra for a certain operad. We call such bifibrations \emph{operadic bifibrations}. To study them, we introduce the notion of operadic family and a kind of operadic Grothendieck construction. The author believes that these tools can be exploited in the study of other ``coloured'' structures similar to PROPs, categories and operads such as PROs, properads, and wheeled PROPs.

Lastly, we would like to bring the reader's attention to the following issue: the operad describing $C$-coloured
PROPs is not $\Sigma$-free, therefore, even if the unit of $\Mm$ is cofibrant, it will not be a $\Sigma$-cofibrant operad (seen as
an operad in $\Mm$). In general, only for a $\Sigma$-cofibrant operad $\mathcal{O}$ the homotopy theory of strict algebras is equivalent to the homotopy theory of homotopy algebras (i.e. algebras for a cofibrant replacement of $\mathcal{O}$, \emph{cf.} theorem 4.4 \cite{BM03}); this implies that the model structure on $\EPropfc{\Mm}{C}$ (and $\EProp{\Mm}$) might not present the homotopy theory of homotopy PROPs. This might not be a problem when $\Mm$ is a symmetric flat monoidal model category such as chain complexes over a field of characteristic $0$ or symmetric spectra with the positive model structure, 
since the rectification result of Pavlov and Scholbach \cite[Theorem 9.3.6 and \S\ 7]{PS14} guarantees
that the homotopy theories of homotopy PROPs and strict PROPs are equivalent. However this condition is not satisfied,
for example, by $\mathrm{sSet}$ (with the Kan-Quillen model structure).

In the general case, depending on the applications, one might want to restrict himself to constant-free PROPs or augmentation-free PROPs; the operads encoding these structure are $\Sigma$-free, therefore the above problem is solved in these cases. 

\subsubsection*{Organization of the paper:}
In \S\ \ref{sec.enrprop} we recall the definitions of enriched coloured PROP, bicollection, and morphisms between them.
In \S\ \ref{secoperbif} we introduce operadic families, i.e. families of operads parametrized by a category, that will
help us to define $\EProp{\Mm}$ as the total category of a special kind of bifibration over $\Set$, i.e. an operadic bifibration. To study diagram in operadic bifibrations we introduce a Grothendieck construction for operads, that permits to describe those diagrams as algebras for certain operads.

In \S\ \ref{sechomfib} few facts about the transferred model structure on $\EPropfc{\Mm}{C}$ are recalled.

Section \ref{sec.main} is the central section of the paper: the Dwyer-Kan model structure for enriched coloured PROPs is introduced
and sufficient conditions for its existence are given in Theorem \ref{main.thm}.
In \S\ \ref{sec.fibob} it is proven that, when $\Mm$ is right proper and the unit is cofibrant, the fibrant objects in $\EProp{\Mm}$
coincides with the locally fibrant ones.

The last sections of the paper contain the most technical parts of the paper. In \S\ \ref{sec.opprop} we carefully 
describe the operad for $C$-coloured PROPs; the composition law of this operad is based on graph insertion; the definition of graph that we use (borrowed from \cite{Ko14}) and graph insertion are recalled in appendix \ref{ch:graph}.

Our main result about push-outs of PROPs is proved in Section \ref{sec.poalongop}; operadic families and the operadic Grothendieck are used to efficiently describe the combinatorics of push-outs in $\EProp{\Mm}$.

In \S\ \ref{sec.locpresop} we study filtered colimits in operadic families. The results therein are used to show that if $\Mm$ is locally presentable, then $\EOper{\Mm}$ is locally presentable.  

Adjunctions between the categories of algebras induced by a morphism of operads are extensively used through-out the paper. In Appendix \ref{sec: alg as Kan extensions} we give a characterisation of the left adjoints of these adjunctions as left Kan extensions. This is closely related to the theory of Feynman categories of Ward and Kaufmann (\cite{KW14}); in fact we show that the notion of coloured operad and Feynman category are equivalent in a certain sense (\S \ref{sec:fey cat}).

\subsection*{Acknowledgment} The author would like to thank Ieke Moerdijk for his constant support and guidance and Dimitri Ara and Javier Guti\'errez for many insightful discussions around this work.

\subsection{Notation and conventions}\label{sec:Malgebras}

In general we will mostly ignore set-theoretical size-issues and we shall assume that we are working in a fixed universe of sets; a set will be called \emph{small} if it belong to this universe.

The category of (small) sets will be denoted by $\Set$.
The full subcategory of $\Set$ spanned by the finite sets will be denoted by $\fSet$.

For every non-negative integer $n\in\N$ let $\fcar{n}$ be $\{1,\dots,n\}$, the finite cardinal of order $n$.
Given a finite set $C$ we will denote by $\card{C}\in \N$ its cardinality.

For every $n\in \N$ the symbol $\Sigma_n$ will denote the group of automorphisms of $\fcar{n}$ (i.e. the $n$-permutation group).\\
We denote by $\fSigma$ \emph{finite permutation groupoid} the \index{finite permutation groupoid}, i.e. the maximal subgroupoid of $\Set$ that has $\{\fcar{n} \mid n\in \N\}$ as set of objects;
clearly $\fSigma\cong \coprod_{n\in \N} \Sigma_n$.

The term ``operad'' will always stand for \emph{coloured symmetric operads} (\emph{cf.} \cite{BM05}, also called symmetric multicategories
in the literature) if not specified otherwise.

The $2$-category of (locally small) categories will be denoted by $\CAT$, while $\Cat$ will denote the ($2$-)category of small categories.
Similarly $\OPER$ will stand for the $2$-category of (locally small) operads, while $\Oper$ will denote the $2$-category of small operads.

Given a bicomplete closed symmetric monoidal category $(\Mm,\otimes,\Un_{\Mm})$ (\cite{McL98}) and a $\Mm$-category $\cat{W}$ (i.e. a category enriched in $\Mm$), for every $a,b\in \cat{W}$ we will denote
by $\IHom{\Mm}{a}{b}\in \Mm$ the hom-object from $a$ to $b$. 

A \emph{cocomplete symmetric $\Mm$-algebra} will be for us a cocomplete symmetric monoidal $\Mm$-category $\cat{W}$ (\cite[\S B.5]{AJ13}) together with a strong symmetric monoidal $\Mm$-functor $\morp{z}{\Mm}{\cat{W}}$.

\begin{rmk}\label{rmk:setalg}
Every cocomplete symmetric monoidal category $(\cat{W},\otimes,\Un)$ with monoidal product that commutes with coproducts in each variable is a cocomplete symmetric $\Set$-algebra (where $\Set$ is taken with the cartesian monoidal structure)  via the functor $\morp{-\cdot \Un}{\Set}{\Mm}$ that sends $S\in \Set$ to the coproduct $\coprod_S \Un$.
\end{rmk}
\noindent We assume the reader to be familiar with the basic theory of model categories and operads (and algebras over them); for the former \cite{Ho99} should contain all the definitions and results that we will need, for the latter the reader is referred to \cite{BM03}, \cite{BM05}; see also \cite{MSS02} for a more broad survey on the theory of operads.

Section \ref{secoperbif} and appendix \ref{sec: alg as Kan extensions} make a mild use of (enriched) weighted colimits and limits, we refer the reader to the book of Kelly \cite{Ke82} for the notation.

\section{Enriched coloured PROPs}\label{sec.enrprop}

For the whole section $(\Mm,\otimes,\Un)$ will be a bicomplete closed symmetric monoidal category.
 
To define (enriched) coloured PROPs, we will proceed gradually starting from the definition of uncoloured non-enriched PROP.  

Uncoloured (or one-coloured) PROPs (\emph{cf.} \cite{McL65}, \cite{La04}, \cite{Mk08}) are symmetric (strict) monoidal categories whose monoid of objects
is freely generated by one object; it is custom to identify the set of objects of an uncoloured PROP with $\N$.
 
Given a symmetric monoidal category $\cat{M}$ and a PROP $\prp{P}$, one can define the
category of algebras of $\prp{P}$ in $\Mm$, denoted by $\Alg{\prp{P}}{\Mm}$, as the category of strong monoidal functor from $\prp{P}$ to $\Mm$.

Since $\prp{P}$ is freely generated on one object $1$ such a functor $\morp{A}{\prp{P}}{\cat{M}}$ is determined (up to isomorphisms) on objects by the value $A(1)$, in fact $A(n)\cong A(1)^{\otimes n}$.
Each $f\in \prp{P}(n,m)$ determines a morphism 
\[
\arro{A(f)}{A(1)^{\otimes n}}{A(1)^{\otimes m}}
\]
that can be regarded as an operation with $n$ inputs and $m$ outputs defined on $A(1)$. These operations on $A(1)$
are subject to constrains dictated by the composition in $\prp{P}$.

(Uncoloured) operads can be seen as particular PROPs; in fact, they are nothing but PROPs in which the morphisms are freely generated (via the monoidal product) by the ones with target $1$.

PROPs enriched in $\Mm$ (or $\Mm$-PROPs) are defined as symmetric monoidal $\Mm$-category freely generated by one object.

As for operads (and algebraic theories) PROPs come also in a $C$-coloured (or multi-sorted) version: for every set $C$, a $C$-coloured $\Mm$-PROPs is a symmetric monoidal $\Mm$-category with monoid of objects is freely generated by $C$.

Algebras for coloured PROPs are defined as expected: every $C$-coloured $\Mm$-enriched PROP $\prp{P}$ and every symmetric $\Mm$-enriched monoidal model category $\mathcal{M}$ the category of algebras of $\prp{P}$ in $\mathcal{M}$ can be defined as the category of strong monoidal $\Mm$-functor from $\prp{P}$ to $\mathcal{M}$ (\emph{cf.} \S \ref{sec:algprop}).\\
Even though this is one of the most important and motivational aspects of the theory, we will not be concerned with algebras over PROPs in this paper; we refer the reader to \cite{Fr10} and \cite{JY09} for more details on the theory of algebras and their homotopy theory.

As for (enriched) categories and coloured operads (\emph{cf.} \cite{C14}),  there is a category 
$\EProp{\Mm}$ whose objects are all the coloured $\Mm$-PROPs with all possible set of objects.
The functor of colours $\morp{\Clf}{\EProp{\Mm}}{\Set}$ is a bifibration over $\Set$ (\S\ \ref{secoperbif}); for each $C\in \Set$ the 
fiber over $C$ is isomorphic to $\EPropfc{\Mm}{C}$, the category of $C$-coloured $\Mm$-enriched PROPs.

\subsection{Valences and bicollections}\label{sec:valences and bicollections}  
For every set $C\in \Set$ let $(\Str{C},\ast,[])$ be the free monoid generated by $C$.
The underlying set $\Str{C}$ is isomorphic to the set of finite ordered sequences of elements of $C$, 
that is the set of couples $(n,\str{c})$ where $n\in \N$ and $\morp{\str{c}}{\fcar{n}}{C}$ is a function. An element $(n,\str{c})$ of $\Str{C}$ can also be represented
as an array $(c_1,\dots,c_n)$ where $c_i=\str{c}(i)$ for every $i\in \fcar{n}$; the integer $n$ is called the \emph{length}\index{length!of a sequence} of $(n,\str{c})$ and denoted by
$\card{(n,\str{c})}$. When not relevant for the discussion we will omit the length and denote $(n,\str{c})$ simply by $\str{c}$.
If we think of $\N$ as the discrete subcategory of $\Set$ spanned by the sets $\fcar{n}$ for every $n\in \N$ and we think of $C$ as an
object of $\Set$ then $\Str{C}$ is isomorphic to the (discrete) comma category $\comcat{\N}{C}$.

The monoid structure on $\Str{C}$ is given by the concatenation of sequences, more explicitly
for every couple of function $\morp{\str{f}}{\fcar{n}}{C}$, $\morp{\str{g}}{\fcar{m}}{C}$ we define 
$\morp{\str{f}\ast \str{g}}{\fcar{n+m}}{C}$ in the following way:
\[
 (\str{f}\ast \str{g})(i)=
\begin{cases}
 \str{f}(i) & \text{if } i\leq n \\
 \str{g}(i-n) & \text{otherwise;}
\end{cases}
\]
the unit $[]$ is the unique sequence of length $0$.

We denote by $\Sigma\Str{C}$ the comma category $\comcat{\Sigma}{C}$; $\Sigma\Str{C}$ is actually a groupoid and it will be called the \emph{$C$-sequences permutation groupoid }\index{permutation groupoid! $C$-sequences}: its set of objects
is isomorphic to $\Str{C}$ and for every $\sigma\in \Sigma_n$ there is an isomorphism $\morp{\tilde{\sigma}}{(n,\str{c}\sigma)}{(n,\str{c})}$.

A \emph{$C$-valence} is an element of $\Val{C}=\Str{C}\times \Str{C}$. Given $(n,\str{c}),(m,\str{d})\in \Str{C}$
the $C$-valence $\mathbf{v}=((n,\str{c}),(m,\str{d}))$ will be represented in different ways
\[
 \mathbf{v}=\ioval{\inva{\mathbf{v}}}{\outva{\mathbf{v}}}=\ioval{\mathbf{c}}{\mathbf{d}}=\iovalv{c_1,\dots,c_n}{d_1,\dots,d_m}
\]

For every two $C$-valences $\mathbf{v},\mathbf{u}\in \Val{C}$ let $\mathbf{v}\ast\mathbf{u}\in \Val{C}$ be the $C$-valence
represented by $\ioval{\inva{\mathbf{v}}\ast\inva{\mathbf{u}}}{\outva{\mathbf{v}}\ast\outva{\mathbf{u}}}$. 

\begin{defi}
The \emph{category of $C$-bicollection} in $\Mm$ is $\Mm^{\Val{C}}$, the category of functors from $\Val{C}$ (seen as a discrete category) to $\Mm$.
\end{defi}

\noindent A \emph{$C$-bicollection $\mathbf{P}$ in $\Mm$}  is thus a collection $\{\mathbf{P}(\mathbf{a};\mathbf{b})\}_{\ioval{\mathbf{a}}{\mathbf{b}}\in \Val{C}}$ of objects in $\Mm$.

We can also define the \emph{permutation groupoid of $C$-valences}\index{permutation groupoid! of $C$-valences} $\Sigma \Val{C}$ as $\opc{\Sigma~\Str{C}}~\times~ \Sigma\Str{C}$;
the set of objects of $\Sigma \Val{C}$ is $\Val{C}$; for every $\sigma\in \Sigma_n$ and $\tau \in \Sigma_m$ and every $\mathbf{a},\mathbf{b}\in \Sqofc{C}$ such that $\card{\mathbf{a}}=n$, $\card{\mathbf{b}}=m$
there is a morphism
\[
 \lmorp{(\sigma,\tau)}{\ioval{\mathbf{a}}{\mathbf{b}}}{\ioval{\mathbf{a}\sigma}{\mathbf{b}\tau^{-1}}}.
\]

\begin{defi}
The \emph{category of $C$-coloured symmetric $\Mm$-bicollection}\index{bicollection!symmetric} in $\Mm$ is the functor category $\Mm^{\Sigma\Val{C}}$.
\end{defi} 

\subsection{Coloured PROPs}
Coloured PROPs can be defined in a compact and conceptual way as follows. 

\begin{defi}\label{def:propcomp}
A \emph{coloured $\Mm$-PROP}\index{PROP!coloured} (or a PROP enriched in $\Mm$) is a couple $(C,\prp{P})$ where $C$ is a set and $\prp{P}$ 
a symmetric strict monoidal $\Mm$-category $(\prp{P},\boxtimes, \Un)$ (\emph{cf.} \cite{McL98}) 
such that $\Ob{\prp{P}}=\Str{C}$ and the induced monoid structure on objects is $(\Str{C},\ast,[])$.
The set $C$ is called the \emph{set of colours} of $\prp{P}$.\\
A \emph{morphism of $\Mm$-PROP}\index{morphism!of $\Mm$-PROPs} $\morp{F}{(C,\prp{P})}{(D,\prp{R})}$ is given by:
\begin{itemize}
 \item[-] a function $\morp{f}{C}{D}$;
 \item[-] a strict monoidal $\Mm$-functor $\morp{F}{\prp{P}}{\prp{R}}$ that coincides with
$\morp{\Str{f}}{\Str{C}}{\Str{D}}$ at the level of objects.
\end{itemize}
Composition of morphisms is defined in the evident way; 
The category of coloured $\Mm$-PROPs and morphisms between them will be denoted by $\EPROP{\Mm}$; a PROP is \emph{small} if its set of objects is small; the category of small coloured PROPs will be denoted by $\EProp{\Mm}$.
\end{defi}

\begin{rmk}\label{rmk:flex prop}
Even thought definition \ref{def:propcomp} is the classical one, asking that $\Ob{\prp{P}}$ is isomorphic to $\Str{C}$
might sound more natural than asking that $\Ob{\prp{P}}$ and $\Str{C}$ are \emph{equal}; for this, one 
can define a (coloured) $\Mm$-PROP to be a triple $(C,\prp{P},\iota)$ where $C$ is a set, $\prp{P}$ is a symmetric strict
monoidal $\Mm$-category and $\morp{\iota}{C}{\prp{P}}$ is a $\Mm$-functor such that the induced map of monoids
$\morp{\iota}{(\Str{C},\ast,[])}{(\Ob{\prp{P}},\boxtimes,\Un_{\prp{P}})}$ is an isomorphism. A morphism from $(C,\prp{P},\iota)$ to $(D,\prp{R},\iota')$ is just a couple $(f,F)$ as in definition \ref{def:propcomp}
such that $\iota' f=F \iota$.  
This definition is clearly equivalent to the above one, but it will be preferable in \S \ref{sec:fey cat}.
\end{rmk}
\noindent Unravelling definition \ref{def:propcomp} we get the following equivalent one (see \cite{Fr10}): 

\begin{defi}\label{def:prop}
 A \emph{coloured PROP enriched in $\Mm$} is given by the following data:
\begin{itemize}
\item[-] A set $C$;
\item[-] A $C$-coloured bicollection $\prp{P}$;
 \item[-] A \emph{vertical composition law}\index{composition law!vertical (PROPs)}, i.e. a morphism
\begin{equation}\label{vert.comp.prop}
 \lmorp{\circ}{\prp{P}(\mathbf{b};\mathbf{c})\otimes \prp{P}(\mathbf{a};\mathbf{b})}{\prp{P}(\mathbf{a};\mathbf{c})}
\end{equation}
for every $\mathbf{a},\mathbf{b},\mathbf{c}\in \Str{C}$
\item[-] For every $\mathbf{a}\in \Str{C}$ a morphism
\[
 \lmorp{u_{\mathbf{a}}}{\Un}{\prp{P}(c;c)}
\]
called the \emph{identity for $\mathbf{a}$}\index{identity!in PROPs};
\item[-] An \emph{horizontal composition law}\index{composition law!horizontal (PROPs)}, i.e. a morphism
\begin{equation}\label{hor.comp.prop}
 \lmorp{\boxtimes}{\prp{P}(\mathbf{a};\mathbf{b})\otimes\prp{P}(\mathbf{c};\mathbf{d})}{\prp{P}(\mathbf{a}\ast \mathbf{c};\mathbf{b}\ast \mathbf{d})}
\end{equation}
for every $\mathbf{a},\mathbf{b},\mathbf{c},\mathbf{d}\in \Str{C}$.

\item[-] For every $\mathbf{a},\mathbf{b}\in \Str{C}$ such that $\card{a}=n$ and every 
 $\sigma \in \Sigma_n$, two morphisms
\[
\lmorp{\sigma^*}{\prp{P}(\mathbf{a};\mathbf{b})}{\prp{P}(\mathbf{a}\sigma;\mathbf{b})}
\]
\[
\lmorp{\sigma_*}{\prp{P}(\mathbf{b};\mathbf{a})}{\prp{P}(\mathbf{b};\mathbf{a}\sigma^{-1})}
\]
called \emph{input-permutation}\index{input-permutation} and \emph{output-permutation}\index{output-permutation} respectively.
\end{itemize}
These data have to satisfy the following conditions:
\begin{enumerate}
 \item \emph{$\circ$ is associative and has $u_{\mathbf{a}}$ as unit} for $\mathbf{a}\in \Str{C}$. In the case $\Mm=\Set$ this translates into the formulas
\[
 (f\circ f')\circ f''=f\circ (f'\circ f''),\ \ u_{\mathbf{a}}\circ f=f,\ \ f\circ u_\mathbf{a}=f 
\]
whenever these compositions make sense;
\item \emph{$\boxtimes$ is associative with $u_{[]}$ as unit}. When $\Mm=\Set$, this is equivalent to require that
\[
 (f\boxtimes f')\boxtimes f''= f\boxtimes (f'\boxtimes f''),\ \ f\boxtimes u_{[]}=f, \ \ u_{[]}\boxtimes f=f
\]
whenever these compositions make sense;
\item \emph{the vertical composition and the horizontal composition distribute (interchange law)} one over the other. If  $\Mm=\Set$ this constrains read as
\[
 (f\boxtimes g)\circ (f'\boxtimes g')=(f\circ f')\boxtimes (g \circ g') \ \ u_\mathbf{a}\boxtimes u_\mathbf{b}=u_{\mathbf{a}\ast \mathbf{b}} 
\]
whenever the compositions above are defined;
\item The \emph{input-permutations} and the \emph{output-permutations} define a right action and a left action respectively 
\[
 \tau^*\sigma^*=(\sigma \tau)^*\ \ \tau_*\sigma_*=(\tau \sigma)_*\ \ \id^*=\id_*=\id\ \ \tau^*\sigma_*=\sigma_*\tau^*;
\]
for every $n,m\in \N$ and $(\tau,\sigma)\in \Sigma_n\times \Sigma_m$;
\item Permutations and vertical composition commute. In the case $\Mm=\Set$ this translates in the following equations
\[
 \sigma^*(g \circ f)=g\circ \sigma^*(f)\ \ \sigma_*(g \circ f)=\sigma_*(g)\circ f\ \ 
 \sigma^*(g) \circ f=g\circ \sigma_*(f)
\]
whenever these expressions make sense.
\item Permutations and horizontal composition are compatible; that is, given $\mathbf{a},\mathbf{b},\mathbf{c},\mathbf{d}\in \Str{C}$
with $\card{\mathbf{a}}=a,\card{\mathbf{b}}=b,\card{\mathbf{c}}=c,\card{\mathbf{d}}=d$ and $\alpha \in \Sigma_a$, $\beta \in \Sigma_b$,
$\gamma \in \Sigma_c$, $\delta \in \Sigma_d$ the diagram
\[
 \xymatrix{\prp{P}(\mathbf{a};\mathbf{b})\otimes\prp{P}(\mathbf{c};\mathbf{d}) \ar[r]^-{\boxtimes} \ar[d]^-{\alpha^*\beta_* \otimes \gamma^*\delta_*} & \prp{P}(\mathbf{a}\ast \mathbf{b};\mathbf{c}\ast \mathbf{d}) \ar[d]^-{(\alpha \times \beta)^*\otimes (\gamma \times \delta)_*}\\
           \prp{P}(\mathbf{a}\alpha;\mathbf{b}\beta^{-1})\otimes\prp{P}(\mathbf{c}\gamma;\mathbf{d}\delta^{-1}) \ar[r]^-{\boxtimes} & \prp{P}((\mathbf{a}\ast \mathbf{b})(\alpha \times \beta);(\mathbf{c}\ast \mathbf{d})(\gamma \times \delta)^{-1})
          }
\]
commutes.\\
Moreover the diagram
\[
 \xymatrix{\prp{P}(\mathbf{a},\mathbf{b})\otimes\prp{P}(\mathbf{c},\mathbf{d}) \ar[d] \ar[r]^-\boxtimes & \prp{P}(\mathbf{a}\ast \mathbf{c},\mathbf{b}\ast \mathbf{d})\ar[d]^-{\tau_{(a,b)}^*\tau_{(c,d)*}}\\
           \prp{P}(\mathbf{c};\mathbf{d})\otimes\prp{P}(\mathbf{a};\mathbf{b}) \ar[r]_-\boxtimes & \prp{P}(\mathbf{c}\ast \mathbf{a};\mathbf{d}\ast \mathbf{b})}
\]
has to commute; the left vertical map is the symmetric morphism in $\Mm$ and $\tau_{(a,b)}$ is the unique element of $\Sigma_{a+b}$ that restricts to two order preserving functions $\arr{\{1,\dots,a\}}{\{b+1,\dots,a+b\}}$
and $\arr{\{a+1,\dots,a+b\}}{\{1,\dots,b+1\}}$. 
\end{enumerate}
\end{defi}

\noindent Note that the two monoid structures on $\prp{P}([];[])$ defined by the vertical composition
and the horizontal composition $(\circ,u_{[]})$ and $(\square,u_{[]})$ coincide and are commutative by the Heckmann-Hilton argument.

Note that, given a function $\morp{f}{C}{D}$ and a $D$-coloured $\Mm$-PROP $\prp{Q}$ the $C$-coloured $\Mm$-bicollection 
$f^*(\prp{Q})=\{\prp{Q}(f(\str{a}),f(\str{b}))\}_{(\str{a},\str{b})\in \Val{C}}$ inherits a natural $C$-coloured $\Mm$-PROP structure. We leave to the reader to check that a morphism between coloured $\Mm$-PROPs $\arr{(C,\prp{P})}{(D,\prp{Q})}$ is given by a function $\morp{f}{C}{D}$ together with a morphism of $C$-coloured $\Mm$-bicollection $\arr{\prp{P}}{f^*(\prp{Q})}$ preserving compositions, permutations and identities in an obvious way. 

As for operads, there is a functor $\morp{\Clf}{\EProp{\Mm}}{\Set}$ associating to each PROP its set of colours.

For every set $C$ we will denote by $\EPropfc{\Mm}{C}$ the \emph{category of $C$-coloured $\Mm$-PROPs}\index{PROP!$C$-coloured}, i.e. the fiber of $\Clf$ over $C$.

Every $C$-coloured PROP $\prp{P}$ has an underlying $C$-coloured $\Mm$-bicollection (by definition) which is endowed with the structure of a symmetric $\Mm$-bicollection by the permutation morphisms of $\prp{P}$.

In \S\ \ref{sec.opprop} we will describe a $\Val{C}$-coloured operad $\Opprop{C}$ whose algebras in $\Mm$ are $\Mm$-enriched $C$-coloured PROPs. The forgetful functor from $C$-coloured $\Mm$-PROPs to $C$-coloured $\Mm$-bicollection is thus the right adjoint of a free-forgetful adjunction (\emph{cf.} \S \ref{sec:freeprop}):
\begin{equation}\label{eq:freeprop}
 \xymatrix{\Mm^{\Val{C}} \ar@<3pt>[r]^-{F_{\Opprop{C}}} & \ar@<3pt>[l]^-{U_{\Opprop{C}}} \EPropfc{\Mm}{C}} 
\end{equation}

Similarly there is a free-forgetful adjunction between the category of symmetric $C$-coloured $\Mm$-bicollections and $\EPropfc{\Mm}{C}$ (\emph{cf.} \S \ref{sec:freesigmaprop}):
\begin{equation}\label{eq:freesigmaprop}
 \xymatrix{\Mm^{\Sigma\Val{C}} \ar@<3pt>[r]^-{\eta_!} & \ar@<3pt>[l]^-{\eta_*} \EPropfc{\Mm}{C}.} 
\end{equation}

\subsubsection{The endomorphisms PROP}\label{sec:endomorphism PROP}\index{endomorphism!PROP}
As in the case of operads, the most fundamental example of $\Mm$-PROP is the one generated by a collection of objects in a symmetric
monoidal $\Mm$-category $\cat{W}$.

Given a set $C$ and a collection of objects $\mathbf{X}=\{X_c\}_{c\in C}$ indexed by $C$, the strict symmetric 
monoidal $\Mm$-category $\End{\Mm}(\mathbf{X})$ defined in \S \ref{apx:strictification} is a $C$-coloured $\Mm$-PROP
called the \emph{endomorphism $\Mm$-PROP of $\mathbf{X}$}\index{endomorphism!PROP!of a collection}.

In particular there is an $\Ob{\cat{W}}$-coloured $\Mm$-PROP $\EndP{\Mm}(\cat{W})$ (\S \ref{apx:strictification}) associated to $\cat{W}$ that we will call the \emph{endomorphism $\Mm$-PROP of $\cat{W}$}.

Endomorphism PROPs are used to define algebras over (other) PROPs (see \S \ref{sec:algprop}).

\subsection{Constant-free PROPs and augmentation-free PROPs}\label{sec2constantfree}
Operations with empty input or empty output are a source of difficulties in the combinatorics of PROPs.
For certain applications it is enough, and it might be convenient, to restrict ourselves to PROPs with no operations with empty input or empty output.

\begin{defi}
Let $C$ be a set. A $\Mm$-enriched $C$-coloured PROP $\mathbf{P}$ is \emph{constant-free} if $\mathbf{P}([],\mathbf{a})\cong \emptyset$ for every $\mathbf{a}\in \Str{C}$.

A $\Mm$-enriched $C$-coloured PROP $\mathbf{P}$ is \emph{augmentation-free} if $\mathbf{P}(\mathbf{a},[])\cong \emptyset$ for every $\mathbf{a}\in \Str{C}$.
\end{defi}

\noindent There are $\Sigma$-free operads $\Opcfprop{C}$ and $\Opafprop{C}$ whose algebras are constant-free PROPs and augmentation-free PROPs respectively (see \S\ \ref{sec.opprop}). 

\subsection{Categories, Operads and PROPs}
As we said, PROPs can be regarded as a generalisation of categories and operads.
Let $\ECat{\Mm}$ be the category of (small) $\Mm$-enriched categories let and $\EOper{\Mm}$ be the category of $\Mm$-enriched coloured operads (\emph{cf.} \cite{C14}).
There are well know fully-faithful inclusions $\morp{k_!}{\ECat{\Mm}}{\EProp{\Mm}}$ and $\morp{w_!}{\EOper{\Mm}}{\EProp{\Mm}}$
that will be recalled in \S \ref{sec:opfibcatopprop} and \ref{operandcat}.

Informally categories correspond to PROPs where all the operations are freely generated (under horizontal composition) by the operation with one input and one output; similarly, the essential image of $w_!$ is spanned by the PROPs in which the operations are 
freely generated by the operations with precisely one output.

\subsection{The \texorpdfstring{$2$}{2}-category of PROPs}
The category $\EProp{\Mm}$ can be endowed with a $2$-category structure which extends the one on $\EOper{\Mm}$ and $\ECat{\Mm}$.

\begin{defi}
    Given two morphisms of coloured $\Mm$-PROPs $f,g\colon \prp{P} \rightarrow \prp{R}$ a \emph{natural transformation from $f$ to $g$}\index{natural transformation!of PROPs}
    is a monoidal $\Mm$-natural transformation $\alpha$ between $f$ and $g$.  
\end{defi}

\noindent Note that such a $\Mm$-natural transformation is completely
    determined by the subset of its components $\{\alpha_c\mid c\in C\}$ (where we identify $C$ with the subset of elements of length $1$ of $\Str{C}$ )
    by the requirement that it is monoidal.

Natural transformations give to $\EProp{\Mm}$ the structure of a $2$-category.
The functor $w_!$ and $k_!$ (\S \ref{sec:opfibcatopprop}) extends to $2$-functors between $2$-categories in an obvious way.

The next definition is now natural:
\begin{defi}
    A morphism $\morp{f}{\prp{P}}{\prp{R}}$ of $\Mm$-PROPs is
    \begin{itemize}
        \item[-] \emph{fully-faithful}\index{fully-faithful!morphism!of PROPs} when it is fully-faithful as a $\Mm$-functor of symmetric monoidal $\Mm$-categories;
        \item[-] \emph{essentially surjective}\index{essentially surjective morphism! of PROPs} when the $\Mm$-functor $k^*(f)$ is essentially surjective.
    \end{itemize} 
\end{defi}

\noindent The following characterisation of equivalences is proved as in the case for $\Mm$-categories:
\begin{prop}
    A morphism in the $2$-category $\EProp{\Mm}$ is an equivalence if and only if it is fully-faithful and essentially surjective.  
\end{prop}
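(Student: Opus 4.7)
The strategy is the standard $2$-categorical argument for equivalences of enriched categories, adapted so that the quasi-inverse can be chosen to be a \emph{strict} monoidal $\Mm$-functor (as required by Definition \ref{def:propcomp}).

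For the forward direction, suppose $\morp{f}{\prp{P}}{\prp{R}}$ is an equivalence, with pseudo-inverse $\morp{g}{\prp{R}}{\prp{P}}$ and invertible monoidal $\Mm$-natural transformations $\ntra{\alpha}{gf}{\id_\prp{P}}$ and $\ntra{\beta}{fg}{\id_\prp{R}}$. The components $\beta_d$ at the colours $d\in \Clf(\prp{R})$ exhibit each $d$ as isomorphic in $k^*(\prp{R})$ to $f(g(d))$, so $k^*(f)$ is essentially surjective. For fully-faithfulness, one checks, exactly as for $\Mm$-categories, that conjugation by $\alpha$ and $\beta$ provides a two-sided inverse to the map $\prp{P}(\mathbf{a};\mathbf{b}) \to \prp{R}(f\mathbf{a};f\mathbf{b})$ for every $\mathbf{a},\mathbf{b}\in \Str{\Clf(\prp{P})}$.

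For the backward direction, write $C=\Clf(\prp{P})$ and $D=\Clf(\prp{R})$, and assume $f$ is fully-faithful and essentially surjective. Using essential surjectivity together with the axiom of choice, pick for each $d\in D$ a colour $h(d)\in C$ and an isomorphism $\morp{\beta_d}{f(h(d))}{d}$ in $k^*(\prp{R})$. This defines a function $\morp{h}{D}{C}$ which we extend to objects of $\prp{R}$ as the free monoid map $\morp{\Str{h}}{\Str{D}}{\Str{C}}$; since $f$ strictly preserves $\boxtimes$, horizontal composition of the chosen $\beta_{d_i}$ produces isomorphisms $\morp{\beta_{\mathbf{d}}}{f(\Str{h}(\mathbf{d}))}{\mathbf{d}}$ for every $\mathbf{d}\in \Str{D}$ (with $\beta_{[]}=u_{[]}$), and these are natural with respect to $\boxtimes$ by construction.

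Next define $\morp{g}{\prp{R}}{\prp{P}}$ on objects as $\Str{h}$, and on hom-objects as the unique morphism
\[
\lmorp{g_{\mathbf{d},\mathbf{d}'}}{\prp{R}(\mathbf{d};\mathbf{d}')}{\prp{P}(\Str{h}\mathbf{d};\Str{h}\mathbf{d}')}
\]
whose image under $f_{\Str{h}\mathbf{d},\Str{h}\mathbf{d}'}$ is the conjugation $\beta_{\mathbf{d}'}^{-1}\circ(-)\circ \beta_{\mathbf{d}}$; such a morphism exists uniquely since $f$ is fully-faithful. Preservation of identities, vertical composition, horizontal composition, and the symmetric group actions then each reduce, via fully-faithfulness, to the corresponding property after applying $f$, where they follow from naturality of the $\beta_{\mathbf{d}}$ with respect to $\circ$, $\boxtimes$ and $\Sigma$-actions and from the fact that $f$ is itself a strict symmetric monoidal $\Mm$-functor. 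Hence $g$ is a morphism of $\Mm$-PROPs, the family $\{\beta_{\mathbf{d}}\}$ is a monoidal $\Mm$-natural isomorphism $\ntra{\beta}{fg}{\id_{\prp{R}}}$, and applying $g$ to $\beta_{f(-)}$ and using fully-faithfulness of $f$ produces the isomorphism $\ntra{\alpha}{gf}{\id_{\prp{P}}}$ in the standard way.

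The main point where care is needed is the strict monoidality of $g$: because objects of a PROP form the \emph{free} monoid $\Str{C}$, the choice of $h$ extends uniquely and strictly to objects, so no coherence problem arises there; for morphisms, the identity $g(\phi\boxtimes\psi)=g(\phi)\boxtimes g(\psi)$ follows after applying the faithful $f$ from the horizontal-composition naturality $\beta_{\mathbf{d}\ast \mathbf{e}}=\beta_{\mathbf{d}}\boxtimes\beta_{\mathbf{e}}$, which is precisely why we had to define the $\beta_{\mathbf{d}}$ by $\boxtimes$-composition of the $\beta_{d_i}$ rather than by an independent choice.
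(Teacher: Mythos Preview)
Your argument is correct and is precisely the standard proof for $\Mm$-categories that the paper alludes to (the paper gives no details beyond ``proved as in the case for $\Mm$-categories''). The one point that genuinely needs attention for PROPs---arranging for the pseudo-inverse to be a \emph{strict} monoidal $\Mm$-functor---you handle correctly by first choosing $h$ only on colours, extending freely to $\Str{h}$, and defining $\beta_{\mathbf d}$ as the $\boxtimes$-product of the chosen $\beta_{d_i}$; the interchange law and naturality of the symmetry then give strict preservation of $\boxtimes$ and of the $\Sigma$-actions after applying the faithful $f$, exactly as you indicate.
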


\subsection{Algebras over PROPs}\label{sec:algprop}
Given an symmetric monoidal $\Mm$-category $\cat{W}$ and a $C$-coloured $\Mm$-PROP $\prp{P}$, the \emph{category of $\prp{P}$-algebras 
}\index{algebra!for a PROP} $\Alg{\prp{P}}{\cat{W}}$, is defined to be the category of morphisms $\EPROP{\Mm}(\prp{P},\EndP{\Mm}(\cat{W}))$.

Since $\cat{W}$ is equivalent to $\EndP{\Mm}(\cat{W})$ ( \S \ref{apx:strictification}), the category $\Alg{\prp{P}}{\cat{W}}$ is equivalent to the category of strong
symmetric monoidal $\Mm$-functors from (the underlying symmetric monoidal $\Mm$-category of) $\prp{P}$ to $\cat{W}$.

Given a collection of objects $\mathbf{X}=\{X_c\}_{c\in C}$ in $\cat{W}$ a $\prp{P}$-algebra structure on it is a morphism of $C$-coloured PROPs
from $\prp{P}$ to $\End{\Mm}(\mathbf{X})$, that is a $\prp{P}$-algebra in $\cat{W}$ that factorises via the inclusion
$\arr{\EndP{\Mm}(\mathbf{X})}{\EndP{\Mm}(\cat{W})}$.

Note that when $\cat{W}$ is a cocomplete symmetric $\Mm$-algebra, an equivalent (and perhaps more familiar) definition of $\prp{P}$-algebra using the tensored structure of $\cat{W}$
is available, in fact to give a $\prp{P}$-algebra structure on $\mathbf{X}$ is the same as giving a morphism
\[
\lmorp{\alpha_{\str{c},\str{d}}}{\prp{P}(\str{c},\str{d})\otimes \bigotimes_{i\in \card{\str{c}}} X_{c_i} }{\bigotimes_{j\in \card{\str{d}}} X_{d_j}}
\]
for every $\ioval{\str{c}}{\str{d}}\in \Val{C}$, subjects to certain associativity, equivariancy and unitality constrains.

\section{Operadic bifibrations}\label{secoperbif}
We continue to fix a bicomplete closed symmetric monoidal category $(\Mm,\otimes,\Un)$.

We want to describe the category $\EProp{\Mm}$ of $\Mm$-enriched coloured PROPs as the total category 
of a bifibration; to this end we will introduce \emph{operadic bifibrations}.
A bifibration is a functor which is both a fibration (or cartesian fibration) and an opfibration 
(or cocartesian fibration); \S\ 2 of \cite{HP14} provides a nice introduction to bifibrations and their correspondence
with pseudo-functors via the Grothendieck construction (see also \cite[Section 3]{C14} for the notation); 
a more classical reference for fibered categories is \cite[Section 8]{Bo08}.

We will use the same notation and conventions used in \cite{C14}; in particular, given a bifibration $\morp{\pi}{\cat{E}}{\cat{B}}$,
we will suppose that a cartesian arrow $\phi_f$ and a cocartesian arrow $\nu_f$ over $f$ have been chosen for every morphism $f$ in $B$.
For every morphism $\morp{g}{A}{B}$ in $E$ such that $\pi(g)=f$ we will denote by $\morp{\trf{g}}{A}{f^*(B)}$
the unique morphism such that $\phi_f \trf{g}=g$ and by $\morp{\trc{g}}{f_!(A)}{B}$ the unique morphism such that
$\trc{g} \nu_f=g$.  

\subsection{The \texorpdfstring{$2$}{2}-functor of algebras}\label{sec:algebra functor} 
We continue to assume $(\Mm,\otimes,\Un)$ is a bicomplete symmetric closed monoidal category.
Let $\EOPER{\Mm}$ be the (large) $2$-category of $\Mm$-operads and let  $\EOper{\Mm}$ be the $2$-category of small coloured $\Mm$-operads.

For every cocomplete symmetric $\Mm$-algebra $\cat{W}$ (\S \ref{sec:Malgebras}) and every $O\in \EOper{\Mm}$ let $\Alg{O}{\cat{W}}$ be the category of algebras of $O$ in $\cat{W}$ (\emph{cf.} \cite{BM05});
it is well known (\emph{cf.} \cite{BM05},\cite{Fr09}) that every morphism of $\Mm$-operads $\morp{f}{O}{P}$ produces adjunction between the categories of algebras
\begin{equation}\label{eq.oper.adj}
\adjpair{f_!}{f^*}{\Alg{O}{\cat{W}}}{\Alg{P}{\cat{W}}.}
\end{equation} 

Recall that every symmetric monoidal $\Mm$-category $\cat{W}$ generates an endomorphism $\Mm$-operad $\Endsm{\Mm}(\cat{W})$: its colours are the elements of $\Endsm{\Mm}(\cat{W})(s_1,\dots,s_n;s)\cong \IHom{\Mm}{s_1\otimes\dots\otimes s_n,s}$ for every $(s_1,\dots,s_n)\in \Str{\Ob{\cat{W}}}$ and $s\in \cat{W}$; $\Endsm{\Mm}(\cat{W})$ can also be described as the $\Mm$-operad underlying the endomorphism PROP $\EndP{\Mm}(\cat{W})$ (\S \ref{sec:endomorphism PROP}).
 
For every operad $\ope{O}\in \EOper{\Mm}$ the category of algebras $\Alg{\ope{O}}{\cat{W}}$ is equivalent to category of morphisms $\EOPER{\Mm}(\ope{O},\Endsm{\Mm}(\cat{W}))$,
thus we can denote the representable $2$-functor \[\morp{\EOPER{\Mm}(-,\Endsm{\Mm}(\Mm))}{\opc{\EOper{\Mm}}}{\CAT}\] by
$\Alg{-}{\cat{W}}$.

Under this identification a morphism of $\Mm$-operad $\morp{f}{\ope{O}}{\ope{P}}$ is sent by $\Alg{-}{\cat{W}}$ to the functor $f^*$ 
described above; a description of $f_!$ is given in appendix \ref{sec: alg as Kan extensions}. 

Since $f^*$ is right adjoint, $\Alg{-}{\cat{W}}$ can be thought as a $2$-functor 
\begin{equation}\label{eq:repalg}
\morp{\Alg{-}{\cat{W}}}{\EOper{\Mm}}{\CATadj}
\end{equation}
where $\CATadj$ is the $2$-category having categories as $0$-cells, adjunctions as $1$-cells (going in the direction of the left adjoint) and natural transformations
between right adjoints as $2$-cells.

\subsection{Operadic families} 
Consider the $2$-category of (un-enriched) operads $\Oper$ and let $\mathcal{C}$ be a category; every (pseudo)functor
\[
 \lmorp{\ofam{F}}{\mathcal{C}}{\Oper}
\]
can be though as a family of operads parameterized by $\mathcal{C}$, thus we will call such objects
\emph{operadic $\mathcal{C}$-families}. A morphism of operadic $\mathcal{C}$-families is just a natural transformation
between them.

Recall that since $\Mm$ is closed, it is a cocomplete symmetric $\Set$-algebra (remark \ref{rmk:setalg}).
Given an operadic family $\morp{\ofam{F}}{\cat{C}}{\Oper}$ the composition
\[
 \cat{C}\overset{\ofam{F}}\longrightarrow \Oper \overset{\Alg{-}{\Mm}}\longrightarrow \Catadj
\]
will be denoted by $\alf{\ofam{F}}{\Mm}$.

By abuse of notation, for every morphism $f$ in $\cat{C}$ we will denote the adjunction $\alf{\ofam{F}}{\Mm}(f)$
by $(f_!,f^*)$, when $\ofam{F}$ is clear from the context.

Given such a family $\ofam{F}$, we can apply the Grothendieck construction to $\alf{\ofam{F}}{\Mm}$ (\emph{cf.} \cite{HP14}) to 
get a bifibration
\[
 \lmorp{\pi_{\ofam{F}}}{\gc \alf{\ofam{F}}{\Mm}}{\mathcal{C}};
\]
the total category $\gc \alf{\ofam{F}}{\Mm}$ will be called the \emph{category of $\ofam{F}$-algebras in $\Mm$}
and denoted by $\ToBi{\ofam{F}}{\Mm}$. 
The objects of $\ToBi{\ofam{F}}{\Mm}$ are couples $(c,X)$ where $x\in \mathcal{C}$ and $X\in \Alg{\Mf(c)}{\Mm}$.
A morphism between $(c,X)$ and $(d,Y)$ is a couple $(f,g)$ where $\morp{f}{c}{d}$ and $\morp{g}{X}{\Mf(f)^*(Y)}$ is a morphism in $\Alg{\Mf(c)}{\Mm}$. 

If $\Mm$ and $\mathcal{C}$ are bicomplete then $\ToBi{\ofam{F}}{\Mm}$ is also bicomplete and $\pi_{\Mf}$ preserves limits and
colimits (\emph{cf.} \cite[Appendix A]{C14}, \cite[Section 2.4]{HP14}).
 
The Grothendieck construction is functorial, so for every morphism of operadic $C$-families $\morp{\alpha}{\ofam{F}}{\mathcal{G}}$ we get a morphism of bifibrations
\[
 \xymatrix{\ToBi{\ofam{F}}{\Mm} \ar[dr]_{\pi_{\ofam{F}}} \ar@/^/@<1pt>[rr]^{\alpha_!}& & \ar@/^/@<1pt>[ll]^{\alpha^*}\ToBi{\mathcal{G}}{\Mm} \ar[dl]^{\pi_{\mathcal{G}}}\\
           & \mathcal{C}. &}
\]

For example let $\morp{\Clf}{\Oper}{\Oper}$ be the functor which associates to every operad $\ope{O}$ the unique discrete
operad with the same set of colours (i.e. the initial $\Cl{\ope{O}}$-coloured operad); clearly $\Alg{\Cl{\ope{O}}}{\Mm}\cong \Mm^{\Cl{\ope{O}}}$.
 
For every operadic $\mathcal{C}$-family $\ofam{F}$ the composition $\Clf\circ\ofam{F}$ defines another operadic
$\mathcal{C}$-family, the \emph{family of colours of $\ofam{F}$}, that will be denoted by $\disf{\ofam{F}}$. The total category $\ToBi{\disf{\ofam{F}}}{\Mm}$ is called the \emph{category of 
$\ofam{F}$-collection in $\Mm$}.

There is a unique morphism from $\disf{\ofam{F}}$ to $\ofam{F}$ which is level-wise the identity on colours, the associated adjunction at the level of total categories
will be denoted by
\begin{equation}\label{freeforgfib}
 \xymatrix{\ToBi{\disf{\ofam{F}}}{\Mm} \ar@<3pt>[r]^-{F_{\ofam{F}}} & \ar@<3pt>[l]^-{U_{\ofam{F}}}  \ToBi{\ofam{F}}{\Mm}};
\end{equation}
when restricted to the fiber over an object $c\in \mathcal{C}$ this gives us the usual free-forgetful adjunction
\begin{equation}\label{freeforgfib2}
 \xymatrix{\Mm^{\Cl{F(c)}}\ar@<3pt>[r]^-{F_{\ofam{F}(c)}} & \ar@<3pt>[l]^-{U_{\ofam{F}(c)}} \Alg{\ofam{F}(c)}{\Mm}}.
\end{equation}
Moreover every morphism of operadic $\mathcal{C}$-families $\morp{\alpha}{\ofam{F}}{\ofam{G}}$ produces a commutative
square of adjunctions
\[
 \xymatrix{\ToBi{\Cl{\ofam{F}}}{\Mm} \ar@<3pt>[d]^-{\ofam{F}} \ar@<3pt>[r]^-{\Cl\alpha_!}& \ar@<3pt>[d]^-{F} \ar@<3pt>[l]^-{\Cl\alpha^*} \ToBi{\Cl{\ofam{G}}}{\Mm}\\
           \ToBi{\ofam{F}}{\Mm} \ar@<3pt>[u]^-{U}\ar@<3pt>[r]^-{\alpha_!}& \ar@<3pt>[u]^-{U} \ar@<3pt>[l]^-{\alpha^*} \ToBi{\ofam{G}}{\Mm}}
\]
 
We will be mainly interested in the case in which $\mathcal{C}=\Set$ and $\ofam{F}=\fOpprop$, the operadic $\Set$-family
 \begin{equation}\label{prop.operadic}
\gfun{\fOpprop}{\Set}{\Oper}{C}{\Opprop{C}}                                            
\end{equation} 
where $\Opprop{C}$ is the operad described in \S\ \ref{sec.opprop}, whose algebras in $\Mm$ are the $C$-coloured $\Mm$-enriched PROPs; see
\S\ \ref{sec.changeofcl} for a description of $\fOpprop$ on the morphisms.

The category of $\fOpprop$-algebras in $\Mm$ is isomorphic to $\EProp{\Mm}$, the category of coloured $\Mm$-enriched PROPs. 

This is the same approach we used in \cite{C14} to describe the category of enriched categories $\ECat{\Mm}$ and that
of enriched operads $\EOper{\Mm}$; In fact, there, $\ECat{\Mm}$ and $\EOper{\Mm}$ are presented as algebras in $\Mm$ for certain
operadic $\Set$-families $\Opcat{}$ and $\Opop{}$.

\subsubsection{Categories, Operads and PROPs}\label{sec:opfibcatopprop} The inclusions of operads displayed in (\ref{inclusion.op}) are natural in $C$, 
that is they define morphisms of operadic $\Set$-families
\[
 \xymatrix{\Opcat{}\ar[r]^-{j} & \Opop{}\ar[r]^-{w} & \fOpprop};
\]
the composite $wj$ will be denoted by $k$.

The categories of $\Opcat{}$-collections and $\Opop{}$-collections are denoted by $\EGraph{\Mm}$ and $\EMGraph{\Mm}$ respectively
in
\cite{C14} (but they should not be confused with the graphs of appendix \ref{ch:graph}). The category of $\fOpprop$-collection in $\Mm$ will be called the category
of \emph{bicollections in $\Mm$} and denoted by $\fVal{\Mm}$.
Via the Grothendieck construction we get a commutative diagram of adjunctions
\begin{equation}\label{eq:diagcatopgr}
 \xymatrix{\EGraph{\Mm} \ar@<3pt>[d]^-{F_{\Opcat{}}} \ar@<3pt>[r]^-{j_!}& \ar@<3pt>[d]^-{F_{\Opop{}}} \ar@<3pt>[l]^-{j^*} \EMGraph{\Mm} \ar@<3pt>[r]^-{w_!}& \ar@<3pt>[d]^-{F_{\Opprop{}}} \ar@<3pt>[l]^-{w^*} \fVal{\Mm}\\
           \ECat{\Mm} \ar@<3pt>[u]^-{U_{\Opcat{}}}\ar@<3pt>[r]^-{j_!}& \ar@<3pt>[u]^-{U_{\Opop{}}} \ar@<3pt>[l]^-{j^*} \EOper{\Mm} \ar@<3pt>[r]^-{w_!}& \ar@<3pt>[u]^-{U_{\Opprop{}}} \ar@<3pt>[l]^-{w^*} \EProp{\Mm}}
\end{equation}
where the right adjoints $j^*$ and $w^*$ are just the obvious forgetful functors.
For every $C\in \Set$ the restriction on the fibers over $C$ of the above diagram is isomorphic to:
\[
 \xymatrix{\Mm^{C\times C} \ar@<3pt>[d]^-{F_{\Opcat{C}}} \ar@<3pt>[r]^-{j_!}& \ar@<3pt>[d]^-{F_{\Opop{C}}} \ar@<3pt>[l]^-{j^*} \Mm^{\Str{C}\times C} \ar@<3pt>[r]^-{w_!}& \ar@<3pt>[d]^-{F_{\Opprop{C}}} \ar@<3pt>[l]^-{w^*} \Mm^{\Val{C}}\\
           \ECatfc{\Mm}{C} \ar@<3pt>[u]^-{U_{\Opcat{C}}}\ar@<3pt>[r]^-{j_!}& \ar@<3pt>[u]^-{U_{\Opop{C}}} \ar@<3pt>[l]^-{j^*} \EOperfc{\Mm}{C} \ar@<3pt>[r]^-{w_!}& \ar@<3pt>[u]^-{U_{\Opprop{C}}} \ar@<3pt>[l]^-{w^*} \EPropfc{\Mm}{C}}
\]
\subsubsection{Constant-free and augmentation-free PROPs} The operads for constant-free PROPs and augmentation-free PROPs defined in \S\ \ref{sec2constantfree} define operadic $\Set$-families $\Opcfprop{}$ and $\Opafprop{}$ as well; the associated bifibered categories will be denoted by $\ECfProp{\Mm}$ and $\EAfProp{\Mm}$.
The obvious fully-faithful inclusions $\arr{\ECfProp{\Mm}}{\EProp{\Mm}}$ and $\arr{\EAfProp{\Mm}}{\EProp{\Mm}}$ are induced by the obvious morphism of operadic $\Set$-families and are therefore left adjoints.

\subsection{Diagram in operadic bifibrations}\label{diagopfam}

We now introduce a kind of Grothendieck construction for operads. This will allow us
to describe diagrams in the total category of an operadic bifibrations as algebras for a certain operad obtained from the starting
operadic family via this construction (\emph{cf.} Propositions \ref{prop.diagop} and \ref{prop.diagop2}). 

\subsubsection{A Grothendieck construction for operads}
Let $\mathcal{C}$ be a small category and let $\ofam{F}$ be an operadic $\mathcal{C}$-family.
We define a new operad $\gco \ofam{F}$ that we call the \emph{Grothendieck construction over $\ofam{F}$}:\\
the set of colours of $\gco \ofam{F}$ is the set of couples $(c,x)$ such that $c\in \mathcal{C}$ and $x\in \ofam{F}(c)$.
For every signature $((c_1,x_1),\dots,(c_n,x_n);(c,x))$ the set of operations \[\gco \ofam{F}((c_1,x_1),\dots,(c_n,x_n);(c,x))\] has for elements the couples $(\{f_i\}^n_{i=1},o)$ where
$\morp{f_i}{c_i}{c}$ is a morphism in $\mathcal{C}$ for every $i\in \fcar{n}$ and $o$ is an operation in
$\ofam{F}(c)(f_1(c_i),\dots,f_n(c_n);c)$.

To define the operadic composition in $\gco \Mf$ it is sufficient to define
the partial compositions (or $\circ_i$-operations \emph{cf.}
\cite[Section 1.3,1.7]{MSS02}).
The partial composition is defined as 
\[(\{f_i\}^n_{i=1},o)\circ_l (\{g_j\}^m_{j=1},p)= (\{h_k\}^{m+n-1}_{k=1}, o \circ_l \ofam{F}(f_l)(p))\] 
for every $l\in \fcar{n}$, where
\[
h_k=\begin{cases}
f_k & \text{if } k<l \\
f_l g_{k-l+1} & \text{if } l\leq k \leq l+m \\
f_{k-m} & \text{if } k>l+m. \\
\end{cases}
\]

Note the similarity of this construction with the Boardman-Vogt tensor product for operads (\emph{cf.} \cite[4.1]{Mo10}, \cite[\S\ 5.1]{MW07}); in
particular the rules for composition ( Figure \ref{figcomp}) resemble the interchange law in the Boardman-Vogt tensor product (see
also Remark \ref{rmkBV} below).
\begin{figure}[!ht]
\begin{center}    
\begin{tikzpicture}
 \node[vertgo] (a) at (0,0) {\footnotesize $o$};
 \grnode{a}{1/{f_1}/{(b_1,z_1)}/{(a,z_1)},2/{f_2}/{(b_2,z_2)}/{(a,z_2)},3/{f_3}/{(b_3,z_3)}/{(a,z_3)}}{3}{0.9}{1.2}
 \grout{a}{(a,w)}{1}
\end{tikzpicture}
\caption{A possible graphical representation for operations in $\gco{\Mf}$; $o$ is an operation in $\Mf(a)$, and
$f_1,f_2,f_3$ are morphisms in $\mathcal{C}$.}
\end{center}
\end{figure}
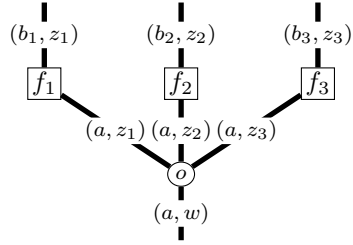

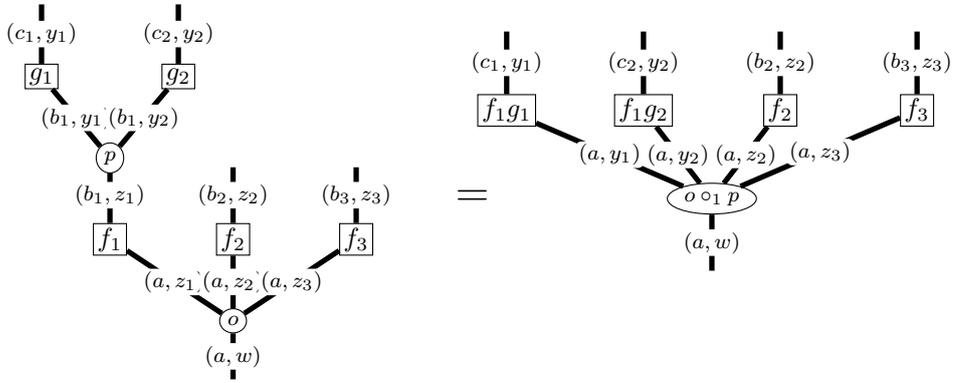
\begin{figure}[!ht]
\begin{center} 
\begin{tikzpicture}[scale=0.9]
 \node[vertgo] (a) at (0,0) {\footnotesize $o$};
 \grnode{a}{1/{f_1}/{(b_1,z_1)}/{(a,z_1)},2/{f_2}/{(b_2,z_2)}/{(a,z_2)},3/{f_3}/{(b_3,z_3)}/{(a,z_3)}}{3}{0.9}{1.2}
 \node[vertgo,fill=white] (b) at (ina1) {\footnotesize $p$};
 \grnode{b}{1/{g_1}/{(c_1,y_1)}/{(b_1,y_1)},2/{g_2}/{(c_2,y_2)}/{(b_1,y_2)}}{2}{1}{1.2} 
 \grout{a}{(a,w)}{1}
 \node[fill=white] (ug) at (3.5,1.8) {\huge $=$};
 \node[vertgo] (c) at (7,1.8) {\footnotesize $o \circ_1 p$};
 \grnode{c}{1/{f_1 g_1}/{(c_1,y_1)}/{(a,y_1)},2/{f_1 g_2}/{(c_2,y_2)}/{(a,y_2)},3/{f_2}/{(b_2,z_2)}/{(a,z_2)},4/{f_3}/{(b_3,z_3)}/{(a,z_3)}}{4}{1}{1.3}
 \grout{c}{(a,w)}{1.2}
\end{tikzpicture}
\caption{A representation of the composition  $(o,\{f_1,f_2,f_3\})\circ_{1}(p,\{g_1,g_2\})$.}\label{figcomp}
\end{center}
\end{figure}

\begin{rmk}
It can be shown that the operad $\gco \ope{F}$ is the lax colimit of $\ope{F}$ in the $2$-category $\Oper$.
More precisely $\gco \ofam{F}$ is isomorphic to $\Wlim{\cat{C}}{w}{\ofam{F}}$, the $\Cat$-weighted colimit of $\ofam{F}$ with respect to the weight 
\[ \gfun{w}{\opc{\mathcal{C}}}{\Cat}{c}{\comcat{c}{\mathcal{C}}}\]
where $\mathcal{C}$ is regarded as a $2$-category with trivial $2$-cells.

We recall that, for a (pseudo)functor $\morp{F}{\cat{C}}{\Cat}$, the (classical) Grothendieck construction $\gc F$ is also isomorphic  to $\Wlim{\cat{C}}{w}{F}$, i.e. to the lax colimit of $F$ (in $\Cat$); this justifies the name we have chosen for our construction.
\end{rmk}
\begin{prop}\label{prop.diagop}
The category of $\gco \ofam{F}$-algebras in $\Mm$ is isomorphic to the category of sections of the bifibration
\[
\lmorp{\pi_{\ofam{F}}}{\ToBi{\ofam{F}}{\Mm}}{\mathcal{C}.}
\]
\end{prop}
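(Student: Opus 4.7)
The plan is to produce explicit mutually inverse functors between $\AlgF{\gco\ofam{F}}{\Mm}$ and the category $\Sec{\pi_{\ofam{F}}}$ of sections of $\pi_{\ofam{F}}$, and to verify the required compatibilities by unwinding the partial composition law in $\gco\ofam{F}$. Conceptually this is what the remark above predicts: since $\gco\ofam{F}$ is the lax colimit of $\ofam{F}$ in $\Oper$, morphisms out of it with target $\Endsm{\Mm}(\Mm)$---i.e.\ $\gco\ofam{F}$-algebras---classify lax cocones under $\ofam{F}$ with vertex $\Endsm{\Mm}(\Mm)$, and such a cocone is precisely the data of a section of $\pi_{\ofam{F}}$.

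Given a section $s$---i.e.\ an $\ofam{F}(c)$-algebra $s(c)$ for each $c\in\cat{C}$ together with a morphism $\morp{s(f)}{s(c)}{f^*s(c')}$ of $\ofam{F}(c)$-algebras for each $\morp{f}{c}{c'}$, functorial in $f$---I define a $\gco\ofam{F}$-algebra $A_s$ with underlying collection $A_s(c,x):=s(c)(x)$ and structure map on an operation $(\{f_i\}^n_{i=1},o)\in \gco\ofam{F}((c_i,x_i);(c,x))$ equal to the composite
\[
\bigotimes_i s(c_i)(x_i)\xrightarrow{\bigotimes_i s(f_i)_{x_i}}\bigotimes_i s(c)(\ofam{F}(f_i)(x_i))\xrightarrow{\alpha^{s(c)}_o} s(c)(x),
\]
where $\alpha^{s(c)}_o$ denotes the action of $o$ on $s(c)$. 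Conversely, given a $\gco\ofam{F}$-algebra $A$, restricting to operations of the form $(\{\id_c,\dots,\id_c\},o)$ with $o\in\ofam{F}(c)$ exhibits $x\mapsto A(c,x)$ as an $\ofam{F}(c)$-algebra $s_A(c)$, while the unary operations $(f,\id_{\ofam{F}(f)(x)})$ provide, as $x$ varies, the components of a transport map $\morp{s_A(f)}{s_A(c)}{f^*s_A(c')}$.

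The verifications are: (i) $s_A(f)$ is a morphism of $\ofam{F}(c)$-algebras, from the identity $(\{\id_{c'},\dots,\id_{c'}\},\ofam{F}(f)(o))\circ(\{(f,\id)\}_i)=(f,\id)\circ(\{\id_c,\dots,\id_c\},o)$ in $\gco\ofam{F}$, which is a direct consequence of the partial composition formula; (ii) functoriality of $s_A$, from $(f',\id)\circ(f,\id)=(f'f,\id)$; (iii) compatibility of $A_s$ with partial compositions in $\gco\ofam{F}$, which, using the formula $(\{f_i\},o)\circ_l(\{g_j\},p)=(\{h_k\},o\circ_l \ofam{F}(f_l)(p))$, reduces to the fact that each $s(f)$ is an algebra morphism together with the functoriality of $s$; (iv) equivariance and unitality, which are immediate; and (v) that the two constructions are mutually inverse, which rests on the factorisation $(\{f_i\}_i,o)=(\{\id_c,\dots,\id_c\},o)\circ(\{(f_i,\id)\}_i)$ used in both directions.

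The main obstacle is step (iii): one has to check that the a priori non-obvious transport factor $\ofam{F}(f_l)(p)$ appearing in the partial composition law of $\gco\ofam{F}$ is exactly what renders the composite defining $A_s$ multiplicative. This is the single place where the definition of $\gco\ofam{F}$ is truly used; everything else is bookkeeping. Naturality in morphisms of sections and of algebras is then immediate, and the correspondence upgrades to the claimed isomorphism of categories.
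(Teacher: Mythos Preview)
Your argument is correct: the explicit constructions you describe are exactly what the paper means when it says ``It is not hard to prove the statement by an explicit description of the algebras of $\gco\ofam{F}$''. In particular, your identification of the crucial step~(iii) with the algebra-morphism condition for $s(f_l)$ is the heart of the matter, and your factorisation $(\{f_i\}_i,o)=(\{\id_c,\dots,\id_c\},o)\circ(\{(f_i,\id)\}_i)$ is precisely what makes the two constructions inverse.

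The paper, however, writes out a different proof. Having observed in the preceding remark that $\gco\ofam{F}$ is the lax colimit $\Wcolim{\cat{C}}{w}{\ofam{F}}$ in $\Oper$, and that $\Alg{-}{\Mm}\cong\OPER(-,\Endsm{\Set}(\Mm))$ is representable, it simply invokes the commutation of weighted colimits with representable $2$-functors to obtain
\[
\Alg{\gco\ofam{F}}{\Mm}\cong\Wlim{\cat{C}}{w}{\alf{\ofam{F}}{\Mm}},
\]
and then identifies this lax limit with the category of sections of $\pi_{\ofam{F}}$. Your approach is more elementary and entirely self-contained, requiring no enriched or $2$-categorical machinery; it also yields the explicit formulas spelled out immediately after the proposition in the paper. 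The paper's approach is shorter and more conceptual, and makes the universal property of $\gco\ofam{F}$ do all the work, but it presupposes familiarity with weighted (co)limits and their interaction with representables.
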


\begin{proof}
It is not hard to prove the statement by and explicit description of the algebras of $\gco \ofam{F}$.

A more concise proof can be given observing that the $2$-functor $\AlgF{-}{\Mm}$ is equivalent to the representable
$2$-functor $\OPER (-,\EndP{\Set}(\Mm))$ (\emph{cf.} \S \ref{sec:algebra functor}).
The proof follows from the commutation properties of lax colimits and representable functors; in fact  
\[
\Alg{\ofam{F}}{\Mm}\cong \Oper (\Wcolim{\cat{C}}{w}{\ofam{F}}, \EndP{\Set}(\Mm))\cong \Wlim{\cat{C}}{w}{\Oper (\ofam{F}, \EndP{\Set}(\Mm))} \cong \Wlim{\cat{C}}{w}{\alf{\ofam{F}}{\Mm}}
\]
where the last weighted limit is the lax limit of $\alf{\ofam{F}}{\Mm}$, which is know to be isomorphic to the category of sections of (the Grothendieck construction)
\[\morp{\pi_{\ofam{F}}}{\ToBi{\ofam{F}}{\Mm}}{\mathcal{C}}\].
\end{proof}
\noindent 
In other words the $\gco \ofam{F}$-algebras in $\Mm$ are couples $({A_c}_{c\in \mathcal{C}}, \{\alpha_i\}_{i\in \mathrm{Ar}(\mathcal{C})})$, where
$\mathrm{Ar}(\mathcal{C})$ is the set of morphisms of $\mathcal{C}$, such that:
\begin{enumerate}[i.]
\item $A_c\in \ofam{F}(c)$;
\item $\morp{\alpha_i}{\ofam{F}(i)_!(A_c)}{A_d}$ is a morphism in $\Alg{\ofam{F}(d)}{\Mm}$ for every $\morp{i}{c}{d}$;
\item $\alpha_{gf}=\alpha_g \ofam{F}(g)_!(\alpha_f)$ for every $f,g \in \mathrm{Ar}(\mathcal{C})$ that can be composed.
\end{enumerate}
By adjunction, the last two conditions (and hence the definition of $\{\alpha_i\}_{i\in \mathrm{Ar}(\mathcal{C})}$) could be replaced by:  
\begin{enumerate}[i'.]
\setcounter{enumi}{1}
\item $\morp{\alpha_i}{A_c}{\ofam{F}(i)^*(A_d)}$ is a morphism in $\Alg{\ofam{F}(c)}{\Mm}$ for every $\morp{i}{c}{d}$;
\item $\alpha_{gf}=\ofam{F}(f)^*(\alpha_g) (\alpha_f)$ for every $f,g \in \mathrm{Ar}(\mathcal{C})$ that can be composed.
\end{enumerate}  

The following is an easy consequence of the previous proposition.
\begin{prop}\label{prop.diagop2}
Let $\mathcal{C}$ category, $\ofam{F}$ an operadic $\cat{C}$-family, $I$ a small category and $\morp{D}{I}{\mathcal{C}}$ an $I$-diagram. 
The category of $\gco (\ofam{F} D)$-algebras in $\Mm$ is isomorphic to the category of $I$-diagrams $\mathbb{D}$ in $\ToBi{\ofam{F}}{\Mm}$ 
such that $\pi_{\ofam{F}}\mathbb{D}=D$.
\end{prop}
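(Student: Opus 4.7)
The plan is to reduce Proposition \ref{prop.diagop2} to Proposition \ref{prop.diagop} by viewing the composite $\ofam{F} D$ as an operadic $I$-family and then identifying sections of the resulting bifibration with lifts of $D$ to $\ToBi{\ofam{F}}{\Mm}$.

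The first step is direct: applying Proposition \ref{prop.diagop} to $\ofam{F} D\colon I\to \Oper$ immediately gives an isomorphism between the category of $\gco(\ofam{F} D)$-algebras in $\Mm$ and the category of sections of the bifibration $\pi_{\ofam{F} D}\colon \ToBi{\ofam{F} D}{\Mm}\to I$. What remains is to identify such sections with $I$-diagrams $\mathbb{D}$ in $\ToBi{\ofam{F}}{\Mm}$ satisfying $\pi_{\ofam{F}}\mathbb{D}=D$.

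The key observation is naturality of the Grothendieck construction with respect to composition: since $\alf{\ofam{F} D}{\Mm}=\alf{\ofam{F}}{\Mm}\circ D$, the total category $\ToBi{\ofam{F} D}{\Mm}$ is canonically the strict pullback
\[
\ToBi{\ofam{F} D}{\Mm}\;\cong\; I\times_{\cat{C}}\ToBi{\ofam{F}}{\Mm}
\]
of $\pi_{\ofam{F}}$ along $D$. This can be read off the explicit description of objects and morphisms in the Grothendieck construction: an object of the left hand side is a pair $(i,X)$ with $i\in I$ and $X\in \Alg{\ofam{F}(D(i))}{\Mm}$, which is precisely a compatible pair consisting of $i\in I$ and an object of $\ToBi{\ofam{F}}{\Mm}$ lying over $D(i)$; the analysis of morphisms and of cartesian/cocartesian liftings is the same, using the equality $\alf{\ofam{F}}{\Mm}(Df)=\alf{\ofam{F} D}{\Mm}(f)$ for $f$ a morphism of $I$. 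Under this pullback identification, a section of $\pi_{\ofam{F} D}$ is, by the universal property of the pullback, the same datum as a functor $\mathbb{D}\colon I\to \ToBi{\ofam{F}}{\Mm}$ with $\pi_{\ofam{F}}\mathbb{D}=D$; and a morphism of sections, being a natural transformation projecting to the identity on $I$, corresponds to a natural transformation $\mathbb{D}\Rightarrow\mathbb{D}'$ projecting to $\id_D$.

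I do not foresee a genuine obstacle: the argument is essentially formal, invoking only Proposition \ref{prop.diagop} and the standard behaviour of the Grothendieck construction under pre-composition. The step requiring the most care is checking that the pullback identification is strict (not merely an equivalence) despite $\alf{\ofam{F}}{\Mm}$ being a pseudofunctor; this holds because pre-composing with a strict functor $D$ preserves all coherence data on the nose, so no additional bookkeeping is needed.
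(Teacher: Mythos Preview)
Your proposal is correct and is precisely the argument the paper has in mind: the paper gives no explicit proof, stating only that the result ``is an easy consequence of the previous proposition,'' and your reduction to Proposition~\ref{prop.diagop} via the pullback identification $\ToBi{\ofam{F} D}{\Mm}\cong I\times_{\cat{C}}\ToBi{\ofam{F}}{\Mm}$ is exactly the intended consequence.
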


\subsubsection{Mixed diagrams of algebras}\label{sec.diagalg}

There are certain operadic families for which the Grothendieck construction takes a simpler form, namely when the 
family takes values in the category of full suboperads of a certain fixed operad.

Suppose $\mathcal{O}$ is an $D$-coloured operad and let 
$\mathcal{C}$ be a small category with set of objects $C$ (for us it will be sufficient to consider the case in which $\mathcal{C}$ is a poset).
Denote by $\mathcal{P}(D)$ the poset of subsets of $D$ ordered by inclusion; suppose a functor $\morp{f}{\mathcal{C}}{\mathcal{P}(D)}$
is given, we will call such a functor a \emph{hierarchy functor} for $\mathcal{C}$-diagrams of $\mathcal{O}$-algebras. 

For every $S\subseteq D$ let $\resop{\mathcal{O}}{S}$ be the full suboperad of $\mathcal{O}$ spanned by the colours in $S$,
and let $\morp{\xi^S}{\resop{\mathcal{O}}{S}}{\mathcal{O}}$ be the morphism of operads witnessing this inclusion.

Given a hierarchy functor $f$ we can define an operadic $\mathcal{C}$-family
\[
 \gfun{\hof{\mathcal{O}}{f}}{\mathcal{C}}{\Oper}{c}{\resop{\mathcal{O}}{f(c)}.}
\]
 
The operad $\gco{\hof{\mathcal{O}}{f}}$ has for colours all the couples
$(o,c)\in D\times C$ such that $o\in f(i)$.

For every $(o,c),(o_1,c_1),\dots, (o_n,c_n)\in \Cl{\gco\hof{\mathcal{O}}{f}}$
\[
 (\gco\hof{\mathcal{O}}{f})((o_1,c_1),\dots,(o_n,c_n); (o,c))=\mathcal{O}(o_1,\dots,o_n; o)\times \prod_{i=1}^{n} \mathcal{C}(c_i,c);
\]
compositions, symmetries and identities are defined in the evident way.

The algebras of $\gco{\hof{\mathcal{O}}{f}}$ in $\Mm$ are $\mathcal{C}$-diagrams $X$ in $\Alg{\mathcal{O}}{\Mm}$ such that 
$X(c)\cong \xi^{f(c)}_!(Y)$ for some $Y\in \Alg{\resop{\mathcal{O}}{f(c)}}{\Mm}$ for every $c\in C$.
Informally thus, we can think of them as diagram of $\ope{O}$-algebras in which certain nodes are particularly simple algebras
(i.e. algebras for a suboperad of $\ope{O}$).    

There is a natural morphism $\morp{l}{\gco{\hof{\mathcal{O}}{f}}}{\mathcal{O}}$ and the induced functor
\[
 \morp{l_!}{\Alg{\gco{\hof{\mathcal{O}}{f}}}{\Mm}}{\Alg{\mathcal{O}}{\Mm}}
\]
sends each diagram to its colimit.

\begin{rmk}\label{rmkBV}
When $f$ is the constant functor with constant value $D$ the operad $\gco \hof{\mathcal{O}}{f}$ will be denoted $\mathcal{O}\BVt \mathcal{C}$, since
it is isomorphic to the Boardman-Vogt tensor product of $\mathcal{O}$ with $\mathcal{C}$ (\emph{cf.} \cite[\S\ 4.1]{Mo10}).
 For every hierarchy functor $f$ the operad $\gco \hof{\mathcal{O}}{f}$ is a full suboperad of $\mathcal{O}\BVt \mathcal{C}$. 
\end{rmk}
\begin{rmk}\label{rmk.bvposet}
If $\mathcal{C}$ is a poset the description of $\mathcal{O}\BVt \mathcal{C}$ is even simpler, in fact, 
for every $o,o_1,\dots, o_n\in \Cl{\mathcal{O}}$ and $a,a_1,\dots,a_n\in \Ob{C}$:
\[
 (\mathcal{O}\BVt \mathcal{C})((o_1,a_1),\dots,(o_n,a_n); (o,a))=
\begin{cases}
\mathcal{O}(o_1,\dots,o_n; o) & \text{ if } a_i\leq a \ \forall i\in \fcar{n}\\ 
\emptyset & \text{otherwise} 
\end{cases}
\]
\end{rmk}

\subsubsection{Example: Push-out diagrams}
In Sections \ref{sec.podiag} and \ref{sec.poalongop} we will investigate push-outs of PROPs.
The case of interest for us will then be $\mathcal{C}=\dPo$, the poset with three elements $O,A,B$ such that $O<B$ and $O<A$
\begin{equation}\label{pocat}
 \xymatrix{O \ar[d]_{p_0} \ar[r]^{p_1} & B\\
           A & }
\end{equation}
we thus give a little bit more details for this case.
To give $\morp{f}{\dPo}{\mathcal{P}(D)}$ is equivalent to give a commutative diagram of full inclusions of operads:
\[
 \diagc{\mathcal{S}}{\ope{F}}{\mathcal{H}}{\mathcal{O}.}{h}{f}{g}{k}{}
\]
A algebra in $\Mm$ for $\fbv{\mathcal{O}}{\dPo}{f}$ is a $\dPo$-diagram in $\Alg{\mathcal{O}}{\Mm}$ of the form
\[
 \xymatrix{k_!f_!(S) \ar[d]_{g_!(m)} \ar[r]^{k_!(l)} & k_!(F)\\
           g_!(H) & .}
\]
where $S\in \Alg{\mathcal{S}}{\Mm}$,$F\in \Alg{\ope{F}}{\Mm}$, $H\in \Alg{\mathcal{H}}{\Mm}$.

\begin{rmk}
For simplicity we defined operadic families with values in (un-enriched) operads; however, given a (closed) symmetric monoidal category $\Mm$ it is possible to define operadic families with values in $\Mm$-operads and an operadic Grothendieck construction for them in a similar way; this would permit to describe more general bifibrations and diagrams in their total categories.    
\end{rmk}

\section{Homotopy theory of the fibers}\label{sechomfib}
Until this point there was no homotopy theory involved. From now one we suppose that
the monoidal category $(\Mm,\otimes,\Un)$ we are enriching in is given with a cofibrantly generated monoidal model structure with $I$ (resp. $J$) as set of generating (trivial)
cofibrations (\emph{cf.} \cite{Ho99}).

Given a (coloured) operad $\mathcal{O}$, under appropriate hypotheses, the model structure $\Mm$ can be transferred to the category of algebras $\Alg{\mathcal{O}}{\Mm}$.
Such an operad $\mathcal{O}$ was called \emph{admissible in $\Mm$} in \cite{C14}. Admissibility of (enriched) operads was investigated under various constrains on $\mathcal{O}$
and $\Mm$ in several papers, see for example \cite{BM03}, \cite{BM05}, \cite{Ha10}, \cite{Mu11}, \cite{BB13}, \cite{PS14} and \cite{WY15}. 
\subsection{Admissible operadic families}
We begin by extending the definition of admissible operad to operadic families in a natural way.
\begin{defi}
An operadic $\mathcal{C}$-family $\morp{\ofam{F}}{\mathcal{C}}{\Oper}$ is \emph{admissible} in $\Mm$ if
for every $c\in \mathcal{C}$ the operad $\ofam{F}(c)$ is admissible in $\Mm$ ( in the sense of \cite{C14}). 
\end{defi}

\noindent We are going to unravel a little bit the previous definition.
Recall that for every set $S\in \Set$ the product category $\Mm^S$ has an induced model structure, where 
cofibrations, fibrations and weak equivalences are defined level-wise (\cite{Ho99}). This model structure is cofibrantly generated;
if $\morp{\iota_s}{\Mm}{\Mm^{S}}$ is the left adjoint of the projection on the $s$-component, a set of generating (trivial)
cofibrations is given by \[I_S=\{\iota_s(i) \mid s\in S,\ i\in I \}\ (\text{resp. }
J_S=\{\iota_s(j) \mid s\in S,\ j\in J \}\ ).\]

If the $\mathcal{C}$-family $\morp{\ofam{F}}{\mathcal{C}}{\Oper}$ is \emph{admissible} in $\Mm$, for every $c\in \mathcal{C}$ the product model structure on $\Mm^{\Cl{\ofam{F}(c)}}$ can be 
transferred to $\Alg{\ofam{F}(c)}{\Mm}$ along the free-forgetful adjunction
\begin{equation}
\adjpair{F_{\ofam{F}(c)}}{U_{\ofam{F}(c)}.}{\Mm^{\Clf\ofam{F}(c)}}{\Alg{\ofam{F}(c)}{\Mm}}
\end{equation}
The transferred model structure on $\Alg{\ofam{F}(c)}{\Mm}$ is often called the \emph{projective model structure};
fibrations and weak-equivalences are preserved and reflected by $U_{\ofam{F}(c)}$ while a set of generating (trivial)
cofibrations is given by $F_{\ofam{F}(c)}(I_{\Clf\ofam{F}(c)})$ (resp. $F_{\ofam{F}(c)}(J_{\Clf\ofam{F}(c)})$).

For every $\morp{\alpha}{c}{d}$ in $\mathcal{C}$ the adjunction
\[
\indadjpair{\ofam{F}(\alpha)}{\Alg{\ofam{F}(c)}{\Mm}}{\Alg{\ofam{F}(d)}{\Mm}}
\]
is automatically a Quillen adjunction between the projective model structures.
 
In other words if $\ofam{F}$ is admissible, the functor $\fAlg \ofam{F}$ can be extended to a functor with values
in $\ModCat$, the 2-category of Model Categories and Quillen adjunctions between them
\begin{equation}
\lmorp{\alf{\ofam{F}}{\Mm}}{\mathcal{C}}{\ModCat.}
\end{equation}

\subsubsection{Local fibrations and trivial fibrations}
Suppose now that $\mathcal{C}$ is bicomplete and $\ofam{F}$ is an operadic $\mathcal{C}$-family admissible in $\Mm$.
In this section we are going to characterize the level-wise (trivial) fibrations in the total category $\ToBi{\ofam{F}}{\Mm}$ 
via the right lifting property with respect to a certain set of maps.

\begin{defi}\label{def:localstuff}
A morphism $\morp{f}{X}{Y}$ in $\ToBi{\ofam{F}}{\Mm}$ is a \emph{local fibration (weak equivalence)} if
for $\morp{\trf{f}}{X}{f^*(Y)}$ is a fibration (weak equivalence) in the projective model structure on
$\Alg{\Mm}{\ofam{F}(\pi_{\ofam{F}}X)}$. 
\end{defi}

\begin{rmk}
A morphism $\morp{f}{X}{Y}$ in $\ToBi{\ofam{F}}{\Mm}$ is a local fibration (weak equivalence) if and only if
$U_{\ofam{F}}(f)$ is a local fibration (weak equivalence) in $\ToBi{\disf{\ofam{F}}}{\Mm}$.
\end{rmk}

\begin{defi}\label{def.weakin}
Let $\ofam{F}$ be an operadic $\mathcal{C}$-family. A \emph{weakly initial set of colours for $\ofam{F}$} is a set $G$ of colours of $\gco \ofam{F}$ such that for every colour $(c,x)\in \gco \ofam{F}$ there exist $(g,y)\in G$ and a morphism $\morp{f}{g}{c}$ such that $\ofam{F}(f)(y)=x$. 
\end{defi}

\begin{prop}\label{rlp.int}
Let $\mathcal{C}$ be a bicomplete category and let $\ofam{F}$ be an operadic $\mathcal{C}$-family with a weakly initial set
of colours $G$ and admissible in $\Mm$. A morphism $\morp{f}{(x,X)}{(y,Y)}$ in $\ToBi{\ofam{F}}{\Mm}$ is a local (trivial) fibration
if and only if it has the right lifting property with respect to:
\[
 \mathcal{J}_{loc}=\{ (c,F_{\ofam{F}(c)}\iota_s(j)) \mid (c,s)\in G, j\in J\}
\]
(resp. 
\[
 \mathcal{I}_{loc}=\{ (c,F_{\ofam{F}(c)}\iota_s(i)) \mid (c,s)\in G, i\in I\}\text{ )}
\]  
\end{prop}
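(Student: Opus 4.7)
The plan is to translate a lifting problem in $\ToBi{\ofam{F}}{\Mm}$ against $f$ into a family of lifting problems in the fibers of $\pi_{\ofam{F}}$, parametrised by morphisms of $\mathcal{C}$ landing in the base colour of $f$, and then apply the weakly initial hypothesis. I will treat the fibration case; the trivial fibration case is verbatim with $I$ in place of $J$.

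First I would fix notation: write $f=(h,\beta)$ with $h\colon x\to y$ in $\mathcal{C}$ and $\beta=\trf{f}\colon X\to h^*(Y)$ in $\Alg{\ofam{F}(x)}{\Mm}$. A commuting square with left edge $(c,F_{\ofam{F}(c)}\iota_s(j))$ is, by the universal property of cocartesian arrows, the same datum as a morphism $g\colon c\to x$ together with a commuting square in $\Alg{\ofam{F}(c)}{\Mm}$ whose left edge is $F_{\ofam{F}(c)}\iota_s(j)$ and whose right edge is $g^*(\beta)\colon g^*(X)\to g^*h^*(Y)=(hg)^*(Y)$; a filler in the total category is the same as a filler of this square in the fiber. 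Hence the RLP of $f$ against $\mathcal{J}_{loc}$ is equivalent to: for every $g\colon c\to x$ with $c$ the first component of some $(c,s)\in G$, and every $j\in J$, the map $g^*(\beta)$ has the RLP against $F_{\ofam{F}(c)}\iota_s(j)$. By the free-forgetful adjunction $(F_{\ofam{F}(c)},U_{\ofam{F}(c)})$ and the defining adjunction $\iota_s\dashv(\text{evaluation at }s)$, this is equivalent to the $s$-component of $U_{\ofam{F}(c)}\,g^*(\beta)$ having the RLP against $j$ in $\Mm$.

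For the direct implication, assume $\beta$ is a fibration in the projective model structure on $\Alg{\ofam{F}(x)}{\Mm}$. Since $\ofam{F}$ is admissible and $(g_!,g^*)$ is a Quillen adjunction between the projective model structures, $g^*(\beta)$ is a fibration in $\Alg{\ofam{F}(c)}{\Mm}$; in particular it has the RLP against every generating trivial cofibration $F_{\ofam{F}(c)}\iota_s(j)$, giving the desired RLP against $\mathcal{J}_{loc}$.

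For the converse, assume $f$ has the RLP against $\mathcal{J}_{loc}$. I must show $\beta$ is a fibration in $\Alg{\ofam{F}(x)}{\Mm}$, i.e.\ that for each colour $t\in\Cl{\ofam{F}(x)}$ and each $j\in J$, the $t$-component of $U_{\ofam{F}(x)}(\beta)$ has the RLP against $j$. Here the weakly initial set enters: pick $(c,s)\in G$ and $g\colon c\to x$ with $\ofam{F}(g)(s)=t$. The forgetful functors are compatible with change of colours, meaning the $s$-component of $U_{\ofam{F}(c)}\,g^*(\beta)$ coincides with the $t$-component of $U_{\ofam{F}(x)}(\beta)$; by the reformulation of RLP against $\mathcal{J}_{loc}$ above, this component has the RLP against $j$, as needed. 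The main obstacle is keeping the bookkeeping of the two adjunctions straight — the Grothendieck adjunction $(g_!,g^*)$ governing the fibers and the free-forgetful adjunction inside each fiber — and verifying that they indeed interact so that evaluating $U_{\ofam{F}(c)}\,g^*$ at $s$ agrees with evaluating $U_{\ofam{F}(x)}$ at $\ofam{F}(g)(s)$; this is a naturality check for the colour components of the forgetful functor along the morphism of operads $\ofam{F}(g)$.
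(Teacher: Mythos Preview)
Your argument is correct and follows essentially the same strategy as the paper: translate a lifting problem in the total category into one in a fibre, use the Quillen adjunction for the forward direction, and invoke the weakly initial set to reach every colour for the converse. The only slip is terminological: when you factor a morphism $(c,F_{\ofam{F}(c)}\iota_s A)\to(x,X)$ as $g\colon c\to x$ together with $F_{\ofam{F}(c)}\iota_s A\to g^*X$, you are using the universal property of \emph{cartesian} arrows, not cocartesian ones.

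The one genuine difference is the choice of fibre. The paper transports the generating map forward to the fibre over $x$ via the cocartesian structure, obtaining a square with left edge $a_!F_{\ofam{F}(c)}\iota_s(j)$ and right edge $\trf{f}$, and then argues that $a_!$, $F_{\ofam{F}(c)}$ and $\iota_s$ are left Quillen. You instead transport $f$ backward to the fibre over $c$ via the cartesian structure, obtaining right edge $g^*(\beta)$, and argue that $g^*$ is right Quillen. These are dual implementations of the same idea; your version is arguably slightly more direct for the forward implication since ``right Quillen preserves fibrations'' is exactly what is needed, while the paper's version makes the role of the free functor more visible. Either way the converse hinges on the identity $(U_{\ofam{F}(c)}g^*X)(s)=X(\ofam{F}(g)(s))$, which you correctly identify as a routine compatibility between restriction of algebras and evaluation at colours.
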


\begin{proof}
Note that, given $\morp{j}{A}{B}$ in $J$, $(c,s)\in G$, a commutative diagram 
\begin{equation}\label{rlp.one}
 \diagc{(c,F_{\ofam{F}(c)}\iota_s(A))}{(x,X)}{(c,F_{\ofam{F}(c)}\iota_s(B))}{(y,Y)}{(c,F_{\ofam{F}(c)}\iota_s(j))}{a}{f}{b}{}
\end{equation}
admits a diagonal filler if and only if the diagram
\begin{equation}\label{rlp.two}
 \diagc{a_!F_{\ofam{F}(c)}\iota_s(A)}{X}{b_!F_{\ofam{F}(c)}\iota_s(B)}{f^*(Y)}{a_!F_{\ofam{F}(c)}\iota_s(i)}{\trc{a}}{\trf{f}}{\trc{b}}{}
\end{equation}
has a diagonal filler.
If $f$ is a local fibration then diagram (\ref{rlp.two}) has a diagonal filler since $a_!$, $F_{\ofam{F}(c)}$ and $\iota_s$ are
left Quillen functors.

Conversely suppose that $f$ has the right lifting property with respect to every map in $\mathcal{J}_{loc}$.
To prove that $f$ is a local fibration it is sufficient to show that $\morp{f_t}{X(t)}{f^*(Y)(t)}$ is a fibration for every $t\in \Cl{\ofam{F}(x)}$;
in other words we have to prove that for every $t\in \Cl{\ofam{F}(x)}$ and every $j\in J$, every diagram of the form
\begin{equation}\label{rlp.three}
 \diagc{A}{X(t)}{B}{f^*(Y)(t)}{i}{a}{f_t}{b}{}
\end{equation} 
admits a diagonal filler.

Pick an element $(c,s)\in G$ and a morphism $\morp{a}{c}{x}$ such that $\ofam{F}(a)(s)=t$. The diagram (\ref{rlp.three}) has a diagonal filler
if and only if (\ref{rlp.two}) admits a diagonal filler, which is true by the hypothesis we made on $f$. Thus the statement is proved.
The proof for local trivial fibrations is similar.
\end{proof}

\subsection{The integral model structure}
There is a way to get a model structure on the total category of an operadic bifibration directly from the model structures on 
the fibers.

Suppose that $\mathcal{C}$ is bicomplete and $\ofam{F}$ is an operadic $\mathcal{C}$- parametrization admissible in $\Mm$.
We can endow $\mathcal{C}$ with the \emph{trivial model structure} where the classes of cofibrations and fibrations coincides with the whole class of morphisms and weak equivalences are the isomorphisms.

In this way the functor $\alf{\ofam{F}}{\Mm}$ becomes a relative proper functor in the sense of \cite[def. 3.3, 3.6]{HP14}; as a particular case of \cite[Theorem 3.9]{HP14} we get the following:
result
\begin{prop}\label{prop.int.mod}(\cite{HP14}) Let $\ofam{F}$ be an operadic $\mathcal{C}$-parametrization. The category $\ToBi{\ofam{F}}{\Mm}$ admits a model structure such that a morphism $\morp{f}{(c,X)}{(d,Y)}$ in $\ToBi{\ofam{F}}{\Mm}$ is
\begin{itemize}
\item[-] a \textit{weak equivalence} if and only if it is a local weak equivalence and $\pi_{\ofam{F}}(f)$ is an isomorphism.
\item[-] a \textit{fibration} if and only if $\morp{\trf{f}}{X}{f^*(Y)}$ is
a local fibration;
\item[-] a \textit{fibration} if and only if $\morp{\trc{f}}{f_!(X)}{Y}$ is
a cofibration in $\Alg{\ofam{F}(d)}{\Mm}$.
\end{itemize}
\end{prop}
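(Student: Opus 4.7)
The plan is to recognise the statement as a direct instance of the integral model structure theorem of Harpaz--Prasma \cite[Theorem 3.9]{HP14}, so the work reduces to checking the hypotheses of that theorem in the special situation where the base $\mathcal{C}$ carries its trivial model structure (every morphism is both a cofibration and a fibration, and the weak equivalences are the isomorphisms).

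First I would verify that the functor
\[
\alf{\ofam{F}}{\Mm}\colon \mathcal{C} \longrightarrow \ModCat
\]
indeed factors through $\ModCat$. Admissibility of $\ofam{F}$ means that each fiber $\Alg{\ofam{F}(c)}{\Mm}$ is a model category via the projective model structure transferred along $(F_{\ofam{F}(c)},U_{\ofam{F}(c)})$. For every morphism $\alpha$ in $\mathcal{C}$, the change-of-operad adjunction $(\ofam{F}(\alpha)_!, \ofam{F}(\alpha)^*)$ is automatically Quillen, since $\ofam{F}(\alpha)^*$ preserves (and reflects) both level-wise fibrations and level-wise weak equivalences between algebras. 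Bicompleteness of $\ToBi{\ofam{F}}{\Mm}$ was already recorded earlier in \S\ref{secoperbif}.

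Next I would check the relative properness condition of \cite[def.~3.3, 3.6]{HP14}. Unwound, this requires that the left adjoints $\ofam{F}(\alpha)_!$ associated to trivial cofibrations $\alpha$ in $\mathcal{C}$ preserve weak equivalences between cofibrant objects, and dually that the right adjoints $\ofam{F}(\alpha)^*$ associated to trivial fibrations preserve weak equivalences between fibrant objects. Because $\mathcal{C}$ carries the trivial model structure, trivial cofibrations and trivial fibrations are both isomorphisms of $\mathcal{C}$, for which $\ofam{F}(\alpha)_!$ and $\ofam{F}(\alpha)^*$ are (pseudo-)inverse equivalences of categories; hence both conditions hold trivially. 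I view this as the only real step in the proof; no obstacle is anticipated, precisely because the trivial model structure on $\mathcal{C}$ makes the properness hypothesis vacuous.

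Having verified the hypotheses, I would invoke \cite[Theorem 3.9]{HP14} directly. The characterisations of weak equivalences, fibrations, and cofibrations stated in the proposition are exactly those of the integral model structure: a morphism $(f,\trf{f})\colon (c,X) \to (d,Y)$ is a weak equivalence (resp.\ fibration, resp.\ cofibration) if $f$ is such in $\mathcal{C}$ and the comparison $\trf{f}\colon X \to f^*(Y)$ (resp.\ $\trc{f}\colon f_!(X) \to Y$) has the analogous property in the appropriate fiber. Specialising to the trivial structure on $\mathcal{C}$, the condition on $f$ collapses to ``$\pi_{\ofam{F}}(f)$ is an isomorphism'' in the weak equivalence clause and becomes vacuous for fibrations and cofibrations, yielding exactly the three bullets of the statement.
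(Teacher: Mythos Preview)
Your proposal is correct and matches the paper's approach exactly: the paper does not give an independent proof but simply observes, in the paragraph preceding the proposition, that endowing $\mathcal{C}$ with the trivial model structure makes $\alf{\ofam{F}}{\Mm}$ a relative proper functor in the sense of \cite[def.~3.3, 3.6]{HP14}, and then records the result as a particular case of \cite[Theorem~3.9]{HP14}. Your write-up fills in the verification of the hypotheses that the paper leaves implicit.
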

The model structure on $\ToBi{\ofam{F}}{\Mm}$ defined in Proposition \ref{prop.int.mod} will be called the
\emph{integral model structure}. 

For our purposes the integral model structure on $\ToBi{\Opprop{}}{\Mm}$ has too many weak equivalences (and cofibrations);
in fact, as we are going to explain, in the Dwyer-Kan model structure the weak equivalences must be the local weak equivalences
which are essentially surjective in a homotopical sense.

The identity functor is neither a left or right Quillen functor between the integral and the Dwyer-Kan model structure on 
$\EProp{\Mm}$ (and the same holds for $\ECat{\Mm}$ and $\EOper{\Mm}$).

\section{The Dwyer-Kan model structure on enriched PROPs}\label{sec.main}
We continue to fix a cofibrantly generated monoidal model category $(\Mm,\otimes,\Un)$.

We now return to our main goal which is to prove the existence of a sensible model structure on $\EProp{\Mm}=\ToBi{\Opprop{}}{\Mm}$ (\emph{cf.} (\ref{prop.operadic})), the category
of coloured $\Mm$-enriched PROPs, at least under certain conditions on $\Mm$. 
 
First of all we want to ensure that $\Opprop{}$ is admissible in $\Mm$.
\begin{defi}
A cofibrantly generated monoidal model category $\Mm$ \emph{admits transfer for PROPs} if
$\Opprop{}$ is admissible (in $\Mm$).
\end{defi}

\noindent $\Opprop{}$ is certainly admissible in $\Mm$ if every operad is admissible in $\Mm$. Sufficient conditions for this to happen are
given in \cite{BM03} (see also \cite{BM05}), \cite{Ha10} and more recently in \cite{PS14} and \cite{WY15}. In particular, we would like to cite the following result:
\begin{thm}(\textit{Pavlov, Scholbach \cite[Theorem 9.2.11]{PS14}})
If $\Mm$ is a combinatorial, pretty small, symmetric $i$-monoidal model category the category every coloured operad is admissible
in $\Mm$. 
\end{thm}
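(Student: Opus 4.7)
The plan is to verify the hypotheses of Kan's (Quillen-Crans) transfer theorem along the free-forgetful adjunction
\[
\adjpair{F_{\mathcal{O}}}{U_{\mathcal{O}}}{\Mm^{\Cl{\mathcal{O}}}}{\Alg{\mathcal{O}}{\Mm}}
\]
for an arbitrary small coloured operad $\mathcal{O}$. Concretely I need two ingredients: (i) the small object argument applies to $F_{\mathcal{O}}(I_{\Cl{\mathcal{O}}})$ and $F_{\mathcal{O}}(J_{\Cl{\mathcal{O}}})$; and (ii) relative $F_{\mathcal{O}}(J_{\Cl{\mathcal{O}}})$-cell complexes are underlying weak equivalences in $\Mm^{\Cl{\mathcal{O}}}$. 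Point (i) is the easy half: combinatoriality of $\Mm$ gives combinatoriality of $\Mm^{\Cl{\mathcal{O}}}$, and the free algebra monad is sufficiently accessible (it is a sifted-colimit-preserving monad, since operadic composition is finitary in the arity variable), so one has a regular cardinal relative to which $U_{\mathcal{O}}$ preserves the needed filtered colimits.

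Point (ii) is where the work lies. By standard reductions it suffices to treat a single pushout
\[
\xymatrix{F_{\mathcal{O}}(K) \ar[r]^-{F_{\mathcal{O}}(j)} \ar[d] & F_{\mathcal{O}}(L) \ar[d] \\ A \ar[r] & A[j]}
\]
with $j\colon K\to L$ a generating trivial cofibration in $\Mm^{\Cl{\mathcal{O}}}$, and to show that $A\to A[j]$ lies in the saturation of the trivial cofibrations of $\Mm$. I would construct an explicit cellular filtration $A=A_0\to A_1\to A_2\to\cdots$ with $A[j]=\mathrm{colim}_n\, A_n$, where the successive layers are computed pointwise by pushouts of the form
\[
\xymatrix{\mathcal{O}(n)\otimes_{\Sigma_n}\bigl(Q^n(j)\otimes A^{\otimes(\ast)}\bigr) \ar[r] \ar[d] & A_{n-1} \ar[d] \\ \mathcal{O}(n)\otimes_{\Sigma_n}\bigl(L^{\otimes n}\otimes A^{\otimes(\ast)}\bigr) \ar[r] & A_n}
\]
with $Q^n(j)$ the $n$-th iterated pushout-product of $j$ with itself (the $\Sigma_n$-equivariant ``latching object'' of $j^{\otimes n}$). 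This filtration is the operadic analogue of the one used in the category and operad cases in \cite{C14}, and is the natural target of the admissibility analysis in \cite{PS14}.

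The main obstacle, and the reason for the strong hypotheses on $\Mm$, is controlling the quotient by the non-free $\Sigma_n$-action on the horizontal arrows. One needs that the canonical map
\[
\mathcal{O}(n)\otimes_{\Sigma_n}(Q^n(j)\otimes A^{\otimes \ast})\longrightarrow \mathcal{O}(n)\otimes_{\Sigma_n}(L^{\otimes n}\otimes A^{\otimes\ast})
\]
is a weak equivalence (ideally a trivial cofibration) in $\Mm$. Without $\Sigma$-cofibrancy assumptions on $\mathcal{O}$ this requires a model-categorical property stating that coinvariants of iterated pushout-products of trivial cofibrations by arbitrary $\Sigma_n$-objects remain trivial cofibrations. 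This is precisely the content of symmetric $i$-monoidality (sometimes phrased as the class of trivial $h$-cofibrations being closed under arbitrary symmetric powers), and the ``pretty small'' hypothesis is there to ensure the relevant tensor powers commute with the filtered colimits used to run the small object argument at a controlled rank.

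Finally, once (i) and (ii) are established, the transferred projective model structure on $\Alg{\mathcal{O}}{\Mm}$ is obtained by declaring weak equivalences and fibrations to be those maps sent by $U_{\mathcal{O}}$ to such maps in $\Mm^{\Cl{\mathcal{O}}}$, and this constitutes admissibility of $\mathcal{O}$. Since the argument is uniform in $\mathcal{O}$, every small coloured operad is admissible, which is the desired conclusion.
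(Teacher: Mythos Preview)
The paper does not prove this statement at all: it is quoted verbatim as a result of Pavlov and Scholbach, with attribution to \cite[Theorem 9.2.11]{PS14}, and no argument is given. So there is nothing in the paper's own proof to compare your proposal against.

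That said, your sketch is broadly in line with how such admissibility results are actually established (and with the strategy of \cite{PS14}): transfer along the free-forgetful adjunction, a cellular filtration of pushouts of free maps, and control of the symmetric-group coinvariants via the symmetric $i$-monoidal hypothesis. If you want to turn this into a self-contained proof you should be more careful on two points. First, the filtration you wrote is schematic; in the coloured case the layers are indexed by decorated corollas (or more generally by the arity data of $\mathcal{O}$), not just by a single integer $n$, and the ``$A^{\otimes(\ast)}$'' placeholder hides nontrivial bookkeeping. Second, symmetric $i$-monoidality as formulated in \cite{PS14} is a statement about iterated pushout-products of (acyclic) cofibrations under coinvariants, and you should check that it gives exactly what you need for the horizontal maps in your filtration squares, not merely something close to it. The ``pretty small'' hypothesis is used to control smallness after taking symmetric powers, as you say.
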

\noindent
Monoidal model categories of interest satisfying the above hypothesis are:
\begin{itemize}
 \item[-] Simplicial Sets with the Kan-Quillen model structure;
 \item[-] (Unbounded) chain complexes over a ring $R\supseteq \mathbb{Q}$, with the projective model structure;
 \item[-] Symmetric spectra (in Simplicial sets) with the positive stable model structure 
(and more generally symmetric spectra in other monoidal model category, \emph{cf.} \cite{PS14},\cite{PS142});
\end{itemize}

We refer the reader to \S\ 7 of \emph{loc.cit.} for other examples and the definitions of symmetric $i$-monoidal and pretty small.
     
The operadic $\Set$-family $\Opprop{}$ is finitary (def. \ref{def.finfam}), this can be checked by and explicit
description of filtered colimits in $\Oper$, see for example \cite[Appendix A]{C14}.

The family $\Opprop{}$ also has a weakly initial set of colours (def. \ref{def.weakin}) that we are now going to 
describe; 
for every $(m,n)\in \N\times \N$ let $b^m_n$ be the $\fcar{n+m}$-valence $\ioval{m+1,\dots,n+m}{1,\dots,m}$, a weakly initial set of colours
for $\Opprop{}$ is
\[
 \mathcal{B}=\{(\fcar{n+m}, b^m_n) \mid (n,m)\in \N\times \N \}.
\]
For every $(m,n)\in \N\times \N$ we are going to denote by $\morp{B^m_n}{\Mm}{\EPropfc{\Mm}{\fcar{n+m}}}$ 
the composition $F_{\Opprop{C}}\iota_{b^m_n}$.

Suppose that $\Opprop{}$ is admissible in $\Mm$; according to Proposition \ref{rlp.int} the
local (trivial) fibrations in $\EProp{\Mm}$ are characterized by the right lifting property with respect to:
\[
 \mathcal{J}_{loc}=\{ B^m_n(j) \mid (m,n)\in \N\times \N, j\in J\}
\]
(resp. 
\[
 \mathcal{I}_{loc}=\{ B^m_n(i) \mid (m,n)\in \N\times \N, i\in I\}\text{ ).}
\] 
 
\subsection{The Dwyer-Kan model structure}
Let $(\Mm,\otimes, \Un)$ be a monoidal model category and let $\HoC{\Mm}$ be its homotopy category (\emph{cf.} \cite{Ho99}). The functor of ``path components''
\[
 \gfun{\pi_0}{(\Mm,\otimes,\Un)}{(\Set,\times, \ast)}{X}{\HoC{\Mm}(\Un,X)}
\]
is symmetric monoidal and therefore induces functors
\[
 \lmorp{\pi_0}{\ECat{\Mm}}{\ECat{\Set},}
\]
\[
 \lmorp{\pi_0}{\EProp{\Mm}}{\EProp{\Set}.}
\]

We recall that a morphism $\morp{f}{X}{Y}$ in $\EProp{\Mm}$ is a \emph{local weak equivalence} if $\morp{\trf{f}}{X}{f^*(Y)}$ is
a weak equivalence $\EPropfc{\Mm}{\Cl{X}}$ (def. \ref{def:localstuff}).

\begin{defi}\label{defi.DKequi}
A morphism $\morp{f}{X}{Y}$ in in $\EProp{\Mm}$ is a \emph{Dwyer-Kan weak equivalence} if it is a local weak equivalence
and the functor $\pi_0(k^*(f))$ is essentially surjective (that is $k^*(f)$ is a Dwyer-Kan weak equivalence in $\ECat{\Mm}$
in the sense of \cite{Mu14}). 
\end{defi}

\begin{defi}
A \emph{Dwyer-Kan model structure} for $\EProp{\Mm}$ is a model structure on $\EProp{\Mm}$ in which 
the class of weak equivalences coincides with the one of Dwyer-Kan weak equivalences and the class of trivial fibrations
coincides with the class of the local trivial fibrations surjective on objects. 
\end{defi}

\noindent Note that if a Dwyer-Kan model structure for $\EProp{\Mm}$ exists, it is unique.

Our main theorem about the existence of the Dwyer-Kan model structure is the following:
\begin{thm}\label{main.thm}
Suppose that $(\Mm,\otimes,\Un)$ is a combinatorial monoidal model structure such that:
\begin{itemize}
 \item[-] it satisfies the monoid axiom;
 \item[-] the class of weak equivalences is closed under transfinite composition;
 \item[-] it admits transfer for PROPs.
\end{itemize}
then the Dwyer-Kan model structure on $\EProp{\Mm}$ exists and it is combinatorial.  
\end{thm}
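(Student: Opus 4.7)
The plan is to apply J.\ Smith's recognition theorem for combinatorial model categories. I would take as generating cofibrations $\mathcal{I}_{\mathrm{DK}} = \mathcal{I}_{\mathrm{loc}} \cup \{i_{\emptyset}\}$, where $i_{\emptyset}\colon \emptyset \to \mathbb{I}$ is the inclusion of the initial PROP into the unit PROP on one colour, and as generating trivial cofibrations $\mathcal{J}_{\mathrm{DK}} = \mathcal{J}_{\mathrm{loc}} \cup \{j_H\}$, where $j_H\colon \mathbb{I} \to H$ picks one endpoint of an interval PROP $H$---a cofibrant resolution, in the projective model structure on $\EPropfc{\Mm}{\{0,1\}}$, of the canonical map from the discrete PROP on $\{0,1\}$ to the terminal PROP identifying the two colours. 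The description of these sets uses the weakly initial set of colours $\mathcal{B}$ of $\Opprop{}$ together with Proposition \ref{rlp.int}.

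With these sets in hand, I would verify the hypotheses of Smith's theorem in four steps. First, I would show that $\EProp{\Mm}$ is locally presentable by applying the results on filtered colimits in operadic families of \S\ref{sec.locpresop} to the finitary operadic family $\Opprop{}$, using that $\Mm$ is combinatorial. Second, I would check that the class $\mathcal{W}$ of Dwyer--Kan weak equivalences satisfies two-out-of-three and is accessibly embedded: the local part is inherited fibre-wise from the transferred model structure on $\EPropfc{\Mm}{C}$, while essential surjectivity on $\pi_0 k^*(-)$ is preserved under filtered colimits of PROPs and is closed under two-out-of-three by a straightforward argument. Third, I would verify that every $\mathcal{I}_{\mathrm{DK}}$-injective is a Dwyer--Kan weak equivalence: lifting against $\mathcal{I}_{\mathrm{loc}}$ characterises local trivial fibrations via Proposition \ref{rlp.int}, and lifting against $i_{\emptyset}$ forces surjectivity on colours; such a morphism is necessarily a Dwyer--Kan weak equivalence.

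The fourth and essential step is to show that every relative $\mathcal{J}_{\mathrm{DK}}$-cell complex lies in $\mathcal{W}$. Pushouts along maps in $\mathcal{J}_{\mathrm{loc}}$ are handled by combining the push-out analysis in \S\ref{sec.poalongop} (in particular Theorem \ref{main.po}), the admissibility of $\Opprop{C}$, the monoid axiom, and the assumed stability of weak equivalences under transfinite composition; together these guarantee that such pushouts are local trivial cofibrations and remain so after transfinite composition. Pushouts along $j_H$ are the genuinely delicate ones: using the operadic Grothendieck construction of \S\ref{diagopfam}, I would translate such a pushout in $\EProp{\Mm}$ into a colimit of a diagram of algebras over a combinatorially explicit operad built from $\Opprop{-}$ and from the push-out shape $\dPo$, and then apply the Interval Cofibrancy Theorem of \cite{BM12} together with its analogue \cite[Thm.\ 7.13, 7.14]{Mu14} fibre-by-fibre to conclude that attaching $H$ along a colour only enlarges the PROP up to a Dwyer--Kan weak equivalence. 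Smith's theorem then delivers the combinatorial Dwyer--Kan model structure.

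The main obstacle is precisely this interval-attachment step: one must control the combinatorics of a pushout along $j_H$ explicitly enough to invoke the Interval Cofibrancy Theorem, despite $\Opprop{C}$ not being $\Sigma$-free. This is why the explicit graph-insertion presentation of $\Opprop{C}$ (\S\ref{sec.opprop}) and the operadic Grothendieck construction are indispensable: they let one present the pushout as a filtered colimit of free-cell attachments in a single operadic category, reducing the problem to the well-understood interval arguments for algebras over a fixed operad. For $\ECfProp{\Mm}$ and $\EAfProp{\Mm}$ the same argument applies verbatim, with the additional feature that the relevant operads $\Opcfprop{C}$ and $\Opafprop{C}$ are $\Sigma$-free, simplifying the invocation of the interval theorem.
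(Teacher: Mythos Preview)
Your proposal has the right ingredients but misallocates them, and the crucial interval-attachment step is not correctly handled.

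You assign Theorem~\ref{main.po} and the push-out analysis of \S\ref{sec.poalongop} to the $\mathcal{J}_{\mathrm{loc}}$-pushouts, but these are in fact trivial: a pushout along a map in $\mathcal{J}_{\mathrm{loc}}$ lives entirely in one fibre $\EPropfc{\Mm}{C}$ and is a pushout of a trivial cofibration there, hence a weak equivalence by admissibility alone; neither the monoid axiom nor Theorem~\ref{main.po} enters. Where Theorem~\ref{main.po} is actually used is in the interval step, and the mechanism is not a ``fibre-by-fibre'' application of the Interval Cofibrancy Theorem. The results of \cite{BM12} and \cite[Thm.~7.13,~7.14]{Mu14} are statements about enriched \emph{categories}, not PROPs; to leverage them the paper factors each $\theta_{\mathbb{K}}$ as $\psi\phi$ with $\phi$ a map in $\ECatfc{\Mm}{\{0\}}$ and $\psi$ fully faithful and injective on objects. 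Muro's theorem shows that $\phi$ is a trivial cofibration in $\ECatfc{\Mm}{\{0\}}$ (this is where the monoid axiom enters), so the pushout along $k_!(\phi)$ is a weak equivalence in the fibre. Theorem~\ref{main.po}---whose proof is precisely the graph-insertion and Grothendieck-construction analysis you mention---then shows that the pushout along $k_!(\psi)$, which comes from a fully faithful morphism of \emph{operads}, is again fully faithful. Without this $\psi\phi$-decomposition your sketch does not close: there is no direct passage from the categorical interval theorem to a PROP-level statement.

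Two smaller points. You invoke Smith's recognition theorem but then verify a Hovey-style condition (that $\mathcal{J}_{\mathrm{DK}}$-cells lie in $\mathcal{W}$) rather than Smith's closure condition on $\cof(\mathcal{I}_{\mathrm{DK}})\cap\mathcal{W}$; the paper simply uses Hovey's Theorem~2.1.19 and also checks $\inj{\mathcal{I}_{DK}}=\mathcal{W}_{DK}\cap\inj{\mathcal{J}_{DK}}$. And a single interval $H$ does not suffice in general: one needs a \emph{generating set} of weak intervals $\mathfrak{G}$ (\S\ref{sec.weakint}), so that $\mathcal{J}_{\mathfrak{G}}=k_!\{\theta_{\mathbb{K}}:\mathbb{K}\in\mathfrak{G}\}$ is large enough to detect essential surjectivity on $\pi_0$.
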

\noindent
We recall that a monoidal model category satisfies the \emph{monoid axiom} if the closure of the class of trivial cofibrations
under push-outs, transfinite compositions and tensor product with an arbitrary object is contained in the class of weak equivalences.
The monoid axiom was first introduced by Schwede and Shipley in \cite{SS00} to study the transferred model structure
on monoids; 
it ensure that every non-symmetric operad is admissible in $\Mm$, see \cite{Mu11} and \cite{BB13}.

We have the following immediate Corollary:
\begin{cor}
Let $(\Mm,\otimes,\Un)$ be a combinatorial monoidal model category which is pretty small and $i$-symmetric monoidal. Then 
The Dwyer-Kan model structure on $\EProp{\Mm}$ exists and it is combinatorial. 
\end{cor}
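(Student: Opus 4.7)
The plan is simply to verify, under the hypotheses of the Corollary, the three bulleted assumptions of Theorem \ref{main.thm} and then invoke that theorem. The combinatoriality hypothesis is assumed outright, so nothing needs to be checked there; the work is to extract the monoid axiom, closure of weak equivalences under transfinite composition, and admissibility of $\Opprop{}$ from the combination ``pretty small $+$ $i$-symmetric monoidal.''

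First I would recall that admissibility of $\Opprop{}$ is immediate from the theorem of Pavlov and Scholbach quoted just above Theorem \ref{main.thm}: under ``combinatorial, pretty small, symmetric $i$-monoidal,'' every coloured operad is admissible in $\Mm$, in particular the operadic $\Set$-family $\Opprop{}$ is levelwise admissible, so $\Mm$ admits transfer for PROPs in the sense of the definition preceding the theorem. This disposes of the third bullet with no additional combinatorics of PROPs required.

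Second I would check the monoid axiom and the closure of weak equivalences under transfinite composition. Both are standard parts of the Pavlov--Scholbach package for ``pretty small, $i$-symmetric monoidal'' combinatorial model categories (roughly, pretty smallness forces weak equivalences to be stable under filtered colimits and in particular under transfinite composition, while the $i$-monoidal condition upgrades the pushout product axiom to a strong form of the monoid axiom). I would cite the corresponding statements from \cite{PS14} (\S\,7 and the machinery used for \cite[Theorem 9.2.11]{PS14}) rather than reprove them, since each is a self-contained consequence of the definitions recalled there. This handles the first two bullets.

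Having verified all three hypotheses of Theorem \ref{main.thm}, the Dwyer--Kan model structure on $\EProp{\Mm}$ exists and is combinatorial by direct application of that theorem, which concludes the proof. The only real point of caution is making sure that the variant of the monoid axiom supplied by Pavlov--Scholbach matches the one used in Theorem \ref{main.thm}; if there is a discrepancy (e.g.\ a symmetric versus non-symmetric version), the fix is to appeal to the stronger ``symmetric monoid axiom'' that $i$-symmetric monoidality entails, from which the form needed in Theorem \ref{main.thm} follows formally. No new combinatorial analysis of PROPs is required at this stage.
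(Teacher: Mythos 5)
Your proposal is correct and follows essentially the same route as the paper, which likewise proves the corollary by observing that the Pavlov--Scholbach hypotheses (combinatorial, pretty small, symmetric $i$-monoidal) imply all three bulleted conditions of Theorem \ref{main.thm} and then citing \cite{PS14}; you merely spell out which part of \cite{PS14} supplies which hypothesis, which the paper leaves implicit.
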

\begin{proof}
Under these hypotheses $\Mm$ satisfies all the hypotheses of \ref{main.thm} (\emph{cf.} \cite{PS14}). 
\end{proof}
\noindent The proof of Theorem \ref{main.thm} is postponed to \S\ \ref{proof.main.thm}; before that, we need to describe the set of generating (trivial) cofibrations.

On the category $\ECfProp{\Mm}$ (resp. $\EAfProp{\Mm}$) of constant-free (augmentation-free) PROPs there is an equivalent notion
of Dwyer-Kan model structure, that is a model structure in which the weak equivalences are the Dwyer-Kan weak equivalences and 
the trivial fibrations are locally trivial fibrations surjective on colours. 
The proof of Theorem \ref{main.thm} can be adapted to prove the following analogous statement; we leave the details to the reader.

\begin{thm}\label{main.thm2}
Suppose that $(\Mm,\otimes,\Un)$ is a combinatorial monoidal model structure such that:
\begin{itemize}
 \item[-] it satisfies the monoid axiom;
 \item[-] the class of weak equivalences is closed under transfinite composition;
 \item[-] it admits transfer for constant-free (augmentation-free) PROPs .
\end{itemize}
The Dwyer-Kan model structure on $\ECfProp{\Mm}$ ($\EAfProp{\Mm}$) exists and it is combinatorial.  
\end{thm}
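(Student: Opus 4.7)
The strategy is to mirror the proof of Theorem~\ref{main.thm} step-by-step, replacing the operadic family $\Opprop{}$ with $\Opcfprop{}$ (respectively $\Opafprop{}$) throughout, and exploiting the fact that the inclusion $\ECfProp{\Mm} \hookrightarrow \EProp{\Mm}$ (resp.\ $\EAfProp{\Mm} \hookrightarrow \EProp{\Mm}$) is a fully-faithful left adjoint induced by an evident morphism of operadic $\Set$-families. The first concrete step is to exhibit a weakly initial set of colours for $\Opcfprop{}$ in the sense of Definition~\ref{def.weakin}: since a constant-free PROP satisfies $\prp{P}([], \mathbf{a}) \cong \emptyset$, no free generator of valence $b^m_n$ with $n=0$ can occur, so I would take
\[
\mathcal{B}_{\mathrm{cf}} = \{(\fcar{n+m}, b^m_n) \mid n \geq 1,\ m \in \N\}
\]
(and dually $m \geq 1$ for $\Opafprop{}$). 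Proposition~\ref{rlp.int} then characterises the local (trivial) fibrations in $\ECfProp{\Mm}$ by the right lifting property against the analogues $\mathcal{I}_{\mathrm{loc}}^{\mathrm{cf}}$ and $\mathcal{J}_{\mathrm{loc}}^{\mathrm{cf}}$ of $\mathcal{I}_{loc}$ and $\mathcal{J}_{loc}$ obtained by applying the free-algebra functors $B^m_n$ to $I$ and $J$ over indices in $\mathcal{B}_{\mathrm{cf}}$.

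Next I would verify that $\Opcfprop{}$ is finitary (by the same explicit description of filtered colimits in $\Oper$ used for $\Opprop{}$) and, under the hypothesis that $\Mm$ admits transfer for constant-free PROPs, admissible; local presentability of $\ECfProp{\Mm}$ then follows from the analysis of filtered colimits in operadic families developed in \S\ref{sec.locpresop}, restricted to the constant-free subfamily. For the Dwyer--Kan part, one observes that the essential-surjectivity condition in Definition~\ref{defi.DKequi} factors through $k^*$ and so depends only on unary operations, which are untouched by the constant-free / augmentation-free restriction; in particular the interval generators produced by the Interval Cofibrancy Theorem of \cite{BM12} and \cite{Mu14} — which already live in the image of $\ECat{\Mm} \hookrightarrow \ECfProp{\Mm}$ — can be reused verbatim.

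The main obstacle, just as in the unrestricted case, is a careful re-run of the push-out analysis of Theorem~\ref{main.po} in the new setting, using the operadic Grothendieck construction of \S\ref{diagopfam}. Here however the situation is cleaner because $\Opcfprop{C}$ and $\Opafprop{C}$ are $\Sigma$-free (as noted in the introduction), so the graph-indexed filtration appearing in the proof of Theorem~\ref{main.po} is built from graphs whose vertices all have at least one input (respectively output); the filtration is still assembled from pushouts along maps built from the generating (trivial) cofibrations of $\Mm$, and the monoid axiom together with closure of weak equivalences under transfinite composition ensures that cellular extensions along $\mathcal{J}_{\mathrm{loc}}^{\mathrm{cf}}$ and the interval generators remain Dwyer--Kan weak equivalences. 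Once this step is in place, Kan's recognition theorem for cofibrantly generated model categories, combined with the small object argument in the locally presentable setting, yields the Dwyer--Kan model structure on $\ECfProp{\Mm}$ and $\EAfProp{\Mm}$, with combinatoriality inherited from that of $\Mm$.
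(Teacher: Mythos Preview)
Your proposal is essentially correct and follows the paper's own approach, which in fact says nothing more than ``The proof of Theorem~\ref{main.thm} can be adapted to prove the following analogous statement; we leave the details to the reader.'' Your identification of the weakly initial set of colours, the reuse of the interval generators from $\ECat{\Mm}$, and the invocation of local presentability via \S\ref{sec.locpresop} are all exactly what is intended.

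There is one place where you do more work than necessary. You propose a ``careful re-run of the push-out analysis of Theorem~\ref{main.po}'' in the constant-free (augmentation-free) setting, and suggest that $\Sigma$-freeness of $\Opcfprop{C}$ makes this cleaner. In fact no re-run is needed: the paper records (Corollary~\ref{cor.po1}) that since the inclusions $\ECfProp{\Mm}\hookrightarrow\EProp{\Mm}$ and $\EAfProp{\Mm}\hookrightarrow\EProp{\Mm}$ are left adjoints, they preserve push-outs, so the push-out computed in the restricted category coincides with the one computed in $\EProp{\Mm}$, and Theorem~\ref{main.po} applies directly. The $\Sigma$-freeness of $\Opcfprop{C}$ and $\Opafprop{C}$ plays no role in this argument; it is mentioned in the introduction for an entirely separate reason (comparison with homotopy PROPs), not for the construction of the model structure.
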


\subsection{Generation (trivial) cofibrations}
Let $\mathbf{0}$ be the initial object in $\EProp{\Mm}$ and let $\mathbf{1}$ be the initial object in $\EPropfc{\Mm}{\ast}$ (the category
of one-coloured PROPs) and let
$\morp{r}{\mathbf{0}}{\mathbf{1}}$ be the unique morphism between them in $\EProp{\Mm}$.
For every $X\in \EProp{\Mm}$ the Hom-set $\EProp{\Mm}(\mathbf{1},X)$ is isomorphic to $\Cl{X}$.

A morphism $\morp{X}{Y}$ in $\EProp{\Mm}$ has the right lifting property with respect to $r$ if and only if it is surjective on colours.

It follows that, if we assume that $\Opprop{}$ is admissible in $\Mm$, local trivial fibrations surjective on colours are characterized
by the right lifting property with respect to
\begin{equation}\label{DKgenco}
 \mathcal{I}_{DK}=\{r\}\cup \mathcal{I}_{loc},
\end{equation}
where $\mathcal{J}_{loc}$ is the set of morphisms defined in Proposition \ref{rlp.int}.  

The description of the set of generating trivial cofibrations is more subtle and is based on the work
of Muro \cite{Mu14} and the notion of interval introduced there that we are now going to recall.    

\subsubsection{Weak intervals}\label{sec.weakint}
We assume that $(\Mm,\otimes,\Un)$ is a monoidal model category in which the operadic family $\Opcat{}$ is admissible.

Recall that two objects in a $\Mm$-category $\cat{C}$ are \emph{homotopy equivalent} if they are isomorphic in $\pi_0(\cat{C})$.
\begin{defi}
Given a $\Mm$-enriched PROP $X\in \EProp{\Mm}$ two colours $x,y\in X$ are \emph{homotopy equivalent}
if they are isomorphic as objects of $\pi_0(k^*(X))$, i.e. if they are homotopy equivalent in the underlying $\Mm$-category of $X$. 
\end{defi}

\begin{defi}\label{def.weakint}
A \emph{weak interval} in $\ECat{\Mm}$ is a $\Mm$-enriched category with two objects $\mathcal{I}\in \ECatfc{\Mm}{\{0,1\}}$
such that $0$ and $1$ are homotopy equivalent. 

A set of weak intervals $\mathfrak{G}$ is said to be generating if for every $X\in\ECat{\Mm}$ and every pair of homotopy
equivalent objects $x,y\in X$ there exist a weak interval $G\in \mathfrak{G}$ and a morphism $\morp{f}{G}{X}$ such that 
$f(0)=x$ and $f(1)=y$.
\end{defi}

\noindent We remark that weak intervals are just called intervals in \cite{Mu14}; we use a different terminology in order to distinguish them
from the intervals of Berger and Moerdijk (\emph{cf.} \S\ \ref{sec.BMint}).

\begin{prop}(\cite{Mu14}) Suppose that $\Mm$ is a combinatorial monoidal model category satisfying the monoid axiom; there exists a generating set of weak intervals $\mathcal{G}$ for $\ECat{\Mm}$. 
\end{prop}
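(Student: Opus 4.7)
The plan is to produce $\mathcal{G}$ via an accessibility argument inside the locally presentable category $\ECatfc{\Mm}{\{0,1\}}$: exhibit the full subcategory of weak intervals as accessible, and take $\mathcal{G}$ to consist of representatives for the $\lambda$-presentable weak intervals for a suitably chosen regular cardinal $\lambda$. Under the standing hypotheses the operadic family $\Opcat{}$ is admissible in $\Mm$ (as in \cite{Mu11}, \cite{BB13}, or \cite{Mu14} itself), so $\ECatfc{\Mm}{\{0,1\}}$ inherits a transferred combinatorial model structure from $\Mm^{\{0,1\}\times\{0,1\}}$; in particular it is a locally presentable category.

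Write $\mathcal{W} \subset \ECatfc{\Mm}{\{0,1\}}$ for the full subcategory of weak intervals and consider the connected-components functor $\pi_0 \colon \ECatfc{\Mm}{\{0,1\}} \to \ECatfc{\Set}{\{0,1\}}$ that sends an $\Mm$-enriched category to its associated ordinary two-object category. Then $\mathcal{W} = \pi_0^{-1}(\mathcal{V})$, where $\mathcal{V} \subset \ECatfc{\Set}{\{0,1\}}$ is the full subcategory of those two-object categories in which $0$ and $1$ are isomorphic. The subcategory $\mathcal{V}$ is accessible, since ``being an isomorphism'' is an existential first-order condition on two-object ordinary categories which is preserved by filtered colimits. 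The functor $\pi_0$ is itself accessible past some regular cardinal, because $\pi_0 = [\Un,-]$ is represented by the monoidal unit and in a combinatorial model category the homotopy category inherits filtered colimits from the underlying category (Smith's theorem). Consequently $\mathcal{W}$ is an accessible full subcategory of the locally presentable category $\ECatfc{\Mm}{\{0,1\}}$, and we may fix $\lambda$ large enough that every weak interval is a $\lambda$-filtered colimit of $\lambda$-presentable weak intervals, while at the same time there is (up to isomorphism) only a small set of such $\lambda$-presentable objects in $\mathcal{W}$. Let $\mathcal{G}$ be such a set of representatives.

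To verify the generating property of Definition \ref{def.weakint}, let $X \in \ECat{\Mm}$ with $x, y \in X$ homotopy equivalent. The full $\Mm$-subcategory $X|_{\{x,y\}}$ spanned by $\{x,y\}$ lies in $\mathcal{W}$ (after the evident relabeling) and comes equipped with a canonical morphism $X|_{\{x,y\}} \to X$ identifying its two objects with $x$ and $y$. Writing $X|_{\{x,y\}}$ as a $\lambda$-filtered colimit of objects $G_i \in \mathcal{G}$, any coprojection $G_i \to X|_{\{x,y\}}$ composed with this inclusion supplies the required morphism. The main obstacle is the accessibility of $\mathcal{W}$: one must carefully check that ``possessing an inverse modulo $\pi_0$'' can be detected at bounded cardinality, which ultimately rests on the combinatoriality of $\Mm$ and the monoid axiom (the latter providing the transferred model structure on $\ECatfc{\Mm}{\{0,1\}}$ exploited above). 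This reproduces the content of the construction given in \cite{Mu14}.
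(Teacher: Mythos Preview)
The paper gives no proof here; it simply cites \cite{Mu14}. Your accessibility strategy is the right one and is essentially what Muro carries out. However, the justification you give for the accessibility of $\pi_0$ is incorrect: Smith's theorem says nothing about the homotopy category ``inheriting filtered colimits from the underlying category,'' and in fact $\HoC{\Mm}$ is typically not cocomplete at all, so the phrase has no content. What actually makes the step work is that in a combinatorial model category one can choose a $\lambda$-accessible fibrant replacement functor $R$ (built via the small object argument on a set of generating trivial cofibrations), while a cofibrant replacement $Q\Un$ and a cylinder $\mathrm{Cyl}(Q\Un)$ can be taken $\lambda$-presentable for suitable $\lambda$; then
\[
\pi_0(X)\cong\mathrm{coeq}\bigl(\Mm(\mathrm{Cyl}(Q\Un),RX)\rightrightarrows\Mm(Q\Un,RX)\bigr)
\]
visibly preserves $\lambda$-filtered colimits. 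Since filtered colimits in $\ECatfc{\Mm}{\{0,1\}}$ are computed on underlying hom-objects, the induced functor $\pi_0\colon\ECatfc{\Mm}{\{0,1\}}\to\ECatfc{\Set}{\{0,1\}}$ is $\lambda$-accessible as well, and $\mathcal{W}=\pi_0^{-1}(\mathcal{V})$ is then accessible as a pullback of accessible categories along accessible functors. With this in place the rest of your argument goes through. You correctly flag this step as ``the main obstacle'' in your final paragraph, but you should supply the argument above rather than gesturing at Smith's theorem and the standing hypotheses.
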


Consider the inclusion $\morp{i_0}{\{0\}}{\{0,1\}}$; for every weak interval $\mathbb{K}$ let
$\morp{i^{\mathbb{K}_0}}{i^*_0(\mathbb{K})}{\mathbb{K}}$ the cartesian arrow over $i_0$ with target $\mathbb{K}$;
Let $\morp{c_{\mathbb{K}}}{\widetilde{i^*_0(\mathbb{K})}}{i^*_0(\mathbb{K})}$ be a cofibrant replacement
in $\EPropfc{\Mm}{\{0\}}$; factor the morphism $\trc{(i^{\mathbb{K}}_0 c_{\mathbb{K}})}$ in $\EPropfc{\Mm}{\{0,1\}}$ into a cofibration $\morp{l_{\mathbb{K}}}{i_{0!}\widetilde{i^*_0(\mathbb{K})}}{\widetilde{\mathbb{K}}}$ followed by a weak equivalence $\morp{w}{\widetilde{\mathbb{K}}}{\mathbb{K}}$; 
We will denote by $\theta_{\mathbb{K}}$ the morphism $\morp{(i_0,l_{\mathbb{K}})}{\widetilde{i^*_0(\mathbb{K})}}{\widetilde{\mathbb{K}}}$.

\begin{equation}
\xymatrix{\widetilde{i^*_0(\mathbb{K})} \ar[r]^{\theta_{\mathbb{K}}}\ar[d]_{\sim} &
\widetilde{\mathbb{K}}\ar[d]^{\sim} \\ i^*_0(\mathbb{K}) \ar[r]^{i^{\mathbb{K}}_0} & \mathbb{K};}
\end{equation}
of course the definition of $\theta_{\mathbb{K}}$ depends on the choice of cofibrant replacement
and factorization that we made; we will assume that one such a $\theta_{\mathbb{K}}$ has been chosen for
every weak interval $\mathbb{K}$.

Given a generating set of weak intervals $\mathfrak{G}$ we will denote by $\mathcal{J}^{\Opcat{}}_{\mathfrak{G}}$ the set of morphisms
\begin{equation}\label{set.genes}
 \mathcal{J}^{\Opcat{}}_{\mathfrak{G}}=\{\theta_G \mid G\in \mathfrak{G} \};
\end{equation}
this set is part of the set of generating trivial cofibrations for the Dwyer-Kan model structure on $\ECat{\Mm}$ 
introduced in \cite{Mu14}.

Let $\mathcal{J}_{\mathfrak{G}}=k_!\mathcal{J}^{\Opcat{}}_{\mathfrak{G}}$ be the incarnation of 
$\mathcal{J}^{\Opcat{}}_{\mathfrak{G}}$ in $\EProp{\Mm}$.

We define a set of morphisms
\begin{equation}\label{DKgentco} 
\mathcal{J}_{DK}=\mathcal{J}_{loc}\cup \mathcal{J}_{\mathfrak{G}}
\end{equation}
that will serve as a set of generating trivial cofibrations for the Dwyer-Kan model structure.

\subsubsection{Berger and Moerdijk's \texorpdfstring{$\Mm$}{V}-intervals}\label{sec.BMint}
We continue to assume that $\Opcat{}$ is admissible in $\Mm$. The notion of weak interval is strictly related to the notion of $\Mm$-interval defined in \cite{BM12}.

We will denote by $\mathbb{I}\in \ECat{\Mm}{\{0,1\}}$ the $\Mm$-enriched category representing isomorphisms, that is
$\mathbb{I}(0,0)=\mathbb{I}(1,0)=\mathbb{I}(0,1)=\mathbb{I}(1,1)=\Un$; the $\Mm$-monoid $\mathbf{1}=\mathbb{I}(0,0)\in \ECatfc{\Mm}{\{0\}}$ is the $\Mm$-enriched category representing objects.

\begin{defi}(\cite{BM12})
A \emph{$\Mm$-interval} is a cofibrant object in $\ECatfc{\Mm}{\{0,1\}}$ (considered with the projective model structure) which
is weakly equivalent to $\mathbb{I}$.
\end{defi}

\begin{rmk}
Every $\Mm$-interval is a weak interval. Furthermore since $\mathbb{H}$ is cofibrant $\ECat{\Mm}{\{0,1\}}$ and
$i^*_0(\mathbb{H})$ is weakly equivalent to $\mathbf{1}$ in $\ECatfc{\Mm}{\{0\}}$,
 the morphism $\theta_{\mathbb{H}}$ defined in \S~\ref{sec.weakint}
can be chosen to be the unique morphism
\[
 \morp{\theta_{\mathbb{H}}}{\mathbf{1}}{\mathbb{H}}
\]
such that $\theta_{\mathbb{H}}(0)=0$.
\end{rmk}
\noindent It is clear that every $\Mm$-interval is a weak interval.

\begin{defi}
A set of $\Mm$-interval $\mathfrak{M}$ is \emph{generating} if every $\Mm$-interval is a retract of a trivial extension on an element
of $\mathfrak{M}$ (\emph{cf.} \cite[def. 1.11]{BM12}).  
\end{defi}

\begin{prop}(\cite{BM12})
If $\Mm$ is combinatorial a generating set of $\Mm$-intervals always exists. 
\end{prop}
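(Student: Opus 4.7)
The plan is to exploit the combinatoriality of $\Mm$ to reduce the problem to a small-object argument inside $\ECatfc{\Mm}{\{0,1\}}$. Since $\Mm$ is combinatorial and $\Opcat{}$ is admissible, this fiber inherits a combinatorial projective model structure via transfer along the free-forgetful adjunction. By Smith's theorem, one may choose a regular cardinal $\kappa$ such that (i) the generating (trivial) cofibrations of $\ECatfc{\Mm}{\{0,1\}}$ have $\kappa$-presentable domains and codomains, (ii) the object $\mathbb{I}$ is $\kappa$-presentable, and (iii) the class of weak equivalences is closed under $\kappa$-filtered colimits.

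I would then define $\mathfrak{M}$ to be a set of representatives of isomorphism classes of $\Mm$-intervals whose underlying objects are $\kappa$-presentable in $\ECatfc{\Mm}{\{0,1\}}$; this is essentially small since the category is locally $\kappa$-presentable. To verify that $\mathfrak{M}$ is generating, given an arbitrary $\Mm$-interval $\mathbb{H}$, I would write $\mathbb{H} = \mathrm{colim}_\alpha H_\alpha$ as a $\kappa$-filtered colimit of $\kappa$-presentable cofibrant sub-objects indexed by a $\kappa$-filtered poset. A cofinality argument then supplies some index $\alpha_0$ for which the inclusion $H_{\alpha_0} \hookrightarrow \mathbb{H}$ is a weak equivalence, so that $G := H_{\alpha_0}$ is weakly equivalent to $\mathbb{I}$ and therefore an element of $\mathfrak{M}$.

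Given such a $G$, I would factor the inclusion $G \hookrightarrow \mathbb{H}$ through the functorial (trivial cofibration, trivial fibration) factorization of $\ECatfc{\Mm}{\{0,1\}}$, yielding $G \to \mathbb{H}' \to \mathbb{H}$ in which the first map is a trivial cofibration (so $\mathbb{H}'$ is a trivial extension of $G$ in the sense of \cite{BM12}) and the second is a trivial fibration. Since $\mathbb{H}$ is cofibrant, the trivial fibration $\mathbb{H}' \to \mathbb{H}$ admits a section, exhibiting $\mathbb{H}$ as a retract of $\mathbb{H}'$, which is the required conclusion.

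The main obstacle I expect is the cofinality step: producing a $\kappa$-presentable cofibrant sub-object $H_{\alpha_0} \hookrightarrow \mathbb{H}$ which is itself weakly equivalent to $\mathbb{I}$. The point is that the data witnessing $\mathbb{H} \simeq \mathbb{I}$ (for instance, a zig-zag through a cofibrant-fibrant replacement, or a homotopy inverse) factors through some $H_\alpha$ by $\kappa$-presentability of $\mathbb{I}$ and of the domains of generating cofibrations; combining this with closure of weak equivalences under $\kappa$-filtered colimits one forces an $H_\alpha$ into the correct weak equivalence class. Once this accessibility lemma is set up carefully, the rest of the argument is formal.
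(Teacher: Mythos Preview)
The paper does not give a proof of this proposition at all: it is stated with the attribution \cite{BM12} and no proof environment follows. So there is nothing to compare your argument against in the paper itself; the author is simply quoting the result.

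Your sketch is the right shape and is essentially the argument Berger--Moerdijk give (their Lemma~1.12). The transferred model structure on $\ECatfc{\Mm}{\{0,1\}}$ is combinatorial, one chooses $\kappa$ large enough that $\mathbb{I}$ is $\kappa$-presentable and weak equivalences are $\kappa$-accessible, and then the generating set $\mathfrak{M}$ consists of the (isomorphism classes of) $\kappa$-presentable $\Mm$-intervals. The factorisation-plus-retract step at the end is exactly right.

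The gap you flag is real and is the only nontrivial point. Writing an arbitrary cofibrant $\mathbb{H}$ as a $\kappa$-filtered colimit of $\kappa$-presentable \emph{cofibrant} objects along cofibrations is not automatic from local presentability alone; one needs something like Dugger's presentation theorem for combinatorial model categories (or the ``fat small object argument'') to arrange this. Once that is in place, closure of weak equivalences under $\kappa$-filtered colimits together with $\kappa$-presentability of $\mathbb{I}$ does force some stage $H_{\alpha_0}$ to already be weakly equivalent to $\mathbb{I}$, exactly as you say. So your plan is sound, but you should cite or invoke the relevant structural result rather than leave it as a cofinality claim.
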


\begin{prop}(\cite{BM12}) 
Suppose that $\Mm$ is a right proper monoidal model category in which the unit is cofibrant and the operadic $\Set$-family $\Opcat{}$ is admissible.
Then a generating set of $\Mm$-interval $\mathfrak{M}$ is a generating set of intervals in the sense of def. \ref{def.weakint}.    
\end{prop}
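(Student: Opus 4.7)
The goal is to verify Muro's criterion (definition \ref{def.weakint}) for $\mathfrak{M}$: given any $X \in \ECat{\Mm}$ and any pair of homotopy equivalent colours $x, y \in X$, produce $G \in \mathfrak{M}$ and a morphism $G \to X$ with $0 \mapsto x$ and $1 \mapsto y$. My plan is to proceed in two stages: first construct an auxiliary $\Mm$-interval $H$ (not necessarily in $\mathfrak{M}$) already equipped with a morphism $h\colon H \to X$ sending $0 \mapsto x$, $1 \mapsto y$; then use the generating property of $\mathfrak{M}$ to replace $H$ by some $G \in \mathfrak{M}$.

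For the first stage, a map $H \to X$ with the prescribed endpoints is the same datum as a map $H \to \hof{X}{\{x,y\}}$ in $\ECatfc{\Mm}{\{0,1\}}$, where $\hof{X}{\{x,y\}}$ is the full $\Mm$-subcategory of $X$ on $\{x, y\}$; here $0$ and $1$ are homotopy equivalent by hypothesis. I would build $H$ cellularly from the discrete category $\{0,1\}$ by freely attaching $1$-cells from $0$ to $1$ and from $1$ to $0$, corresponding to chosen representatives $f, g$ of the equivalence $x \sim y$, and then cylinder-attaching further cells built from $\mathrm{Cyl}(\Un)$ to realise the homotopies $gf \sim \id_x$ and $fg \sim \id_y$. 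Since $\Un$ is cofibrant, each such attachment is a cofibration in $\ECatfc{\Mm}{\{0,1\}}$, so $H$ is cofibrant; the same cellular presentation applied to the universal equivalence inside $\mathbb{I}$ exhibits a weak equivalence $H \to \mathbb{I}$, so $H$ is indeed an $\Mm$-interval. The chosen data on $X$ then assembles into the required $h\colon H \to X$. Right-properness enters at the step of extracting the representatives $f, g$ and their witnessing homotopies from a locally fibrant replacement of $\hof{X}{\{x,y\}}$ and transporting them back to $X$ itself.

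For the second stage, the generating property of $\mathfrak{M}$ says that $H$ is a retract, in $\ECatfc{\Mm}{\{0,1\}}$, of some $Y$ fitting into a trivial cofibration $G \to Y$ with $G \in \mathfrak{M}$. Writing $r\colon Y \to H$ for the retraction, the composite
\[
G \longrightarrow Y \overset{r}{\longrightarrow} H \overset{h}{\longrightarrow} X
\]
is the desired morphism; since the retract datum lives in $\ECatfc{\Mm}{\{0,1\}}$, the whole composition respects the $\{0,1\}$-colouring and hence sends $0 \mapsto x$, $1 \mapsto y$.

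The technical heart of the argument is Stage~1: showing that the cellular construction is genuinely a cofibrant replacement of $\mathbb{I}$ while supporting the map $h$, and explaining why right-properness together with cofibrancy of $\Un$ are precisely what is needed to lift the homotopy equivalence $x \sim y$ to strict cells in $\hof{X}{\{x,y\}}$. This is essentially the content of the interval construction in \cite{BM12}, which I would invoke as a black box.
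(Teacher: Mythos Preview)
Your proposal is correct and follows essentially the same two-stage strategy as the paper. The paper's proof is terser: it simply cites Proposition~2.24 and Lemma~2.10 of \cite{BM12} for the existence of a $\Mm$-interval $\mathbb{H}$ mapping to $X$ with the prescribed endpoints (your Stage~1), and leaves the passage from an arbitrary $\Mm$-interval to an element of $\mathfrak{M}$ via the generating property (your Stage~2) implicit; you have made both stages explicit, which is an improvement in clarity.
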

\begin{proof}
This is basically a consequence of Proposition 2.24 and Lemma 2.10 in \cite{BM12} as remarked there (proof of Proposition 2.20):
if $\Mm$ is right proper and $\Opcat{}$ is admissible then given $X\in \ECat{\Mm}$, two objects $x,y\in X$ are homotopy equivalent
if and only if there exist a $\Mm$-interval $\mathbb{H}$ and a morphism $\morp{f}{\mathbb{H}}{X}$ such that $f(0)=x$ and $f(1)=y$.  
\end{proof}
\noindent 
It follows that if $\Mm$ is a combinatorial right proper monoidal model category in which the unit is cofibrant an $\Opcat{}$ is admissible
then the set (\ref{set.genes}) can be chosen to be:
\begin{equation}\label{genBMint}
 \mathcal{J}^{\Opcat{}}_{\mathfrak{M}}=\{ \arro{\theta_{\mathbb{H}}}{\mathbf{1}}{\mathbb{H}} \mid \mathbb{H}\in\mathfrak{M}\}
\end{equation}
where $\mathfrak{M}$ is a generating set of $\Mm$-intervals.

\subsection{Proof of Theorem \ref{main.thm}} \label{proof.main.thm}
Under the hypothesis of Theorem \ref{main.thm} the category $\EProp{\Mm}$ is locally presentable according to Corollary \ref{loc.pres.par}. 
In particular $\EProp{\Mm}$ is bicomplete and the set of morphisms $\mathcal{I}_{DK}$ and $\mathcal{J}_{DK}$ in (\ref{DKgenco}) and (\ref{DKgentco})
both admit the small object argument.
Therefore, according to Theorem 2.1.19 \cite{Ho99}, to prove the existence of the Dwyer-Kan model structure it is sufficient to check
that:
\begin{enumerate}[1)]
 \item\label{main1} The class $\mathcal{W}_{DK}$ has the 2-out-of-3 property and it is closed under retracts;
 \item\label{main2} $\inj{\mathcal{I}_{DK}}= \mathcal{W}_{DK} \cap \inj{\mathcal{J}_{DK}}$;
 \item\label{main3} $\cell{\mathcal{J}_{DK}}\subseteq \mathcal{W}_{DK}$.
\end{enumerate}

Point \ref{main1}) follows from the closure properties of the classes of weak equivalences in $\Mm$ and of essentially surjective functors
in $\Cat$.

Recall that $f\in \inj{\mathcal{I}_{DK}}$ if and only if it is a local trivial fibration surjective on colours;
it is clear that $\inj{\mathcal{I}_{DK}}\subseteq \mathcal{W}_{DK} \cap \inj{\mathcal{J}_{DK}}$.
Observe that $f\in \mathcal{W}_{DK} \cap \inj{\mathcal{J}_{DK}}$ if and only if it is a local trivial fibration
and $k^*(f)$ is a trivial fibration in the Dwyer-Kan model structure on $\ECat{\Mm}$ (\emph{cf.} \cite{Mu14}); 
in $\ECat{\Mm}$ the trivial fibrations
are the local trivial fibrations which are surjective on colours. It follows that $f\in \mathcal{W}_{DK} \cap \inj{\mathcal{J}_{DK}}$
if and only if $f$ is a local trivial fibration surjective on colours, that is if and only if $f\in \inj{\mathcal{I}_{DK}}$. 

We are now left to prove point \ref{main3}). Since we have supposed that the class of weak equivalences in $\Mm$ is closed under
transfinite composition the class of Dwyer-Kan weak equivalences is closed under transfinite composition.
It is therefore sufficient to prove that for every $k\in \mathcal{J}_{DK}$ and every push-out diagram
\begin{equation}\label{diagDK}
 \diagc{A}{X}{B}{Y,}{k}{f}{h}{g}{}
\end{equation}
if $f$ is a Dwyer-Kan weak equivalence then $h$ is a Dwyer-Kan weak equivalence.

If $k\in \mathcal{J}_{loc}$ then the following diagram
\begin{equation}\label{diagDK2} 
\diagc{f_!(A)}{X}{f_!(B)}{Y}{k}{\trc{f}}{h}{\trc{g}}{}
\end{equation}
is a push-out diagram in the fiber $\EPropfc{\Mm}{\Cl{A}}$. In particular $h$ is an isomorphism on objects and it is a local weak equivalence
since $f_!$ is a left Quillen functor and $k$ is a trivial cofibration in the projective model structure on $\EPropfc{\Mm}{\Cl{A}}$;
thus $h$ is a Dwyer-Kan weak equivalence.

Suppose now that $k\in \mathcal{J}_{\mathfrak{G}}$, then $k=k_! (\theta_G)$ for some weak interval $G\in {\mathfrak{G}}$.

We can decompose $\theta_G$ as $\psi\phi$ where $\phi$ is in $\ECatfc{\Mm}{\{0\}}$ and $\psi$ is fully faithful.
Diagram (\ref{diagDK}) is decomposed in two push-out squares:
\begin{equation}
 \xymatrix{
           k_! \widetilde{i^*_0(G)} \ar[d]_-{k_!(\phi)} \ar[r]^-{f} &  X\ar[d]^-{\phi'}\\
             w_! j_! i^*_0(\widetilde{G}) \ar[d]_-{k_! (\psi)} \ar[r] & X'\ar[d]^{\psi'}\\
             w_! j_! \widetilde{G} \ar[r]_-{g} & Y.}
\end{equation}
In \cite[Theorem 7.13, 7.14]{Mu14} Muro proved that $\phi$ is a trivial cofibration in $\ECatfc{\Mm}{\{0\}}$, thus $\phi$ is a weak equivalence in $\ECatfc{\Mm}{\{0\}}$.
Since $\psi$ is a fully faithful and injective on objects, $j_!(\psi)$ is also fully faithful and injective on colours; it follows by 
Theorem \ref{main.po} that $j_!(\psi)$ is a fully faithful and injective on colours.
This prove that $h=\psi' \phi'$ is a local weak equivalence. 

To conclude we only have to prove that $\pi_0(h)$ is essentially surjective;
this follows immediately from the fact that $0$ and $1$ are equivalent in $\pi_0(\widetilde{G})$ and the unique colour not in the
image of $h$ is $g(1)$. This completes the proof of Theorem \ref{main.thm}.

\begin{rmk}
    It is clear that, under the hypotheses of Theorem \ref{main.thm} on $\Mm$ the Dwyer-Kan model structure on $\ECat{\Mm}$ and $\EProp{\Mm}$ (\emph{cf.} \cite{Mu14} and \cite{C14}) and the adjunctions in the bottom row of diagram (\ref{eq:diagcatopgr}) are Quillen equivalences between the respective Dwyer-Kan model structures. 
\end{rmk}

\subsection{Fibrant objects}\label{sec.fibob}
Suppose $\Mm$ is a combinatorial monoidal model category satisfying the hypothesis of Theorem \ref{main.thm}; then the Dwyer-Kan 
model structure on $\EProp{\Mm}$ exists and the chosen set of generating trivial cofibrations contains $\mathcal{J}_{loc}$, so every
fibration in this model structure is a local fibration; the converse is not true since in general fibrations in the Dwyer-Kan model
structure has the right lifting property with respect to $\mathcal{J}_{\mathfrak{G}}$ (for a chosen generating set of weak intervals
$\mathfrak{M}$). However, as done in \cite{BM12}, it can be proved that if $\Mm$ is right proper and has cofibrant unit 
the class of Dwyer-Kan fibrant objects coincides with the one of locally fibrant objects.

\begin{prop}
Suppose that $\Mm$ satisfies the hypothesis of Theorem \ref{main.thm} an furthermore that it is right proper and with cofibrant unit,
then a $\Mm$-PROP $X\in \EProp{\Mm}$ is fibrant in the Dwyer-Kan model structure if and only if it is locally fibrant. 
\end{prop}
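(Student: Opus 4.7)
The plan is as follows. One direction is immediate from the construction $\mathcal{J}_{DK} = \mathcal{J}_{loc} \cup \mathcal{J}_{\mathfrak{G}}$ of generating trivial cofibrations: since $\mathcal{J}_{loc} \subseteq \mathcal{J}_{DK}$, any Dwyer-Kan fibrant object has the right lifting property against $\mathcal{J}_{loc}$, which by Proposition \ref{rlp.int} is precisely the condition of being locally fibrant.

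For the converse, suppose $X$ is locally fibrant. Since $\Mm$ is right proper with cofibrant unit, by \S \ref{sec.BMint} I may take the second component of $\mathcal{J}_{DK}$ to be $\mathcal{J}_{\mathfrak{M}} = k_! \mathcal{J}_{\mathfrak{M}}^{\Opcat{}}$, where $\mathcal{J}_{\mathfrak{M}}^{\Opcat{}} = \{\theta_{\mathbb{H}}\colon \mathbf{1} \to \mathbb{H}\}_{\mathbb{H} \in \mathfrak{M}}$ arises from a generating set $\mathfrak{M}$ of $\Mm$-intervals. The RLP against $\mathcal{J}_{loc}$ is automatic from local fibrancy, and the key step is to establish the RLP against $\mathcal{J}_{\mathfrak{M}}$. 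Here I would use the adjunction $k_! \dashv k^*$ of diagram (\ref{eq:diagcatopgr}): any lifting problem for $k_!(\theta_{\mathbb{H}})$ against $X \to *$ in $\EProp{\Mm}$ transposes bijectively to a lifting problem for $\theta_{\mathbb{H}}$ against $k^*(X) \to *$ in $\ECat{\Mm}$, so it suffices to prove that $k^*(X)$ is Dwyer-Kan fibrant as an $\Mm$-category.

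The underlying $\Mm$-category $k^*(X)$ has hom-objects $X((c);(c'))$ for $c, c' \in \Cl{X}$, which are fibrant in $\Mm$ by the local fibrancy of $X$ as a PROP, so $k^*(X)$ is itself locally fibrant. At this point I would invoke the analogue of the present proposition in the category-enriched setting, namely \cite[Proposition 2.20]{BM12}: under the hypotheses of Theorem \ref{main.thm} (in particular, $\Mm$ right proper with cofibrant unit and admissibility of $\Opcat{}$), locally fibrant $\Mm$-categories are Dwyer-Kan fibrant. This would complete the argument.

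The entire homotopical content is thus imported from the cited Berger-Moerdijk result; the statement for PROPs follows formally by transposing lifting problems via $k_! \dashv k^*$ and by the observation that $k^*$ preserves local fibrancy. There is no genuine obstacle beyond having the $\ECat{\Mm}$ version in hand, which is why the hypotheses on $\Mm$ match those of \cite{BM12} exactly.
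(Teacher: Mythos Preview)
Your proof is correct and follows essentially the same approach as the paper's: both argue the easy direction from $\mathcal{J}_{loc}\subseteq\mathcal{J}_{DK}$, then for the converse use the right-properness and cofibrant-unit hypotheses to take the generating set from $\Mm$-intervals, transpose the remaining lifting problems along $k_!\dashv k^*$, and invoke the Berger--Moerdijk result for $\ECat{\Mm}$. The only cosmetic difference is the citation: the paper appeals directly to \cite[Lemma~2.10]{BM12} (which gives the lifting for locally fibrant $\Mm$-categories against the interval maps), whereas you cite \cite[Proposition~2.20]{BM12}, the packaged statement whose proof is that lemma.
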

 
\begin{proof}
Clearly every Dwyer-Kan fibrant object is a locally fibrant object.
Suppose $X$ is locally fibrant. Under our hypothesis the set of generating trivial cofibrations for the Dwyer-Kan model structure
can be chosen to be $\mathcal{J}_{loc}\cup k_!(\mathcal{J}^{\Opcat{}}_{\mathfrak{M}})$ where $\mathfrak{M}$ is a generating set of
$\Mm$-intervals and $\mathcal{J}^{\Opcat{}}_{\mathfrak{M}}$ is as in (\ref{genBMint}). 
Since $X$ is locally fibrant, it has the right lifting property with respect to $\mathcal{J}_{loc}$; by adjunction
$X$ has the right lifting property with respect to $k_!(\mathcal{J}^{\Opcat{}}_{\mathfrak{M}})$ if and only if $k^*(X)$ has the
right lifting property with respect to $\mathcal{J}^{\Opcat{}}_{\mathfrak{M}}$, and this is indeed the case by \cite[Lemma 2.10]{BM12}
since $k^*(X)$ is locally fibrant (in $\ECat{\Mm}$).  
\end{proof}
\noindent 
Under these conditions it is also easy to prove that $\EProp{\Mm}$ is right proper.

\begin{prop}
Suppose that $\Mm$ satisfies the hypothesis of Theorem \ref{main.thm} an furthermore that it is right proper,
then the Dwyer-Kan model structure on $\EProp{\Mm}$ is right proper.  
\end{prop}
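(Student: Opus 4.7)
The plan is to verify right properness directly by examining a pullback square
\[
\xymatrix{P \ar[r]^-{q} \ar[d]_-{p} & Y \ar[d]^-{f} \\ X \ar[r]_-{g} & Z}
\]
in which $f$ is a Dwyer-Kan fibration and $g$ a Dwyer-Kan weak equivalence, and showing that $q$ is a Dwyer-Kan weak equivalence. First I would unravel the pullback. Because $\Clf \colon \EProp{\Mm} \to \Set$ is a bifibration and, within each fiber, limits in $\EPropfc{\Mm}{C}$ are computed on the underlying bicollections, one has $\Cl{P} = \Cl{X} \times_{\Cl{Z}} \Cl{Y}$ and, for any sequences of pullback colours $\mathbf{p},p_0$ with projections $\mathbf{x},x_0$ in $X$, $\mathbf{y},y_0$ in $Y$, and common image $\mathbf{z},z_0$ in $Z$, the hom-object $P(\mathbf{p};p_0)$ is the pullback $X(\mathbf{x};x_0) \times_{Z(\mathbf{z};z_0)} Y(\mathbf{y};y_0)$ in $\Mm$.

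Next I would show that $q$ is a local weak equivalence. The comparison $\trf{q} \colon P \to q^*(Y)$ is on each valence the canonical projection of the pullback above. Since $g$ is a local weak equivalence, the top map of that pullback is a weak equivalence in $\Mm$; since $f$ is a Dwyer-Kan fibration, hence in particular a local fibration, the right map is a fibration in $\Mm$. Right properness of $\Mm$ then forces the projection to be a weak equivalence on every valence, so $\trf{q}$ is a levelwise weak equivalence in the projective model structure on $\EPropfc{\Mm}{\Cl{P}}$.

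For the remaining condition --- that $\pi_0(k^*(q))$ is essentially surjective --- I would pass to $\ECat{\Mm}$. Since $k^* \colon \EProp{\Mm} \to \ECat{\Mm}$ is a right adjoint (to $k_!$), it preserves pullbacks, so $k^*(P) \cong k^*(X) \times_{k^*(Z)} k^*(Y)$. Moreover $k^*$ preserves local fibrations and local weak equivalences (these are detected bicollection-wise and $k^*$ merely restricts a bicollection to its $(1,1)$-component), and by adjunction $f$ has the right lifting property against $\mathcal{J}_{\mathfrak{G}} = k_!\mathcal{J}^{\Opcat{}}_{\mathfrak{G}}$ iff $k^*(f)$ has it against $\mathcal{J}^{\Opcat{}}_{\mathfrak{G}}$. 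Therefore $k^*(f)$ is a Dwyer-Kan fibration and $k^*(g)$ a Dwyer-Kan weak equivalence in $\ECat{\Mm}$; invoking right properness of the Dwyer-Kan model structure on $\ECat{\Mm}$ (established under the present hypotheses in \cite{Mu14}) yields that $k^*(q)$ is a Dwyer-Kan weak equivalence, and in particular $\pi_0(k^*(q))$ is essentially surjective.

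The two delicate points are the identification of limits in $\EProp{\Mm}$ with the hom-object pullbacks above, and the verification that $k^*$ takes Dwyer-Kan fibrations to Dwyer-Kan fibrations; both are direct but rely on the explicit description of $\mathcal{J}_{DK}$. The one external ingredient --- right properness of $\ECat{\Mm}$ --- could, if desired, be replaced by a direct argument: given $y_0 \in \Cl{Y}$, one uses essential surjectivity of $g$ to produce $x_0 \in \Cl{X}$ with $g(x_0)$ homotopy equivalent to $f(y_0)$ and then lifts this equivalence strictly along $f$ using the interval-lifting property enjoyed by Dwyer-Kan fibrations, producing a point of $P$ projecting to something equivalent to $y_0$.
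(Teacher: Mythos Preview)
Your argument is correct. The paper gives no proof here beyond the sentence ``The proof is almost identical to the one of \cite[Proposition 5.3]{C14}'', so there is nothing to compare at the level of details; your write-up is presumably close to what one finds in the cited reference for operads. The two-step verification you outline --- right properness of $\Mm$ on hom-objects for the local part, then reduction to $\ECat{\Mm}$ via $k^*$ for essential surjectivity --- is exactly the expected route.

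One small presentational slip: when you say ``since $g$ is a local weak equivalence, the top map of that pullback is a weak equivalence'', you mean the \emph{bottom} map (the component of $g$) is a weak equivalence; the conclusion of the right-properness argument is that the top map (the component of $q$) is one. The mathematics is fine, only the labelling got swapped. Also, your appeal to right properness of the Dwyer-Kan model structure on $\ECat{\Mm}$ is legitimate, but if you want the proof to be self-contained within this paper's references you should either pin down the exact statement in \cite{Mu14} or use the direct isofibration-style lifting argument you sketch at the end, which works without further citation.
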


\begin{proof}
The proof is almost identical to the one of \cite[Proposition 5.3]{C14}. 
\end{proof}

\section{The operads for PROPs}\label{sec.opprop}
Fixed a set $C$, the algebraic structure of a $C$-coloured PROP is defined by a  bicollection of operations with two inputs and
one output, i.e. the vertical and the horizontal composition laws, a collection of operations with no input and one output, i.e.
the identities (see \ref{def:prop}) and a collection of morphisms with one input and one output, i.e. the permutation morphisms
of the underlying bicollection.
This suggests that the category of $C$-coloured PROPs $\EPropfc{\Mm}{C}$ can be presented as the category of algebras for a certain operad $\Opprop{C}$;
this is indeed the case and the aim of this section is to describe $\Opprop{C}$.   

The operad $\Opprop{C}$ is a $\Val{C}$-coloured $\Set$-operad. 
For every $n\in \N$ and $s_1,\dots,s_n,s_0\in \Val{C}$ the 
set of operations $\Opprop{C}(s_1,\dots,s_n; s_0)$ is defined to be the set of all isomorphism classes of completely ordered acyclic (\coa) $C$-graphs 
of arity $(s_1,\dots,s_n; s_0)$ (\ref{apx:completely ordered graph}, def. \ref{def:arity of a graph}).

For every $G\in \Opprop{C}(s_1,\dots,s_n; s_0)$, every $i\in \fcar{n}$ and every $H\in \Opprop{C}(t_1,\dots,t_m; s_i)$ the partial composition $G\circ_i H$ is
defined as the insertion $G\circ_{\sigma_G(i)} H$ (\S \ref{multins}), which is indeed an element of 
\[
\Opprop{C}(s_1,\dots,s_{i-1},t_1,\dots,t_m,s_{i+1},\dots,s_n; s_0). 
\]
For every $s\in \Val{C}$ the unit in $\Opprop{C}(s;s)$ is given by the unique c.o. $C$-corolla with $C$-valence $(s;s)$ and trivial twist.

For every $n\in \N$, every $\gamma\in \Sigma_n$ and  $s_1,\dots,s_n,s_0\in \Val{C}$, the symmetric action
\[
 \lmorp{\gamma^*}{\Opprop{C}(s_1,\dots,s_n; s_0)}{\Opprop{C}(s_{\sigma(1)},\dots,s_{\sigma(n)}; s_0)}
\]
sends $G$ to $\gamma^*G$ (def. \ref{defpermgr}).\\
The underlying category  $j^*(\Opcfprop{C})$ is equivalent to $\Sigma\Val{C}$ (\S \ref{sec:valences and bicollections}).

Constant-free and augmentation-free $C$-coloured $\Mm$-PROPs (\S \ref{sec2constantfree}) are algebras for two full suboperads of $\Opprop{C}$, denoted by $\Opcfprop{C}$
and $\Opafprop{C}$; $\Opcfprop{C}$ is spanned by the $C$-valences with non-empty input and $\Opafprop{C}$ is spanned by the $C$-valences
with non-empty output.

\subsubsection{Comments on \texorpdfstring{$\Opprop{C}$}{PROP}}

We think that the fact that the operad $\Opprop{C}$ has $C$-coloured PROPs as algebras is well known. 
In any case we would like to provide some justifications about why this is true. 

The following argument is basically a reformulation of Chapter 2 of \cite{JS91} for enriched symmetric monoidal categories (
called \emph{symmetric tensor categories} in \emph{loc.cit.}).

One of the main result of Joyal and Street in \cite{JS91} is Theorem 2.2 (and Corollary 2.3), that can be reformulated in the following way:
given a (un-enriched) symmetric monoidal category $\mathcal{M}$ and an ordered $\Ob{\mathcal{M}}$-graph $G$
(there called an \emph{anchored progressive polarised graph}) it is possible to define a value function
\begin{equation}
 \morp{\mathbf{V_G}}{\prod_{v\in N_G} \mathcal{M}(\iport{v},\oport{v})}{\mathcal{M}(\iport{G},\oport{G})}
\end{equation} 
using composition, tensor product and symmetries in $\mathcal{M}$; this function only depends on the isomorphism class of $G$ in the
sense that for every isomorphism of ordered $\Ob{\mathcal{M}}$-graphs $\morp{f}{G}{H}$ there is a commutative triangle
\[
 \xymatrix{\prod_{v\in N_G} \mathcal{M}(\iport{v},\oport{v}) \ar[d]_-{s_f} \ar[r]^-{V_G} & \mathcal{M}(\iport{G},\oport{G})\\
           \prod_{u\in N_H} \mathcal{M}(\iport{u},\oport{u})\ar[ur]_-{V_H} &  
          }
\]
where $s_{f}$ is the symmetric morphism in $\mathcal{M}$ induced by the restriction of $f$ on nodes.

In \cite{JS91} graphs are defined as particular one dimensional topological spaces, but it should be clear that the result
still holds with our definition of graphs.

Using the same methods of \cite{JS91} (decomposition of graphs in elementary graphs) 
it is possible (and not difficult) to generalise the above result to the enriched context, that is: given a $\Mm$-enriched symmetric
monoidal category $\cat{W}$ and an ordered $\Ob{\mathcal{W}}$-graph $G$ there is a morphism (in $\Mm$)
\begin{equation}
 \morp{\mathbf{V_G}}{\bigotimes_{v\in N_G} \IHom{\mathcal{W}}{\iport{v}}{\oport{v}}}{\IHom{\mathcal{W}}{\iport{G}}{\oport{G}}}
\end{equation}
which is invariant under isomorphisms of ordered $\Ob{\mathcal{M}}$-
graphs in the above sense.

Let $C$ be a fixed of colours, recall that $C$-coloured PROPs are just symmetric monoidal category with set of objects
freely generated by $C$.
 
Consider the obvious forgetful functor $\morp{U}{\EPropfc{\Mm}{C}}{\Mm^\Val{C}}$.
It admits a left adjoint $F$ that we are now going to describe.
For every $\prp{P}\in \Mm^\Val{C}$ and every $(\mathbf{a},\mathbf{b})\in \Val{C}$ 
\begin{equation}\label{eq:firstfreeprop}
 F\prp{P}(\mathbf{a},\mathbf{b})=\varinjlim_{G\in \mathbb{G}_{\text{ord},(\mathbf{a},\mathbf{b})}} \bigotimes_{v\in N_G} \prp{P}(\iport{v},\oport{v})
\end{equation}
where $\mathbb{G}_{\text{ord},(\mathbf{a},\mathbf{b})}$ is the groupoid of ordered acyclic $C$-graphs with $C$-valence $(\mathbf{a},\mathbf{b})$ and isomorphisms between them.

For every $\str{a},\str{b},\str{c}\in \Str{C}$, insertion over an appropriate untwisted
 vertical composition (def. \ref{def:graph comp}) defines a functor
\[
 \arr{\mathbb{G}_{\text{ord},(\mathbf{b},\mathbf{c})} \times \mathbb{G}_{\text{ord},(\mathbf{a},\mathbf{b})}}{\mathbb{G}_{\text{ord},(\mathbf{a}, \mathbf{c})}}
\] 
that is used to define the composition in $F\prp{P}$.
In the same way, for every $\str{a},\str{b},\str{c}, \str{d} \in \Str{C}$, insertion over an appropriate untwisted 
 horizontal composition gives
\[
 \arr{\mathbb{G}_{\text{ord},(\mathbf{a},\mathbf{b})} \times \mathbb{G}_{\text{ord},(\mathbf{c},\mathbf{d})}}{\mathbb{G}_{\text{ord},(\mathbf{a}\ast \mathbf{c},\mathbf{b}\ast \mathbf{d})}}
\]
that is used to define the tensor product in $F\prp{P}$. 

Permutation morphisms and identities in $F\prp{P}$ correspond to the components of an appropriate ordered $C$-graph with no nodes $S_{\alpha}$.
\[
 \xymatrix{\Un \ar[r]^-{\cong} & \bigotimes_{v\in S_{\alpha}} \IHom{\cat{W}}{\iport{v}}{\oport{v}} \ar[r] & F\prp{P}(\iport{S_{\alpha}},\oport{S_{\alpha}})}
\]

The functor $F$ is defined on morphisms in the evident way.

This construction $F\prp{P}$ is the enriched analogous of the free symmetric tensor category on a bicollection (a 
\emph{tensor scheme} in \emph{loc.cit.}) constructed in \cite[Theorem 2.3]{JS91}.

Given a $C$-PROP $\prp{R}$ The natural bijection
\[
 \lmorp{\phi}{\Mm^{\Val{C}}(\prp{P},U\prp{R})}{\EPropfc{\Mm}{C}(F\prp{P},\prp{R})}
\]
is defined in the following way.

For every $k\in \Mm^{\Val{C}}(\prp{P},U\mathbf{V})$, $\phi(k)$ is the unique morphism of $C$ coloured PROPs that
makes the following diagram commute
\begin{equation}
 \xymatrix@C=25pt{ \bigotimes_{v\in N_G} \prp{P}(\iport{v},\oport{v}) \ar[d] \ar[r]^-{\bigotimes k} & \bigotimes_{v\in N_G} \prp{R}(\iport{v},\oport{v}) \ar[d]^-{V_G}  \\
           F\prp{P}(\mathbf{a},\mathbf{b}) \ar[r]^-{\phi(k)} & \prp{R}(\mathbf{a},\mathbf{b}) }
\end{equation}  
for every $(\mathbf{a},\mathbf{b})\in \Val{C}$ and every $G\in \mathbb{G}_{\text{ord},(\mathbf{a},\mathbf{b})}$. 
It is routine to show that $\phi$ is bijective.

The functor $U$ is monadic in the sense that it satisfies one of the equivalent conditions of Beck's theorem
\cite[\S\ VI.7, Theorem 1]{McL98}.

An explicit description of the monad on $\Mm^{\Val{C}}$ associated to the operad $\Opprop{C}$ (\emph{c.f.} \S
\ref{sec:freeprop}) shows that it is isomorphic to $UF$; in other words $\EPropfc{\Mm}{C}$ is isomorphic to the category of $\Opprop{C}$-algebras in $\Mm$.

\subsubsection{Freeness of the permutation action}
Recall that every $C$-coloured operad $\ope{O}$ has and underlying symmetric $C$-coloured collection, i.e. an element of $\Mm^{\opc{\Sigma\Str{C}}\times C}$. The following definition is classical.

\begin{defi}
 A $C$-coloured operad $\ope{O}$ is \emph{$\Sigma$-free} if its underlying symmetric $\Mm$-collection
is free, i.e. if $\ope{O}(\str{s})$ is a free as and $\Aut (\str{s})$-set for every $\str{s}\in \opc{\Sigma\Str{C}}\times {C}$
(where $\Aut (\str{s})\subset \Sigma_{\card{\str{s}}}$ denotes the automorphism group of $\str{s}$).
\end{defi}

\noindent The operad $\Opprop{C}$ is not $\Sigma$-free. For example,
let $o\in \Opprop{C}(([],[]),([],[]);([],[]))$ be the operation represented by the graph in Figure \ref{fixedpoint};
 $\tau^*(o)=o$ for every $\tau\in \Sigma_2$. 
\begin{figure}[!ht]
\begin{tikzpicture}
\node[vert] (a) at (-1,0) {1};
\node[vert] (b) at (1,0) {2}; 
\end{tikzpicture}
\caption{}\label{fixedpoint} 
\end{figure}

As another example, the permutation $(1\,2)(3\,4)\in \Sigma_4$ has the operation represented by the graph in Figure \ref{fixedpoint2}
as fixed point. 

\begin{figure}[!ht]
\begin{tikzpicture}
\node[vert] (a) at (0,1) {1};
\node[vert] (b) at (0,2) {2};
\node[vert] (c) at (-1.2,0) {3};
\node[vert] (d) at (1.2,0) {4};

\outcheckpt{a}{2}{0.5}{-0.6}
\outcheckpt{b}{2}{0.5}{-0.3}
\incheckpt{c}{2}{0.3}{0.2}
\incheckpt{d}{2}{0.3}{0.2}
\connectnodes{a}{c}{2/2/b}
\connectnodes{a}{d}{1/1/a}
\connectnodes{b}{c}{1/1/a}
\connectnodes{b}{d}{2/2/b}
\end{tikzpicture}
\caption{}\label{fixedpoint2} 
\end{figure}

However the permutation groups act freely on the operations represented by completely ordered $C$-graph with no connected 
components of valence $(0,0)$ by Proposition \ref{trivialauto}.

\begin{prop}
The operads $\Opcfprop{C}$ and $\Opafprop{C}$ are $\Sigma$-free. 
\end{prop}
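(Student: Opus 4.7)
The plan is to reduce $\Sigma$-freeness of $\Opcfprop{C}$ and $\Opafprop{C}$ to the already-quoted Proposition~\ref{trivialauto}, which says that the symmetric group acts freely on operations represented by completely ordered $C$-graphs none of whose connected components has valence $(0,0)$. So the heart of the argument is to show that for operations lying in the two suboperads this valence-$(0,0)$ pathology cannot occur.

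Fix an operation $G\in \Opcfprop{C}(s_1,\dots,s_n;s_0)$, represented by a c.o.\ acyclic $C$-graph whose node $v_i$ carries the valence $s_i$ for $i\in\fcar{n}$. By definition of $\Opcfprop{C}$, each $s_i=\ioval{\inva{s_i}}{\outva{s_i}}$ satisfies $\inva{s_i}\neq []$, so every node of $G$ has at least one input port (i.e.\ at least one incoming edge-germ, possibly attached to a free input of the component). Let $G_0$ be any connected component of $G$; I will argue that $G_0$ has valence $\neq (0,0)$ by producing a free input for it. Pick any node $w_0$ in $G_0$. If all input edges of $w_0$ are free inputs of the component, we are done since $w_0$ has $\geq 1$ input. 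Otherwise, walk backwards along some input edge to a predecessor node $w_1$ and iterate. By acyclicity of $G$ (and hence of $G_0$), this backward walk cannot revisit a node and must terminate after finitely many steps at a node $w_k$ all of whose inputs are free inputs of $G_0$; since $w_k$ has at least one input, $G_0$ carries at least one free input, proving $\inva{G_0}\neq[]$. In particular the valence of $G_0$ is not $(0,0)$. Applying Proposition~\ref{trivialauto}, the stabiliser of $G$ under $\Aut(s_1,\dots,s_n;s_0)\subseteq \Sigma_n$ is trivial, so the symmetric action on $\Opcfprop{C}(s_1,\dots,s_n;s_0)$ is free.

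The argument for $\Opafprop{C}$ is entirely dual: each $s_i$ now has $\outva{s_i}\neq []$, i.e.\ every node has at least one output port. Running the analogous walk forward along output edges in any connected component $G_0$ (terminating by acyclicity at a node all of whose output edges are free outputs of the component) shows that $\outva{G_0}\neq []$, and hence $G_0$ does not have valence $(0,0)$. Proposition~\ref{trivialauto} then yields $\Sigma$-freeness of $\Opafprop{C}$.

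No genuine obstacle is expected; the only mildly delicate point is the ``source/sink exists'' step, which relies essentially on acyclicity of the underlying $C$-graphs (more precisely on the fact that the edge-following relation induces a well-founded partial order on the nodes of each component, as encoded in appendix~\ref{ch:graph}). Once that is granted, everything else is a direct application of Proposition~\ref{trivialauto}.
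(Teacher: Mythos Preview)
Your proof is correct and follows essentially the same approach as the paper: both reduce to Proposition~\ref{trivialauto} by showing that the relevant graphs have no connected components of valence $(0,0)$ (equivalently, that every node is connected to a port). The paper simply asserts this fact in one line, while you supply the explicit source/sink argument via acyclicity; the extra detail is welcome but not a genuinely different route.
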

\begin{proof}
All the operations in $\Opcfprop{C}$ (resp. $\Opafprop{C}$) are represented by graphs whose nodes are connected to a port. 
Proposition \ref{trivialauto} implies that a completely ordered $C$-graph can not be isomorphic to a graph with the same underlying
ordered $C$-graph and a different node order; therefore the actions of the permutation groups in 
$\Opcfprop{C}$ and $\Opafprop{C}$ are free. 
\end{proof}

\subsection{PROPs, Operads and Categories}\label{operandcat}

The operad $\Opprop{C}$ has two important full suboperads:
\begin{equation}\label{inclusion.op}
 \xymatrix{\Opcat{C}\ar@<-1.5pt>[r]^{j} &  \Opop{C}\ar@<-1.5pt>[r]^{w} & \Opprop{C}.}
\end{equation}

$\Opop{C}$ is the full suboperad spanned by the $C$-valences with exactly one output;
 as a consequence, its operations are all the c.o. $C$-graphs which are trees; the category of $\Opop{C}$-algebras
in $\Mm$ is isomorphic to the category of $C$-coloured $\Mm$-enriched operads $\EOperfc{\Mm}{C}$.\\
$\Opcat{C}$ is the full suboperad spanned by the $C$-valences with exactly one output and one input; as a consequence, its
 operations are all the c.o. linear $C$-graphs (i.e. non-empty \coa graphs in which all the nodes
has valence $(1,1)$). the category of $\Opcat{C}$-algebras
in $\Mm$ is isomorphic to $\ECatfc{\Mm}{C}$ the category of $\Mm$-enriched categories with set of objects equal to $C$.
The composite $wj$ will be denoted by $k$. 

The inclusions $j$ and $w$ produce adjunctions between the categories of algebras:
\[
\xymatrix{
            \ECatfc{\Mm}{C} 
            \ar@<3pt>[r]^-{j_!}&  \ar@<3pt>[l]^-{j^*} \EOperfc{\Mm}{C} \ar@<3pt>[r]^-{w_!}&  \ar@<3pt>[l]^-{w^*} \EPropfc{\Mm}{C}
          }
\] 
and by composition
\[
 \xymatrix{\ECatfc{\Mm}{C} 
            \ar@<3pt>[r]^-{k_!}&  \ar@<3pt>[l]^-{k^*} \EPropfc{\Mm}{C};}
\]
the left adjoints $j_!$ and $w_!$ are both fully faithful (\emph{cf.}  corollary \ref{cor:ff}).

The underlying category of the operad $\EPropfc{\Mm}{C}$ is isomorphic to $\Sigma\Val{C}$, i.e. the permutation groupoid of $C$-valences; seen $\Sigma\Val{C}$ as an operads, its category of algebras in $\Mm$ is equivalent to the functor category $\Mm^{\Sigma\Val{C}}$, i.e. the category of symmetric $\Val{C}$-bicollections. Similarly the discrete operad $\disf{\EPropfc{\Mm}{C}}\cong \Val{C}$ has $\Mm$-bicollections as algebras (in $\Mm$). The inclusions:
\[
 \xymatrix{\Val{C} \ar@<-1.5pt>[r]^{} &  \Sigma\Val{C} \ar@<-1.5pt>[r]^{w} & \Opprop{C}.}
\]
induce adjunctions (\ref{eq:freeprop}) and (\ref{eq:freesigmaprop}).

\subsubsection{Free PROP generated by an operad} For every $C$-coloured $\Mm$-operad $\ope{O}$ the free $C$-coloured $\Mm$-PROP $w_!\ope{O}$ can be described by the following formula:

\begin{equation}\label{eq:eq free prop oper}
w_!(\ope{O})\ioval{\mathbf{c}}{\mathbf{d}}=\coprod_{\morp{f}{\fcar{n}}{\fcar{m}}} \bigotimes_{i=1}^{m} \ope{O}\ioval{\covf{\mathbf{c}}{f}{i}}{d_i}.
\end{equation}

In the case $\Mm=\Set$ thus for every $\mathbf{c}=(n,c), \mathbf{d}=(m,d)\in \Str{C}$ the elements of $w_!\ope{O}(\mathbf{c},\mathbf{d})$ 
are couples $(f,\{o_i\}^{m}_{i=1})$
where $\morp{f}{\fcar{n}}{\fcar{m}}$ is a function of sets and $o_i\in \ope{O}(\covf{\mathbf{c}}{f}{i},d_i)$ for every $i\in \fcar{m}$; such a couple can be represented as a forest of one-node trees labelled by operations of $\ope{O}$ with inports shuffled by $\omega_f$,the unshuffling of $f$ (Figure \ref{fig:free prop ope}).  
\begin{center}
    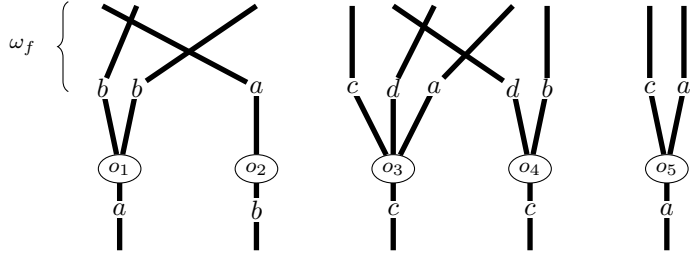
\begin{figure}[!ht]
        \begin{tikzpicture}[scale=0.9]
        \foreach \x in {1,...,3}
        {
            \coordinate (a-\x-up) at ($(0,0)+(-2*3*0.5+\x*2,0)$);	
        }
        \foreach \x in {1,...,2}
        {
            \coordinate (b-\x-up) at ($(5,0)+(-2*2*0.5+\x*2,0)$);	
        }	
        \permgrl{a-1-up/a-1-up/$a$,
            a-2-up/a-2-up/$b$,
            a-3-up/a-3-up/$c$,
            b-1-up/b-1-up/$c$,
            b-2-up/b-2-up/$a$}
        {1.2}
        \grnodes{a-1-up-up}{2}{0.5}{1.2}{\footnotesize $o_1$}
        \grnodes{a-2-up-up}{1}{0.5}{1.2}{\footnotesize $o_2$}
        \grnodes{a-3-up-up}{3}{0.6}{1.2}{\footnotesize $o_3$}
        \grnodes{b-1-up-up}{2}{0.5}{1.2}{\footnotesize $o_4$}
        \grnodes{b-2-up-up}{2}{0.5}{1.2}{\footnotesize $o_5$}
        \permgl{a-1-up-up-1/a-2-up-up-1/$a$,
            a-1-up-up-2/a-1-up-up-1/$b$,
            a-2-up-up-1/a-1-up-up-2/$b$,
            a-3-up-up-1/a-3-up-up-1/$c$,
            a-3-up-up-2/b-1-up-up-1/$d$,
            a-3-up-up-3/a-3-up-up-2/$d$,
            b-1-up-up-1/a-3-up-up-3/$a$,
            b-1-up-up-2/b-1-up-up-2/$b$,
            b-2-up-up-1/b-2-up-up-1/$c$,
            b-2-up-up-2/b-2-up-up-2/$a$}{1.2}
        \node (p1) at ($(a-1-up-up-1)+(-0.5,-0.2)$) {}; 
        \node (p2) at ($(p1|- a-1-up-up-1-up)+(0,0.2)$) {};
        \draw [decorate,decoration={brace,amplitude=4pt},xshift=-4pt,yshift=0pt]
        (p1) -- (p2) node [black,midway,xshift=-17pt] {\footnotesize
            $\omega_f$};
        \end{tikzpicture}
        \caption{Presentation of $(f,\{o_i\}^5_{i=1})\in w_!\ope{O}(a,b,b,c,d,d,a,b,c,a;a,b,c,c,a)$}\label{fig:free prop ope}
    \end{figure}
\end{center}
Given a morphism $\morp{g}{\fcar{n}}{\fcar{m}}$ and every $i\in \fcar{m}$ let $\morp{g^{\sharp}_i}{l}{n}$ be the unique order preserving injective function
whose image is $g^{-1}(i)$; for every $\mathbf{c}=(n,c)\in \Str{C}$ define $\covf{\mathbf{c}}{g}{i}=(l,cf^{\sharp})$.
For example if $\morp{g}{\fcar{3}}{\fcar{2}}$ is such that $g(1)=f(3)=1$ and $g(2)=2$ and $\mathbf{c}=(c_1,c_2,c_3)$
then $\covf{\mathbf{c}}{g}{1}=(c_1,c_3)$ and $\covf{\mathbf{c}}{g}{2}=(c_2)$.

Given $\mathbf{e}=(l,e)\in \Str{C}$ and morphisms $(f,\{o_i\}^{m}_{i=1})\in \FSm{\Mop}(\mathbf{c},\mathbf{d})$ and 
$(g,\{p_i\}^{e}_{i=1})\in \FSm{\Mop}(\mathbf{d},\mathbf{e})$ their composition is defined as
\[
(gf,\{\omega_i^*(p_i \circ (o_{g_i^{\sharp}(1)},\dots,o_{g_i^{\sharp}(k_i)}))\}_{i=1}^{e})
\]
where $k_i=\card{f^{-1}(i)}$ for every $i\in \fcar{m}$ and
$\omega_i\in \Sigma_{\card{(gf)^{-1}(i)}}$ is the permutation that makes sure that the operation has the right valence (and it depends only on $f$ and $g$); more precisely $\omega_i$ is the  the unshuffling
of $f$ restricted to $(gf)^{-1}(i)$ (\S \ref{not:total orders and unshuffles}).

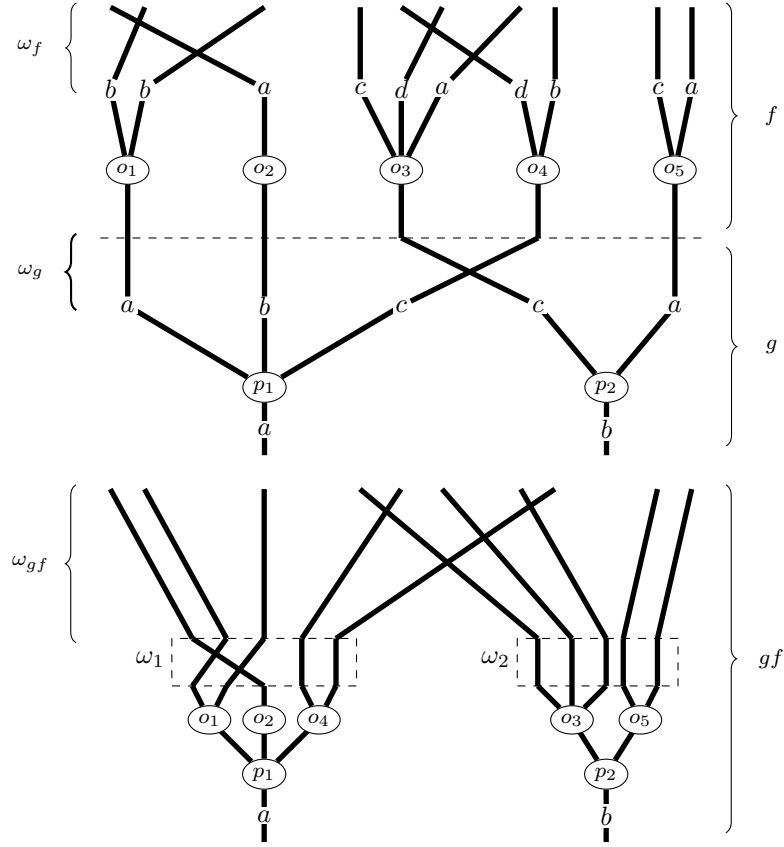
\begin{figure}[!ht]
    \begin{tikzpicture}[scale=0.9]
    \coordinate (a) at (0,0);
    \grnodes{a}{3}{2}{1.2}{\footnotesize $p_1$}
    \grouts{a}{1}{a}
    \coordinate (b) at (5,0);
    \grnodes{b}{2}{2}{1.2}{\footnotesize $p_2$}
    \grouts{b}{1}{b}
    \permgl{a-1/a-1/$a$,
        a-2/a-2/$b$,
        b-1/a-3/$c$,
        b-2/b-2/$a$,
        a-3/b-1/$c$}
    {1}
    \permg{a-1-up/a-1-up,
        a-2-up/a-2-up,
        a-3-up/a-3-up,
        b-1-up/b-1-up,
        b-2-up/b-2-up}
    {1}
    \grnodes{a-1-up-up}{2}{0.5}{1.2}{\footnotesize $o_1$}
    \grnodes{a-2-up-up}{1}{0.5}{1.2}{\footnotesize $o_2$}
    \grnodes{a-3-up-up}{3}{0.6}{1.2}{\footnotesize $o_3$}
    \grnodes{b-1-up-up}{2}{0.5}{1.2}{\footnotesize $o_4$}
    \grnodes{b-2-up-up}{2}{0.5}{1.2}{\footnotesize $o_5$}
    \permgl{a-1-up-up-1/a-2-up-up-1/$a$,
        a-1-up-up-2/a-1-up-up-1/$b$,
        a-2-up-up-1/a-1-up-up-2/$b$,
        a-3-up-up-1/a-3-up-up-1/$c$,
        a-3-up-up-2/b-1-up-up-1/$d$,
        a-3-up-up-3/a-3-up-up-2/$d$,
        b-1-up-up-1/a-3-up-up-3/$a$,
        b-1-up-up-2/b-1-up-up-2/$b$,
        b-2-up-up-1/b-2-up-up-1/$c$,
        b-2-up-up-2/b-2-up-up-2/$a$}{1.2}
    \node (p1) at ($(a-1-up-up-1)+(-0.5,-0.2)$) {}; 
    \node (p2) at ($(p1|- a-1-up-up-1-up)+(0,0.2)$) {};
    \draw [decorate,decoration={brace,amplitude=4pt},xshift=-4pt,yshift=0pt]
    (p1) -- (p2) node [black,midway,xshift=-17pt] {\footnotesize
        $\omega_f$};
    \node (q2) at ($(p1|- a-1)+(0,-0.2)$) {};
    \node (q1) at ($(p1|- a-1-up)+(-0,0.2)$) {};
    \draw [thick,decorate,decoration={brace,amplitude=4pt},xshift=-4pt,yshift=0pt]
    (q2) -- (q1) node [black,midway,xshift=-17pt] {\footnotesize
        $\omega_g$};
    \draw[dashed] ($(a-1-up)+(-0.4,0)$) -- ($(b-2-up)+(0.4,0)$); 
    \node (s1) at ($(b-2-up-up-2-up)+(0.5,0.2)$) {}; 
    \node (s2) at ($(s1|- a-1-up)+(0,0)$) {};
    \draw [decorate,decoration={brace,amplitude=4pt},xshift=-4pt,yshift=0pt]
    (s1) -- (s2) node [black,midway,xshift=17pt] {\footnotesize
        $f$};
    \node (t1) at ($(s1|- a-1-up)+(0,0)$) {}; 
    \node (t2) at ($(s1|- outa)+(0,0)$) {};
    \draw [decorate,decoration={brace,amplitude=4pt},xshift=-4pt,yshift=0pt]
    (t1) -- (t2) node [black,midway,xshift=17pt] {\footnotesize
        $g$};	    
    \end{tikzpicture}
    \begin{tikzpicture}[scale=0.9]
    \coordinate (a) at (0,0);
    \grnodes{a}{3}{0.8}{0.8}{\footnotesize $p_1$}
    \grouts{a}{1}{a}
    \coordinate (b) at (5,0);
    \grnodes{b}{2}{1}{0.8}{\footnotesize $p_2$}
    \grouts{b}{1}{b}
    \coordinate (ga) at (a);
    \centerp{ga}{3}{2}{1.2}
    \coordinate (gb) at (b);
    \centerp{gb}{2}{2}{1.2}
    \upcopy{ga-1,
        ga-2,
        gb-1,
        gb-2,
        ga-3}
    {2}
    \centerp{ga-1-up}{2}{0.5}{1}
    \centerp{ga-2-up}{1}{0.5}{1}
    \centerp{ga-3-up}{3}{0.6}{1}
    \centerp{gb-1-up}{2}{0.5}{1}
    \centerp{gb-2-up}{2}{0.5}{1}
    \upcopy{ga-1-up-1,
        ga-1-up-2,
        ga-2-up-1,
        ga-3-up-1,
        ga-3-up-2,
        ga-3-up-3,
        gb-1-up-1,
        gb-1-up-2,
        gb-2-up-1,
        gb-2-up-2}{1.2}
    \grnodes{a-1}{2}{0.5}{0.5}{\footnotesize $o_1$}
    \grnodes{a-2}{1}{0.5}{0.5}{\footnotesize $o_2$}
    \grnodes{a-3}{2}{0.5}{0.5}{\footnotesize $o_4$}
    \grnodes{b-1}{3}{0.5}{0.5}{\footnotesize $o_3$}
    \grnodes{b-2}{2}{0.5}{0.5}{\footnotesize $o_5$}
    \groupn{a-1-1,a-1-2,a-2-1,a-3-1,a-3-2}{u-}
    \upcopy{u-1,u-2,u-3,u-4,u-5}{0.7}
    \conx{1/3,2/1,3/2,4/4,5/5}{u-}{-up}{u-}{}
    \groupn{b-1-1,b-1-2,b-1-3,b-2-1,b-2-2}{v-}
    \upcopy{v-1,v-2,v-3,v-4,v-5}{0.7}
    \conx{1/1,2/2,3/3,4/4,5/5}{v-}{-up}{v-}{}
    \conx{a-1-up-1/1,
        a-1-up-2/2,
        a-2-up-1/3,
        a-3-up-2/4,
        b-1-up-2/5}
    {g}{}{u-}{-up.center}
    \conx{
        a-3-up-1/1,
        a-3-up-3/2,
        b-1-up-1/3,
        b-2-up-1/4,
        b-2-up-2/5}
    {g}{}{v-}{-up.center}
    \draw[dashed] ($(u-1-up)+(-0.3,0)$) rectangle ($(u-5)+(0.3,0)$) node[pos=0,xshift=-8pt,yshift=-8pt] {$\omega_1$};
    \draw[dashed] ($(v-1-up)+(-0.3,0)$) rectangle ($(v-5)+(0.3,0)$) node[pos=0,xshift=-8pt,yshift=-8pt] {$\omega_2$};
    \node (s1) at ($(gb-2-up-2)+(0.5,0.2)$) {}; 
    \node (s2) at ($(s1|- outa)+(0,0)$) {};
    \draw [decorate,decoration={brace,amplitude=4pt},xshift=-4pt,yshift=0pt]
    (s1) -- (s2) node [black,midway,xshift=17pt] {\footnotesize
        $gf$};
    \node (p1) at ($(ga-1-up-1)+(-0.5,0.2)$) {}; 
    \node (p2) at ($(p1|- u-1-up)+(0,-0.2)$) {};
    \draw [decorate,decoration={brace,amplitude=4pt},xshift=-4pt,yshift=0pt]
    (p2) -- (p1) node [black,midway,xshift=-17pt] {\footnotesize
        $\omega_{gf}$};
    \end{tikzpicture}
    \caption{Graphical presentation of the composition of two morphisms $(f,\{o_i\}^5_{i=1})$, $(g,\{p_j\}^2_{j=1})$ in $w_!\ope{O}$}
\end{figure}

The identity of $\mathbf{c}$ is given by $(\id_n,\{\id_{c_i}\}^n_{i=0})$.

The tensor product is defined on objects as $(n,c)\boxtimes (m,d)=(n+m,c\ast d)$ and on morphisms in the obvious way.

Let $\mathsf{Com}$ be the terminal coloured $\Set$-operad, then $\FSm{Com}$ is isomorphic to $\Gamma$ (i.e. the skeletal category of the category of finite sets).

The construction of the PROP $w_!\ope{O}$ is not new and appears in several places in the literature; for example, a detailed description of this construction (in the one-coloured case) appears in \cite[\S 5.4.1]{LV12} and \cite[\S 2]{Fr15draft}.

For convenience we also rewrite formula (\ref{eq:eq free prop oper}) in the case in which $\ope{O}$ is generated by a $\Mm$-category,
i.e. when $\ope{O}\cong j_!(\cat{C})$ for some $\cat{C}\in \ECat{\Mm}$:
\begin{equation}\label{eq:free prop cat}
k_!\cat{C}(\str{c},\str{d})\cong w_!j_!\cat{C}(\str{c},\str{d})\cong \coprod_{f\in \fSigma(n,m)} \bigotimes_{i\in \fcar{m}} \cat{C}(c_{f^{-1}(i)}, d_i)
\end{equation}
for every $\str{c},\str{d}\in \Str{C}$ of length $n$ and $m$ respectively. 
Note that (the underlying symmetric monoidal $\Mm$-category of) $k_!\cat{C}$ can be characterised as the \emph{free symmetric monoidal category} generated by $\cat{C}$.

\subsection{Some Operadic Adjunctions}\label{sec.operadj}
In appendix \ref{sec: alg as Kan extensions} we give a description of the adjunction between the categories of algebras induced by a morphism $f$ of operads; in particular we described the left adjoint as a left Kan extension along $w_!f$.
 Here we focus on examples in which the target of $f$ is $\Opprop{C}$;
as usual for left Kan extensions, the description of $f_!$ will
involve colimits indexed by the comma category $\comcat{w_!f}{\mathbf{v}}$ for some $\mathbf{v}\in \FSm{\Opprop{C}}$. 

Since $f_!$ is strong monoidal, it value on the objects is determined (up to isomorphisms) by its value on $\mathbf{v}\in\Val{C}\subset \Cl{w_!\Opprop{C}}$.

To better understand how the objects of these comma categories look like we are going to describe the slice categories over objects of $\FSm{\Opprop{C}}$. We then proceed with the description of the PROP associated to an operadic Grothendieck construction.
Finally, we describe the adjunctions of algebras which are relevant for us.

\subsubsection{Slice in \texorpdfstring{$w_!\Opprop{C}$}{PROPs}}\label{sec:sliceprop}
Fix a set $C$ and a $C$-valence $\str{s}$; the comma category $\comcat{(w_!\Opprop{C})}{{\str{s}}}$ can be described in the following way:\\
\textbf{Objects:} are all the \coa $C$-graphs with $C$-valence $\str{s}$;\\
\textbf{Morphisms:} suppose two \coa $C$-graphs $G$, $H$ with residue $\str{s}$ are given; an element of the hom-set $(\FSm{\Opprop{C}}/{\str{s}})(G,H)$
is a couple $(\{K_i\}_{i=1}^m, \alpha)$, where  $\{K_i\}_{i=1}^m$ is a sequence of \coa $C$-graphs insertable over $H$ and $\alpha$ is a permutation for its insertion over $H$ (def. \ref{permins})  such that $G\cong H \circ_{\alpha} (K_1,\dots,K_m)$ (\S \ref{multins}).

In Figure \ref{graphslice} for example, $G$ is the graph on the left, $x,y$ are the nodes marked by $1$ and $2$, $H^G_{x,y}$ is the target and $K^G_{xy}$ is the graph in the first box from the left; $\morp{f}{\fcar{3}}{\fcar{2}}$ is such that $f(3)=2$, $f(1)=f(2)=1$.

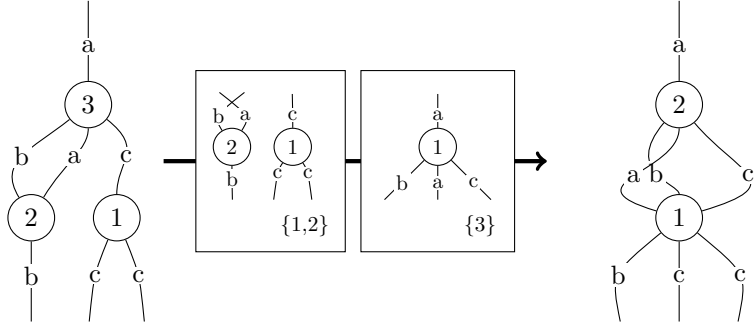
\begin{figure}
    \begin{tikzpicture}
    \node (s) at (-4,0) {
        \begin{tikzpicture}[scale=1.5]
        \node[vert] (a) at (0,0.5) {3};
        \node[vert] (b) at  (-0.5,-0.5) {2};
        \node[vert] (c) at (0.25,-0.5) {1};
        \inports{0,1.5}{1}{0};
        \outports{0,-1.5}{3}{0.25};
        \incheckpt{a}{1}{0}{0.5};
        \incheckpt{c}{1}{0}{0.5};
        \incheckpt{b}{2}{0.3}{0.5};
        \outcheckpt{a}{3}{0.25}{-0.4};
        \outcheckpt{b}{1}{0}{-0.5};
        \outcheckpt{c}{2}{0.2}{-0.5};
        \connectntoinp{a}{1/1/a};
        \connectntoop{b}{1/1/b};
        \connectntoop{c}{1/2/c,2/3/c};
        \connectnodes{a}{b}{1/1/b,2/2/a};
        \connectnodes{a}{c}{3/1/c};
        \end{tikzpicture} };
    
    \node (t) at (4,0) {
        \begin{tikzpicture}[scale=1.5]
        \node[vert] (a) at (0,0.5) {2};
        \node[vert] (b) at  (0,-0.5) {1};
        \inports{0,1.5}{1}{0};
        \outports{0,-1.5}{3}{0.25};
        \incheckpt{a}{1}{0}{0.5};
        \incheckpt{b}{3}{0.5}{0.3};
        \outcheckpt{a}{3}{0.25}{-0.6};
        \outcheckpt{b}{3}{0.3}{-0.5};
        \connectntoinp{a}{1/1/a};
        \connectntoop{b}{1/1/b,2/2/c,3/3/c};
        \connectnodes{a}{b}{1/2/b,2/1/a,3/3/c};
        \end{tikzpicture} };
    \draw[->,ultra thick] (s.east) -- 
    node{
        \begin{tikzpicture}
        \node[fill=white,draw,-] at (-1,0) {
            \begin{tikzpicture}[scale=0.8, every node/.style={transform shape}]
            \inports{-0.17,1}{3}{0.33};
            \outports{0.17,-1}{3}{0.33};
            \node[vert] (b) at  (-0.5,0) {2};
            \node[vert] (c) at (0.5,0) {1};
            \incheckpt{b}{2}{0.3}{0.5};
            \outcheckpt{b}{1}{0.25}{-0.5};
            \incheckpt{c}{1}{0.3}{0.5};
            \outcheckpt{c}{2}{0.25}{-0.4};
            \connectntoinp{b}{1/2/a,2/1/b};
            \connectntoop{b}{1/1/b};
            \connectntoinp{c}{3/1/c};
            \connectntoop{c}{1/2/c,2/3/c};
            \node at (0.7,-1.3) {\{1,2\}};
            \end{tikzpicture}
        };
        \node[fill=white,draw,-] at (1.2,0) {
            \begin{tikzpicture}[scale=0.8, every node/.style={transform shape}]
            \inports{0,1}{1}{0.5};
            \outports{0,-1}{3}{0.5};
            \node[vert] (a) at  (0,0) {1};
            \incheckpt{a}{1}{0}{0.5};
            \outcheckpt{a}{3}{0.3}{-0.6};
            \connectntoinp{a}{1/1/a};
            \connectntoop{a}{1/1/b,2/2/a,3/3/c};
            \node at (0.7,-1.3) {\{3\}};
            \end{tikzpicture}
        };
    \end{tikzpicture}
} 
(t.west);
\end{tikzpicture}
\caption{Example of a morphism in $\comcat{w_!{\Opprop{C}}}{\str{s}}$; the graph in the $n$-th box from the left
    is inserted on the $n$-th node of the target. The numbers on the lower-right corner of the $n$-th box indicate
    which nodes of the source are sent to the $n$-th node of the target.}\label{graphslice}
\end{figure}

As another example let $G$ be a \coa $C$-graph of $C$-valence $\str{s}$ and $x,y$ be two nodes of $G$ with no dead-ends paths of length greater than $1$ between them and consider the \coa $C$-graphs $H^G_{xy}$ and $K^G_{xy}$ defined in \S \ref{coa.decomp}; proposition \ref{prop:grapheldecomp} tell us that $f_{xy}=((C_{\sigma_G(1)},\dots,K^G_{xy},\dots,C_{\sigma_G(n)}),\alpha_{xy})$ is a morphism in $\FSm{\Opprop{C}}/{\mathbf{v}}$
\begin{equation}\label{comp.morp}
\lmorp{f_{xy}}{G}{H^G_{xy}.}
\end{equation}

A morphism $\morp{(\{K_i\}_{i=1}^m, \alpha)}{G}{H}$ is an isomorphism if and only if $\alpha$ is a bijection and in that case
all $K_i$'s are $C$-corollas; such an isomorphism uniquely define an isomorphism of
(not completely ordered) $C$-graph from $G$ to $H$ preserving the port order: $\alpha$ determines the image of the nodes and the $K_i$ determines the image
of edges (note that edges which are not ports of a node are necessarily ports of $G$, thus their image is determined by the requirement
that the port order is preserved).

This assignment establishes a bijection between the set of isomorphisms from $G$ to $H$ in $\comcat{w_!{\Opprop{C}}}{{\str{s}}}$ and the set of
isomorphisms of $C$-graphs from $G$ to $H$ preserving the port order.

\subsubsection{The PROP for operadic \texorpdfstring{$I$}{I}-families}\label{sec.feypar}

Let $\morp{\ofam{F}}{I}{\Oper}$ be an operadic $I$-family.

Consider the operadic Grothendieck construction $\gco \ofam{F}$. The PROP $\FSm{(\gco \ofam{F})}$ has the following description:\\
\textbf{Objects:} are finite sequences $\{(i_k,c_k)\}^n_{k=1}$ where $i_k\in I$ and $c_k\in \ofam{F}(i_k)$ for every $1\leq k \leq n$.\\
\textbf{Morphisms:} given two objects $\{(i_k,c_k)\}^n_{k=1}$ and $\{(j_h,d_h)\}^m_{h=1}$ a morphism between them are 
triples $(\alpha, \{f_k\}^n_{k=1}, \{p_h\}^m_{h=1})$ where:
\begin{itemize}
    \item[-] $\morp{\alpha}{\fcar{n}}{\fcar{m}}$ is a function;
    \item[-] $\morp{f_k}{i_k}{j_{\alpha{k}}}$ is a morphism in $I$ for every $1\leq k \leq n$;
    \item[-] $p_h$ is an operation in  $\ofam{F}(d_h)(\{f_k(c_k)\}_{k\in \alpha^{-1}(h)};c_h)$ for 
    every $1\leq l \leq m$.
\end{itemize}

There is a canonical morphism of operads $\morp{\mathrm{colim}}{\gco (\ofam{F})}{\varinjlim \ofam{F}}$ which produces a morphism of PROPs
\[
\morp{\mathrm{colim}}{\FSm{(\gco \ofam{F})}}{\FSm{(\varinjlim \ofam{F})}}
\]
For every $i\in I$ let $\morp{e_i}{\ofam{F}(i)}{\varinjlim \ofam{F}}$ be the $i$-morphism defining the colimit.

Let $E=\varinjlim (\Clf \circ\ofam{F})$ be the set of colours of $\varinjlim \ofam{F}$. For every $e\in E$ the comma category 
$\comcat{\mathrm{colim}}{e}$ has the following description:\\
\textbf{Objects:} are couples $(\{(i_k,c_k)\}_{k\in \fcar{n}}, o)$ where $\{(i_k,c_k)\}_{k\in \fcar{n}}$ is an object in $\FSm{(\gco \ofam{F})}$ and
$o$ is an operation in $(\varinjlim \ofam{F})(\{e_{i_1}(c_{i_1})\}^n_{i=1};e)$.

\textbf{Morphisms:} a morphism between $(\{(i_k,c_k)\}^n_{k=1}, o_1)$ and $(\{(j_h,d_h)\}^n_{h=1}, o_2)$ is a morphism  $(\alpha, \{f_k\}_{k=1}^n, \{p_h\}_{l=1}^m)$ in 
$\FSm{(\gco \ofam{F})}$ such that $\omega^*_\alpha (o_2 \circ (\{e_{j_h}(p_h)\}_{h=1}^m))=o_1$.

\begin{rmk}\label{rmkpostipar} 
    When $I$ is a poset we can omit the collection $\{f_k\}^n_{k=1}$ from the description of a morphism, since it is completely determined by the source and the target. 
\end{rmk}
\subsubsection{Example I: Free PROPs generated by a bicollection}\label{sec:freeprop}
As a trivial example consider the morphism of operads $\arr{\Val{C}}{\Opprop{C}}$ which induces the free-forgetful adjunction (\ref{eq:freeprop}).
For every $\str{c}\in \Val{C}$ the comma category $\comcat{w_!i}{\str{c}}$ is equivalent to 
$\mathbb{G}_{\mathrm{ord},\str{c}}$, the category of \coa (i.e. completely ordered acyclic,\S \ref{apx:completely ordered graph}) $C$-graph of $C$-valence $\str{c}$ 
and isomorphisms of ordered graphs between them.
For every $C$-coloured bicollection $A\in \cat{M}^{\Val{C}}$
\begin{equation}\label{eq:secondfreeprop}
F_{\Opprop{C}}A(\str{c})\cong 
\underset{G\in \mathbb{G}_{\mathrm{ord},\str{c}}}{\varinjlim}
\bigotimes_{v\in \verx{G}} A(\arity{v}).
\end{equation}
\subsubsection{Example II: Free PROP generated by a symmetric bicollection}\label{sec:freesigmaprop}
Consider now the morphism of operads $\morp{\eta}{\Sigma \Val{C}}{\Opprop{C}}$ (\S \ref{sec.opprop}) which induces adjunction (\ref{eq:freesigmaprop}); for every $C$-valence $\str{v}\in \Val{C}$, the comma category
$\comcat{w_!\eta}{\str{v}}$ the category $\comcat{w_!\eta}{\str{v}}$ is the the maximal subgroupoid of the slice $\comcat{w_!{\Opprop{C}}}{\str{v}}$; 
it is equivalent to $\mathbb{G}_{\mathrm{port},\str{v}}$, the category with acyclic $C$-graph with a port order of $C$-valence $\str{v}$ as objects and  isomorphisms of $C$-graphs that preserves the port order as morphisms.
 
Let $\mathbb{G}_{\str{v}}$ be the groupoid of $C$-graph with residue $\str{v}$ and isomorphisms between them;
the functor $\morp{\pi}{\mathbb{G}_{\mathrm{port,\str{v}}}}{\mathbb{G}_{\mathrm{\str{v}}}}$ that forgets the order on the ports is a discrete opfibration with all fibers isomorphic to $\Aut(\str{v})$, the set of automorphisms of $\str{v}$ in $\Sigma\Val{C}$.

It follows that for every $C$-coloured symmetric $\cat{M}$-bicollection $A\in \cat{M}^{\Sigma \Val{C}}$ the free PROPs generated by $A$ has $\str{v}$-component 
\[
n_!A(\str{v})= \coprod_{G\in \pi_0(\mathbb{G}_{\str{c}})}\Aut(\str{v})\underset{\Aut(G)} \otimes \left( \bigotimes_{g\in \verx{G}} A(\arity{g}) \right). 
\]

\subsubsection{Example III: Change of colours}\label{sec.changeofcl}
For every $C,D\in \Set$ and every function $\morp{f}{C}{D}$, there is a morphism of operads
$\morp{\widetilde{f}}{\Opprop{C}}{\Opprop{D}}$ sending an operation (i.e. a \coa $C$-graph) $(G,\bar{\lambda},\tau,\sigma)$ to
$(f(G),\bar{\lambda},\tau,\sigma)$ (the same graph with the labelling changed).

This produces an adjunction between the categories of algebras
\[
\adjpair{\widetilde{f}_!}{\widetilde{f}^*}{\EPropfc{\Mm}{C}}{\EPropfc{\Mm}{D}}
\]

Note that when $f$ is injective the morphism $\widetilde{f}$ is fully-faithful, thus $f_!$ is a fully faithful functor (corollary \ref{cor:ff}).
 
To describe $\widetilde{f}_!$ it will be useful to give the following definition
\begin{defi}
A (\coa) $f$-graph is a couple $(G,\{l_u\}_{u\in N_G})$ where $G$ is a (\coa) graph, and $l_u$
is a $C$-labelling on $\resd{u}$, such that $fl_u$ and $fl_t$ coincide on $\cie{u}{t}$.
\end{defi}

\noindent In other words $f$-graphs are graphs whose ports are labelled by $C$ and whose inner edges are labelled by $C\underset{D}\times C$. Isomorphisms of (\coa) $f$-graphs are defined in the obvious way.

Given a $C$-coloured PROP $P$ and a $D$-valence $\mathbf{v}$
\[
\widetilde{f}_!(P)(\mathbf{v}) \cong \varinjlim_{(\mathbf{u},g) \in w_!\widetilde{f}/\mathbf{v}} P(\mathbf{u}).
\] 

The comma category $\comcat{w_!\widetilde{f}}{\mathbf{v}}$ has the following description:\\
\textbf{Objects:} according to the definition, the objects are couples $(\{\mathbf{u}_i\}_{i=1}^n,G)$ where $\{\mathbf{u}_i\}_{i=1}^n$ is a sequence of $C$-valences (for some $n\in \N$) and $G$ is a \coa $D$-graph $G$ of
arity $(f(\mathbf{u_1}),\dots,f(\mathbf{u_n}); \mathbf{v})$. Alternatively these objects can be described as \coa $f$-graphs with residue $\mathbf{v}$.\\
\textbf{Morphisms:} given two $f$-graphs $(G,\{l_i\}_{i\in N_G})$ and $(H,\{k_{j}\}_{j\in N_H})$,with $\card{N_G}=n$ and $\card{N_H}=m$, an element in $\widetilde{f}/\mathbf{v}(G,H)$
is a couple $(\{K_i\}_{i=1}^{m},\alpha)$ and the $\{K_i\}_{i=1}^m$ is a sequence of \coa $C$-graphs such that $\{f(K_i)\}_{i=1}^m$ is insertable over $H$
and $\morp{\alpha}{\fcar{n}}{\fcar{m}}$ is a permutation for that insertion over $H$ such that $G=H\circ_{\alpha} (f(K_1),\dots,f(K_m))$.

\begin{prop}\label{mate.prop}
Given a function $\morp{f}{C}{D}$, in the commutative diagram of adjunctions
\[
\xymatrix{\EOperfc{\Mm}{C} \ar@<3pt>[r]^{w_!} \ar@<-3pt>[d]_{f_!}& \EPropfc{\Mm}{C} \ar@<3pt>[d]^{f_!} \ar@<3pt>[l]^{w^*}\\
          \EOperfc{\Mm}{D} \ar@<3pt>[r]^{w_!} \ar@<-3pt>[u]_{f^*} & \EPropfc{\Mm}{D} \ar@<3pt>[l]^{w^*} \ar@<3pt>[u]^{f^*}}
\]
the mate $ w_! f^* \Rightarrow f^* w_!$ is an isomorphism.
\end{prop}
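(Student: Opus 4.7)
The strategy is to compare both functors via the explicit formula \eqref{eq:eq free prop oper} for the free PROP generated by an operad. For any $C$-coloured operad $\ope{Q}$ and $C$-valence $(\str{c},\str{d})$ with $\str{c}$ of length $n$ and $\str{d}$ of length $m$, this formula reads
\[
w_!(\ope{Q})\ioval{\str{c}}{\str{d}} \cong \coprod_{g\colon \fcar{n}\to \fcar{m}} \bigotimes_{i=1}^m \ope{Q}\ioval{\covf{\str{c}}{g}{i}}{d_i},
\]
while both pullbacks $f^*$ (for PROPs and for operads) act on values by pre-composition with $f$ on colour sequences.

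Applied to an operad $\ope{O}\in\EOperfc{\Mm}{D}$ and a $C$-valence $(\str{c},\str{d})$, this gives
\[
(f^* w_!\ope{O})\ioval{\str{c}}{\str{d}} \cong \coprod_{g} \bigotimes_{i=1}^m \ope{O}\ioval{\covf{f\str{c}}{g}{i}}{f(d_i)}
\quad\text{and}\quad
(w_! f^*\ope{O})\ioval{\str{c}}{\str{d}} \cong \coprod_{g} \bigotimes_{i=1}^m \ope{O}\ioval{f(\covf{\str{c}}{g}{i})}{f(d_i)}.
\]
Since the componentwise application of $f$ commutes with selecting the $g$-fibre, i.e. $f(\covf{\str{c}}{g}{i}) = \covf{f\str{c}}{g}{i}$, the two sides are canonically identified summand by summand; this defines a natural isomorphism $\varphi_{\ope{O}}\colon w_!f^*\ope{O}\cong f^*w_!\ope{O}$, which is clearly natural in $\ope{O}$ as well.

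It then remains to check that $\varphi$ coincides with the mate $w_!f^* \Rightarrow f^*w_!$ produced by the commutation $w_!f_! \cong f_!w_!$ of the left adjoints. Unwinding the definition of the mate as the composite
\[
w_! f^* \xrightarrow{\;\eta\;} f^* f_! w_! f^* \;\cong\; f^* w_! f_! f^* \xrightarrow{\;\epsilon\;} f^* w_!,
\]
using the unit $\eta$ of the $f$-adjunction for PROPs and the counit $\epsilon$ of the $f$-adjunction for operads, it suffices to evaluate on the summand indexed by $g=\id_n$, which corresponds to a generator of $w_!f^*\ope{O}$ coming directly from an operation of $f^*\ope{O}$; both routes through the diagram send such a generator to the same element of $f^*w_!\ope{O}$. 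I expect this last bookkeeping step to be the main obstacle, but it is purely formal given the explicit presentation of $w_!$ and the explicit form of the unit and counit of $f_! \dashv f^*$ in each category.
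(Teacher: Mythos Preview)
Your approach is essentially the same as the paper's: both compute $w_!f^*\ope{O}$ and $f^*w_!\ope{O}$ at a $C$-valence via the explicit formula \eqref{eq:eq free prop oper} and identify them summand by summand using $f(\covf{\str{c}}{g}{i})=\covf{f\str{c}}{g}{i}$. The paper is in fact more terse than you are---it simply asserts that the component of the mate \emph{is} this canonical isomorphism, without spelling out the unit/counit bookkeeping you outline; your remark that it suffices to check on generators (operations of $f^*\ope{O}$, i.e.\ the case $m=1$, since the mate is a morphism of $C$-coloured PROPs and $w_!f^*\ope{O}$ is freely generated by these) is the correct justification, though your phrasing ``the summand indexed by $g=\id_n$'' is slightly off, as this only makes sense when $n=m$.
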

\begin{proof}
For every $\ope{O}\in \EOperfc{\Mm}{D}$ and every $C$-valence $\ioval{\mathbf{a}}{\mathbf{b}}\in \Val{C}$ of valence $(n,m)$ the 
$\ioval{\mathbf{a}}{\mathbf{b}}$-component of the considered mate at $\ope{O}$ coincides with the isomorphism
\[ 
 w_!f^*\ope{O}\ioval{\mathbf{a}}{\mathbf{b}}\cong \coprod_{\morp{\alpha}{\fcar{n}}{\fcar{m}}} \bigotimes_{i=1}^{m}  \ope{O} \ioval{f(\covf{\mathbf{a}}{\alpha}{i})}{f(b_i)}\cong
 \]
 \[
 \cong\coprod_{\morp{\alpha}{\fcar{n}}{\fcar{m}}} \bigotimes_{i=1}^{m} \ope{O}\ioval{\covf{f(\mathbf{a})}{\alpha}{i}}{f(\mathbf{b})_i}\cong f^*w_!\ope{O}\ioval{\mathbf{a}}{\mathbf{b}}.
\]

\end{proof}
\noindent 
The following corollary can also be proven directly.

\begin{cor}\label{mate.cor}
Suppose that 
\[
\diagc{A}{B}{C}{D}{i}{f}{h}{g}{}
\]
is a push-out diagram in $\Set$ such that $i$ is an injective function; consider the commutative diagram of adjunctions
\[
\xymatrix{\EOperfc{\Mm}{A} \ar@<3pt>[r]^{f_!} \ar@<-3pt>[d]_{i_!} & \EPropfc{\Mm}{C} \ar@<3pt>[d]^{h_!} \ar@<3pt>[l]^{f^*}\\
          \EOperfc{\Mm}{B} \ar@<3pt>[r]^{g_!} \ar@<-3pt>[u]_{i^*} & \EPropfc{\Mm}{D}. \ar@<3pt>[l]^{g^*} \ar@<3pt>[u]^{h^*}}
\]
The mate $f_! i^* \Rightarrow h^* g_!$ is an isomorphism.
\end{cor}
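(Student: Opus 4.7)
The plan is to deduce Corollary \ref{mate.cor} formally from Proposition \ref{mate.prop} combined with a Beck--Chevalley property for the change-of-colour adjunctions on operads along the pushout. By Proposition \ref{mate.prop}, each of the horizontal arrows in the corollary's square factors (up to isomorphism) as a change-of-colour functor composed with $w_!$, and the mate identifying these two factorisations is invertible for any function of colours. A standard paste-of-mates argument then reduces the mate $f_! i^* \Rightarrow h^* g_!$ of the corollary to the corresponding operadic mate $\widetilde{f}_! \widetilde{i}^* \Rightarrow \widetilde{h}^* \widetilde{g}_!$: the former is an isomorphism if and only if the latter is.

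To verify the operadic Beck--Chevalley condition I would exploit the hypothesis that $i$ is injective. In $\Set$ the pushout of an injection is an injection, so $h$ is injective as well, and $D$ may be identified on the nose with $(C \setminus i(A)) \sqcup B$ modulo the identifications $i(a) \sim f(a)$ forced by the pushout. Given a $C$-coloured operad $\ope{O}$, one then computes both $\widetilde{f}_! \widetilde{i}^*(\ope{O})$ and $\widetilde{h}^* \widetilde{g}_!(\ope{O})$ explicitly: each is the $B$-coloured operad whose operations on tuples of colours in $f(A) \subseteq B$ agree with the corresponding operations of $\ope{O}$ on tuples in $i(A) \subseteq C$ (transported via the bijection $i \colon A \to i(A)$ followed by $f$), and whose operations involving any colour in $B \setminus f(A)$ are the freely adjoined (trivial) ones. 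The canonical mate transformation realises exactly this identification.

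The main obstacle will be the second step when $f$ is not assumed injective: then both $\widetilde{f}_!$ and $\widetilde{g}_!$ involve non-trivial coequalisers, and one must check that the identifications produced by $\widetilde{g}_!$ on the $D$-side, after restriction along $h$, agree with those produced by $\widetilde{f}_!$ on the $B$-side after restriction along $i$. The key input is that all identifications in $D$ arise from the relation $g \circ i = h \circ f$, so the injectivity of $i$ ensures that they are controlled entirely by $f$. Once the left adjoints are unwound using the Kan extension description of Appendix \ref{sec: alg as Kan extensions} (with slice categories over the target valence as in \S \ref{sec:sliceprop}), the mate becomes a level-wise identity.
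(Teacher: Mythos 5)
Your proposal follows the paper's own route: the printed proof of Corollary \ref{mate.cor} consists precisely of pasting the invertible mate of Proposition \ref{mate.prop} with the Beck--Chevalley isomorphism for the change-of-colour adjunctions on operads along a pushout with an injective leg, except that the paper imports the latter as \cite[Proposition B.26]{C14} instead of proving it. Your second and third paragraphs are in effect a (correct, if slightly informal in the non-injective case, where the operations on $f(A)$-tuples are free composites rather than literal transports of operations of $\ope{O}$) reconstruction of that imported operadic Beck--Chevalley statement, so the argument is essentially the same as the paper's.
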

\begin{proof}
This is an immediate consequence of 
\cite[Proposition B.26]{C14} and Proposition \ref{mate.prop}.
\end{proof}

\subsubsection{Example IV: Push-outs of PROPs}\label{sec.podiag}
Let $\dPo$ be the poset with three elements $O,A,B$ such that $O<B$ and $O<A$ (as in (\ref{pocat})).
Colimits over $\dPo$-diagram in $\Mm$ are just push-outs in $\Mm$.

From \S\ \ref{sec.diagalg} (see also Remark \ref{rmkBV}) we know that the colimits adjunction
\[
 \adjpair{\varinjlim}{\mathrm{const}}{\EPropfc{\Mm}{C}^{\dPo}}{\EOperfc{\Mm}{C}}
\]
is isomorphic to the adjunction 
\[
 \adjpair{c_!}{c^*}{\EPropfc{\Mm}{C}^{\dPo}}{\EOperfc{\Mm}{C}}
\]
induced by the morphism of operads $\morp{c}{\Opprop{C}\BVt \dPo}{\Opprop{C}}$.

So, given a $\dPo$-diagram of $C$-coloured PROPs $D\in \EPropfc{\Mm}{C}^{\dPo}$, the value of the push-out of $D$ at a $C$-valence $\mathbf{v}\in \Val{C}$
is
\begin{equation}\label{eq.colim1}
 c_!(D)\cong(\varinjlim D)(\mathbf{v})\cong \varinjlim_{(\mathbf{u},g)\in \comcat{\FSm{(\Opprop{C}\BVt {\dPo})}} {\mathbf{v}}} D(\mathbf{u}).
\end{equation}
It will be convenient to denote the PROPs $D(A),D(B)$ and $D(O)$ by $D_A$,$D_B$ and $D_O$.

The index category $\comcat{\FSm{(\Opprop{C}\BVt {\dPo})}} {\mathbf{v}}$ has the following description:\\
\textbf{Objects} are couples $(\mathbf{u},g)$; $\mathbf{u}=\{(s_i,p_i)\}_{i=1}^{n}$ is a sequence of couples where
$s_i\in \Val{C}$ and $p_i\in {\dPo}$ for every $i\in \fcar{n}$, while $g$ is a \coa
$C$ graph of arity $(s_1,\dots,s_n; \mathbf{v})$; alternatively we can describe this objects as couples $(g,M)$ where $g$ is a \coa $C$-graph
of valence $\mathbf{v}$ and $\morp{M}{N_g}{\{A,O,B\}=\Ob{\dPo}}$ is a function of sets (to recover the previous description set
$p_i=M(\sigma_g(i))$ for every $i\in \fcar{n}$).  

It will be convenient to give a name to this kind of objects:
\begin{defi}
A \emph{$\dPo$-marking} for a (\coa) $C$-graph $G$ is a function $\morp{M}{N_G}{\Ob{\dPo}}$.\\
A \emph{$\dPo$-marked (\coa) $C$-graph} (or \coa $\dPo$-$C$-graph for short) is a couple $(G,M)$ where $G$ is a (\coa) $C$-graph $M$ is a $\dPo$-marking for $G$.
The marking of a $\dPo$-marked \coa $C$-graph $G$ will be denoted by $M_G$.
\end{defi}
\begin{figure}
 \begin{tikzpicture}[scale=1]
 \node[verto] (o) at (0.5,-1.7) {3};
    \incheckpt{o}{2}{0.4}{0.5};
    \outcheckpt{o}{2}{0.5}{-0.4};
 \node[vertb] (b) at (3.5,-1) {1};
    \incheckpt{b}{2}{0.4}{0.5};
    \outcheckpt{b}{1}{0.3}{-0.5};  
 \node[verta] (a) at (0.5,0) {2};
    \incheckpt{a}{3}{0.4}{0.5};
    \outcheckpt{a}{2}{0.3}{-0.5};  
 \inports{1.5,1}{5}{0.75};
 \outports{1.5,-3}{3}{1};
 \connectntoinp{a}{1/2/a,2/1/c,3/3/a};
 \connectnodes{a}{o}{1/1/c,2/2/b};
 \connectntoop{o}{1/1/d,2/2/d};
 \connectntoinp{b}{4/1/d,5/2/d};
 \connectntoop{b}{1/3/b};
 \end{tikzpicture}
\caption{A graphical representation of a $\dPo$-marked $C$-graph of valence $(a,c,a,d,d;d,d,b)$: nodes marked by $A$ have the shape of a down-pointing triangle,
 nodes marked by $B$ have the shape of a right-pointing triangle,
 nodes marked by $O$ have the shape of a diamond.
}\label{fig.pcgraph}
\end{figure}
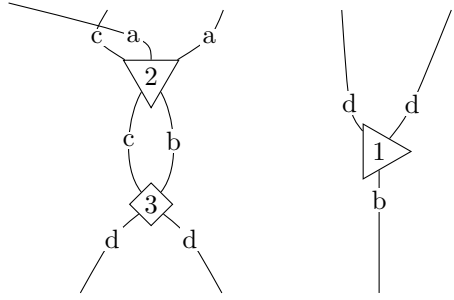
\noindent \textbf{Morphisms:} 
observe that graph insertion is defined for \coa $\dPo$-$C$-graphs as well:

\begin{defi}
Given a \coa $\dPo$-$C$-graph $G$ and a node $s\in N_G$ a \coa $\dPo$-$C$-graph $K$ is called \emph{insertable over $G$ at $s$} if 
so is the underlying graph of $K$ and $M_K(x)\leq M_G(s)$ for every $x\in N_K$.
The \coa $C$-graph $G\circ K$ inherits a $\dPo$-marking from $G$ and $K$.

A sequence of \coa $\dPo$-$C$-graphs $\{K_i\}_{i=1}^m$ with $m=\card{N_G}$
is called \emph{insertable over $G$} if $K_i$ is insertable at $\sigma_G(i)$ for
every $i\in \fcar{m}$. 
If $\morp{\alpha}{\fcar{n}}{\fcar{m}}$ is a permutation for the insertion of $\{K_i\}_{i=1}^m$ over $G$, the resulting \coa $C$-graph
\[
 G\circ_{\alpha} (K_1,\dots,K_m)
\]
inherits an obvious $\dPo$-marking.   
\end{defi}

\noindent Suppose two \coa $\dPo$-$C$-graphs $G$ and $H$ of valence $\mathbf{v}$ are given and set $n=\card{N_G}$, $m=\card{N_H}$. 
As one might expect, an element of $(\comcat{\FSm{(\Opprop{C}\BVt {\dPo})}} {\mathbf{v}})(G,H)$  is a couple
$(\{K_i\}_{i=1}^m,\alpha)$ where $\{K_i\}_{i=1}^m$ is a sequence of \coa $\dPo$-$C$-graphs
insertable over $H$ and $\morp{\alpha}{\fcar{n}}{\fcar{m}}$ is a permutation for the insertion of $\{K_i\}_{i=1}^m$ over $H$ 
such that $G=H\circ_{\alpha} (K_1,\dots,K_m)$.

The functor $\morp{w_!(c)}{\comcat{\FSm{(\Opprop{C}\BVt {\dPo})}} {\mathbf{v}}}{\comcat{\FSm{\Opprop{C}}}{\mathbf{v}}}$ forgets the $\dPo$-marking.

\begin{defi}
    A morphism $\morp{f}{G}{H}$ in $\comcat{\FSm{(\Opprop{C}\BVt {\dPo})}}{\mathbf{v}}$ is said to be \emph{graph-preserving} if $w_!(c)(G)=w_!(c)(H)$ and $w_!(c)(f)$ is equal to the identity.
\end{defi}
\noindent Informally, a morphism is graph-preserving if and only if it changes only the marking on the nodes from the source to the target.
    
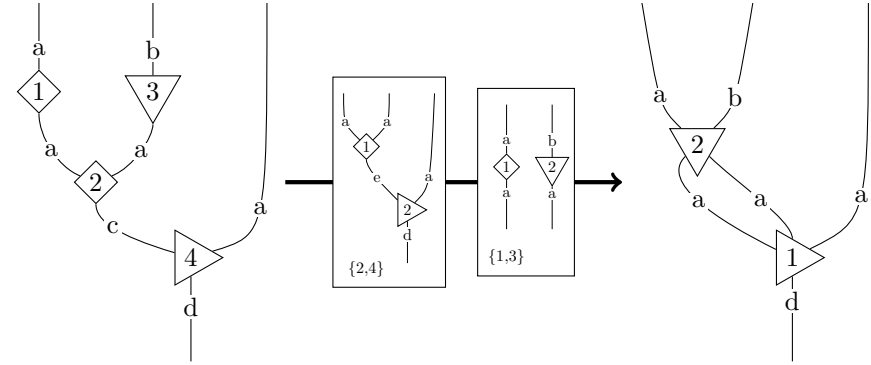
\begin{figure}
\begin{tikzpicture}
\node (s) at (-4,0) 
{
\begin{tikzpicture}
\inports{0,2.5}{3}{0.75}
\outports{0.5,-2.5}{1}{0.5}
\node[verto] (1) at (-1.5,1.2) {1};
   \incheckpt{1}{1}{0.5}{0.5};
   \outcheckpt{1}{1}{0.5}{-0.6};
\node[verta] (3) at (0,1.2) {3};
   \incheckpt{3}{1}{0.5}{0.5};
   \outcheckpt{3}{1}{0.5}{-0.6};
\node[verto] (2) at (-0.75,0) {2};
   \incheckpt{2}{2}{0.5}{0.2};
   \outcheckpt{2}{1}{0.5}{-0.5};
\node[vertb] (4) at (0.5,-1) {4};
   \incheckpt{4}{2}{1}{0.3};
   \outcheckpt{4}{1}{0.5}{-0.6};
\connectntoinp{1}{1/1/a};
\connectntoinp{3}{2/1/b};
\connectntoinp{4}{3/2/a};
\connectntoop{4}{1/1/d};
\connectnodes{1}{2}{1/1/a};
\connectnodes{3}{2}{1/2/a};
\connectnodes{2}{4}{1/1/c};         
\end{tikzpicture}
};
\node (t) at (4,0) 
{
\begin{tikzpicture}
\inports{0,2.5}{3}{0.75}
\outports{0.5,-2.5}{1}{0.5}
\node[verta] (2) at (-0.75,0.5) {2};
   \incheckpt{2}{2}{0.5}{0.5};
   \outcheckpt{2}{2}{0.5}{-0.5};
\node[vertb] (1) at (0.5,-1) {1};
   \incheckpt{1}{3}{0.5}{0.5};
   \outcheckpt{1}{1}{0.5}{-0.5};
\connectntoinp{1}{3/3/a};
\connectntoinp{2}{1/1/a,2/2/b};
\connectntoop{1}{1/1/d};
\connectnodes{2}{1}{1/1/a,2/2/a};         
\end{tikzpicture}
};
\draw[->,ultra thick] (s.east) --
node{
\begin{tikzpicture}[scale=0.6, every node/.style={transform shape}]
\node[fill=white,draw,-] at (1.5,0) {
\begin{tikzpicture}
\inports{0,1.5}{2}{0.5};
\outports{0,-1.5}{2}{0.5};
\node[verta] (2) at (0.5,0) {2};
   \incheckpt{2}{1}{0.5}{0.5};
   \outcheckpt{2}{1}{0.5}{-0.5};
\node[verto] (1) at (-0.5,0) {1};
   \incheckpt{1}{1}{0.5}{0.5};
   \outcheckpt{1}{1}{0.5}{-0.5};
\connectntoinp{1}{1/1/a};
\connectntoinp{2}{2/1/b};
\connectntoop{1}{1/1/a};
\connectntoop{2}{1/2/a};
\node at (-0.5,-2) {\{1,3\}};
\end{tikzpicture}
}; 
\node[fill=white, draw,-] at (-1.5,0) {
\begin{tikzpicture}
\inports{0,2}{3}{0.5};
\outports{0.4,-2}{1}{0.5};
\node[vertb] (2) at (0.4,-0.7) {2};
   \incheckpt{2}{2}{0.5}{0.5};
   \outcheckpt{2}{1}{0.5}{-0.5};
\node[verto] (1) at (-0.5,0.7) {1};
   \incheckpt{1}{2}{0.5}{0.5};
   \outcheckpt{1}{1}{0.5}{-0.5};
\connectntoinp{1}{1/1/a,2/2/a};
\connectntoinp{2}{3/2/a};
\connectntoop{2}{1/1/d};
\connectnodes{1}{2}{1/1/e};
\node at (-0.5,-2) {\{2,4\}};
\end{tikzpicture}
};       
\end{tikzpicture}
}
(t.west);
\end{tikzpicture}
\caption{A morphism}
\end{figure}

We can rewrite (\ref{eq.colim1}) as
\[
 (\varinjlim D)(\mathbf{v})\cong \varinjlim_{(g,M)\in \comcat{\FSm{(\Opprop{C}\BVt {\dPo})}} {\mathbf{v}}} (\bigotimes_{k\in N_g}D_{M(k)}(\resd{k}))
\]

To get a feeling on how the right-hand side looks like, if $(g,M)$ is the $\dPo$-$C$-graph of Figure \ref{fig.pcgraph} then
\[
 \bigotimes_{k\in N_g}D_{M(k)}(\resd{k})=D_b(\ioval{d,d}{B})\otimes D_A(\ioval{c,a,a}{c,b})\otimes D_O(\ioval{c,b}{d,d}).
\]

The unit $\ntra{l}{D}{c^*c_!(D)}$ has also an explicit description;
for every $X\in \dPo$ and $\mathbf{v}\in \Val{C}$,  let $C_{X,\mathbf{v}}$ be the unique untwisted $\dPo$-$C$-corolla with $C$-valence $\mathbf{v}$ and with its node marked by $X$;
the $\mathbf{v}$-component of $l_X$ is the canonical morphism
\begin{equation}\label{}
 \lmorp{\iota_{C_{X,\mathbf{v}}}}{D_X(\mathbf{v})}{\varinjlim_{(g,M)\in \comcat{(\FSm{\Opprop{C}\BVt \dPo})}{\mathbf{v}} } (\bigotimes_{k\in N_g}D_{M(k)}(\resd{k}))}
\end{equation}

\section{Push-out of a PROP along a morphism of operads}\label{sec.poalongop}
We are now ready to prove our main theorem about push-out of PROPs:
\begin{thm}\label{main.po}
Let $\morp{a}{U}{V}$ be a morphism of $\Mm$-operads injective on the colours and fully faithful and let $\morp{f}{w_!(U)}{P}$ be a morphism of PROPs.  
In the push-out diagram
\[
 \diagc{w_!(U)}{P}{w_!(V)}{R}{w_!(a)}{f}{b}{g}{}
\]
the morphism $b$ is injective on colours and fully-faithful.
\end{thm}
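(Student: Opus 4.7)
The proof splits into showing (i) injectivity on colours and (ii) fully-faithfulness. For (i), recall from \S\ \ref{secoperbif} that the projection $\Clf = \pi_{\fOpprop}\colon \EProp{\Mm} \to \Set$ preserves colimits, so $\Cl{R} \cong \Cl{V} \sqcup_{\Cl{U}} \Cl{P}$. Since $a$ is injective on colours, its push-out $b$ in $\Set$ is also injective, settling (i).

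For (ii), write $C_R := \Cl{R}$ and use Corollary \ref{mate.cor} together with Proposition \ref{mate.prop} to reduce to the case where all four PROPs live in the fiber $\EPropfc{\Mm}{C_R}$, with $\Cl{U}, \Cl{V}, \Cl{P}$ appearing as subsets of $C_R$ satisfying $\Cl{V} \cap \Cl{P} = \Cl{U}$. By the description of push-outs in \S\ \ref{sec.podiag}, for every $\mathbf{v} \in \Val{C_R}$,
\[
R(\mathbf{v}) \cong \varinjlim_{(g,M) \in \comcat{\FSm{(\Opprop{C_R} \BVt \dPo)}}{\mathbf{v}}} \bigotimes_{k \in N_g} D_{M(k)}(\resd{k}),
\]
with $D_A = P$, $D_B = w_!(V)$, $D_O = w_!(U)$. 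I fix $\mathbf{v} \in \Val{\Cl{P}}$ and aim to show that the natural map $P(\mathbf{v}) \to R(\mathbf{v})$ induced by $b$ is an isomorphism.

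Using formula (\ref{eq:eq free prop oper}), I refine the colimit by expanding each $B$- (resp.\ $O$-) marked node into a forest of trees labelled by $V$- (resp.\ $U$-) operations. The crucial geometric observation is that, in any mixed graph contributing to $R(\mathbf{v})$, every maximal connected sub-structure formed by $B$-marked nodes has its external legs appearing either as external ports of the whole graph, or as edges adjacent to $A$-marked nodes. In both cases the relevant edges carry colours both in $\Cl{V}$ (because they are ports of $V$-operations) and in $\Cl{P}$ (because they are external ports of $\mathbf{v} \in \Val{\Cl{P}}$ or ports of $P$-operations), hence in $\Cl{V} \cap \Cl{P} = \Cl{U}$. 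Each such maximal $V$-sub-tree therefore represents a composite operation in $V(\mathbf{w})$ for some $\mathbf{w} \in \Val{\Cl{U}}$, and by the fully-faithfulness of $a$ it descends uniquely to an operation in $U(\mathbf{w})$.

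This $U$-interpretation furnishes, inside the index category, a zig-zag from $(g,M)$ through a refined $O$-marked replacement (mapping to $(g,M)$ via $w_!(a)$) to a purely $A$-marked graph (via $f$); the colimit therefore equates every $B$-contribution to an $A$-contribution, and the same argument absorbs the $O$-marked nodes. Iterating, one sees that the colimit is exhausted by graphs whose nodes are all $A$-marked and whose external valence lies in $\Val{\Cl{P}}$, and on this sub-diagram the colimit is canonically $P(\mathbf{v})$ by the universal property of $P$ as a PROP (any graph of $P$-operations composes to a single $P$-operation via the vertical, horizontal and symmetric structure of $P$). The main obstacle in carrying this out carefully is the cofinality/coequalizer part of the argument: iterated absorption involves several choices (of maximal $V$-sub-tree, of refinement, of order of absorption), and their independence must be checked using the PROP interchange laws of $P$ and the compatibility of $f$ with the operadic composition in $U$. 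This makes the argument essentially combinatorial, relying on the explicit slice description of \S\ \ref{sec:sliceprop} and \S\ \ref{sec.feypar}.
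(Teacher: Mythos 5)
Your strategy is essentially the one the paper follows: reduce to the fibre over $\Cl{R}$ via the mate isomorphisms, express the push-out as a colimit over $\dPo$-marked \coa graphs, observe that the edges on the boundary between the $w_!(V)$-part and the $P$-part of such a graph must carry colours coming from $\Cl{U}$, and use full faithfulness of $a$ to re-interpret the corresponding $V$-operations as $U$-operations that $f$ then absorbs into $P$. Your ``maximal $V$-subtree'' observation is the same point the paper makes edge-by-edge: an inner edge labelled by a colour outside the image of $\Cl{P}$ must join two one-output nodes carrying $V$-operations, and such edges are eliminated one at a time via the decomposition $G\cong H^G_{xy}\circ K^G_{xy}$ of Proposition \ref{prop:grapheldecomp}; contracting a whole maximal subtree at once, as you propose, is a harmless repackaging.

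The genuine gap is exactly the step you flag at the end and leave open: the cofinality statement. Exhibiting, for each marked graph $G$, \emph{some} zig-zag down to an all-$P$-marked graph is not enough; one must also show that any two such reductions are coherently identified in the colimit, and a direct verification that the result is independent of all the choices (of maximal subtree, of refinement, of order of absorption) against the interchange laws is long and is not carried out in your sketch. The paper sidesteps this entirely: it formally inverts the marking-change morphisms $W$ (which are sent to isomorphisms in $\Mm$ precisely because $\trf{a}$ is an isomorphism), obtaining a localized index category $(\comcat{\FSm{\mathcal{G}}}{\mathbf{v}})^+$, and then observes that the forget-the-marking functor $c^+_{\mathbf{v}}$ into $\comcat{\FSm{\Opprop{D}}}{\mathbf{v}}$ is faithful and sends the marked corolla $C_{B,\mathbf{v}}$ to the corolla $C_{\mathbf{v}}$, which is terminal there. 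Faithfulness upstairs plus terminality downstairs force any two morphisms $G\to C_{B,\mathbf{v}}$ to coincide, so finality of $C_{B,\mathbf{v}}$ reduces to the mere existence of a morphism from each $G$ --- which is the part of the argument you did construct. Supplying this localization-plus-faithfulness device (or an equivalent mechanism for the connectedness of the relevant comma categories) is what is needed to turn your sketch into a proof; without it the claim that ``the colimit is exhausted by'' the all-$P$-marked graphs is unsupported.
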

\begin{proof}
Since $\morp{\Clf}{\EProp{\Mm}}{\Set}$ is a bifibration and $a$ is injective on colours, then $b$ is injective on colours.
Let $D=\Cl{R}$ and $C=\Cl{P}$; the two diagrams
\begin{equation}\label{diagpofc}
 \diagc{g_!a_!w_!(U)}{b_!(P)}{g_!w_!(V)}{R}{g_!(\trc{w_!(a)})}{b_!(\trc{f})}{\trc{b}}{\trc{g}}{}
\quad
\diagc{w_! g_! a_!(U)}{b_!(P)}{w_!g_!(V)}{R}{w_!g_!(\trc{a})}{b_!(\trc{f})}{\trc{b}}{\trc{g}}{}
\end{equation}
are isomorphic and they are both push-outs in $\EPropfc{\Mm}{D}$.

Since $b$ is injective on colours the unit of $(b_!,b^*)$ is an isomorphism (\emph{cf.} \S\ \ref{sec.changeofcl}), therefore to prove that $\trf{b}$ is an isomorphism
it is sufficient to prove that $b^*(\trc{b})$ is an isomorphism. 

Let $\morp{\mathbb{D}}{\dPo}{\EPropfc{\Mm}{D}}$ be the diagram represented by the upper-left corner of the right diagram in (\ref{diagpofc}), so
$\mathbb{D}_A=w_!g_!(V)$, $\mathbb{D}_O=w_!g_!a_!(U)$ and $\mathbb{D}_B=b_!(P)$;
it is actually a mixed $\dPo$-diagram of algebras in $\Mm$ (\S\ \ref{sec.diagalg}) with hierarchy functor $\kappa$ represented by the following inclusions of operads 
\[
 \diagc{\Opop{C}}{\Opprop{C}}{\Opop{D}}{\Opprop{D},}{b}{w}{w}{b}{}
\]

in other words $\mathbb{D}$ is an algebra in $\Mm$ for the operad $\mathcal{G}=\fbv{\mathcal{\Opprop{D}}}{\dPo}{\kappa}$.
 
To prove that $b^*(\trc{b})$ is an isomorphism is equivalent to check that for every $\mathbf{v}\in \Val{C}$
\[
 \lmorp{(\trc{b})_{\mathbf{v}}}{b_!(P)(\mathbf{v})}{R(\mathbf{v})}
\]
is an isomorphism.

From \S\ \ref{sec.podiag} we know that this map is isomorphic to
\begin{equation}\label{first.pomorp}
 \lmorp{\iota_{C_{B,\mathbf{v}}}}{b_!(P)(\mathbf{v})}{\varinjlim_{g\in \comcat{\FSm{\mathcal{G}}}{\mathbf{v}}} \bigotimes_{k\in N_g}\mathbb{D}_{M_{g(k)}}(\resd{k}).}
\end{equation}

Observe that $\trf{g_!(\trc{w_!(a)})}$ is and isomorphism; in fact it is isomorphic to $\morp{f_!w_!(\trf{a})}{f_!w_!(U)}{f_!a^*w_!(V)}$ (\emph{cf.} Corollary \ref{mate.cor}), 
which is an isomorphism since $\trf{a}$ is an isomorphism by assumption. 

The category $\comcat{\FSm{\mathcal{G}}}{\mathbf{v}}$ is the full subcategory of $\comcat{\FSm{\Opprop{D}\BVt \dPo}}{\mathbf{v}}$ that contains every \coa $\dPo$-$D$-graphs $G$ such that:
\begin{itemize}
 \item[-] every node marked by $A$ or $O$ has exactly one export;
 \item[-] every node marked by $O$ or $B$ has all the port labelled by elements of $C$. 
\end{itemize}
 
For every \coa $\dPo$-$D$-graph $G$ in $\comcat{\FSm{\mathcal{G}}}{\mathbf{v}}$ and every $u\in N_G$ marked by $O$ let $G_u$ be the 
\coa $\dPo$-$D$-graph that has the same underlying \coa $D$-graph and the same $\dPo$-marking except on $u$, where $M_{G_u}(u)=A$.
Let $\morp{w_{G,u}}{G}{G_u}$ be the unique graph-preserving morphism from $G$ to $G_u$;
Let $(\comcat{\FSm{\mathcal{G}}}{\mathbf{v}})^+\cong (\comcat{\FSm{\mathcal{G}}}{\mathbf{v}})[W^{-1}]$ be the small category obtained from $\comcat{\FSm{\mathcal{G}}}{\mathbf{v}}$
by formally inverting the morphisms in
\[
 W=\{\morp{w_{G,u}}{G}{G_u} \mid G\in \comcat{\FSm{\mathcal{G}}}{\mathbf{v}},\quad u\in N_G, M_{G}(u)=O\}.
\]
The diagram defining the colimit on the right in expression (\ref{first.pomorp}) can be extended to $(\comcat{\FSm{\mathcal{G}}}{\mathbf{v}})^+$
in a unique way since every morphism in $W$ is sent to an isomorphism in $\Mm$ and clearly
\[
 \varinjlim_{g\in \comcat{\FSm{\mathcal{G}}}{\mathbf{v}}} \bigotimes_{k\in N_g}\mathbb{D}_{M_{g(k)}}(\resd{k})\cong \varinjlim_{g\in (\comcat{\FSm{\mathcal{G}}}{\mathbf{v}})^+} \bigotimes_{k\in N_g}\mathbb{D}_{M_{g(k)}}(\resd{k})
\]
Therefore, to prove that (\ref{first.pomorp}) is an isomorphism, it is sufficient to prove that $C_{B,\mathbf{v}}$ is final in $(\comcat{\FSm{\mathcal{G}}}{\mathbf{v}})^+$.

Note that the canonical functor (which basically forget the $\dPo$-marking)
\[
 \lmorp{c_{\mathbf{v}}}{\comcat{\FSm{\mathcal{G}}}{\mathbf{v}}}{\comcat{\FSm{\Opprop{D}}}{\mathbf{v}}}
\]
is faithful and send the element of $W$ to isomorphisms.
Therefore there is a canonical faithful functor
\[
 \lmorp{c^+_{\mathbf{v}}}{(\comcat{\FSm{\mathcal{G}}}{\mathbf{v}})^+}{\comcat{\FSm{\Opprop{D}}}{\mathbf{v}}}
\]
which sends $C_{B,\mathbf{v}}$ to the corolla $C_{\mathbf{v}}$. Since $C_{\mathbf{v}}$ is final in $\comcat{\FSm{\Opprop{D}}}{\mathbf{v}}$, to check that $C_{B,\mathbf{v}}$ is final it is sufficient to check that there exists a morphism from $G$ to 
$C_{B,\mathbf{v}}$ for every $G\in (\comcat{\FSm{\mathcal{G}}}{\mathbf{v}})^+$.

Let $G$ be \coa $\dPo$-$D$-graph $G$ in $(\comcat{\FSm{\mathcal{G}}}{\mathbf{v}})^+$, if $G$ has nodes marked by $O$ we can 
compose graph-preserving morphisms (changing the markings from $O$ to $A$) to get a morphism $\morp{q}{G}{G'}$ where $G'$ is
a \coa $\dPo$-$D$-graph without nodes marked by $O$; thus we can restrict ourselves to the case in which $G$ has no nodes marked by $O$.

Suppose $G$ has no nodes marked by $O$; note that
since $G$ has $D$-valence $\mathbf{v}$ (which is actually a $C$-valence), there can not be ports labelled by elements of $D\backslash C$; let $n$ be the number of inner 
edges of $G$ labelled by elements in $D\backslash C$; we claim that it is not restrictive to suppose that $n=0$. 

In fact suppose $n\geq 0$ and $e$ is an inner edge of $G$; then $e$ has to be an export of a node $x$ and the input of a node $y$, so
both $x$ and $y$ has to be marked by $A$. As a consequence, $\cie{x}{y}=\{e\}$ (since $x$ has $e$ has unique output) and $x$ and $y$
has no other dead-ends paths between them. Let $H^G_{xy}$ be the \coa graph $D$-graph defined in \S\ \ref{coa.decomp}; the 
$\dPo$-marking of $G$ induces a natural $\dPo$-$D$-graph structure on $H^G_{xy}$.
The morphism (\ref{comp.morp}) in $\comcat{(\FSm{\Opprop{D}})}{\mathbf{v}}$ (obtained from proposition \ref{prop:grapheldecomp}) can be lifted to a morphism in $\dPo$-$D$-graph
\[
 \lmorp{\widetilde{f}_{xy}}{G}{H^G_{xy}}
\]
the $\dPo$-$D$-graph $H^G_{xy}$ has less (inner) edges labelled by elements of $D\backslash C$ than $G$ and (no nodes marked by $O$).
Iterating this process we can find a morphism $\morp{m}{G}{H}$ such that $H$ has no edges labelled by elements of $D\backslash C$.

Suppose then that $G$ has no edges labelled by elements of $D\backslash C$ and no nodes marked by $O$; let $\widetilde{G}$ be the 
$\dPo$-$D$-graph which as the same underlying \coa $D$-graph but all nodes marked by $B$.
There is a morphism $\morp{l}{G}{\widetilde{G}}$ in $(\comcat{\FSm{\mathcal{G}}}{\mathbf{v}})^+$ such that $c^+_{\mathbf{v}}(l)$ is the identity, obtained as composition of maps in $W$ (to change all the $A$-marking in $O$-marking) and  a graph-preserving map (changing all the $O$-marking
in $B$-marking). 

Now $\widetilde{G}$ has all the nodes marked by $B$, therefore the unique morphism $\arr{c^+_{\mathbf{v}}(\widetilde{G})}{C_{\mathbf{v}}}$ (in
$\comcat{(\FSm{\Opprop{D}})}{\mathbf{v}}$) lifts to a unique marking preserving morphism $\arr{\widetilde{G}}{C_{B,\mathbf{v}}}$; this last morphism
composed with $l$ gives the desired morphism from $G$ to $C_{B,\mathbf{v}}$. 
\end{proof}

\begin{rmk}
It is not true, in general, that the push-out of a fully-faithful inclusion of PROPs is again fully-faithful; in fact there are
$\dPo$-$D$-graphs,
as the one in Figure \ref{obstructiongr}, that can not be reduced to $\dPo$-$D$-graphs with all the node marked by $B$.
 
\begin{figure}[ht]
\begin{tikzpicture}
 \node[verta] (a) at (0,1.2) {1};
 \node[vertb] (b) at (-0.7,0) {2};
 \node[verta] (d) at (0,-1.2) {4};
 \inports{0,2.4}{1}{0};
 \outports{0,-2.4}{1}{0};
 \incheckpt{a}{1}{0}{0.5};
 \outcheckpt{a}{2}{0.5}{-0.3};
 \incheckpt{b}{1}{0}{0.3};
 \outcheckpt{b}{1}{0}{-0.3};
 \incheckpt{c}{1}{0}{0.3};
 \outcheckpt{c}{1}{0}{-0.3};
 \incheckpt{d}{2}{0.7}{0.3};
 \outcheckpt{d}{1}{0}{-0.7};
 \connectnodes{a}{b}{1/1/x};
 \connectnodes{a}{d}{2/2/d};
 \connectnodes{b}{d}{1/1/y};
 \connectntoinp{a}{1/1/z};
 \connectntoop{d}{1/1/y};
\end{tikzpicture} 
\caption{}\label{obstructiongr}
\end{figure}

\end{rmk}
\noindent The following two corollaries are immediate since the inclusions \[\ECfProp{\Mm} \hookrightarrow \EProp{\Mm}\text{ and }\EAfProp{\Mm} \hookrightarrow \EProp{\Mm}\] 
preserve colimits (they are left adjoints).

\begin{cor}\label{cor.po1}
Let $\morp{a}{U}{V}$ be a morphism of $\Mm$-operads injective on the colours and fully faithful and let $\morp{f}{w_!(U)}{P}$ be a morphism of 
constant-free (augmentation-free) PROPs.  
In the push-out diagram of constant-free (augmentation-free) PROPs
\[
 \diagc{w_!(U)}{P}{w_!(V)}{R}{w_!(a)}{f}{b}{g}{}
\]
the morphism $b$ is injective on colours and fully-faithful. 
\end{cor}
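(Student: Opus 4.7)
The plan is to deduce this corollary essentially formally from Theorem \ref{main.po} and the fact, already noted immediately before the statement, that the inclusions
\[
\ECfProp{\Mm} \hookrightarrow \EProp{\Mm}, \qquad \EAfProp{\Mm} \hookrightarrow \EProp{\Mm}
\]
are left adjoints and hence preserve colimits. The key observation is that the properties ``injective on colours'' and ``fully-faithful'' depend only on the underlying morphism of $\Mm$-PROPs; they are insensitive to whether we view a PROP as constant-free/augmentation-free or arbitrary.

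Concretely, first I would check that the hypotheses do live inside the subcategory: in the constant-free case, $w_!(U)$ and $P$ are constant-free by assumption (since $f$ is a morphism in $\ECfProp{\Mm}$), and $w_!(V)$ is constant-free because the free-PROP formula (\ref{eq:eq free prop oper}) shows that $w_!(V)\ioval{[]}{\mathbf{d}}$ receives contributions only from functions $\fcar{0}\to\fcar{m}$, which force $V$ to contribute operations with empty input — something that does not happen once one passes to the evident constant-free analogue (the augmentation-free case is symmetric). Then, since the inclusion into $\EProp{\Mm}$ preserves push-outs, the push-out square computed in $\ECfProp{\Mm}$ (resp.\ $\EAfProp{\Mm}$) is also a push-out square in $\EProp{\Mm}$, with the same underlying morphism $b$.

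Finally I would apply Theorem \ref{main.po} to this push-out square regarded in $\EProp{\Mm}$: the hypotheses on $a$ (injective on colours and fully-faithful as a morphism of operads) are unchanged, so the theorem yields that $b$ is injective on colours and fully-faithful as a morphism of PROPs. Since these two properties make no reference to the ambient category in which $b$ is regarded, the same statement holds for $b$ viewed as a morphism in $\ECfProp{\Mm}$ (resp.\ $\EAfProp{\Mm}$), which proves the corollary. There is no real obstacle here; the only thing that requires a small sanity-check is the stability of the class under push-out along $w_!(a)$, and that is precisely what the left-adjoint property guarantees.
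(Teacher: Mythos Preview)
Your argument is correct and is exactly the one the paper gives: since the inclusion of constant-free (resp.\ augmentation-free) PROPs into $\EProp{\Mm}$ is a left adjoint, the push-out computed in the subcategory is also a push-out in $\EProp{\Mm}$, and Theorem~\ref{main.po} applies verbatim. The only superfluous step is your digression on verifying that $w_!(V)$ is constant-free via formula~(\ref{eq:eq free prop oper}); this is part of the hypothesis (the diagram is assumed to live in $\ECfProp{\Mm}$), not something to be proved, and your phrasing ``once one passes to the evident constant-free analogue'' muddies rather than clarifies the point.
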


\section{Locally presentable operadic families}\label{sec.locpresop}
This section gives sufficient conditions on a operadic family $\ofam{F}$ to guarantee that the total category
$\ToBi{\ofam{F}}{\Mm}$ is locally presentable (when $\Mm$ is locally presentable).

Along the way, we also prove that the adjunction (\ref{freeforgfib}) between $\ofam{F}$-collections and $\ofam{F}$-algebras is monadic and that it is finitary if $\ofam{F}$ is.
For the whole section $\Mm$ will be a bicomplete symmetric monoidal category whose tensor product commutes with colimits
in each variable. 

\subsection{\texorpdfstring{$\ofam{F}$}{F}-Collections}
A discrete (coloured) operad is a (coloured) operad whose unique operations are the identities. The full subcategory of $\Oper$ spanned
by discrete coloured operads is isomorphic to $\Set$;
this inclusion of $\Set$ into $\Oper$ correspond to the section of the bifibration $\morp{\Clf}{\Oper}{\Set}$ which associate to
every set $C$ the initial $C$-coloured operad.
 
A discrete $\mathcal{C}$-family is an operadic $\mathcal{C}$-family which takes values in discrete operads; equivalently, it
is a $\mathcal{C}$-diagram in $\Set$. The category of $\ofam{F}$-algebras of a discrete family $\ofam{F}$ 
is isomorphic to the category of $\ofam{F}$-collection.

\begin{lemma}\label{lambda.dir}
Suppose $I$ is a $\lambda$-directed poset and let $\morp{D}{I}{\Set}$ be an $I$-diagram in $\Set$ with colimit $L$.
For every $l\in L$ the comma category $\comcat{(\gc D)}{l}$ is a $\lambda$-filtered poset. 
\end{lemma}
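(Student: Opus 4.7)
The plan is to identify $\comcat{(\gc D)}{l}$ concretely: because $\ofam{F}=D$ takes values in discrete operads, $\gc D$ has objects the pairs $(i,x)$ with $i\in I$ and $x\in D(i)$, and a (unique, since $I$ is a poset) morphism $(i,x)\to(j,y)$ whenever $i\le j$ and $D(i\le j)(x)=y$. Viewing $l\in L=\varinjlim D$ as an object of the discrete category $L$ and using the canonical functor $\gc D\to L$, $(i,x)\mapsto[x]$, the comma category $\comcat{(\gc D)}{l}$ is just the full sub-poset of $\gc D$ spanned by those $(i,x)$ for which $[x]=l$. In particular it is automatically a poset, so being $\lambda$-filtered reduces to being $\lambda$-directed: every subset of cardinality strictly less than $\lambda$ must admit an upper bound.

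The non-emptiness follows immediately, since by the explicit description of colimits in $\Set$ every element of $L$ is represented by some $(i_0,x_0)\in\gc D$. For the main directedness property, the plan is to fix a family $\{(i_k,x_k)\}_{k<\kappa}$ with $\kappa<\lambda$ and $[x_k]=l$ for all $k$, and to produce $(j',y)$ above all of them in two applications of the $\lambda$-directedness of $I$.

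First, since $\{i_k\}_{k<\kappa}\subseteq I$ has cardinality $<\lambda$, $\lambda$-directedness of $I$ yields $j\in I$ with $i_k\le j$ for every $k$. Setting $y_k:=D(i_k\le j)(x_k)\in D(j)$, all the $y_k$'s still represent $l$ in $L$. By the explicit description of the colimit of a $\lambda$-directed diagram of sets, for each pair $(k,k')$ there exists $j_{k,k'}\ge j$ such that $D(j\le j_{k,k'})(y_k)=D(j\le j_{k,k'})(y_{k'})$. The index set $\{j_{k,k'}:k,k'<\kappa\}$ has cardinality at most $\kappa^2<\lambda$ (here one uses that $\lambda$ is regular, so $\kappa<\lambda$ implies $\kappa^2<\lambda$). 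A second application of $\lambda$-directedness furnishes $j'\in I$ dominating all $j_{k,k'}$; transporting everything along $D(j\le j')$ then shows that the elements $D(j\le j')(y_k)$ coincide for every $k$, so they define a single $y\in D(j')$ with $[y]=l$ and $(j',y)\ge(i_k,x_k)$ for all $k$, as required.

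The only mildly delicate point is this second step, where one has to equalize images using $\lambda$-directedness applied to a possibly larger set of indices; morally, however, this is nothing more than the standard description of filtered colimits in $\Set$ enhanced to the $\lambda$-directed setting, and regularity of $\lambda$ makes the cardinal arithmetic go through without fuss. The argument is therefore almost entirely routine once the correct identification of $\comcat{(\gc D)}{l}$ is in place.
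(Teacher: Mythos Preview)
Your proof is correct and follows essentially the same strategy as the paper: identify $\comcat{(\gc D)}{l}$ explicitly as a sub-poset of pairs $(i,x)$ and use $\lambda$-directedness of $I$ together with the standard description of directed colimits in $\Set$.

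The only difference is in execution. You equalize \emph{all pairs} $(y_k,y_{k'})$, producing a family $\{j_{k,k'}\}$ of size $\kappa^2$ before a second application of $\lambda$-directedness. The paper instead fixes one basepoint $(i_a,x_a)$ and, for each $b$, finds a single $(j_b,y_b)$ dominating both $(i_a,x_a)$ and $(i_b,x_b)$; then one application of $\lambda$-directedness to $\{j_b\}_{b\in A}$ (a family of size $\lvert A\rvert$, not $\lvert A\rvert^2$) suffices, since at the resulting upper bound every $x_b$ maps to the common image of $x_a$. This avoids the cardinal-arithmetic detour entirely. Incidentally, your appeal to regularity for $\kappa^2<\lambda$ is unnecessary: for any infinite cardinal $\lambda$ and any $\kappa<\lambda$ one already has $\kappa^2\le\max(\kappa,\aleph_0)<\lambda$ (or $\kappa^2$ is finite).
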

\begin{proof}
For every $i\in I$ let $\morp{\iota_i}{D(i)}{L}$ be the map defining $L$ as colimit.
The category $\gc D$ is a poset: its objects are couples $(i,x)$ where $i\in I$ and $x\in D(i)$; $(i,x)\leq (j,y)$ if and only if 
$i\leq j$ and $d_{ij}(x)=y$, where $d_{ij}$ is the unique arrow $\morp{d_{ij}}{D(i)}{D(j)}$ in the diagram $D$.

Given a non-empty set $A$ of cardinality less than $\lambda$ and a collection of objects $\{(i_a,x_a)\}_{a\in A}$ in $\comcat{(\gc D)}{l}$,
fix an element $a\in A$;
for every $b\in A$, since $e_{i_a}(x_a)=e_{i_b}(x_b)=l$ there must exist $(j_{b},y_b)$ bigger than $(i_a,x_a)$ and $(j_b,x_b)$.
Now the collection $\{j_b\}_{b\in A}$ must have an upper bound $z$ in $I$; furthermore $d_{i_b,z}(i_b)=d_{j_b,z}(y_b)=d_{i_a,z}(x_a)$
for every $b\in A$,
thus $(z,d_{i_a,z}(x_a))$ is an upper bound for the collection $\{(i_a,x_a)\}_{a\in A}$.  
\end{proof}

\begin{prop}\label{col.small}
Let $\lambda$ be a regular cardinal, $\mathcal{C}$ be a category and $\ofam{F}$ a discrete $\mathcal{C}$-family. Suppose $c\in \mathcal{C}$ and $v\in \Mm$ are $\lambda$-small in their respective categories and let $s\in \Cl{\ofam{F}(c)}$; then
$(c,\iota_s(v))$ is small in $\ToBi{\ofam{F}}{\Mm}$.   
\end{prop}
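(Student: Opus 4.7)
The plan is to verify that $\ToBi{\ofam{F}}{\Mm}((c, \iota_s(v)), -)$ preserves $\lambda$-filtered colimits. Since $\lambda$-smallness may be tested on $\lambda$-directed posets, let $\morp{D}{I}{\ToBi{\ofam{F}}{\Mm}}$ be such a diagram with $D(i) = (d_i, B_i)$. Its colimit is $(L, X_L)$, where $L = \varinjlim_i d_i$ in $\mathcal{C}$ (using that $\pi_{\ofam{F}}$ preserves colimits) and $X_L = \varinjlim_i \ofam{F}(p_i)_!(B_i)$ in $\Mm^{\ofam{F}(L)}$, computed pointwise, with $\morp{p_i}{d_i}{L}$ the colimit maps. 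The defining adjunction for $\iota_s$ and the description of morphisms in the Grothendieck construction give the decomposition
\[
\ToBi{\ofam{F}}{\Mm}((c, \iota_s(v)), (d, B)) \;\cong\; \coprod_{g \in \mathcal{C}(c,d)} \Mm(v, B(\ofam{F}(g)(s))),
\]
which reduces the whole question to combining two smallness inputs.

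First, I would use $\lambda$-smallness of $c$ to obtain $\mathcal{C}(c, L) \cong \varinjlim_i \mathcal{C}(c, d_i)$ in $\Set$, and apply Lemma~\ref{lambda.dir} to the diagram $i \mapsto \mathcal{C}(c, d_i)$: for each $f \in \mathcal{C}(c, L)$, the poset $E_f$ of pairs $(i, g)$ with $\morp{g}{c}{d_i}$ and $p_i g = f$ is $\lambda$-filtered. Second, I would identify
\[
X_L(\ofam{F}(f)(s)) \;\cong\; \varinjlim_{(i,g)\in E_f}\; B_i(\ofam{F}(g)(s)),
\]
by unpacking $\ofam{F}(p_i)_!$ as a pointwise coproduct and showing that the natural functor from $E_f$ into the comma category $\comcat{\gc(\ofam{F}\circ\pi_{\ofam{F}} D)}{\ofam{F}(f)(s)}$, to which Lemma~\ref{lambda.dir} applies directly, is cofinal. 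Finally, $\lambda$-smallness of $v$ lets $\Mm(v,-)$ commute with this $\lambda$-filtered colimit, and reassembling the coproducts indexed by $\mathcal{C}(c, L) \cong \varinjlim_i \mathcal{C}(c, d_i)$ yields
\[
\ToBi{\ofam{F}}{\Mm}((c, \iota_s(v)), (L, X_L)) \;\cong\; \varinjlim_{i\in I}\; \ToBi{\ofam{F}}{\Mm}((c, \iota_s(v)), (d_i, B_i)).
\]

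The main obstacle is the cofinality step identifying $X_L(\ofam{F}(f)(s))$ as the $\lambda$-filtered colimit over $E_f$: the colimit defining it is naturally indexed by the Grothendieck fibre over $\ofam{F}(f)(s)$, which a priori may be larger than the image of $E_f$. One exploits $\lambda$-smallness of $c$ to lift any pair $(j, t)$ in that larger preimage, up to a transition in $I$, to some $(i, g) \in E_f$ realising $t = \ofam{F}(g)(s)$. Once this is in hand, the remaining manipulations of coproducts against $\lambda$-filtered colimits of sets are routine.
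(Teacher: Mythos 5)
Your proposal is correct and follows essentially the same route as the paper: both describe the colimit $(L,X_L)$ via the (pointwise) left Kan extension to the fibre over $L$, decompose a morphism out of $(c,\iota_s(v))$ into its $\mathcal{C}$-component and an $\Mm$-component, and invoke Lemma~\ref{lambda.dir} together with the two smallness hypotheses to factor each component through a stage of the diagram. The only organisational difference is that you reindex the colimit computing $X_L(\ofam{F}(f)(s))$ by the poset $E_f$ of lifts of $f$ and prove cofinality into the Grothendieck fibre, whereas the paper works directly with the comma category $\comcat{u}{\ofam{F}(f)(s)}$ (to which Lemma~\ref{lambda.dir} applies immediately), making that extra cofinality step unnecessary; on the other hand your set-level isomorphism makes the injectivity half of the colimit comparison explicit, which the paper leaves implicit.
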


\begin{proof}
Consider a small $\lambda$-directed poset $I$ and let $\morp{D}{I}{\ToBi{\ofam{F}}{\Mm}}$ an $I$-diagram.
Let $\morp{\mathbf{D}}{I}{\Oper}$ be its associated operadic $I$-family.
Let $L=\varinjlim (\pi_{\ofam{F}}D)\in \mathcal{C}$; then $\varinjlim D\cong (L,X)$ where $X$ is the $\ofam{F}(L)$ algebra associated to
the left Kan extension of $\morp{D}{\FSm{\gco \mathbf{D}}}{\Mm}$ along the canonical morphism
$\morp{u}{\FSm{\gco \mathbf{D}}}{\FSm{\ofam{F}(L)}}$
\[
 \xymatrix{\FSm{\gco \mathbf{D}} \ar[r]^D \ar[d]^{u} & \Mm\\
           \FSm{\ofam{F}(L)} \ar[ur]_{X}& .}
\]
A morphism $\morp{\mathbf{f}}{(c,\iota_s(v))}{(L,X)}$ is just a couple $(f,g)$ where
$\morp{f}{c}{L}$ and $\morp{g}{v}{X(\ofam{F}(f)(s))}$.
Now
$X(\ofam{F}(f)(s))=\varinjlim_{((i,y),\alpha)\in \comcat{u}{\ofam{F}(f)(s)}} D(i)(y)$  
thus $g$ factor via some $D(i)(y)$, since $\comcat{u}{\ofam{F}(f)(s)}$ is $\lambda$-directed by Lemma \ref{lambda.dir}; 
since $c$ is $\lambda$-small it is not restrictive to assume that $f$ factors via $\pi_{\ofam{F}}D(i)$;
it follows that $(f,g)$ factors via $D(i)$.
\end{proof}
\noindent 
Given a discrete $\cat{C}$-family $\ofam{F}$ and a $\ofam{F}$-algebra $(c,x)\in \AlgF{\ofam{F}}{\Mm}$, recall that we denote by
 $\morp{\iota_x}{\Mm^{\ofam{F}(c)}}{\Mm}$ the left adjoint of the projection on the $x$-component. For every $c\in \cat{C}$ we will denote by $\ast_a$ the initial object in 
$\Alg{\ofam{F}(a)}{\Mm}\cong \Mm^{\ofam{F}}$. Recall the definition of (strong) generator from  \cite[\S 0.6]{AR94}.

\begin{prop}\label{strong.gen}
Let $\mathcal{C}$ be a category and let $\ofam{F}$ be a discrete $\mathcal{C}$-family. Suppose that $\mathcal{A}$ is a strong generator for $\cat{C}$, $\mathcal{B}$ is a strong generator for $\Mm$ and
$G$ is a weakly initial set of colours for $\ofam{F}$ (def. \ref{def.weakin}). Then the set
\[
\mathcal{G}=\{(a,\ast_a) \mid a\in \mathcal{A}\}\cup \{(c,\iota_x(b)) \mid (c,x)\in G,\ b\in \mathcal{B} \}
\]
is a strong generator for $\ToBi{\ofam{F}}{\Mm}$.
\end{prop}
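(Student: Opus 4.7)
The plan is to verify directly that the family of representables $\{\Hom(g,-)\mid g\in \mathcal{G}\}$ is jointly faithful and jointly conservative, which by \cite[\S 0.6]{AR94} characterises a strong generator. To do so I first unravel the two kinds of hom-sets. Since $\ofam{F}$ is discrete, a morphism $(c,X)\to (d,Y)$ in $\ToBi{\ofam{F}}{\Mm}$ is a pair $(f,h)$ where $f\in \cat{C}(c,d)$ and $h=\{h_y\colon X(y)\to Y(\ofam{F}(f)(y))\}_{y\in \ofam{F}(c)}$. As $\ast_a$ is initial in the fibre $\Mm^{\ofam{F}(a)}$, the first summand of $\mathcal{G}$ gives
\[
\Hom((a,\ast_a),(c,X)) \cong \cat{C}(a,c),
\]
and using the adjunction $\iota_x \dashv (-)_x$ componentwise the second summand gives
\[
\Hom((c',\iota_x(b)),(d,Y)) \cong \coprod_{k\in \cat{C}(c',d)} \Mm(b, Y(\ofam{F}(k)(x))),
\]
naturally in $(d,Y)$, a morphism $(f,h)$ acting on the $k$-th summand by post-composition with $h_{\ofam{F}(k)(x)}$ and by relabelling the $\cat{C}$-index via $f\circ (-)$.

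For joint faithfulness, two parallel morphisms $(f_i,h_i)\colon (c,X)\to (d,Y)$ that induce the same map on every representable from $(a,\ast_a)$, $a\in \mathcal{A}$, satisfy $f_1\alpha=f_2\alpha$ for every $\alpha\in \cat{C}(a,c)$; the generator property of $\mathcal{A}$ forces $f_1=f_2=:f$. Denoting this common value by $f$, agreement on the representables from $(c',\iota_x(b))$ then yields $h_{1,\ofam{F}(k)(x)}\circ \phi = h_{2,\ofam{F}(k)(x)}\circ \phi$ for every $(c',x)\in G$, every $k\in \cat{C}(c',c)$, every $b\in \mathcal{B}$, and every $\phi\in \Mm(b,X(\ofam{F}(k)(x)))$; the generator property of $\mathcal{B}$ then implies $h_{1,y}=h_{2,y}$ for every $y=\ofam{F}(k)(x)$ — and by the definition of a weakly initial set of colours (def.\ \ref{def.weakin}) this covers every $y\in \ofam{F}(c)$.

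For joint conservativity, assume $(f,h)$ induces bijections $\Hom(g,(c,X))\cong \Hom(g,(d,Y))$ for every $g\in \mathcal{G}$. The first type of bijection identifies $\cat{C}(a,c)\cong \cat{C}(a,d)$ via post-composition with $f$, so strong generation of $\mathcal{A}$ in $\cat{C}$ makes $f$ an isomorphism. Once $f$ is invertible, the $\cat{C}$-indices of the coproduct decompositions match summand-by-summand, so for every $(c',x)\in G$, every $k\in \cat{C}(c',c)$ and every $b\in \mathcal{B}$, post-composition with $h_{\ofam{F}(k)(x)}$ is a bijection $\Mm(b,X(\ofam{F}(k)(x)))\to \Mm(b,Y(\ofam{F}(fk)(x)))$; strong generation of $\mathcal{B}$ in $\Mm$ then forces $h_{\ofam{F}(k)(x)}$ to be an isomorphism. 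The weakly initial hypothesis on $G$ makes this true for every $y\in \ofam{F}(c)$, so $h$ is a componentwise isomorphism and $(f,h)$ is an isomorphism in $\ToBi{\ofam{F}}{\Mm}$.

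The argument is a direct unravelling of the definitions; no serious obstacle is expected. The only care required is to check that post-composition by $(f,h)$ respects the $\cat{C}$-indexed coproduct decomposition of the second kind of hom-set — but this is automatic from the composition law in the Grothendieck construction $\ToBi{\ofam{F}}{\Mm}$.
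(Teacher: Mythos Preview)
Your proof is correct. The only difference from the paper's argument is in how the ``strong'' half is phrased: the paper verifies the subobject condition of \cite[\S 0.6]{AR94} directly (take a monomorphism $(i,\gamma)\hookrightarrow (z,Z)$ through which every $\mathcal{G}$-map factors, deduce that $i$ and then $\gamma$ are isomorphisms), whereas you verify joint conservativity of the representables. The two conditions are equivalent for a generating set --- your own argument essentially contains the proof: once all $\hom(a,f)$ are bijections, injectivity together with the generator property forces $f$ to be a monomorphism, and then surjectivity plus the subobject form of strong generation forces $f$ to be invertible. So neither approach buys anything the other does not; they are two sides of the same coin, and the reduction to the components in $\cat{C}$ and $\Mm$ via weak initiality of $G$ is identical in both.
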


\begin{proof}
We first prove that $\mathcal{G}$ is a generator.\\
Suppose let $(z,Z)$ and $(w,W)$ be two objects of $\ToBi{\ofam{F}}{\Mm}$ and let $(f,\alpha),(f,\alpha')\colon (z,Z) \rightrightarrows (w,W)$ be
two distinct morphism between them. If $f\neq f'$ then there exist $a\in \mathcal{A}$ and $\morp{k}{a}{z}$ such that $fk\neq f'k$, thus $(f,\alpha)(k,\ast)\neq (f',\alpha)(k,\ast)$.

On the other hand, if $f=f'$ then there must exist $s\in \ofam{F}(W)$ such that $\morp{\alpha_s}{Z(s)}{f^*(W)(s)}$ is different
from $\morp{\alpha'_s}{Z(s)}{f^*(W)(s)}$; thus there exist $b\in \mathcal{B}$ and $\morp{\beta}{b}{Z(s)}$ such that 
$\alpha_s\beta\neq \alpha'_s\beta$. It follows that $(f,\alpha)(\id_z,\iota_s\beta)\neq (f,\alpha')(\id_z,\iota_s\beta)$.

We are left to prove that $\mathcal{G}$ is a strong generator.
Let $(z,Z)\in\ToBi{\ofam{F}}{\Mm}$ and let $\morp{(i,\gamma)}{(k,K)}{(z,Z)}$ be a subobject of $(z,Z)$; it follows that  $i$ is a subobject
of $z$ and $\gamma$ is a subobject of $i^*(Z)$. Suppose that for every $(d,y)\in \mathcal{G}$ every morphism $\morp{(j,\delta)}{(d,y)}{(z,Z)}$ factors via $(i,\gamma)$.

Given that for every $a\in \mathcal{A}$ and every morphism $\morp{m}{a}{z}$ there is a morphism $\morp{(m,\ast)}{(a,\ast_a)}{(z,Z)}$,
$i$ has to be an isomorphism.

For every colour $s\in \ofam{F}(z)$ there is a $(c,x)\in G$ and a $\morp{q}{c}{z}$ such that $q(x)=s$. For every $b\in \mathcal{B}$
each morphism $\morp{p}{b}{i^*(Z(s))}$ produces a morphism $\morp{(q,\iota_x p)}{(c,\iota_x(b))}{(z,Z)}$, that, by hypothesis has to factor
via $(i,\gamma)$; this implies that $\gamma_{i^{-1}(s)}$ is an isomorphism for every $s\in \ofam{F}(z)$, that is $\gamma$ is an isomorphism.    
\end{proof}

\begin{cor}
Let $\lambda$ be a regular cardinal. Suppose that $\mathcal{C}$ and $\Mm$ are $\lambda$-locally presentable, and $\morp{\ofam{F}}{\mathcal{C}}{\Set}$ is a discrete
$\mathcal{C}$-family with a weakly initial set of colours, then the category of $\ofam{F}$-collection in $\Mm$ is $\lambda$-locally presentable.
\end{cor}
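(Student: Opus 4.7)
The plan is to deduce the corollary directly from Propositions \ref{col.small} and \ref{strong.gen}, together with standard characterisations of locally presentable categories (e.g.\ \cite{AR94}). Recall that a category is $\lambda$-locally presentable if and only if it is cocomplete and has a strong generator consisting of $\lambda$-presentable objects.

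First I would note bicompleteness of $\ToBi{\ofam{F}}{\Mm}$: this is already mentioned in the paper (\S\ref{secoperbif}) as a consequence of the Grothendieck construction, since $\cat{C}$ and $\Mm$ are bicomplete. Next, using that $\cat{C}$ is $\lambda$-locally presentable, pick a strong generator $\mathcal{A}$ of $\cat{C}$ consisting of $\lambda$-presentable objects; likewise pick a strong generator $\mathcal{B}$ of $\Mm$ consisting of $\lambda$-presentable objects. Let $G$ be the weakly initial set of colours for $\ofam{F}$ given by hypothesis. Proposition \ref{strong.gen} then immediately furnishes the strong generator
\[
\mathcal{G}=\{(a,\ast_a)\mid a\in\mathcal{A}\}\cup\{(c,\iota_x(b))\mid (c,x)\in G,\ b\in\mathcal{B}\}
\]
for $\ToBi{\ofam{F}}{\Mm}$.

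It remains to show that every object in $\mathcal{G}$ is $\lambda$-presentable. For the generators of the form $(c,\iota_x(b))$ this is exactly the content of Proposition \ref{col.small}, since $c\in\mathcal{A}$ and $b\in\mathcal{B}$ are $\lambda$-presentable by construction. For the generators of the form $(a,\ast_a)$ one observes directly, by the same argument as in the proof of Proposition \ref{col.small}, that a morphism $(a,\ast_a)\to (L,X)$ in $\ToBi{\ofam{F}}{\Mm}$ reduces to a single morphism $a\to L$ in $\cat{C}$ (the algebra component is uniquely determined because $\ast_a$ is initial in the fibre), so $\lambda$-smallness of $(a,\ast_a)$ follows from that of $a$. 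Since $\mathcal{A}$, $\mathcal{B}$ and $G$ are sets, $\mathcal{G}$ is a set, and we obtain a small strong generator of $\lambda$-presentable objects in the cocomplete category $\ToBi{\ofam{F}}{\Mm}$; by the standard characterisation this proves $\lambda$-local presentability.

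There is essentially no obstacle here: the substance of the argument is already carried by Propositions \ref{col.small} and \ref{strong.gen}. The only minor point to address is the $\lambda$-presentability of the ``empty-algebra'' generators $(a,\ast_a)$, which is not literally covered by Proposition \ref{col.small} but follows by the same method (or, alternatively, by noting that $(-,\ast_{(-)})$ is the left adjoint to the forgetful functor $\pi_{\ofam{F}}\colon\ToBi{\ofam{F}}{\Mm}\to\cat{C}$, which preserves $\lambda$-filtered colimits, so the left adjoint preserves $\lambda$-presentable objects).
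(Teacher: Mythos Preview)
Your approach is essentially identical to the paper's: the paper's proof consists of the single sentence that $\ToBi{\ofam{F}}{\Mm}$ is cocomplete and has a strong generator of $\lambda$-small objects by Propositions~\ref{strong.gen} and~\ref{col.small}, hence is locally presentable by \cite[Theorem~1.20]{AR94}. You have unpacked this and, in fact, been more careful than the paper about the generators $(a,\ast_a)$, which are not literally covered by Proposition~\ref{col.small}; your adjunction argument for these is correct.

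There is, however, a slip in your verification for the generators of the form $(c,\iota_x(b))$. You write that ``$c\in\mathcal{A}$ \dots\ by construction'', but in the set $\mathcal{G}$ of Proposition~\ref{strong.gen} the pair $(c,x)$ ranges over the weakly initial set of colours $G$, \emph{not} over $\mathcal{A}$. Nothing in the hypotheses forces the objects $c$ appearing in $G$ to be $\lambda$-presentable, so Proposition~\ref{col.small} does not apply directly. The paper's one-line proof glosses over exactly the same point. In the intended applications (e.g.\ $\ofam{F}=\fOpprop$ over $\mathcal{C}=\Set$) the objects $c$ in $G$ are finite sets and hence $\lambda$-small for any $\lambda$, so the issue is harmless there; but as stated the corollary seems to need the extra hypothesis that the weakly initial set $G$ can be chosen with each $c$ $\lambda$-presentable in~$\mathcal{C}$.
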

\begin{proof}
The category $\ToBi{\ofam{F}}{\Mm}$ is cocomplete and has a strong generator formed by  $\lambda$-small objects by Propositions 
\ref{strong.gen} and \ref{col.small},
 thus it is locally presentable by \cite[Theorem 1.20]{AR94}. 
\end{proof}

\subsection{Finitary operadic families}
Now consider a general operadic $\mathcal{C}$-family $\ofam{F}$; suppose that $\mathcal{C}$ is cocomplete.

\begin{lemma}
The adjunction \ref{freeforgfib} is monadic. 
\end{lemma}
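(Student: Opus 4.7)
The plan is to apply Beck's monadicity theorem to the adjunction $F_{\ofam{F}} \dashv U_{\ofam{F}}$ of (\ref{freeforgfib}). Since $F_{\ofam{F}}$ exists by construction, we must verify (i) that $U_{\ofam{F}}$ is conservative, and (ii) that $\ToBi{\ofam{F}}{\Mm}$ admits, and $U_{\ofam{F}}$ preserves, coequalizers of $U_{\ofam{F}}$-split (equivalently, reflexive split after $U_{\ofam{F}}$) parallel pairs.

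For (i), recall that the morphism of operadic $\mathcal{C}$-families $\disf{\ofam{F}} \to \ofam{F}$ is level-wise the identity on colours, so both projections $\pi_{\ofam{F}}$ and $\pi_{\disf{\ofam{F}}}$ fit in a commutative triangle with $U_{\ofam{F}}$, and on each fibre over $c \in \mathcal{C}$ the functor $U_{\ofam{F}}$ restricts to the classical forgetful functor $U_{\ofam{F}(c)}\colon \Alg{\ofam{F}(c)}{\Mm} \to \Mm^{\Clf \ofam{F}(c)}$, which is well-known to be monadic, hence conservative. A morphism $f$ in $\ToBi{\ofam{F}}{\Mm}$ is an isomorphism iff $\pi_{\ofam{F}}(f)$ is an isomorphism in $\mathcal{C}$ and the induced fibre-wise map $\trf{f}$ is an isomorphism in $\Alg{\ofam{F}(\pi_{\ofam{F}} X)}{\Mm}$; both conditions can be read off $U_{\ofam{F}}(f)$, yielding conservativity.

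For (ii), start with a $U_{\ofam{F}}$-split pair $f,g\colon (c,X) \rightrightarrows (d,Y)$. Split coequalizers are absolute, so applying the further projection $\pi_{\disf{\ofam{F}}}$ produces a split, hence absolute, coequalizer $q_0\colon d \to e$ of $\pi_{\ofam{F}}(f)$ and $\pi_{\ofam{F}}(g)$ in $\mathcal{C}$. Using cocartesian lifts along $q_0$ and along $q_0 \circ \pi_{\ofam{F}}(f) = q_0 \circ \pi_{\ofam{F}}(g)$, the pair $(f,g)$ becomes a parallel pair of morphisms in the single fibre $\Alg{\ofam{F}(e)}{\Mm}$ whose image under $U_{\ofam{F}(e)}$ still admits a split coequalizer. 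Since $U_{\ofam{F}(e)}$ is monadic, the coequalizer exists in that fibre and is preserved by $U_{\ofam{F}(e)}$; a standard check then shows that this fibre coequalizer is still a coequalizer in the total category $\ToBi{\ofam{F}}{\Mm}$ and that its image under $U_{\ofam{F}}$ reproduces the given split coequalizer.

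The main technical point — and the only one requiring care — is the last compatibility: verifying that the coequalizer in the fibre over $e$ is a coequalizer in the full category $\ToBi{\ofam{F}}{\Mm}$. This follows from the description of colimits in the total category of a bifibration: a colimit of a diagram whose base image is a split (absolute) coequalizer can always be computed in the fibre over that coequalizer after pushing all objects forward by the appropriate cocartesian lifts, and the co/cartesian comparison maps behave functorially enough to ensure the universal property at the total-category level.
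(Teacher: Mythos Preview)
Your strategy via Beck's theorem is sound, and the conservativity argument is correct; this is essentially what lies behind the paper's one-line proof, which simply cites Proposition~3.3 of \cite{C14} (the case $\mathcal{C}=\Set$) and declares the generalisation straightforward. There is, however, a gap in your treatment of (ii). After pushing $(f,g)$ forward along cocartesian lifts in $\ToBi{\ofam{F}}{\Mm}$ to a pair $(\bar f,\bar g)$ in $\Alg{\ofam{F}(e)}{\Mm}$, you assert that $(U_{\ofam{F}(e)}\bar f,\,U_{\ofam{F}(e)}\bar g)$ still admits a split coequalizer. But $U_{\ofam{F}}$ commutes with the \emph{cartesian} lifts $\alpha^*$ (being fibrewise a right adjoint), not with the cocartesian lifts $\alpha_!$; the canonical comparison from the collection-level pushforward $\alpha_! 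U_c$ to $U_d\alpha_!$ (the latter $\alpha_!$ taken in algebras) is generally not invertible. Consequently $U_{\ofam{F}(e)}\bar f$ is not the collection-level pushforward of $U_{\ofam{F}}f$, and the splitting maps $s,t$ in $\ToBi{\disf{\ofam{F}}}{\Mm}$---which lie over morphisms $e\to d$ and $d\to c$ in $\mathcal{C}$ and have no reason to lift to algebra maps---do not yield a splitting for $(U_{\ofam{F}(e)}\bar f,U_{\ofam{F}(e)}\bar g)$. The point you flag in your last paragraph (that the fibre coequalizer is a total-category coequalizer) is in fact the routine part; the problem occurs one step earlier.

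The repair is to avoid the fibre-by-fibre detour. Since both $F_{\ofam{F}}$ and $U_{\ofam{F}}$ lie over $\mathcal{C}$, the monad $T=U_{\ofam{F}}F_{\ofam{F}}$ on $\ToBi{\disf{\ofam{F}}}{\Mm}$ satisfies $\pi_{\disf{\ofam{F}}}T=\pi_{\disf{\ofam{F}}}$ and restricts on each fibre to $T_c=U_{\ofam{F}(c)}F_{\ofam{F}(c)}$. You can then either run the standard Beck construction directly in the total category (apply $T$ to the given absolute coequalizer, use the counit to endow $(e,Z)$ with a $T$-algebra structure, and verify universality), or bypass Beck and identify the Eilenberg--Moore category of $T$ with $\ToBi{\ofam{F}}{\Mm}$ by hand: a $T$-algebra structure on $(c,A)$ is a map $T_cA\to A$ over $\id_c$, which by fibrewise monadicity is exactly an $\ofam{F}(c)$-algebra structure, and morphisms match because $U$ intertwines the cartesian lifts $\alpha^*$.
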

\begin{proof}
This is a straightforward generalisation of Proposition 3.3 \cite{C14} (where $\mathcal{C}=\Set$).
\end{proof}

\begin{defi}\label{def.finfam}
An operadic $\mathcal{C}$-family $\ofam{F}$ is \emph{finitary} if it preserves filtered colimits. 
\end{defi}

\begin{lemma}
Suppose that $\ofam{F}$ is finitary, then the adjunction \ref{freeforgfib} is finitary.
\end{lemma}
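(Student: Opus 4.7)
The plan is to show that the right adjoint $U_{\ofam{F}}$ preserves filtered colimits; equivalently, since the adjunction is monadic, the associated monad is finitary. The left adjoint $F_{\ofam{F}}$ preserves all colimits automatically, so there is nothing to check on that side.

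Fix a small filtered category $I$ and a diagram $\morp{D}{I}{\ToBi{\ofam{F}}{\Mm}}$, writing $D(i)=(c_i,X_i)$ with $X_i\in\Alg{\ofam{F}(c_i)}{\Mm}$, and let $\bar{D}=\pi_{\ofam{F}}D$. Set $L=\varinjlim \bar{D}$ in $\mathcal{C}$, with coprojections $\morp{e_i}{c_i}{L}$. Because $\ofam{F}$ is finitary, $\ofam{F}(L)\cong \varinjlim \ofam{F}(\bar{D})$ in $\Oper$; applying $\Clf$ (which also preserves filtered colimits, being a left adjoint in $\Oper$) gives $\Cl{\ofam{F}(L)}\cong \varinjlim \Cl{\ofam{F}(\bar{D}(i))}$ in $\Set$. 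Via the Grothendieck construction for bifibrations, one computes the colimit of $D$ in the standard way: $\varinjlim D \cong (L, Y)$ with $Y=\varinjlim_i (e_i)_!(X_i)$ in $\Alg{\ofam{F}(L)}{\Mm}$, where $(e_i)_!$ denotes $\alf{\ofam{F}}{\Mm}(e_i)_!$. An analogous formula holds in $\ToBi{\disf{\ofam{F}}}{\Mm}$, which is computed fiberwise and pointwise in $\Mm$.

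Now I invoke two compatibilities. First, for any $\Mm$-operad $\ope{O}$ the forgetful functor $U_{\ope{O}}\colon\Alg{\ope{O}}{\Mm}\to \Mm^{\Cl{\ope{O}}}$ preserves filtered colimits; this is the classical fact that the free-$\ope{O}$-algebra monad is finitary (its underlying endofunctor is a coproduct of coends of tensor powers, all of which commute with filtered colimits in each variable under our assumption on $\Mm$). Applying this to $\ofam{F}(L)$ gives $U_{\ofam{F}(L)}(Y)\cong \varinjlim_i U_{\ofam{F}(L)}\bigl((e_i)_!X_i\bigr)$. Second, the square of left adjoints induced by $e_i$ commutes: pushforward along $e_i$ of an $\ofam{F}(c_i)$-algebra followed by the forgetful functor agrees with restriction of scalars of the underlying collection along the induced function of colours $\morp{\iota_i}{\Cl{\ofam{F}(c_i)}}{\Cl{\ofam{F}(L)}}$, i.e.\ $U_{\ofam{F}(L)}(e_i)_!\cong (\iota_i)_!U_{\ofam{F}(c_i)}$; this is the mate of the evident identity $(\iota_i)^*U_{\ofam{F}(L)}=U_{\ofam{F}(c_i)}\ofam{F}(e_i)^*$ and is easy to verify from the adjunctions.

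Combining, $U_{\ofam{F}}(L,Y)=(L,U_{\ofam{F}(L)}Y)\cong (L,\varinjlim_i(\iota_i)_!U_{\ofam{F}(c_i)}X_i)$, which is exactly the description of $\varinjlim_i U_{\ofam{F}}D(i)$ in $\ToBi{\disf{\ofam{F}}}{\Mm}$. Hence $U_{\ofam{F}}$ preserves filtered colimits, so the adjunction (\ref{freeforgfib}) is finitary. The only real content is the mate identity for the commuting square of adjunctions; the rest is bookkeeping with the Grothendieck construction and the classical finitariness of free-algebra monads over operads.
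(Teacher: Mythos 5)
Your overall strategy (reduce to showing that $U_{\ofam{F}}$ preserves filtered colimits, compute the colimit of $D$ fiberwise over $L=\varinjlim\pi_{\ofam{F}}D$, and invoke the classical finitariness of free-operad-algebra monads) is sound and matches the spirit of the paper's argument, which also reduces everything to a comparison of two colimit formulas over $L$. However, there is a genuine gap in your second ``compatibility'': the mate $(\iota_i)_!\,U_{\ofam{F}(c_i)}\Rightarrow U_{\ofam{F}(L)}\,\ofam{F}(e_i)_!$ of the evident identity $(\iota_i)^*U_{\ofam{F}(L)}=U_{\ofam{F}(c_i)}\ofam{F}(e_i)^*$ is \emph{not} an isomorphism in general. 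The left adjoint induced by a morphism of operads $\morp{u}{\ope{P}}{\ope{Q}}$ is computed by the coend of Corollary \ref{cor:Lan}, $u_!A(c)\cong\gc^{\str{s}}\ope{Q}(u(\str{s});c)\otimes A(s_1)\otimes\dots\otimes A(s_{\card{\str{s}}})$, which freely adjoins formal composites of $\ope{Q}$-operations applied to elements of $A$; it is not left Kan extension of the underlying collection along the map of colours. Concretely, for $\ope{P}$ the initial one-coloured operad and $\ope{Q}=\Ass$, one has $U(u_!A)\cong\coprod_n A^{\otimes n}$ while $(\iota)_!U(A)=A$. This can occur in your setting: take $I=\{0<1\}$ (which is filtered) and an operadic family with $\ofam{F}(e_0)$ any nontrivial operad map; your family is still finitary, yet the termwise mate fails. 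A telling symptom is that your argument never actually uses that $\ofam{F}(L)\cong\varinjlim\ofam{F}(c_i)$ beyond naming the target fiber -- if the mate were an isomorphism, the lemma would hold with no hypothesis on $\ofam{F}$ at all.

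What is true, and what actually has to be proved, is only that the two \emph{filtered colimits} agree: $\varinjlim_i U_{\ofam{F}(L)}\ofam{F}(e_i)_!X_i\cong\varinjlim_i(\iota_i)_!U_{\ofam{F}(c_i)}X_i$. The extra operations created by $\ofam{F}(e_i)_!$ (composites of operations of $\ofam{F}(L)$ with elements of $X_i$) are absorbed in the colimit because, by finitariness of $\ofam{F}$ and filteredness of $I$, every operation of $\ofam{F}(L)=\varinjlim_i\ofam{F}(c_i)$ is the image of an operation at some finite stage, so the corresponding formal composite is identified with an actual element of some later $X_j$. This is precisely the content of the paper's proof, which expresses both sides as left Kan extensions along $w_!\disf{l}$ and $w_!l$ respectively and shows that, for each colour $c$, the comma category $\comcat{\FSm{\disf{l}}}{c}$ is final in $\comcat{\FSm{l}}{c}$, using the explicit description of filtered colimits in $\Oper$. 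You need to replace your termwise mate identity with an argument of this kind; as written, the step you describe as ``easy to verify from the adjunctions'' is where the entire content of the lemma is hiding, and it is false at the level of individual terms.
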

\begin{proof}
Take $I$ a directed poset and a $I$-diagram $\morp{D}{I}{\ToBi{\ofam{F}}}{\Mm}$ and let $d=\pi_\ofam{F} D$. 
Consider the operadic
Grothendieck construction $\gco{(\ofam{F}d)}$, according to prop. \ref{prop.diagop2} its algebras in $\Mm$ are the $I$-diagram in 
$\AlgF{\ofam{F}}{\Mm}$ that lies over $d$.
 
There is a commutative diagram of operads
\[
 \xymatrix{ \gco (\disf{\ofam{F}}d)\ar[r]\ar[d]^-{\disf{l}} & \gco{(\ofam{F}d)} \ar[d]^-{l}\\
            \varinjlim (\disf{\ofam{F}}d)\ar[r] & \varinjlim (\ofam{F}d). 
}
\]

$D$ is a $\gco{(\ofam{F}d)}$-algebra, so it can be represented by a strong monoidal functor 
$\morp{\widehat{D}}{\FSm{\gco{(\ofam{F}d)}}}{\Mm}$; the colimit of $D$ is then represented by the left Kan extension of $\widehat{D}$ along $w_!l$ (\emph{cf.} appendix \ref{sec: alg as Kan extensions});
similarly the colimit of $\disf{D}$ is represented by $\mathrm{Lan}_{w_!\disf{l}} \hat{D}$, the left Kan extension of $\widehat{D}\widehat{\Clf}$ along $w_!\disf{l}$.

All we have to do is to prove that the canonical natural transformation
\[
 \ntra{\nu}{\mathrm{Lan}_{\disf{l}} (\disf{\hat{D}})}{\disf{(\mathrm{Lan}_l \hat{{D}})}}
\]
is a natural isomorphism. 

Let $C=\varinjlim (\Clf\circ \ofam{F})$ be the set of colours of $\varinjlim (\disf{\ofam{F}}d)$ (and $\varinjlim (\ofam{F}d)$). 
It is sufficient to prove that the morphism
\[
 \lmorp{\nu_c}{\varinjlim_{(a,f)\in\comcat{\FSm{\disf{l}}}{c}} D(a)}{\varinjlim_{(a,f)\in\comcat{(\FSm{l})}{c}} D(a)}
\]
is an isomorphism for every $c\in C$; this morphism is induced by the inclusion of the indexing categories
\[
 \morp{\iota_c}{\comcat{\FSm{\disf{l}}}{c}}{\comcat{(\FSm{l})}{c}}
\]
thus it is sufficient to show that $\comcat{\FSm{\disf{l}}}{c}$ is final in $\comcat{(\FSm{l})}{c}$.
We use the description of $\comcat{(\FSm{l})}{c}$ given in \S\ \ref{sec.feypar} (Remark \ref{rmkpostipar} therein).
For every $i\in I$ let $\morp{e_i}{\ofam{F}(i)}{\varinjlim \ofam{F}}$ be the $i$-morphism defining the colimit; for every $s\leq t$ in $I$
let $\morp{d_{st}}{\ofam{F}(s)}{\ofam{F}(t)}$ be the unique such arrow in the diagram $D$.

The objects of $\comcat{(\FSm{l})}{c}$ in the image of $i_c$ are the ones of the type $((i,x),\id_c)$, for some $i\in I$ and $x\in \ofam{F}(i)$ (such that $e_i(x)=d$); the morphisms in the image of $\iota_c$ are the ones of the form $\morp{(\id_{\fcar{1}},\id)}{((i,x),\id_c)}{((i',x'),\id_c)}$ (with $i\leq i'$).
where $f$ is a morphism in $I$.

To prove that $\comcat{\FSm{\disf{l}}}{c}$ is final in $\comcat{(\FSm{l})}{c}$ we have to show that for every 
$y=(\{i_k,c_k\}^n_{k=1},o)$ in $\comcat{(\FSm{l})}{c}$ the category $\comcat{y}{(\comcat{\FSm{\disf{l}}}{c})}$ is non-empty and connected.

The operation $o$ belongs to $(\varinjlim \ofam{F})(\{e_{i_k}(c_k)\}^n_{i=1};c)$; since $I$ is filtered there exists
$s\in I$ such that $s\geq i_k$ for every $1\leq k\leq n$, a colour $c'\in \ofam{F}(s)$ such that $e_s(c')=c$ and an operation $o'\in \ofam{F}(s)(\{d_{si_k}(c_k)\}^n_{k=1}; c')$ such that $e_s(o')=o$ (\emph{cf.} \ref{rmk:filtcol}).

Therefore there is a morphism
\[
 \morp{(\ast,o')}{y}{((s,c'),\id_{c'})}
\]
which defines an object in $\comcat{y}{(\comcat{\FSm{\disf{l}}}{c})}$.
To show that $\comcat{y}{(\comcat{\FSm{\disf{l}}}{c})}$ is connected suppose we have another object
\[
 \morp{(\ast,o'')}{y}{((t,c''),\id_{c''});}
\]
 we can find $v\in I$ such that $v\geq s$ and $v\geq t$ and $d_{sv}(o')=d_{sv}(o'')=o'''$;
$(\ast,o''')$ is an object in $\comcat{y}{(\comcat{\FSm{\disf{l}}}{c})}$ connected to both $(\ast,o')$ and $(\ast,o'')$. 
\end{proof}

\begin{rmk}\label{rmk:filtcol}
We remark that, in the proof of the previous lemma we used an explicit description the filtered colimits in $\Oper$, namely we have used the fact that the forgetful functor $\morp{U}{\Oper}{\EMGraph{\Set}}$ reflects filtered colimits; for a proof of this fact and the description of filtered colimits in $\EMGraph{\Set}$ we refer the reader to \cite[App. A]{C14}. 
\end{rmk}
\begin{cor}\label{loc.pres.par}
Let $\mathcal{C}$ and $\Mm$ be $\lambda$-locally presentable categories for a regular cardinal $\lambda$. Suppose $\ofam{F}$ is a finitary operadic $\mathcal{C}$-family
with a weakly initial set of colours, then $\ToBi{\ofam{F}}{\Mm}$ is $\lambda$-locally presentable.  
\end{cor}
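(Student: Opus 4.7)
The plan is to combine three ingredients already established in the section: the corollary immediately preceding the statement (which gives local presentability of the category of $\ofam{F}$-collections), the monadicity of the free-forgetful adjunction \eqref{freeforgfib}, and the fact that this adjunction is finitary whenever $\ofam{F}$ is. We then invoke the standard transfer result for accessible monads from Adámek--Rosický.

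First I would apply the preceding corollary to the discrete family $\disf{\ofam{F}}$. Since $\ofam{F}$ has a weakly initial set of colours, so does $\disf{\ofam{F}}$ (they share their colours by definition), and by hypothesis $\mathcal{C}$ and $\Mm$ are $\lambda$-locally presentable. Hence $\ToBi{\disf{\ofam{F}}}{\Mm}$ is $\lambda$-locally presentable.

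Next, by the two lemmas established just above the statement, the free-forgetful adjunction
\[
\adjpair{F_{\ofam{F}}}{U_{\ofam{F}}}{\ToBi{\disf{\ofam{F}}}{\Mm}}{\ToBi{\ofam{F}}{\Mm}}
\]
is monadic, and since $\ofam{F}$ is finitary it is also finitary, i.e.\ $U_{\ofam{F}}$ preserves filtered colimits. In particular the induced monad $T=U_{\ofam{F}} F_{\ofam{F}}$ on $\ToBi{\disf{\ofam{F}}}{\Mm}$ preserves filtered colimits, and \emph{a fortiori} $\lambda$-filtered colimits for our fixed regular cardinal $\lambda$ (any $\lambda$-filtered diagram is filtered, so its colimit is preserved). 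Thus $T$ is a $\lambda$-accessible monad on the $\lambda$-locally presentable category $\ToBi{\disf{\ofam{F}}}{\Mm}$.

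Finally, I would invoke the standard theorem (see Adámek--Rosický \cite[Thm.~2.78 and Rem.~2.78]{AR94}) that the category of algebras for a $\lambda$-accessible monad on a $\lambda$-locally presentable category is itself $\lambda$-locally presentable. Since $\ToBi{\ofam{F}}{\Mm}$ is, by monadicity, equivalent to the Eilenberg--Moore category of $T$, this yields the conclusion. No step is really the ``hard part'': all the substantive work (the monadicity lemma, the finitariness lemma, and the local presentability of $\ofam{F}$-collections via Propositions \ref{col.small} and \ref{strong.gen}) has already been carried out in the preceding paragraphs, so the corollary is essentially a bookkeeping application of the transfer principle for accessible monads.
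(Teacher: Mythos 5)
Your proposal is correct and follows essentially the same route as the paper: local presentability of the category of $\ofam{F}$-collections from the preceding corollary, plus monadicity and finitariness of the free-forgetful adjunction, plus the transfer theorem for accessible monads (the paper cites Gabriel--Ulmer, Satz 10.3, where you cite the equivalent statement in Ad\'amek--Rosick\'y, but the argument is identical). The only point worth noting is that your parenthetical justification that a finitary monad is $\lambda$-accessible for every regular $\lambda$ is a detail the paper leaves implicit, and it is correct.
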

\begin{proof}
Under our assumptions $\ToBi{\disf{\ofam{F}}}{\Mm}$, the category of $\ofam{F}$-collection in $\Mm$, is locally presentable and adjunction
\ref{freeforgfib} is monadic and finitary, therefore $\ToBi{\ofam{F}}{\Mm}$ is $\lambda$-locally presentable by \cite[Satz 10.3]{GU71}.  
\end{proof}

\appendix
\section{Operadic morphisms}\label{sec: alg as Kan extensions}
The goal of this section is to give a more explicit description of the left adjoint of the adjunction between the categories of algebras induced by a morphism of operads. 

The considerations done in this section should be put in the broadest context presented in \cite{MT08}, from which this section took inspiration.

In this section we will make use of enriched coends and left Kan extensions, we refer the reader to \cite{Ke82} for the notation.

Fix a bicomplete closed symmetric monoidal category $\Mm$.\\ 
Suppose $\morp{u}{\prp{A}}{\prp{B}}$ is a morphism of coloured $\Mm$-PROPs and let $(\cat{M}\otimes,\Un)$ be a cocomplete symmetric $\Mm$-algebra (\S \ref{sec:Malgebras}).
Consider $\cat{M}^{\prp{A}}$, the category of $\Mm$-functors from (the underlying symmetric monoidal $\Mm$-category of) $\prp{A}$ to 
$\cat{M}$; identifying $\Alg{\prp{A}}{\cat{M}}$ with the category of strong monoidal $\Mm$-functors from $\prp{A}$ to $\cat{M}$ and
symmetric monoidal $\Mm$-natural transformations between them there is a natural functor $\Alg{\prp{A}}{\cat{M}}\to \cat{M}^{\prp{A}}$ which forgets the monoidal structure; in between there is the category $\LaxM{\prp{A}}{\cat{M}}$ of (lax) symmetric monoidal $\Mm$-functor and symmetric monoidal $\Mm$-natural transformations, of which $\Alg{\prp{A}}{\cat{M}}$ is a full subcategory
\[
\Alg{\prp{A}}{\cat{M}} 
\hookrightarrow
\LaxM{\prp{A}}{\cat{M}} 
\longrightarrow
\cat{M}^{\prp{A}}. 
\]
Restriction and Left Kan extension along $u$ define an adjunction
\begin{equation}\label{eq:rest left kan}
\xymatrix{\cat{M}^{\prp{A}} \ar@<3pt>[r]^-{u_!} & \ar@<3pt>[l]^-{u^*} \cat{M}^{\prp{B}}.} 
\end{equation}

For every $G\in \cat{M}^{\prp{B}}$, a symmetric (strong) monoidal structure on $G$ induces a symmetric (strong) monoidal structure on $u^*G$
in a natural way; in other words $u^*$ extends to functors $\morp{u^*}{\LaxM{\prp{B}}{\cat{M}}}{\LaxM{\prp{A}}{\cat{M}}}$ and
$\morp{u^*}{\Alg{\prp{B}}{\cat{M}}}{\Alg{\prp{A}}{\cat{M}}}$.

The behaviour of $u_!$ on monoidal functors is more subtle; for every $F\in \cat{M}^{\prp{A}}$ and $\str{b}\in \prp{B}$, the $b$-component of $u_!F$ is isomorphic to the coend:
\[
u_!F(\str{b})= \LKan{u} F (\str{b}) \cong\gc^{\str{a}\in \prp{A}} \prp{B}(u(\str{a}),\str{b})\otimes F(\str{a}).
\]

Suppose that $F$ is endowed with a symmetric monoidal structure $(\phi,\psi)$;
from the commutative diagrams
\[
\begin{tikzpicture}[scale=0.9]
\matrix(a)[matrix of math nodes,
row sep=4em, column sep=4em,
text height=1.5ex, text depth=0.25ex]
{\prp{B}\times\prp{B} & \prp{A}\times\prp{A} & \cat{M}\times \cat{M} \\
    \prp{B} & \prp{A} & \cat{M} \\};
\path (a-1-2) edge[->] node[name=m1,pos=.5,label=$u\times u$] {} (a-1-1);
\path (a-2-2) edge[->] node[name=m2,pos=.5,label=below:$u$] {} (a-2-1);
\path (a-2-2) edge[->] node[name=m4,pos=.5,label=below:$F$] {} (a-2-3);
\path (a-1-2) edge[->] node[name=m3,pos=.5,label=$F\times F$] {} (a-1-3);
\path (a-1-2) edge[->] node[pos=.5,label=right:$\boxtimes$] {} (a-2-2);
\path (a-1-1) edge[->] node[pos=.5,label=left:$\boxtimes$] {} (a-2-1);
\path (a-1-3) edge[->] node[pos=.5,label=right:$\otimes$] {} (a-2-3); 
\path (m1) -- node[name=c1,pos=.2] {} node[name=c2,pos=.8] {} (m2);
\path (m3) -- node[name=c3,pos=.3] {} node[name=c4,pos=.7] {} (m4);
\path (c1) --
node[pos=.5] {$\cong$}
(c2);
\path (c3) edge[double distance=2pt,-implies]
node[pos=.6,right=2pt,xshift=-2pt] {$\phi$}
(c4);

\matrix(b)[matrix of math nodes,
row sep=2em, column sep=4em,
text height=1.5ex, text depth=0.25ex] at (7.5,0)
{ & \mathbf{1} &  \\
    \prp{B} &  & \cat{M} \\
    & \prp{A} & \\ };
\path (b-1-2) edge[->] node[name=d1,scale=0.7,pos=.5,label=$\Un$] {} (b-2-1);
\path (b-1-2) edge[->] node[name=e1, scale=0.7,pos=.5,label=$\Un$] {} (b-2-3);
\path (b-3-2) edge[->] node[name=d2,scale=0.7,pos=.5,label=below:$u$] {} (b-2-1);
\path (b-3-2) edge[->] node[name=e2,scale=0.7,pos=.5,label=below:$F$] {} (b-2-3);
\path (b-1-2) edge[->] node[scale=0.7,pos=.5,label=left:$\Un$] {} (b-3-2);
\path (d1) --
node[pos=.5] {$\cong$}
(d2);
\path (e1) -- node[name=f1,pos=.3] {} node[name=f2,pos=.7] {} (e2);
\path (f1.west) edge[double distance=2pt,-implies]
node[pos=.6,label=left:$\psi$] {}
(f2.west);
\end{tikzpicture}
\]
we get a natural transformation
\begin{equation}\label{eq: monoidal str1}
\widetilde{\phi} \colon \otimes (u_!F\times u_! F)\cong  \LKan{u\boxtimes u} (\otimes(F\times F)) \rightarrow (u_! F)\boxtimes 
\end{equation}
and a morphism
\begin{equation}\label{eq: monoidal str2}
\widetilde{\psi} \colon \Un \longrightarrow u_! F([])
\end{equation}
that define a (lax) symmetric monoidal structure on $u_!F$. This assignment is natural in $F$ and the functor $f_!$ sends symmetric monoidal $\Mm$-natural transformation to symmetric monoidal $\Mm$-natural transformation, i.e. it extends to a functor $\morp{f_!}{\LaxM{\prp{A}}{\Mm}}{\LaxM{\prp{B}}{\Mm}}$ which is left adjoint to $u^*$.

Hence we have the following commutative diagram:
\begin{equation}\label{eq: big alg as kan diagram}
\xymatrix{
    \Alg{\prp{A}}{\cat{M}} 
    \ar@<3pt>[d]^-{}
    &
    \Alg{\prp{B}}{\cat{M}} 
    \ar@<3pt>[d]^-{}
    \ar@<3pt>[l]^-{u^*}
    \\
    \LaxM{\prp{A}}{\cat{M}} 
    \ar@<3pt>[d]^-{}
    \ar@<3pt>[r]^-{u_!} &
    \LaxM{\prp{B}}{\cat{M}} 
    \ar@<3pt>[d]^-{}
    \ar@<3pt>[l]^-{u^*}
    \\
    \cat{M}^{\prp{A}} 
    \ar@<3pt>[r]^-{u_!} &
    \cat{M}^{\prp{B}} 
    \ar@<3pt>[l]^-{u^*}
}
\end{equation}
The functor $u_!$ does not always restrict to $\morp{u_!}{\Alg{\prp{A}}{\cat{M}}}{\Alg{\prp{B}}{\cat{M}}}$; in fact, even when the monoidal structure on $A$ is strong the morphisms  
(\ref{eq: monoidal str1}) and (\ref{eq: monoidal str2}) are not isomorphisms in general, i.e. the induced monoidal structure on $u_!A$ is not strong; but this is the only obstruction that does not allow
us to restrict $u_!$.

To understand for which $u$ this obstruction vanishes, it is useful to look closer to 
(\ref{eq: monoidal str1}) and (\ref{eq: monoidal str2}).

For every $\str{p},\str{q}\in \prp{B}$ set 
\[
\gfun{W_{\str{p}}}{\opc{\prp{A}}}{\Mm}{\str{s}}{\prp{B}(u(\str{s}),\str{p})},
\  
\gfun{W_{\str{p},\str{q}}} {\opc{\prp{A}}\times \opc{\prp{A}}}{\Mm}{(\str{s},\str{t})}{\prp{B}(u(\str{s}),\str{p})\otimes \prp{B}(u(\str{t}),\str{q});}
\]
there are natural transformations
\begin{equation}\label{eq: weights kan ext}
\begin{tikzpicture}[baseline=(current  bounding  box.center)]
\matrix(a)[matrix of math nodes,
row sep=4em, column sep=4em,
text height=1.5ex, text depth=0.25ex]
{\opc{\prp{A}}\times \opc{\prp{A}} & \Mm \\
    \opc{\prp{A}} & \\};
\path (a-1-1) edge[->] node[pos=.5,above,scale=0.8] {$W_{\str{p},\str{q}}$} node[pos=.2,name=m1] {}  (a-1-2);
\path (a-1-1) edge[->] node[pos=.5,name=m2,scale=0.8,left] {$\opc{\boxtimes}$} (a-2-1);
\path (a-2-1) edge[->] node[pos=.6,name=m3,below=10pt,scale=0.7] {$W_{\str{p}\boxtimes \str{q}}$} (a-1-2);
\path (m1) -- node[pos=.1,name=n1] {} node[pos=.75,name=n2] {} (a-2-1);
\path (n1) 
edge[double distance=2pt,-implies]
node[pos=.6,above=2pt,xshift=-2pt] {$\alpha$}
(n2);
\end{tikzpicture}
\ \ \ 
\begin{tikzpicture}[baseline=(current  bounding  box.center)]
\matrix(a)[matrix of math nodes,
row sep=4em, column sep=4em,
text height=1.5ex, text depth=0.25ex]
{\mathbf{1} & \Mm \\
    \opc{\prp{A}} & \\};
\path (a-1-1) edge[->] node[pos=.5,above,scale=0.8] {$\Un$} node[pos=.5,name=m1] {}  (a-1-2);
\path (a-1-1) edge[->] node[pos=.5,name=m2,scale=0.8,left] {$[]$} (a-2-1);
\path (a-2-1) edge[->] node[pos=.6,name=m3,below=10pt,scale=0.7] {$W_{[]}$} (a-1-2);
\path (m1) -- node[pos=.1,name=n1] {} node[pos=.75,name=n2] {} (a-2-1);
\path (n1) 
edge[double distance=2pt,-implies]
node[pos=.6,above=3pt,xshift=-3pt] {$\lambda$}
(n2);
\end{tikzpicture}
\end{equation}
induced by the monoidal structure of $\prp{A}$, $\prp{B}$ and $u$.

The $(\str{p},\str{q})$-component of $\widetilde{\phi}$ is
isomorphic to
\[
\gc^{\str{s},\str{t}\in \prp{A}} \prp{B}(u(\str{s}),\str{p})\otimes \prp{B}(u(\str{t}),\str{q})\otimes F(\str{s})\otimes F(\str{t})
\rightarrow
\gc^{\str{z}\in \prp{A}} \prp{B}(u(\str{s}),\str{p}\boxtimes \str{q})\otimes F(\str{z})
\]
that, if $F$ is strong monoidal, is isomorphic to the map between weighted colimits
\[
\lmorp{\alpha^*}
{
    \Wcolim{\cat{A}\times \cat{A}}{W_{\str{p},\str{q}}}{(F\circ\boxtimes)}
}
{
    \Wcolim{\cat{A}}{W_{\str{p}\boxtimes \str{q}}}{F}
}
\]  

Similarly (\ref{eq: monoidal str2}) is isomorphic to the map between weighted colimits
\[
\Un \cong \Wcolim{}{\Un}{F([])} \longrightarrow
\Wcolim{\prp{A}}{W_{[]}}{F}
\]
induced by $\lambda$ in (\ref{eq: weights kan ext}).

This leads to the following definition:

\begin{defi}\label{def:operadic}(\emph{cf.} \cite[\S 3, def. 9]{MT08})
 A morphism $\morp{u}{\ope{A}}{\ope{B}}$ between $\Mm$-PROPs (or more in general, between small symmetric monoidal $\Mm$-categories) is \emph{operadic}\index{operadic morphism} if  $\alpha$ and $\lambda$ in diagram (\ref{eq: weights kan ext}) exhibit  
    \begin{itemize}
        \item[-] $W_{\str{p}\boxtimes\str{q}}$ as the left Kan extension of $W_{\str{p},\str{q}}$ along $\opc{\boxtimes}$ and
        \item[-] $W_{[]}$ as the left Kan extension of $\Un$ along $[]$.    
    \end{itemize}
    for every $\str{p},\str{q}\in \prp{B}$. 
\end{defi}

\noindent The previous discussion can be subsumed in the following proposition: 
\begin{prop}\label{prop: algebraic kan trick}
    Suppose that $\morp{u}{\prp{A}}{\prp{B}}$ is an operadic morphism of $\Mm$-PROPs. 
    For every cocomplete symmetric monoidal $\Mm$-category $\cat{M}$, the functor $u_!$ in (\ref{eq: big alg as kan diagram}) restricts to a left adjoint for $\morp{u^*}{\Alg{\prp{A}}{\cat{M}}}{\Alg{\prp{B}}{\cat{M}}}$. 
\end{prop}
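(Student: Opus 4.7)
The strategy is essentially already sketched in the discussion preceding Definition \ref{def:operadic}. It suffices to prove that, whenever $F\in\Alg{\prp{A}}{\cat{M}}$ is strong monoidal, the lax monoidal structure $(\widetilde{\phi},\widetilde{\psi})$ constructed in (\ref{eq: monoidal str1}) and (\ref{eq: monoidal str2}) on $u_!F$ is actually strong. Once this is established, diagram (\ref{eq: big alg as kan diagram}) shows that $u_!$ restricts to a functor $\morp{u_!}{\Alg{\prp{A}}{\cat{M}}}{\Alg{\prp{B}}{\cat{M}}}$; since $\Alg{-}{\cat{M}}$ is a full subcategory of $\LaxM{-}{\cat{M}}$, the adjunction isomorphism already available at the level of lax monoidal functors restricts, exhibiting $u_!\dashv u^*$ on algebras.

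To see that $\widetilde{\phi}_{\str{p},\str{q}}$ and $\widetilde{\psi}$ are isomorphisms when $F$ is strong monoidal, I would use the identification carried out in the paragraph preceding Definition \ref{def:operadic}: under strong monoidality of $F$, these structure maps become, respectively, the comparison maps
\[
    \alpha^{*}\colon \Wcolim{\prp{A}\times\prp{A}}{W_{\str{p},\str{q}}}{F\circ \boxtimes} \longrightarrow \Wcolim{\prp{A}}{W_{\str{p}\boxtimes \str{q}}}{F}
\]
and $\lambda^{*}\colon \Un\cong \Wcolim{\mathbf{1}}{\Un}{F([])} \longrightarrow \Wcolim{\prp{A}}{W_{[]}}{F}$ induced between weighted colimits by the natural transformations $\alpha$ and $\lambda$ of (\ref{eq: weights kan ext}). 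These identifications rely only on the Fubini rule for coends and the fact that $\otimes$ in $\cat{M}$ commutes with colimits in each variable.

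The final ingredient is the standard fact in enriched category theory (\cite{Ke82}) that if a natural transformation $\beta\colon W \Rightarrow W'\circ \opc{G}$ exhibits $W'$ as the $\Mm$-enriched left Kan extension of $W$ along $\opc{G}$, then for every $\Mm$-functor $H$ the induced comparison map $\Wcolim{\cat{C}}{W}{H\circ G}\to \Wcolim{\cat{D}}{W'}{H}$ is an isomorphism. Applying this to $\alpha$ and $\lambda$ (which exhibit left Kan extensions precisely because $u$ is operadic, by Definition \ref{def:operadic}) with $H=F$ shows that $\widetilde{\phi}_{\str{p},\str{q}}$ and $\widetilde{\psi}$ are isomorphisms, so that $u_!F$ is strong monoidal. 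The main obstacle is bookkeeping rather than conceptual: one has to carefully verify that the chain of coend manipulations fits into a commutative diagram identifying the structure maps of $u_!F$ with the comparison maps $\alpha^*$ and $\lambda^*$, but no real difficulty is expected beyond this.
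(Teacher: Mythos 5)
Your proposal is correct and follows essentially the same route as the paper: identify $\widetilde{\phi}$ and $\widetilde{\psi}$ with the comparison maps $\alpha^*$ and $\lambda^*$ between weighted colimits (using the discussion preceding Definition \ref{def:operadic}) and then invoke the fact that a transformation exhibiting a left Kan extension induces an isomorphism of weighted colimits, which is precisely the dual of \cite[Theorem 4.63]{Ke82} cited in the paper's proof.
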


\begin{proof}
    Under our hypothesis, for every $\Mm$-functor $\morp{F}{\cat{A}}{\cat{M}}$
    and every $\str{p},\str{q}\in \prp{B}$ the canonical maps $\morp{\alpha^*}{\Wcolim{\cat{A}\times \cat{A}}{W_{\str{p},\str{q}}}{(F\circ\boxtimes)}}{\Wcolim{\cat{A}}{W_{\str{p}\boxtimes\str{q}}}{F}}$ and $\morp{\lambda^*}{\Un\otimes F([])}{\Wcolim{\prp{A}}{W_{[]}}{F}}$
    are isomorphisms by the dual of \cite[Theorem 4.63]{Ke82}.
    
    In particular, if $F$ is a strong monoidal functor, (\ref{eq: monoidal str1}) and (\ref{eq: monoidal str2}) are isomorphisms.
\end{proof} 
\begin{rmk}
    The operadic condition of definition \ref{def:operadic} for a strong monoidal morphism $\morp{u}{\prp{A}}{\prp{B}}$ can be formulated in a different way (\emph{cf.} \cite[\S 1.1.2]{KW14}). Since $\prp{A}$ and $\prp{B}$ are symmetric monoidal, the functor category $\Mm^{\opc{\prp{A}}}$ (resp. $\Mm^{\opc{\prp{B}}}$) has a symmetric monoidal structure given by the Day convolution. To ask that $\morp{u}{\prp{A}}{\prp{B}}$ is equivalent to ask that the restriction functor
    $\morp{u^*}{\Mm^{\opc{\prp{B}}}}{\Mm^{\opc{\prp{A}}}}$ is strong monoidal (with respect to the Day convolution).
\end{rmk}
\noindent For every $\str{p}\in \prp{B}$ of length $n$ let $\bigotimes_{i=1}^n W_{p_i}\colon \prod_{i=1}^n \opc{\prp{A}} \to \Mm$ be the composition of 
\[\morp{\prod_{i=1}^n W_{p_i}}{\prod_{i=1}^n \opc{\prp{A}}}{\prod^{n}_{i=1} \Mm}\] with the $n$-fold tensor product 
$\morp{\otimes}{\prod^{n}_{i=1} \Mm}{\Mm}$. There is a canonical natural transformation $\ntra{\lambda^{\str{p}}}{\bigotimes_{i=1}^n W_{p_i}}{W_{\str{p}}\opc{\boxtimes}}$.  Operadicity can also be formulated in the following way.

\begin{prop}\label{prop: different operadic}
A morphism of of $\Mm$-PROPs $\morp{u}{\prp{A}}{\prp{B}}$ is operadic if and only if for every $\str{p}\in \prp{B}$ the natural transformation
$\lambda^{\str{p}}$ exhibits $W_{\str{p}}$ as the left Kan extension of $\bigotimes_{i=1}^n W_{p_i}$ along $\opc{\boxtimes}$, i.e. if
and only if  
the canonical map:
\begin{equation}\label{eq:operadic for props}
\gc^{\str{s}_1,\dots,\str{s}_n\in \prp{A}} \prp{A}(\str{z},\str{s}_1\boxtimes\dots\boxtimes\str{s}_n)\otimes 
      \prp{B}(u(\str{s}_1),p_1)\otimes \dots \otimes \prp{B}(u(\str{s}_n),p_n) \longrightarrow \prp{B}(u(\str{z}),\str{p})
\end{equation}
is an isomorphism for every $\str{z}\in \prp{A}$.
\end{prop}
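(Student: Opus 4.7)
The strategy is to reduce the $n$-fold Kan extension condition to the cases $n=0$ and $n=2$ appearing in Definition \ref{def:operadic}, exploiting associativity of $\boxtimes$ and the fact that $\otimes$ preserves colimits in each variable. Along the way I first translate the Kan extension condition into the explicit coend formula (\ref{eq:operadic for props}).

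The equivalence with the coend formula is immediate from the pointwise construction of enriched left Kan extensions: the extension of $\bigotimes_{i=1}^n W_{p_i}$ along $\opc{\boxtimes^{(n)}} \colon \prod^n \opc{\prp{A}} \to \opc{\prp{A}}$, evaluated at $\str{z} \in \prp{A}$, is precisely the coend on the left-hand side of (\ref{eq:operadic for props}), and the universal comparison map is exactly $\lambda^{\str{p}}$. So the Kan extension formulation and the isomorphism formulation of the proposition coincide.

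For the forward direction (the $n$-fold condition implies operadicity), the case $n=0$ applied to $\str{p}=[]$ is the second clause of Definition \ref{def:operadic} by definition. For the binary clause, given $\str{p},\str{q} \in \prp{B}$ of lengths $m$ and $k$, apply the $(m+k)$-fold condition to $\str{p}\boxtimes\str{q}$; since $\boxtimes^{(m+k)} = \boxtimes \circ (\boxtimes^{(m)} \times \boxtimes^{(k)})$ by associativity, the composition law $\LKan{KL} \cong \LKan{K} \LKan{L}$ together with the $m$-fold and $k$-fold conditions repackage $W_{\str{p}\boxtimes\str{q}}$ as $\LKan{\opc{\boxtimes}}(W_{\str{p}}\otimes W_{\str{q}}) = \LKan{\opc{\boxtimes}}(W_{\str{p},\str{q}})$, which is the first operadic clause. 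Here one uses that $(-)\otimes(-)\colon \Mm\times\Mm\to\Mm$ preserves colimits in each variable.

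For the converse, induct on $n$. The cases $n=0$ and $n=1$ are trivial (the first is exactly the second operadic clause, the second is literally an identity). For $n \geq 2$, write $\str{p}=(p_1)\boxtimes\str{p}'$ with $\str{p}' = (p_2,\dots,p_n)$; the binary operadic condition yields
\[
W_{\str{p}} \cong \LKan{\opc{\boxtimes}}\bigl(W_{p_1}\otimes W_{\str{p}'}\bigr).
\]
By inductive hypothesis $W_{\str{p}'} \cong \LKan{\opc{\boxtimes^{(n-1)}}}\bigl(\bigotimes_{i=2}^n W_{p_i}\bigr)$, and since $W_{p_1}\otimes(-)$ preserves colimits it commutes with left Kan extensions, giving
\[
W_{p_1}\otimes W_{\str{p}'} \cong \LKan{\id\times\opc{\boxtimes^{(n-1)}}}\Bigl(\textstyle\bigotimes_{i=1}^n W_{p_i}\Bigr).
\]
The composition law for Kan extensions and $\opc{\boxtimes^{(n)}} = \opc{\boxtimes}\circ(\id\times\opc{\boxtimes^{(n-1)}})$ then yield the claim.

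The only step requiring care is to verify that the isomorphism obtained inductively is induced by the canonical $\lambda^{\str{p}}$ rather than some other comparison; this is routine coherence bookkeeping using associativity of $\boxtimes$ and the monoidal structure on $u$, and I do not expect any genuine obstacle beyond tracking universal properties through the iterated Kan extensions.
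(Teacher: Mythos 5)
Your proposal is correct and follows essentially the same route as the paper's proof: the direction ``operadic $\Rightarrow$ $n$-fold condition'' by induction on the length of $\str{p}$, peeling off $p_1$ and composing Kan extensions (using that $\otimes$ preserves colimits in each variable), and the converse by cancelling the $\boxtimes^{(m)}\times\boxtimes^{(k)}$ stage out of the $(m+k)$-fold condition for $\str{p}\boxtimes\str{q}$. The coherence point you defer at the end (that the composite comparison map agrees with the canonical $\lambda^{\str{p}}$, resp.\ $\alpha_{\str{p},\str{q}}$) is exactly the compatibility the paper also invokes without detailed verification, so no substantive gap remains.
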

\begin{proof}
 Suppose $u$ is operadic.
We are going to prove that $W_{\str{p}}$ is the left Kan extension of $\bigotimes_{i=1}^n W_{p_i}$ along $\opc{\boxtimes}$
for every $\str{p}\in \prp{B}$ by induction on the length of $\str{p}$.

If $\str{p}=[]$ then $\lambda^{[]}$ coincides with $\lambda$ thus the statement follows from the hypothesis that $u$ is operadic.
If $\card{\str{p}}=1$ the statement is trivial.
Suppose that $\str{p}=(p_1,\dots,p_n)$ and that the statement is true for every object of $\prp{B}$ of length smaller than $n$; let $\str{p}'=(p_2,\dots,p_n)$; then
in the diagram  
\[
\begin{tikzpicture}[scale=0.5]
\matrix(a)[matrix of math nodes,
row sep=3.7em, column sep=7em,
text height=1.5ex, text depth=0.25ex]
{\opc{\prp{A}}\times \prod_{i=1}^n\opc{\prp{A}} & \Mm \\
    \opc{\prp{A}} \times \opc{\prp{A}} & \\
     \opc{\prp{A}} & \\};
\path (a-1-1) edge[->] node[name=m1,pos=.22] {} 
                       node[name=l1,pos=.5,style={scale=0.8},yshift=10pt] {$\bigotimes_{i=1}^n W_{p_i}$} (a-1-2);
\path (a-2-1) edge[bend right=10,->] node[name=m2,pos=.3] {} 
                                     node[name=l2,pos=.3,style={scale=0.8},below,xshift=22pt] {$W_{p_1}\otimes W_{\str{p}'}$} (a-1-2);
\path (a-3-1) edge[bend right,->] node[name=m3,pos=.3] {} 
				  node[name=l3,pos=.5,style={scale=0.8},xshift=8pt,yshift=-11pt] {$W_{\str{p}}$} (a-1-2);
\path (a-1-1) edge[->] node[name=n1,pos=.5,style={scale=0.8},xshift=-22pt] {$\id \times \opc{\boxtimes}$} (a-2-1);
\path (a-2-1) edge[->] node[name=n2,pos=.5,style={scale=0.8},xshift=-12pt] {$\opc{\boxtimes}$} (a-3-1);
\path (m1) -- node[name=c1,pos=.1] {} node[name=c2,pos=.7] {} (m2);
\path (m2) -- node[name=c3,pos=.2] {} node[name=c4,pos=.8] {} (m3);
\path (c1) edge[double distance=2pt,-implies]
node[pos=.6,left=2pt,yshift=2pt,style={scale=0.8}] {$id\otimes \lambda^{\str{p}'}$}
(c2);
\path (c3) edge[double distance=2pt,-implies]
node[pos=.6,left=2pt,yshift=2pt,style={scale=0.8}] {$\alpha_{\str{p_i},\str{p}'}$}
(c4);
 \end{tikzpicture}
\]
the top triangle is a left Kan extension by inductive hypothesis and the bottom triangle is a left Kan extension since $u$ is operadic; it follows that the outer triangle exhibits $W_{\str{p}}$ as the left Kan extension of $\bigotimes_{i=1}^n W_{p_i}$ along
$\opc{\boxtimes}$. 

Conversely suppose that $\lambda^{\str{p}}$ exhibits $W_{\str{p}}$ as left Kan extension for every $\str{p}\in \prp{B}$; since 
$\lambda$ in diagram (\ref{eq: weights kan ext}) is isomorphic to $\lambda^{[]}$ we only need to check the first condition of definition \ref{def:operadic}. 

Given $\str{p},\str{q}\in \prp{B}$ of length $n$ and $m$ respectively, we have a commutative diagram:
\[
\begin{tikzpicture}[scale=0.5]
\matrix(a)[matrix of math nodes,
row sep=3.7em, column sep=8em,
text height=1.5ex, text depth=0.25ex]
{\prod_{i=1}^n\opc{\prp{A}}\times \prod_{i=1}^m\opc{\prp{A}} & \Mm \\
    \opc{\prp{A}} \times \opc{\prp{A}} & \\
     \opc{\prp{A}} & \\};
\path (a-1-1) edge[->] node[name=m1,pos=.15] {} 
                       node[name=l1,pos=.5,style={scale=0.8},yshift=10pt] {$\bigotimes_{i=1}^n W_{p_i}\otimes \bigotimes_{j=1}^m W_{q_j}$} (a-1-2);
\path (a-2-1) edge[bend right=10,->] node[name=m2,pos=.3] {} 
                                     node[name=l2,pos=.3,style={scale=0.8},below,xshift=22pt] {$W_{\str{p},\str{q}}$} (a-1-2);
\path (a-3-1) edge[bend right,->] node[name=m3,pos=.3] {} 
				  node[name=l3,pos=.5,style={scale=0.8},xshift=8pt,yshift=-11pt] {$W_{\str{p}\boxtimes \str{q}}$} (a-1-2);
\path (a-1-1) edge[->] node[name=n1,pos=.5,style={scale=0.8},xshift=-22pt] {$\opc{\boxtimes} \times \opc{\boxtimes}$} (a-2-1);
\path (a-2-1) edge[->] node[name=n2,pos=.5,style={scale=0.8},xshift=-12pt] {$\opc{\boxtimes}$} (a-3-1);
\path (m1) -- node[name=c1,pos=.1] {} node[name=c2,pos=.7] {} (m2);
\path (m2) -- node[name=c3,pos=.2] {} node[name=c4,pos=.8] {} (m3);
\path (c1) edge[double distance=2pt,-implies]
node[pos=.6,left=2pt,yshift=2pt,style={scale=0.8}] {$\lambda^{\str{p}} \otimes \lambda^{\str{q}}$}
(c2);
\path (c3) edge[double distance=2pt,-implies]
node[pos=.6,left=2pt,yshift=2pt,style={scale=0.8}] {$\alpha_{\str{p},\str{q}}$}
(c4);
 \end{tikzpicture}
\]
The composition of $\lambda^{\str{p}} \otimes \lambda^{\str{q}}$ with $\alpha_{\str{p},\str{q}}$ is isomorphic to $\lambda^{\str{p}\boxtimes \str{q}}$, thus the outer triangle and the top triangle are left Kan extensions; this implies that the
bottom triangle is a left Kan extension as well; hence $u$ is operadic.
\end{proof}
\noindent 
Our next goal is to show that every morphism of $\Mm$-PROPs generated by a morphism of $\Mm$-operads is operadic.

More in details, suppose $\morp{u}{\ope{O}}{\ope{P}}$ is a morphism of $\Mm$-operads; by abuse of notation let $\morp{u}{w_!\ope{O}}{w_!\ope{P}}$
be the induced map of PROPs.

Recall the description of the $\Mm$-PROP $w_!\ope{P}$ we gave in \S \ref{operandcat}; as a shorthand, for every $\str{s},\str{t}\in w_!\ope{P}$ such that $\card{\str{s}}=n$ and $\card{\str{t}}=m$ and every function $\morp{f}{\fcar{n}}{\fcar{m}}$
we will denote by $\ope{P}(\str{s},\str{t})_f$ the tensor product $\bigotimes_{i=1}^m \ope{P}(\str{s}_{f^{-1}(i))},t_i)$, i.e. the 
component of $w_!\ope{P}(\str{s},\str{t})$ determined by $f$.

Consider $\str{p},\str{q}\in w_!\ope{P}$ and $\str{z}\in w_!\ope{O}$ with lengths $\card{\str{p}}=m_1$, $\card{\str{q}}=m_2$ and  $\card{\str{z}}=n$.

The weights we are considering have the following expressions:
\[
W_{\str{p}\boxtimes \str{q}}(\str{z})\cong w_!\ope{P}(u(\str{z}),p\boxtimes q)\cong \coprod_{\morp{f}{\fcar{n}}{\fcar{m_1}\sqcup\fcar{m_2}}} (\bigotimes_{i=1}^{m_1} \ope{P}(u(\str{z}_{f^{-1}(i)}),p_i))\otimes (\bigotimes_{j=1}^{m_2} \ope{P}(u(\str{z}_{f^{-1}(j)}),q_j))\cong
\]
\begin{equation}\label{eq:big wanna coend}
\cong \coprod_{\morp{f}{\fcar{n}}{\fcar{m_1}\sqcup\fcar{m_2}}} \ope{P}(u(\str{z}_{f^{-1}(m_1)}),\str{p})_{f_1} \otimes
\ope{P}(u(\str{z}_{f^{-1}(m_1)}),\str{q})_{f_2}
\end{equation}
where $f_i$ is the restriction of $f$ to $f^{-1}(m_i)$ for every $i\in \{1,2\}$.\\
For every $\str{s},\str{t}\in w_!\ope{O}$ with lengths $\card{s}=k_1$ and $\card{t}=k_2$:
\begin{equation}\label{eq: operad weight}
W_{\str{p},\str{q}}(\str{s},\str{t})\cong w_!\ope{P}(u(\str{s}),p)\otimes w_!\ope{P}(u(\str{t}),q)\cong
\end{equation}
\[ 
\underset{{\morp{b}{k_2}{m_2}}}{\coprod_{\morp{a}{k_1}{m_1}}} \ope{P}(u(\str{s}),p)_a\otimes \ope{P}(u(\str{t}),q)_b\cong
\]\[\cong\underset{{\morp{b}{k_2}{m_2}}}{\coprod_{\morp{a}{k_1}{m_1}}}\left(\bigotimes^{m_1}_{i=1} \ope{P}(u(\str{s}_{a^{-1}(i)}),p_i)\right)\otimes \left(\bigotimes^{m_2}_{j=1}\ope{P}(u(\str{t}_{b^{-1}(j)}),q_j)\right)
\]
and
\begin{equation}\label{eq: weight of weights}
w_!\ope{O}(\str{z},\str{s}\otimes \str{t})\cong \coprod_{\morp{g}{\fcar{n}}{\fcar{k_1}\sqcup \fcar{k_2}}} \left(\bigotimes^{k_1}_{i=1}\ope{O}(\str{z}_{g^{-1}(i)},s_i)\right)\otimes \left(\bigotimes^{k_2}_{j=1}\ope{O}(\str{z}_{g^{-1}(j)},t_j)\right)\cong
\end{equation}
\[
\cong \coprod_{\morp{g}{\fcar{n}}{\fcar{k_1}\sqcup \fcar{k_2}}} \ope{O}(\str{z_{g^{-1}(k_1)}},\str{p})_{g_1} \otimes \ope{O}(\str{z_{g^{-1}(k_2)}},\str{p})_{g_2}
\]

\begin{lemma}\label{lemma:main lemma kan ext}
    Let $\morp{u}{\ope{O}}{\ope{P}}$ be a map of $\Mm$-operads. The induced map of PROPs $\morp{u}{w_!\ope{O}}{w_!\ope{P}}$ is operadic. 
\end{lemma}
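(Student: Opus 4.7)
The strategy is to verify the hypothesis of Proposition~\ref{prop: different operadic}: for each $\str{p}=(p_1,\dots,p_n)\in w_!\ope{P}$ and each $\str{z}\in w_!\ope{O}$ of length $N$, one must show that the canonical map (\ref{eq:operadic for props}) is an isomorphism. The plan is to compute both sides of (\ref{eq:operadic for props}) via the explicit formula (\ref{eq:eq free prop oper}) and to match the resulting coproducts summand by summand. Note that the right-hand side already breaks up as a coproduct indexed by functions $\morp{f}{\fcar{N}}{\fcar{n}}$, with $f$-summand $\bigotimes_{i=1}^{n}\ope{P}(u(\str{z}_{f^{-1}(i)}),p_i)$.

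On the left-hand side, each $w_!\ope{P}(u(\str{s}_i),p_i)$ reduces to $\ope{P}(u(\str{s}_i),p_i)$ since $p_i$ has length one, while $w_!\ope{O}(\str{z},\str{s}_1\boxtimes\dots\boxtimes\str{s}_n)$ expands, after fixing lengths $k_i=\card{\str{s}_i}$, as a coproduct over functions $\morp{g}{\fcar{N}}{\fcar{k_1+\dots+k_n}}$ of tensor products of hom-objects in $\ope{O}$. Since coends commute with coproducts in each variable, one may pull the coproduct over $(k_1,\dots,k_n,g)$ outside the coend.

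Next I would evaluate, for each fixed $(k_1,\dots,k_n,g)$, the coend over the colours $s_{i,j}$ of the sequences $\str{s}_i$. Each such colour appears in exactly one $\ope{O}$-factor (via $g$) and in exactly one input slot of one $\ope{P}$-factor; the operadic composition morphism $\ope{O}(\str{c},s)\otimes\ope{P}(\dots,u(s),\dots;p)\to \ope{P}(\dots,u(\str{c}),\dots;p)$, combined with iterated enriched co-Yoneda, collapses this coend to $\bigotimes_{i=1}^{n}\ope{P}(u(\str{z}_{f^{-1}(i)}),p_i)$, where $\morp{f}{\fcar{N}}{\fcar{n}}$ is the composite of $g$ with the canonical partition $\fcar{k_1+\dots+k_n}\to \fcar{n}$.

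The last step is to match the resulting over-indexed coproduct (over tuples $(k_1,\dots,k_n,g)$) with the right-hand coproduct (over $f$ alone). Fixing $f$ forces $k_i=\card{f^{-1}(i)}$, and any two compatible choices of $g$ differ by shuffle permutations of the entries of the $\str{s}_i$'s; such shuffles are bona fide morphisms in $w_!\ope{O}$, hence are identified by the coend, leaving precisely one copy per $f$. I expect this final bookkeeping of the symmetric actions to be the technical crux: one must verify carefully that the coend quotient over the groupoid of reparametrizations of the $\str{s}_i$ is exactly what is needed to pass from the tuple-indexed coproduct to the $f$-indexed coproduct appearing on the right-hand side, and that the natural map of (\ref{eq:operadic for props}) indeed realises this identification.
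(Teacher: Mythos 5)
Your entry point is legitimate and genuinely different from the paper's: you reduce to the $n$-ary condition of Proposition~\ref{prop: different operadic} (whose proof is independent of this lemma, so there is no circularity), whereas the paper verifies the binary condition of Definition~\ref{def:operadic} for each pair $\str{p},\str{q}$ together with the unit condition, and handles the $n$-ary version only afterwards. Your reduction also silently takes care of the case $\str{p}=[]$, which the paper must treat separately. However, two steps of your plan do not survive scrutiny. First, you cannot ``pull the coproduct over $(k_1,\dots,k_n,g)$ outside the coend'': the index set of the coproduct decomposition (\ref{eq:eq free prop oper}) of $w_!\ope{O}(\str{z},\str{s}_1\boxtimes\dots\boxtimes\str{s}_n)$ depends on the lengths of the $\str{s}_i$, which are the coend variables, and morphisms in $w_!\ope{O}$ do not preserve length, so the integrand is not a coproduct of functors indexed by $g$ and the coend does not split this way. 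The coequalizer defining the coend identifies terms living over different length profiles, which is exactly what makes the computation nontrivial.

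Second, the step you defer as ``final bookkeeping'' is where the entire proof lives, and in a general $\Mm$ it cannot be settled by matching summands or by arguing that the coend ``leaves precisely one copy per $f$'' --- those are element-wise arguments about a quotient, and over an arbitrary monoidal model category you have no elements and no a priori control that the coend does not over-identify. What is actually needed is the universal property: one constructs an explicit candidate inverse out of $\prp{B}(u(\str{z}),\str{p})\cong\coprod_f\bigotimes_i\ope{P}(u(\str{z}_{f^{-1}(i)}),p_i)$ by inserting identity operations of $\ope{O}$ (the map $v$ built from units in the paper's $\beta$), and then verifies that both composites are identities, the nontrivial direction being a diagram chase that uses extra-naturality of the coend cowedge. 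Your plan correctly locates the crux and the correct ``generic representatives'' ($\str{s}_i=\str{z}_{f^{-1}(i)}$ with the canonical unshuffle), but without the inverse-plus-extra-naturality argument the isomorphism in (\ref{eq:operadic for props}) is not established.
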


\begin{proof}
    We start by showing that for every $\str{p},\str{q}\in w_!\ope{O}$ diagram (\ref{eq: weights kan ext})
    exhibits $W_{\str{p}\boxtimes\str{q}}$ as the left Kan extension $\LKan{\opc{\boxtimes}} W_{\str{p},\str{q}}$.
    Since $\Mm$ is cocomplete as a $\Mm$-category, for every $\str{z}\in w_!\ope{O}$ the left Kan extension  $\LKan{\opc{\boxtimes}} W_{\str{p},\str{q}}$
    has $\str{z}$-component:
    \[
      \LKan{\opc{\boxtimes}} W_{\str{p},\str{q}}(\str{z})\cong \gc^{\str{s},\str{t}\in w_!\ope{O}\times w_!\ope{O}} w_!\ope{O}(\str{z},\str{s}\otimes \str{t}) \otimes 
      w_!\ope{P}(u(\str{s}),p)\otimes w_!\ope{P}(u(\str{t}),q)
    \]  
    According to (\ref{eq: operad weight}) and (\ref{eq: weight of weights})
    $w_!\ope{O}(\str{z},\str{s}\boxtimes \str{t})\otimes w_!\ope{P}(u(\str{s}),\str{p})\otimes w_!\ope{P}(u(\str{t}),\str{q})$ is isomorphic to
    \begin{equation}\label{eq: big coend expression}
      \coprod_{\morp{g}{\fcar{n}}{\fcar{k_1}\sqcup \fcar{k_2}}}
      \underset{{\morp{b}{k_2}{m_2}}}{\coprod_{\morp{a}{k_1}{m_1}}}
      \ope{O}(\str{z}_{g^{-1}(k_1)},\str{p})_{g_1} \otimes \ope{O}(\str{z}_{g^{-1}(k_2)},\str{p})_{g_2}\otimes
      \ope{P}(u(\str{s}),p)_a\otimes \ope{P}(u(\str{t}),q)_b
    \end{equation}
    The $\Mm$-natural transformation $\alpha$ in diagram (\ref{eq: weights kan ext}) induces a canonical natural transformation $\ntra{\widetilde{\alpha}}{\LKan{\opc{\otimes}} W_{p,q}}{W_{\str{p}\boxtimes\str{q}}}$.
    It is sufficient to show that for every $\str{z}$ in $w_!\ope{O}$ the component map:
    \[
    \widetilde{\alpha}_{\str{z}}\colon \gc^{\str{s},\str{t}\in w_!\ope{O}\times w_!\ope{O}} w_!\ope{O}(\str{z},\str{s}\boxtimes \str{t}) \otimes 
    w_!\ope{P}(u(\str{s}),p)\otimes w_!\ope{P}(u(\str{t}),q) \longrightarrow w_!\ope{P}(u(z),\str{p}\boxtimes\str{q}).
    \]
    is an isomorphism. 
    By the universal property of coends, this is equivalent to show that the induced map
    \begin{equation}
    \morp{\widetilde{\alpha}_{\str{z}}^*}{\Mm(W_{\str{p}\boxtimes\str{q}}(\str{z}),X)}{\Enr{\Mm} \mathrm{ExtNat}\left(w_!\ope{O}(\str{z},-\boxtimes-)\otimes W_{\str{p,q}}(-\boxtimes-),X\right)}
    \end{equation}
    is a bijection for every $X\in \Mm$; here $\Enr{\Mm} \mathrm{ExtNat}\left(w_!\ope{O}(\str{z},-\boxtimes-)\otimes W_{\str{p,q}}(-\boxtimes-),X\right)$ denotes the set of extra natural $\Mm$-transformations from $\morp{w_!\ope{O}(\str{z},-\boxtimes-)\otimes W_{\str{p,q}}(-\boxtimes-)}{\opc{w_!\ope{O}}\times\opc{w_!\ope{O}}\times w_!\ope{O}\times w_!\ope{O}}{\Mm}$ to $X$ (see \cite[\S 1.7]{Ke82}).
    
    We are going to describe $\widetilde{\alpha}_{\str{z}}^*$ and its inverse $\beta$ explicitly.
    
    Consider a morphism $\morp{h}{W_{\str{p}\boxtimes \str{q}}}{X}$; for every $\str{s},\str{t}\in w_!\ope{O}$ the component 
    $\morp{\alpha_{\str{z}}^*(h)_{\str{s},\str{t}}}{w_!\ope{O}(\str{z},\str{s}\boxtimes \str{t})\otimes W_{p,q}(\str{s},\str{t})}{X}$
    of the extra-natural transformation $\alpha_{\str{z}}^*(h)$ has the following description: for every $\morp{g}{\fcar{n}}{\fcar{k_1}\sqcup \fcar{k_2}}$ and every $\morp{a}{\fcar{k}_1}{\fcar{m}_1}$, $\morp{b}{\fcar{k}_2}{\fcar{m}_2}$ the restriction to the $(g,a,b)$-component of
    the coproduct (\ref{eq: big coend expression}) is:
    \begin{equation}
    \xymatrix{
        \ope{O}(\str{z},\str{s}\boxtimes \str{t})_g\otimes \ope{P}(u(\str{s}),\str{p})_a\otimes \ope{P}(u(\str{t}),\str{q})_b
        \ar[d]^-{\textit{ composition after applying $u$}}\\
        \ope{P}(u(\str{z}_{g^{-1}(k_1)}),\str{p})_{a\circ g_1} \otimes \ope{P}(u(\str{z}_{g^{-1}(k_2)}),\str{p})_{b\circ g_2}
        \ar[d]^-{\textit{ tensor in  $w_!\ope{P}$}}\\
        \ope{P}(u(\str{z}),\str{p}\boxtimes\str{q})_{(a\sqcup b) \circ g}
        \ar[d]^-{h}\\
        X.
    } 
    \end{equation}
    For every extra-natural transformation $\gamma\in \Enr{\Mm} \mathrm{ExtNat}\left(w_!\ope{O}(\str{z},-\boxtimes-)\otimes W_{\str{p,q}}(-\boxtimes-),X\right)$ and for every $\morp{f}{\fcar{n}}{\fcar{m}_1\sqcup \fcar{m}_2}$ the map $\morp{\beta(\gamma)}{W_{\str{p}\boxtimes\str{q}}(\str{z})}{X}$ 
    is defined on the $f$-component of the coproduct (\ref{eq:big wanna coend}):
    
    \begin{equation}
    \xymatrix{
        \smxylabel{\Un\otimes \ope{P}(\str{z},\str{p}\boxtimes \str{q})_{f}}  
        \ar[d]^-{v\otimes \id}\\ 
        \smxylabel{\ope{O}(\str{z},\str{z}_{f^{-1}(m_1)}\otimes \str{z}_{f^{-1}(m_2)})_{\omega_{\widehat{f}}}
            \otimes \ope{P}(u(\str{z}_{f^{-1}(m_1)}),\str{p})_{f_1}\otimes \ope{P}(u(\str{z}_{f^{-1}(m_2)},\str{q})_{f_2}}
        \ar[d]^-{\gamma}\\
        X
    }
    \end{equation}
    where $v$ is the tensor product of the unit
    morphisms of every component of \[\ope{O}(\str{z},\str{z}_{f^{-1}(m_1)}\boxtimes \str{z}_{f^{-1}(m_2)})_{\omega_{\widehat{f}}}
    \cong \bigotimes_{i=1}^n \ope{O}(z_{\omega_{\widehat{f}}^{-1}(i)},z_{\omega_{\widehat{f}}^{-1}(i)}).\]
    
    It is easy to check that $\alpha^*_{\str{z}}\beta(h)=h$ for every $h\in \Mm(W_{\str{p}\boxtimes\str{q}}(\str{z}),X)$.\\
    To check that for every $\beta\alpha^*_{\str{z}}=\id$ it is sufficient to observe that for every \[\gamma\in \Enr{\Mm} \mathrm{ExtNat}(w_!\ope{O}(\str{z},-\boxtimes-)\otimes W_{\str{p,q}}(-\boxtimes-),X),\] every $\str{s},\str{t}\in w_!\ope{O}$ and every $\morp{g}{\fcar{n}}{\fcar{k_1}\sqcup \fcar{k_2}}$ and every $\morp{a}{\fcar{k}_1}{\fcar{m}_1}$, $\morp{b}{\fcar{k}_2}{\fcar{m}_2}$ the following diagram commutes
    \begin{center}
        \begin{tikzpicture}[every node/.style={scale=0.8}]
        \node (a) at (0,0) 
        {$\ope{O}(\str{z}_{g^{-1}(k_1)},\str{s})_{g_1} \otimes \ope{O}(\str{z}_{g^{-1}(k_2)},\str{t})_{g_2}
            \otimes \Un \otimes \ope{P}(\str{s},\str{p}))_a \otimes \ope{P}(\str{s},\str{q}))_b$};
        \node (b) at (0,-2)
        {$\ope{O}( \str{z}_{g^{-1}(k_1)},\str{s})_{g_1} \otimes \ope{O}(\str{z}_{g^{-1}(k_2)},\str{t})_{g_2}
            \otimes \ope{O}(\str{z},\str{z}_{g^{-1}(k_1)}\otimes \str{z}_{g^{-1}(k_2)})_{\omega_{\widehat{g}}} \otimes \ope{P}(\str{s},\str{p}))_a \otimes \ope{P}(\str{s},\str{q}))_b$}; 
        \node (c) at (-5,-4)
        {
            $\ope{O}( \str{z},\str{s}\boxtimes \str{t})_g \otimes \ope{P}(\str{s},\str{p})_a \otimes \ope{P}(\str{s},\str{q}))_b$
        };
        \node (d) at (3,-5)
        {
            $\ope{O}(\str{z},\str{z}_{g^{-1}(k_1)}\otimes \str{z}_{g^{-1}(k_2)})_{\omega_{\widehat{(a\sqcup b)g}}} \otimes \ope{P}(\str{z}_{g^{-1}(k_1)},\str{p})_{ag_1} \otimes \ope{P}(\str{z}_{g^{-1}(k_2)},\str{q})_{bg_2}$
        };
        \node (x) at (-2,-7) {$X$};
        \path (a) edge[->] (b);
        \path (b) edge[->] (c);
        \path (b) edge[->] (d);
        \draw (d) edge[->] node[midway,below,xshift=15ex] {$\gamma_{\str{z}_{g^{-1}(k_1)},\str{z}_{g^{-1}(k_2)},\omega_{\widehat{(a\sqcup b)g}},ag_1,bg_2}$} (x);
        \draw (c) edge[->] node[midway,below,xshift=-5ex] {$\gamma_{\str{s},\str{t},g,ag,b}$} (x);
        \draw (a.west) edge[bend right=40,->] node[midway,above,xshift=-3pt] {$\cong$} ($(c.north west)+(1,0)$);
        \node[gray] (txt) at (-0.5,-4) {\textit{extra-naturality of $\gamma$}};
        \end{tikzpicture}
    \end{center}
    The composition of the inverse of the isomorphism on the top left with the descending chain of arrows on the right is equal to $\alpha_{\str{z}}^*\beta(\gamma)_{\str{s},\str{t},g,ag,b}$; this shows that $\alpha_{\str{z}}^*\beta(\gamma)=\gamma$.
    
    To show that $W_{[]}$ is the left Kan extension $\LKan{[]}{\Un}$ notice that the map induced by $\lambda$ (diagram \ref{eq: weights kan ext})
    \[
    \lmorp{\widetilde{\lambda}_{\str{z}}}{\LKan{[]}{\Un}(\str{z})}{W_{[]}(\str{z})}
    \]
    is isomorphic to
    $
    \morp{u_\str{z}}{w_!\ope{O}(\str{z},[])}{w_!\ope{P}(\str{z},[])}
    $,
    which is an isomorphism for every $\str{z}\in w_!\ope{O}$.
\end{proof}

\begin{cor}\label{cor:Lan}
    For every morphism of $\Mm$-operads 
    $\morp{u}{\ope{O}}{\ope{P}}$ and every algebra $A\in \Alg{\ope{O}}{\cat{M}}$ the $\ope{P}$-algebra $\morp{u_!A}{w_!\ope{P}}{\Mm}$ is the left Kan extension of $A$ along $w_!(u)$; in particular:
    \[
    u_!A(c)\cong \gc^{\str{s}\in w_!\ope{O}}
    \ope{P}(u(\str{s});c)\otimes A(s_1)\otimes\dots \otimes A (s_{\card{\str{s}}})
    \]
    for every colour $c$ of $\ope{P}$.  
\end{cor}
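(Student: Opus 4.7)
My plan is to deduce the corollary directly from Lemma~\ref{lemma:main lemma kan ext} and Proposition~\ref{prop: algebraic kan trick}, passing between operadic and PROPic pictures via the adjunction $w_! \dashv w^*$.

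First I would observe that for every $\Mm$-operad $\ope{O}$ and every cocomplete symmetric $\Mm$-algebra $\cat{M}$, there is a natural isomorphism of categories $\Alg{\ope{O}}{\cat{M}} \cong \Alg{w_!\ope{O}}{\cat{M}}$. Indeed, an $\ope{O}$-algebra is a morphism $\ope{O} \to \Endsm{\cat{M}}(\cat{M})$, and since $\Endsm{\cat{M}}(\cat{M})$ is the underlying operad of $\EndP{\cat{M}}(\cat{M})$, the adjunction $w_! \dashv w^*$ (and the fully-faithfulness of $w_!$) yields
\[
\EOPER{\Mm}\bigl(\ope{O}, w^*\EndP{\cat{M}}(\cat{M})\bigr) \;\cong\; \EPROP{\Mm}\bigl(w_!\ope{O}, \EndP{\cat{M}}(\cat{M})\bigr).
\]
This identification is natural in $\ope{O}$, so the adjunction $(u_!, u^*)$ at the level of operad algebras is intertwined with the adjunction $(w_!(u)_!, w_!(u)^*)$ at the level of PROP algebras. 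In particular, $u_!A$ (as an $\ope{O}$-algebra) corresponds, under the natural identification, to $w_!(u)_!A$ (as a $w_!\ope{O}$-algebra), where $A$ on the right is viewed as a strong monoidal $\Mm$-functor $\morp{A}{w_!\ope{O}}{\cat{M}}$.

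Next I would invoke Lemma~\ref{lemma:main lemma kan ext}, which asserts that $w_!(u) \colon w_!\ope{O} \to w_!\ope{P}$ is operadic in the sense of Definition~\ref{def:operadic}. By Proposition~\ref{prop: algebraic kan trick}, the restriction functor $w_!(u)^*$ on algebras admits as left adjoint the (plain) left Kan extension $\LKan{w_!(u)}$. Therefore $w_!(u)_!A \cong \LKan{w_!(u)} A$ in $\cat{M}^{w_!\ope{P}}$, and combining with the preceding identification shows that $u_!A$ is the left Kan extension of $A$ along $w_!(u)$, as claimed.

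Finally, to obtain the explicit coend formula I would compute the pointwise Kan extension at a colour $c$ of $\ope{P}$, thought of as a length-one object of $w_!\ope{P}$. The standard coend formula gives
\[
(u_!A)(c) \;\cong\; \gc^{\str{s}\in w_!\ope{O}} w_!\ope{P}\bigl(w_!(u)(\str{s}), c\bigr)\otimes A(\str{s}).
\]
Using formula~(\ref{eq:eq free prop oper}) with target of length one, and noting that there is a unique function $\fcar{n}\to\fcar{1}$, one has $w_!\ope{P}(w_!(u)(\str{s}), c)\cong \ope{P}(u(\str{s}); c)$. Since $A$ is strong symmetric monoidal, $A(\str{s})\cong A(s_1)\otimes \cdots \otimes A(s_{\card{\str{s}}})$, and the desired formula follows. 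The only subtle point in the whole argument is keeping track of the identification between $u_!$ (computed in $\Alg{\ope{P}}{\cat{M}}$) and $w_!(u)_!$ (computed at the PROP level); once this bookkeeping is done, the corollary reduces to unpacking the Kan extension coend of Proposition~\ref{prop: algebraic kan trick} in the one-output case.
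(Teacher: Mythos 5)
Your proposal is correct and follows essentially the same route as the paper, which likewise deduces the corollary by combining Lemma~\ref{lemma:main lemma kan ext} (operadicity of $w_!(u)$) with Proposition~\ref{prop: algebraic kan trick}. The extra bookkeeping you supply --- the identification $\Alg{\ope{O}}{\cat{M}}\cong\Alg{w_!\ope{O}}{\cat{M}}$ via $\Endsm{\Mm}(\cat{M})=w^*\EndP{\Mm}(\cat{M})$, and the reduction of the coend weight to $\ope{P}(u(\str{s});c)$ using formula~(\ref{eq:eq free prop oper}) in the one-output case --- is exactly what the paper's terse proof leaves implicit, and it is carried out correctly.
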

\begin{proof}
    It is a direct application of Proposition \ref{prop: algebraic kan trick}; in fact Lemma \ref{lemma:main lemma kan ext} shows that $u$ satisfies all the required hypotheses.
\end{proof}

\begin{cor}\label{cor:ff}
    For every fully faithful morphism of $\Mm$-operads $\morp{u}{\ope{O}}{\ope{P}}$
    the induced functor $\morp{u_!}{\Alg{\ope{O}}{\cat{M}}}{\Alg{\ope{P}}{\cat{M}}}$ is fully faithful.
\end{cor}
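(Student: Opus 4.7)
The plan is to reduce the statement to a standard fact about enriched left Kan extensions along fully faithful functors, via the description of $u_!$ given in Corollary~\ref{cor:Lan}. Recall that a functor with a right adjoint is fully faithful if and only if the unit of the adjunction is a natural isomorphism, so it suffices to prove that for every $A \in \Alg{\ope{O}}{\cat{M}}$ the unit $\eta_A \colon A \to u^* u_! A$ is an isomorphism in $\Alg{\ope{O}}{\cat{M}}$; equivalently, an isomorphism of the underlying $\Mm$-functors $w_!\ope{O} \to \cat{M}$.

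First I would show that if $u \colon \ope{O} \to \ope{P}$ is fully faithful as an operad morphism, then the induced map of PROPs $w_!(u) \colon w_!\ope{O} \to w_!\ope{P}$ is fully faithful as a $\Mm$-functor of the underlying symmetric monoidal $\Mm$-categories. This is an immediate consequence of the explicit formula
\[
w_!(\ope{O})\ioval{\mathbf{c}}{\mathbf{d}} \cong \coprod_{f \colon \fcar{n} \to \fcar{m}} \bigotimes_{i=1}^{m} \ope{O}\ioval{\covf{\mathbf{c}}{f}{i}}{d_i}
\]
from~\eqref{eq:eq free prop oper}: the hom-object $w_!(u)_{\mathbf{c},\mathbf{d}}$ is, component-wise in the coproduct indexed by $f$, the tensor product of the isomorphisms $\ope{O}\ioval{\covf{\mathbf{c}}{f}{i}}{d_i} \xrightarrow{\cong} \ope{P}\ioval{u(\covf{\mathbf{c}}{f}{i})}{u(d_i)}$ coming from the fully-faithfulness of $u$.

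Next I would invoke the general fact that for a fully faithful $\Mm$-functor $F \colon \cat{A} \to \cat{B}$ between small $\Mm$-categories and any $\Mm$-functor $H \colon \cat{A} \to \cat{M}$ (with $\cat{M}$ cocomplete as a $\Mm$-algebra), the unit $H \to F^* \mathrm{Lan}_F H$ of the pointwise Kan extension is an isomorphism. This is standard (see Kelly~\cite{Ke82}, \S 4.5): by the coend formula the $a$-component of $F^* \mathrm{Lan}_F H$ is $\int^{a' \in \cat{A}} \cat{B}(F(a'),F(a)) \otimes H(a') \cong \int^{a' \in \cat{A}} \cat{A}(a',a) \otimes H(a') \cong H(a)$, where the first isomorphism uses that $F$ is fully faithful. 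Applying this to $F = w_!(u)$ and $H = A$ (regarded as a strong monoidal $\Mm$-functor $w_!\ope{O} \to \cat{M}$), and combining with Corollary~\ref{cor:Lan}, which identifies $u_!A$ with $\mathrm{Lan}_{w_!(u)} A$ as an underlying $\Mm$-functor, we conclude that $u^* u_! A \cong A$ via the unit map.

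The only mildly subtle point is matching the two descriptions of the unit: the one coming from the operadic adjunction $(u_!, u^*)$ on algebras, and the one coming from the enriched Kan-extension adjunction on underlying $\Mm$-functors. This match follows from the diagram~\eqref{eq: big alg as kan diagram} above, which shows that the forgetful functor from $\Alg{\ope{P}}{\cat{M}}$ to $\cat{M}^{w_!\ope{P}}$ intertwines the two adjunctions. Hence $\eta_A$ is an isomorphism for every $A$, and $u_!$ is fully faithful.
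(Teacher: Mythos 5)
Your proof is correct and follows essentially the same route as the paper's: the paper's one-line argument is precisely that fully faithfulness of $u$ implies fully faithfulness of $w_!(u)$, hence the unit of $(u_!,u^*)$ is an isomorphism by the standard property of left Kan extensions along fully faithful $\Mm$-functors. You have simply filled in the details (the component-wise verification via formula~(\ref{eq:eq free prop oper}) and the compatibility of the two units via diagram~(\ref{eq: big alg as kan diagram})) that the paper leaves implicit.
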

\begin{proof}
 If $u$ is fully-faithful, $w_!(u)$ is fully-faithful thus the unit of the adjunction $(u_!,u^*)$ is a natural isomorphism, i.e.
 $u_!$ is fully-faithful.
\end{proof}

\begin{prop}\label{prop:operadic operads}
A $C$-coloured $\Mm$-PROPs is in the image of $w_!$ if and only if the unique morphism of $C$-coloured PROPs
$\morp{i}{\Str{C}}{\prp{P}}$ is operadic. 
\end{prop}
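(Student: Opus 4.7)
The plan is to identify the comparison maps appearing in the operadicity criterion of Proposition \ref{prop: different operadic} with the counit of the adjunction $w_! \dashv w^*$, and then to leverage Lemma \ref{lemma:main lemma kan ext} for the easy direction.

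First I would observe that the initial $C$-coloured $\Mm$-PROP $\Str{C}$ is canonically isomorphic to $w_!(\Un_C)$, where $\Un_C$ denotes the initial $C$-coloured $\Mm$-operad (only coloured identities). This follows directly from formula (\ref{eq:eq free prop oper}): the sum collapses to the permutation summands. Consequently, for every $C$-coloured PROP $\prp{P}$, the unique morphism $\morp{i}{\Str{C}}{\prp{P}}$ coincides, under this identification, with $w_!(\iota)$ where $\morp{\iota}{\Un_C}{w^*(\prp{P})}$ is the unique such morphism of operads. With this in hand, the ``only if'' direction is immediate: if $\prp{P} \cong w_!(\ope{O})$ for some $\ope{O}\in \EOperfc{\Mm}{C}$, then $i$ is (up to isomorphism) $w_!$ applied to the unique map $\Un_C \to \ope{O}$, and Lemma \ref{lemma:main lemma kan ext} shows such a map is operadic.

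For the converse, assume $i$ is operadic. I would show that the counit $\morp{\epsilon_\prp{P}}{w_!w^*(\prp{P})}{\prp{P}}$ is an isomorphism, thereby exhibiting $\prp{P}$ in the image of $w_!$. Fix $\str{z},\str{p}\in \Str{C}$ of lengths $n$ and $m$ respectively. Proposition \ref{prop: different operadic} applied to $i$ asserts that the canonical map
\[
    \gc^{\str{s}_1,\dots,\str{s}_m\in \Str{C}} \Str{C}(\str{z},\str{s}_1\boxtimes\cdots\boxtimes\str{s}_m)\otimes \prp{P}(\str{s}_1,p_1)\otimes\cdots\otimes \prp{P}(\str{s}_m,p_m) \longrightarrow \prp{P}(\str{z},\str{p})
\]
is an isomorphism. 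Since every morphism of $\Str{C}$ is a permutation of coloured sequences and the hom-set $\Str{C}(\str{z},\str{s}_1\boxtimes\cdots\boxtimes\str{s}_m)$ is an $\Aut(\str{z})$-torsor whenever non-empty, a direct coend calculation (summing over orbits in the permutation groupoid) identifies the left-hand side with
\[
    \coprod_{f\colon \fcar{n}\to\fcar{m}} \bigotimes_{i=1}^{m} \prp{P}(\covf{\str{z}}{f}{i},p_i) \;\cong\; w_!w^*(\prp{P})(\str{z},\str{p}),
\]
where the isomorphism uses formula (\ref{eq:eq free prop oper}) applied to the operad $\ope{O} = w^*(\prp{P})$.

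The main bookkeeping is then to verify that the comparison map produced by Proposition \ref{prop: different operadic}, when transported across this identification, coincides with the $(\str{z},\str{p})$-component of the counit $\epsilon_\prp{P}$. This is where care is needed: one has to trace the construction of the comparison map (which uses horizontal composition in $\prp{P}$ together with the monoidal structure of $i$) and match it with the description of $\epsilon_\prp{P}$ arising from the universal property of $w_!$ as a left adjoint, paying attention to the unshuffling conventions from \S \ref{operandcat}. This verification is essentially formal but is the only nontrivial step; once established it immediately gives $\epsilon_\prp{P}\cong \id$ and hence $\prp{P}\cong w_!(w^*(\prp{P}))$, completing the proof.
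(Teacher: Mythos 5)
Your proposal is correct and follows essentially the same route as the paper: both directions reduce the converse to showing the counit of $(w_!,w^*)$ is an isomorphism via Proposition \ref{prop: different operadic}, identify the coend over $\Str{C}$ with the coproduct of formula (\ref{eq:eq free prop oper}) (your $\Aut(\str{z})$-torsor observation is the paper's remark that $\comcat{\str{c}}{\boxtimes}$ is a simply connected groupoid with $\pi_0\cong\Set(\fcar{n},\fcar{m})$), and leave the matching of the comparison map with the counit as a routine verification. The only difference is that you spell out the easy direction slightly more explicitly by writing $\Str{C}\cong w_!(\Un_C)$, which the paper takes for granted.
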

\begin{proof}
Suppose that $i$ is operadic; to show that $\prp{P}$ is in the image of $w_!$ it is sufficient to prove that the counit $\morp{\eta_{\prp{P}}}{w_!w^*\prp{P}}{\prp{P}}$ is an isomorphism. 

According to Proposition \ref{prop: different operadic}, the morphism $i$ is operadic if and only if for every $\str{c},\str{d}\in \Str{C}$, with length $n$ and $m$ respectively, the $\str{c}$-component of the map induced by $\lambda^{\str{d}}$
\begin{equation}\label{eq: lambda of a discrete}
 \gc^{\str{e_1},\dots,\str{e_n}\in \Str{C}} \Str{C}(\str{c},\str{e_1}\boxtimes \dots \boxtimes \str{e_m})\otimes 
\bigotimes_{i=1}^m \prp{P}(\str{e_i},d_i) \longrightarrow \prp{P}(\str{c},\str{d})
\end{equation}
is an isomorphism.

The left hand side is isomorphic to:
\[
 \underset{(\str{c}\to \str{e})\in \comcat{\str{c}}{\boxtimes}} \varinjlim \prp{P}(\str{e_i},d_i)
\]
The comma category $\comcat{\str{c}}{\boxtimes}$ (where $\morp{\boxtimes}{\Str{C}^{\times n}}{\Str{C}}$ denotes the $m$-fold tensor product) is a simply connected groupoid and its set of connected components is isomorphic to $\pi_0(\comcat{\str{c}}{\boxtimes})\cong \Set(\fcar{n},\fcar{m})$.
Under these identifications the morphism (\ref{eq: lambda of a discrete}) becomes
\[
 \underset{\morp{f}{\fcar{n}}{\fcar{m}}}\coprod \prp{P}(\str{c}_{f^{-1}(i)},d_i) \longrightarrow \prp{P}(\str{c},\str{d})
\]
We leave to the reader to check that this map coincides with the $(\str{c},\str{d})$-component of the counit of $(w_!,w^*)$ at $\prp{P}$ (\emph{cf.} formula (\ref{eq:eq free prop oper})).

If $\prp{P}$ is in the image of $w_!$, the morphism $i$ is in the image of $w_!$, thus $i$ is operadic by lemma \ref{lemma:main lemma kan ext}. 
\end{proof}

\subsubsection{Extensions of \texorpdfstring{$\Set$}{Set}-Operads}\label{sec:extset}
In ordinary categories (i.e. in $\Set$-categories) all weighted colimits can be computed as ordinary colimits indexed by the category
of elements of the considered weight; it follows that (point-wise) left Kan extension along a functor $f$ are computed object-wise 
as colimits indexed by the comma category of $f$ over the considered object.
 
In particular, given a morphism of $\Set$-operads $\morp{u}{\ope{O}}{\ope{P}}$ and a strong monoidal functor $\morp{A}{w_!\ope{O}}{\cat{M}}$
(i.e. an $\ope{O}$-algebra in $\cat{M}$), for every $p\in \Cl{\ope{P}}$:
\begin{equation}
 u!A(p)\cong \underset{\{\arr{u(\str{o})}{p}\}\in 
\comcat{u}{p}}\varinjlim A(o_1)\otimes\dots\otimes A(o_{\card{o}})
\end{equation}
It is thus useful to analyse the structure of the comma category $w_!\comcat{u}{p}$ to get information over $u_!$.
Examples were presented in \S \ref{sec.operadj}.

\subsection{Feynman Categories}\label{sec:fey cat}
Kaufmann and Ward defines in \cite{KW14} another class of symmetric monoidal categories that serve to model algebraic objects, namely the class of Feynman categories.

An instance of operadic functor (def. \ref{def:operadic}) appears in the formulation of the hierarchy condition for Feynman categories (\emph{c.f.} 1.1.2 in \emph{loc.cit.}).

In this section we would like to compare the notion of coloured operad and Feynman categories, showing that they are equivalent and that the former can be regarded as a strictification of the latter. The equivalence between Feynman categories and coloured operad has also been proved independently by Batanin, Kock and Weber and will appear in \cite{BKW15}.

We begin by defining what we call a ``pre-Feynman category", that should be regarded as a non-strict version of a PROP.

The results of this section should also indicate that the requirement that the symmetric monoidal category underlying a PROP is strict monoidal is more a simplification than a restriction.
      
\subsubsection{Pre-Feynman Categories}
Fix a bicomplete closed symmetric monoidal category $(\Mm,\otimes,\Un)$.

For every category $\cat{C}$ we denote by $\Cor{\cat{C}}$ its \emph{core}, i.e. the  maximal subgroupoid of $\cat{C}$; for a $\Mm$-enriched category $\cat{W}$, $\Cor{\cat{W}}$ will stand for the core of its underlying (un-enriched) category.

Recall that for every category $\cat{C}$, the symmetric monoidal category underlying the PROP $k_!(\cat{C})$ can be characterised as the free symmetric monoidal category generated by $\cat{C}$.

\begin{defi}    
A \emph{(small) pre-Feynman $\Mm$-category}\index{pre-Feynman category} is a triple $(\Mfg, \Mf, \iota)$ where $\Mfg$ is an (un-enriched) groupoid, 
$\Mf$ is a (small) symmetric monoidal $\Mm$-category and $\iota$ is a fully-faithful functor $\morp{\iota}{\Mfg}{\Cor{\Mf}}$ such that the induced strong monoidal $\Mm$-functor $\morp{\adjsm{\iota}}{\fsm{\Mfg}}{\Mf}$ is essentially surjective, that is every object $x\in \Mf$ is isomorphic to $\iota g_1\otimes \dots \otimes \iota{g_n}$ for some $n\in \N$ and some $g_1,\dots,g_n\in \Mfg$.

A \emph{morphism of pre-Feynman $\Mm$-categories}\index{morphism! of pre-Feynman categories} $\morp{f}{(\Mfg,\Mf,\iota)}{(\Mfg',\Mf',\iota')}$ is a triple $(v_f,m_f,\alpha_f)$ where $\morp{v_f}{\Mfg}{\Mfg'}$
is a morphism of groupoid, $\morp{m_f}{\Mf}{\Mf'}$ is a strong monoidal $\Mm$-functor and $\ntra{\alpha_f}{\adjsm{\iota'}\fsm{(v_f)}}{m_f \adjsm{\iota}}$ is an invertible monoidal $\Mm$-natural transformation.
\[
 \diagcn{\fsm{\Mfg}}{\fsm{\Mfg'}}{\Mf}{\Mf'}{\adjsm{\iota}}{\fsm{(v_f)}}{\adjsm{\iota'}}{m_f}{\alpha_f}
\]
\end{defi}
\begin{rmk}
Note that $\alpha$ is completely determined by its behaviour on $\Mfg$, thus we could have defined $\alpha$ as an invertible $\Mm$-natural transformation from $\iota' v_f$ to $\iota m_f$.
\end{rmk}
\noindent Composition of morphisms of pre-Feynman $\Mm$-categories is defined in the evident way.

We will denote by $\EpreFey{\Mm}$ the $2$-category where $0$-cells are pre-Feynman $\Mm$-categories, $1$-cells are morphisms between them
and $2$-cells are described in the following way:
given  $\morp{f,g}{(\Mfg,\Mf,\iota)}{(\Mfg',\Mf',\iota')}$ a $2$-cell $f\Rightarrow g$ is
 a couple $(\gamma,\eta)$ where $\ntra{\eta}{m_f}{m_g}$ is a monoidal $\Mm$-natural transformation and $\ntra{\gamma}{\iota' v_f}{\iota' v_g}$ is an $\Mm$-natural transformation
such that $(\eta\iota) \alpha_f= \alpha_g \gamma$ (figure \ref{fig:2cell}).
\begin{figure}[th!]
\begin{tikzpicture}[scale=1.2]
\matrix(a)[matrix of math nodes,
row sep=5em, column sep=7em,
text height=1.5ex, text depth=0.25ex]
{\Mfg&\Mfg'\\
 \Mf& \Mf'\\};
\path[bend left=20,->](a-1-1) edge node[pos=.5,above] {\footnotesize $v_f$} (a-1-2);
\path[bend left=25,gray, ->](a-1-1) edge node[pos=.4,name=d1]{} node[pos=.5,name=c1]{} (a-2-2);
\path[bend right=27,->](a-1-1) edge node[pos=.5,above] {\footnotesize $v_g$} (a-1-2);
\path[bend left=20,->](a-2-1) edge node[pos=.45,name=b1]{} node[pos=.6,above] {\footnotesize $m_f$} (a-2-2);
\path[bend right=27,->](a-2-1) edge node[pos=.4,name=b2]{} node[pos=.45,below] {\footnotesize $m_g$} (a-2-2);
\path (b1) edge[double distance=2pt,shorten <=2pt,shorten >=2pt,-implies] node[fill=white,inner sep=0pt,pos=.5,rounded corners=2pt,right,xshift=4pt]{\footnotesize $\eta$} (b2);
\path (d1) edge[bend right=20,double distance=2pt,shorten <=0.4cm,shorten >=0.3cm,-implies] node[fill=white,inner sep=0pt,pos=.3,rounded corners=2pt,above,yshift=2pt,xshift=-6pt]{\footnotesize $\alpha_f$} (a-2-1);
\path[bend right=27,gray,->](a-1-1) edge node[pos=.5,name=c2]{} node[pos=.5,name=d2]{} (a-2-2);
\path[->](a-1-1) edge node[pos=.5,left] {\footnotesize $\iota$} (a-2-1);
\path[->](a-1-2) edge node[pos=.5,right] {\footnotesize $\iota'$}(a-2-2);
\path (c1) edge[shorten <=0pt,shorten >=0pt,double distance=2pt,-implies] node[pos=.7,above,yshift=3pt]{\footnotesize $\gamma$} (c2);
\path  (d2) edge[bend left=10,double distance=2pt,shorten <=0.1cm,shorten >=0.05cm,-implies] node[fill=white,inner sep=0pt,pos=.3,rounded corners=2pt,above,yshift=0pt,xshift=-6pt]{\footnotesize $\alpha_g$}(a-2-1);
\end{tikzpicture}
\caption{A $2$-cell in $\preFey$.}\label{fig:2cell}
\end{figure}

A $1$-cell $(v_f,m_f)$ is an equivalence in the $2$-category $\EpreFey{\Mm}$ if and only if $m_f$ is an equivalence of $\Mm$-categories.

\begin{defi}
    For a pre-Feynman $\Mm$-category $(\Mfg,\Mf,\iota)$ and a symmetric monoidal $\Mm$-category $\cat{W}$ the \emph{category of $\Mf$-algebras in $\cat{W}$} is the category of strong symmetric monoidal functor from $\Mf$ to $\cat{W}$ (and monoidal natural transformations between them) and it is denoted by $\Alg{\Mf}{\cat{W}}$.
\end{defi}

\noindent Recall that a $C$-coloured $\Mm$-PROP can be defined as a triple as $(C,\prp{P},i)$ as in Remark \ref{rmk:flex prop}.

There is a $2$-functor:
\begin{equation}\label{eq:fun:prop->preFey}
\gfun{\FSmf}{\EProp{\Mm}}{\EpreFey{\Mm}}{(C,\prp{P})}{(\Cor{\prp{P}},\prp{P},i)}
\end{equation} 
where $\Cor{\prp{P}}=\Cor{k^*\prp{P}}$ is the \emph{core of} $k^*(\prp{P})$ and $i$ is the canonical inclusion $\morp{i}{\Cor{\prp{P}}}{\prp{P}}$. $T$ is defined on $1$-cells and $2$-cells in the evident way.

There is also a $2$-functor going in the opposite direction:
\begin{equation}\label{eq:fun:preFey->prop}
\gfun{\Foff}{\EpreFey{\Mm}}{\EProp{\Mm}}{(\Mfg,\Mf,\iota)}{(\Ob{\Mfg},\EndP{\Mm}(\mathbf{G}))}
\end{equation}
where $\EndP{\Mm}(\mathbf{G})$ is the endomorphism $\Mm$-PROP for the collection $\mathbf{G}=\{\iota(g)\}_{g\in \Ob{\Mfg}}$ (\S \ref{sec:endomorphism PROP}).

\begin{prop}\label{prop:eq preFey and PROPs}
 The $2$-functors $\FSmf$ and $\Foff$ defines a biequivalence of $2$-categories.
\end{prop}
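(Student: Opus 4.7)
The plan is to construct pseudo-natural equivalences
\[
\beta\colon \id_{\EProp{\Mm}} \Longrightarrow \Foff \FSmf \quad\text{and}\quad \gamma\colon \id_{\EpreFey{\Mm}} \Longrightarrow \FSmf \Foff
\]
with each component being an equivalence in the respective $2$-category; by standard $2$-categorical arguments this suffices to establish the biequivalence.

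For $\beta$, given $(C, \prp{P})\in \EProp{\Mm}$, unwinding definitions yields $\Foff \FSmf (C, \prp{P}) = (\Str{C}, \EndP{\Mm}(\{s\}_{s\in \Str{C}}))$. The component $\beta_{(C, \prp{P})}$ is the morphism of PROPs induced by the set inclusion $C \hookrightarrow \Str{C}$, $c \mapsto (c)$, whose underlying strict symmetric monoidal $\Mm$-functor $\prp{P} \to \EndP{\Mm}(\{s\}_{s\in \Str{C}})$ sends $(c_1, \dots, c_n)$ to $((c_1), \dots, (c_n))$ and is the identity on hom-objects under the canonical identification $\prp{P}((c_1,\dots,c_n),(d_1,\dots,d_m)) = \EndP{\Mm}(\{s\})(((c_1),\dots,(c_n)), ((d_1),\dots,(d_m)))$. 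This morphism is fully faithful by construction and essentially surjective, since any $(s_1, \dots, s_n) \in \Str{\Str{C}}$ is isomorphic in $\EndP{\Mm}(\{s\})$ to $((s_1\ast\dots\ast s_n))$ via the identity of $s_1 \ast \dots \ast s_n$ in $\prp{P}$, and this latter object belongs to the image of $\beta_{(C, \prp{P})}$. Hence $\beta_{(C, \prp{P})}$ is an equivalence in $\EProp{\Mm}$.

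For $\gamma$, given $X = (\Mfg, \Mf, \iota)$, unwinding gives $\FSmf \Foff X = (\Cor{\EndP{\Mm}(\mathbf{G})}, \EndP{\Mm}(\mathbf{G}), i)$ with $\mathbf{G} = \{\iota(g)\}_{g\in \Ob{\Mfg}}$. The key ingredient is the canonical evaluation strong symmetric monoidal $\Mm$-functor $E\colon \EndP{\Mm}(\mathbf{G}) \to \Mf$ defined on objects by $E(g_1,\dots,g_n) = \iota(g_1)\otimes\dots\otimes\iota(g_n)$: it is fully faithful by the very definition of the endomorphism PROP and essentially surjective by the generation condition on pre-Feynman categories, hence an equivalence of symmetric monoidal $\Mm$-categories. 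Fixing a quasi-inverse $m \colon \Mf \to \EndP{\Mm}(\mathbf{G})$ of $E$, the triple consisting of $m$, of the groupoid functor $v\colon \Mfg \to \Cor{\EndP{\Mm}(\mathbf{G})}$ defined by $g\mapsto (g)$ and $h\mapsto \iota(h)$, and of the invertible monoidal $\Mm$-natural transformation $\alpha\colon \adjsm{i}\fsm{v}\Rightarrow m\adjsm{\iota}$ supplied by the unit $\id\cong mE$ of the quasi-inverse, assembles into a $1$-cell $\gamma_X \colon X \to \FSmf \Foff X$ in $\EpreFey{\Mm}$; since $m$ is an equivalence, so is $\gamma_X$ by the characterisation of equivalences in $\EpreFey{\Mm}$ recalled just before the statement.

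The main obstacle I anticipate is verifying pseudo-naturality of $\beta$ and $\gamma$ together with the invertible modifications providing the coherence data for a biequivalence. This bookkeeping is considerably simplified by two observations already recorded in the paper: a monoidal $\Mm$-natural transformation out of a free symmetric monoidal $\Mm$-category $\fsm{\Mfg}$ is determined by its restriction to $\Mfg$ (the remark following the definition of morphism in $\EpreFey{\Mm}$), and the ``strict-on-objects'' versus ``iso-on-objects'' presentations of PROPs give canonically equivalent $2$-categories (Remark~\ref{rmk:flex prop}). With these, the remaining verifications reduce to routine diagram chases.
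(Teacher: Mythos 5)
Your treatment of the second unit, $\gamma\colon \id_{\EpreFey{\Mm}}\Rightarrow \FSmf\Foff$, is essentially the paper's argument: both proofs come down to the strictification equivalence between $\EndP{\Mm}(\mathbf{G})$ and $\Mf$ (your $E$ is the functor $R_{\mathbf{G}}$ of \S\ref{apx:strictification}) together with a choice of quasi-inverse, which is exactly why one only gets a pseudo-natural equivalence and not an isomorphism; the paper packages this as a zig-zag of pre-Feynman morphisms through $\EndP{\Mm}(\Mf)$ and inverts one leg, while you invert $E$ directly. That half is fine.

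The first half, however, rests on a misreading of the functor $\FSmf$. In (\ref{eq:fun:prop->preFey}) the groupoid attached to $(C,\prp{P})$ is $\Cor{\prp{P}}=\Cor{k^*\prp{P}}$, the core of the \emph{underlying $\Mm$-category on the set of colours $C$} (objects of length one), not the core of the whole symmetric monoidal category $\prp{P}$. Hence $\Ob{\Cor{\prp{P}}}=C$ and
$\Foff\FSmf(C,\prp{P})=\bigl(C,\ \EndP{\Mm}(\{(c)\}_{c\in C})\bigr)$, not $\bigl(\Str{C},\ \EndP{\Mm}(\{s\}_{s\in\Str{C}})\bigr)$ as you claim. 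Since $\prp{P}$ is strict monoidal with object monoid freely generated by $C$, one has $\brak{(c_1,\dots,c_n)}=(c_1)\otimes\cdots\otimes(c_n)=(c_1,\dots,c_n)$, so $\EndP{\Mm}(\{(c)\}_{c\in C})$ is canonically \emph{isomorphic} to $\prp{P}$; this is the paper's (stronger and simpler) assertion that $\Foff\FSmf\cong\id_{\EProp{\Mm}}$. Your morphism $\beta_{(C,\prp{P})}$, which changes the colour set from $C$ to $\Str{C}$, is therefore not a morphism into $\Foff\FSmf(C,\prp{P})$ as defined in the paper, and the pseudo-naturality bookkeeping you defer would be carried out for the wrong transformation. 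The error is recoverable — replacing your $\beta$ by the canonical identification above actually shortens the proof — but as written the first half does not establish what is needed.
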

\begin{proof}
  It is straight-forward to check that $ST$ is isomorphic to the identity on $\EProp{\Mm}$.
  Thus it is sufficient to exhibit an equivalence of pre-Feynman categories $\morp{\kappa}{TS(\Mf)}{\Mf}$
  pseudo natural in $\Mf=(\Mfg,\Mf,\iota)\in \EpreFey{\Mm}$. 
  
  For every $(\Mfg,\Mf,\iota)\in \EpreFey{\Mm}$ we have two morphisms of pre-Feynman categories, represented by the left and the right square of the
  following diagram
  \[
  \xymatrix{ \Mfg \ar[r] \ar[d] & \Mfg \ar[d]_-{} \ar@{}[dl]^(.3){}="a"^(.7){}="b" \ar@2{->}_-{\id} "a";"b" 
	      \ar@{}[dr]^(.3){}="x"^(.7){}="y" \ar@2{->}^-{\id} "x";"y" & \ar[l] \Mfg \ar[d]\\
	      \FSmf\Foff(\Mf) \ar[r] & \EndP{\Mm}(\Mf) & \ar[l] \Mf
	    }
  \]
  where $\EndP{\Mm}(\Mf)$ is the $\Mm$-PROP associated to $\Mf$ (or equivalently the strictification of $\Mf$, \S \ref{sec:endomorphism PROP}, \ref{apx:strictification}).
  Both squares are equivalence of pre-Feynman $\Mm$-categories and are pseudo natural in $\Mf$; choosing an inverse for the right square (for every $\Mf$)
  we get the desired pseudo natural equivalence $\ntra{\kappa}{\FSmf\Foff}{\id_{\EpreFey{\Mm}}}$.
\end{proof}
\begin{rmk}
    Note that it also follows that for every symmetric monoidal $\Mm$-category $\cat{W}$ and every pre-Feynman category $\Mf$ there is an equivalence of categories $\Alg{\Mf}{\cat{W}}\cong \Alg{S(\Mf)}{\cat{W}}$ and conversely, for every $\Mm$-PROP $\prp{P}$ the categories of algebras $\Alg{\prp{P}}{\cat{W}}$ and  $\Alg{T(\prp{P})}{\cat{W}}$ are equivalent. 
\end{rmk}
\subsubsection{Feynman Categories and Operads}

\begin{defi}(\cite{KW14} def. 1.1)
A \emph{(small) Feynman $\Mm$-categories}\index{Feynman category} is a pre-Feynman category $(\Mfg, \Mf, \iota)$ such
the functor $\morp{\adjsm{\iota}}{k_!\Mfg}{\Mf}$ is operadic.
\end{defi}
\begin{rmk}

\noindent Feynman categories are defined in \cite{KW14} in the case in which $\Mm=\Set$; their definition is formulated in a different way than the above one, but it is equivalent (at least for small Feynman categories).
In fact, in definition 1.1 of \emph{loc.cit.} a Feynman category is defined to be triple $(\Mfg, \Mf, \iota)$ as above satisfying three condition: the isomorphism condition, the hierarchy condition and the size condition. 

The size condition requires that the comma category $\comcat{\Mf}{\iota{g}}$ is essentially small for every $g\in \Mfg$;
Since we are only considering \emph{small} Feynman categories the size condition is automatically satisfied.

The fact that the hierarchy condition is equivalent to the operadicity of $\adjsm{\iota}$ under the isomorphism condition is clearly explained in \S 1.1.2 of \emph{loc.cit.}.

Thus it is enough to prove that, under the assumption that $(\Mfg, \Mf, \iota)$ is a Pre-Feynman category, the operadicity of $\adjsm{\iota}$ implies the isomorphism condition.

The isomorphism condition requires that $\morp{\adjsm{\iota}}{k_!\Mfg}{\Cor{\Mf}}$ is an equivalence of groupoids; since $(\Mfg, \Mf, \iota)$ is a pre-Feynman categories, it is sufficient to show that $\adjsm{\iota}$ is fully faithful on the isomorphisms.  
 
For every $\str{g},\str{h}\in \Str{\Ob{\cat{G}}}$ with length $\card{\str{g}}=n$ and $\card{\str{h}}=n$, the requirement that $\adjsm{\iota}$
is operadic implies that the canonical map
\begin{equation}\label{eq:operadic for fey}
\gc^{\str{s}_1,\dots,\str{s}_m\in k_!\cat{G}} k_!\cat{G}(\str{g},\str{s}_1\boxtimes\dots\boxtimes\str{s}_n)\otimes 
      \Mf(\adjsm{\iota}(\str{s}_1),\iota{h_1})\otimes \dots \otimes \Mf(\adjsm{\iota}(\str{s}_m),\iota{h_m}) \longrightarrow \Mf(\adjsm{\iota}(\str{g}),\adjsm{\iota}(\str{h}))
\end{equation}
is an isomorphism. Since $\comcat{\str{g}}{\boxtimes^m}$ is a simply connected groupoid with set of connected components isomorphic to $\Set(\fcar{n},\fcar{m})$, (\ref{eq:operadic for fey}) becomes  
\[
 \underset{\morp{f}{\fcar{n}}{\fcar{m}}}\coprod 
      \Mf(\adjsm{\iota}(\str{g}_{f^{-1}(1)}),\iota{h_1})\otimes \dots \otimes \Mf(\adjsm{\iota}(\str{g}_{f^{-1}(m)}),\iota{h_m})
      \longrightarrow \Mf(\adjsm{\iota}(\str{g}),\adjsm{\iota}(\str{h}))
\]
In the case $\Mm=\Set$, this implies that the set of isomorphism in $\Mf(\adjsm{\iota}(\str{g}),\adjsm{\iota}(\str{h}))$ is non-empty only if $n=m$ and in that case it canonically isomorphic to
\[
 \underset{f\in \Sigma_n}\coprod 
      \Cor{\Mf}(\iota(g_{f^{-1}(1)}),\iota(h_1))\otimes \dots \otimes \Cor{\Mf}(\iota(g_{f^{-1}(m)}),\iota(h_m)); 
\]
in view of the fact that $\iota$ is fully faithful on isomorphism this implies that $\morp{\adjsm{\iota}}{k_!\cat{G}}{\Cor{\Mf}}$ is
fully faithful; thus the isomorphism condition is satisfied.
\end{rmk}
 
The \emph{$2$-category of Feynman $\Mm$-category} $\EFey{\Mm}$ is defined to be the full $2$-subcategory of $\EpreFey{\Mm}$ spanned by all Feynman $\Mm$-categories. We will denote by $\morp{z}{\EFey{\Mm}}{\EpreFey{\Mm}}$ the canonical $2$-inclusion.

\begin{prop}
    The biequivalence of Proposition \ref{prop:eq preFey and PROPs} restricts to a biequivalence between $\EOper{\Mm}$ and $\EFey{\Mm}$.
\end{prop}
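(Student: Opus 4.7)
The plan is as follows. Since $T$ and $S$ already define a biequivalence of $2$-categories between $\EProp{\Mm}$ and $\EpreFey{\Mm}$ by Proposition \ref{prop:eq preFey and PROPs}, and since $w_! \colon \EOper{\Mm} \to \EProp{\Mm}$ is fully faithful by Corollary \ref{cor:ff} while $\EFey{\Mm} \hookrightarrow \EpreFey{\Mm}$ is by definition a full $2$-subcategory, it suffices to verify that $T$ carries every $\Mm$-PROP in the essential image of $w_!$ to a Feynman category, and that, conversely, $S$ carries every Feynman $\Mm$-category to a $\Mm$-PROP in the essential image of $w_!$; the two restrictions then automatically give the claimed biequivalence.

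Both closure properties will be reduced to a single equivalence: for any $C$-coloured $\Mm$-PROP $\prp{P}$, the PROP $\prp{P}$ lies in the essential image of $w_!$ if and only if $T(\prp{P}) = (\Cor{\prp{P}}, \prp{P}, i)$ is a Feynman $\Mm$-category. By Proposition \ref{prop:operadic operads}, the former condition is equivalent to the operadicity of the canonical morphism $i_0 \colon \Str{C} \to \prp{P}$ from the initial $C$-coloured $\Mm$-PROP, while by definition the latter is equivalent to the operadicity of $\adjsm{i} \colon k_!\Cor{\prp{P}} \to \prp{P}$. The heart of the proof is therefore the comparison of these two operadicity conditions.

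To carry this out I would apply the characterisation of operadicity in Proposition \ref{prop: different operadic}: for both $u = i_0$ and $u = \adjsm{i}$, the condition asks that the canonical map
\[
\gc^{\str{s}_1,\dots,\str{s}_m} \prp{A}(\str{z},\str{s}_1\boxtimes\dots\boxtimes\str{s}_m)\otimes \prp{P}(u(\str{s}_1),p_1)\otimes \dots \otimes \prp{P}(u(\str{s}_m),p_m) \longrightarrow \prp{P}(u(\str{z}),\str{p})
\]
be an isomorphism for all $\str{z}$ in the source $\prp{A}$ and all $\str{p} \in \prp{P}$, with $\prp{A} = \Str{C}$ or $\prp{A} = k_!\Cor{\prp{P}}$ respectively. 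One direction is immediate: restricting the $\adjsm{i}$-coend to the ``fine'' objects $\str{z} \in \Str{\Str{C}}$ whose entries all have length one reproduces the $i_0$-coend, so operadicity of $\adjsm{i}$ entails operadicity of $i_0$. For the converse I would verify, in analogy with the argument in the proof of Proposition \ref{prop:operadic operads}, that the comma category $\comcat{\str{z}}{\boxtimes^m}$ in $k_!\Cor{\prp{P}}$ is a simply connected groupoid with $\pi_0 \cong \Set(\fcar{N_{\str{z}}}, \fcar{m})$, where $N_\str{z}$ is the total length of $\str{z}$ as a word in $C$; the extra isomorphisms coming from $\Cor{\prp{P}}$ (notably the symmetries inherited from the strict symmetric monoidal structure on $\prp{P}$) are precisely what ensures that any two ordered decompositions of $\str{z}$ via $\boxtimes$ get identified in $\pi_0$. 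Once both coends are reduced to coproducts indexed by the same set, the two canonical maps coincide and the equivalence of operadicities follows.

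To conclude, the first closure property is immediate from the equivalence. For the second, given a Feynman $\mathcal{F}$, the pseudonatural equivalence $\kappa$ constructed in the proof of Proposition \ref{prop:eq preFey and PROPs} shows $T(S(\mathcal{F})) \simeq \mathcal{F}$, so $T(S(\mathcal{F}))$ is Feynman, and hence $S(\mathcal{F})$ is in the essential image of $w_!$ by the equivalence. The principal technical obstacle will be the explicit analysis of the comma category $\comcat{\str{z}}{\boxtimes^m}$ in $k_!\Cor{\prp{P}}$: one has to track how the isomorphisms in the groupoid $\Cor{\prp{P}}$ act on ordered decompositions and verify that, together with the monoidal symmetries in $\prp{P}$, they yield exactly the expected set of connected components with no superfluous identifications.
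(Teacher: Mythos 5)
Your proposal is correct in substance but takes a genuinely different route from the paper's. The paper treats the two inclusions asymmetrically: for $T(w_!\ope{O})$ it invokes Lemma \ref{lemma:main lemma kan ext} directly (the inclusion $k_!\Cor{\ope{O}}\to w_!\ope{O}$ is $w_!$ of a morphism of operads, hence operadic), and for $S(\Mf)$ it factors the canonical morphism as $\Str{\Ob{\Mfg}}\to k_!\Mfg\to S(\Mf)$, observes that both factors are operadic, and concludes by Proposition \ref{prop:operadic operads}. You instead prove a single equivalence --- $\prp{P}$ lies in the essential image of $w_!$ if and only if $\adjsm{i}\colon k_!\Cor{\prp{P}}\to\prp{P}$ is operadic --- by comparing the two coends of Proposition \ref{prop: different operadic} through the comma category $\comcat{\str{z}}{\boxtimes^m}$ taken in $k_!\Cor{\prp{P}}$. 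That computation does work: this comma category is a simply connected groupoid with $\pi_0\cong\Set(\fcar{n},\fcar{m})$, since a morphism over $\str{z}$ preserves the block structure (so the extra isomorphisms supplied by $\Cor{\prp{P}}$ enlarge each component without identifying distinct ones or introducing automorphisms), and hence both coends reduce to the same coproduct $\coprod_{f}\bigotimes_i\prp{P}(\str{z}_{f^{-1}(i)},p_i)$ compatibly with the maps to $\prp{P}(\str{z},\str{p})$. What your route buys is that it avoids the paper's tacit appeal to closure of operadic morphisms under composition; what it costs is redoing a comma-category analysis that the paper gets for free from Lemma \ref{lemma:main lemma kan ext}.

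Two soft spots should be patched. First, your ``one direction is immediate'' step does not follow from the reason given: restricting a coend to a subcategory of the indexing category only produces a comparison map into the larger coend, which by itself transfers the isomorphism property in neither direction. This is harmless only because your subsequent $\pi_0$ analysis identifies the two coends outright and thus yields both implications simultaneously; you should drop the separate ``immediate'' direction and let that identification carry the whole equivalence. Second, your final step deduces that $T(S(\Mf))$ is Feynman from $T(S(\Mf))\simeq\Mf$, which presupposes that the Feynman condition is invariant under equivalences in $\EpreFey{\Mm}$. This is true for the specific equivalence $\kappa$ of Proposition \ref{prop:eq preFey and PROPs} --- its groupoid component is an isomorphism, and operadicity (read, for instance, as strong monoidality of $u^*$ for the Day convolution) is stable under composition with equivalences --- but it is exactly the point the paper sidesteps by composing operadic morphisms instead, so it needs to be stated and justified rather than assumed.
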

\begin{proof}
    We begin by proving that for every $\Mm$- operad $\mathcal{O}$ the pre-Feynman $\Mm$-category $T(\ope{O})$ is in the essential image of $z$; in fact, the morphism $\arr{k_!\Cor{\ope{O}}}{w_!\ope{O}}$ is operadic by Lemma \ref{lemma:main lemma kan ext}.
    
    We only need to prove that for every Feynman $\Mm$-category $(\Mf,\Mfg,\iota)$, $S(\Mf)$ is in the essential image of $w$;
    first notice that $(k_!\Mfg,\Mfg,\eta_{\Mfg})$ also a Feynman $\Mm$-category (here $\eta$ is the counit of the adjunction $(k_!,k^*)$); 
    since $\morp{\adjsm{\iota}}{k_!\Mfg}{\Mfg}$ is operadic the induced morphism
    $\morp{S(\adjsm{\iota})}{k_!\Mfg \cong S(k_!\Mfg)}{\Mf}$
    is also operadic; the morphism $S(\adjsm{\iota})$ is a morphism of $\Ob{\Mfg}$-coloured $\Mm$-PROPs; the unique morphism $\Ob{\Mfg}$-coloured $\Mm$-PROPs $\Str{\Ob{G}}\to k_!\Mfg$ is operadic, thus the composition
    \[
    \Str{\Ob{G}}\longrightarrow k_!\Mfg \overset{S(\adjsm{\iota})}\longrightarrow
        S(\Mf)
    \]
    is also operadic; it follows that $S(\Mf)$ is in the essential image of $w_!$ by Proposition \ref{prop:operadic operads}.       
\end{proof}
\noindent 
To summarise, we have a commutative diagram of $2$-functors 
\[
\xymatrix{\EOper{\Mm} \ar@<3pt>[r]^-{\FSmf} \ar[d]_-{w_!} & \ar@<3pt>[l]^-{\Foff} \EFey{\Mm} \ar[d]^-{z}\\ 
    \EProp{\Mm} \ar@<3pt>[r]^-{\FSmf} & \ar@<3pt>[l]^-{\Foff} \EpreFey{\Mm}
}
\]
where the horizontal arrows are biequivalences.
This shows that every small Feynman category can be ``strictified'' into a small coloured operad
with an equivalent category of algebras.

\subsection{Strictification}\label{apx:strictification}
\newcommand{\smon}[1]{\mathrm{str}(#1)}
\newcommand{\brak}[1]{\langle #1 \rangle}
Every monoidal $\Mm$-category $(\cat{W},\otimes,\Un_{\cat{W}})$ is equivalent to a strict monoidal $\Mm$-category.
Following Joyal and Street (\cite{JS91}), we recall how this strictification can be constructed;
we are going to define a slightly more general construction that associate to each collection of objects $\cat{W}$ a strict monoidal $\Mm$-category; we will recover a strictification of $\cat{W}$ as a special case.

Let $C$ be a (possibly large) set and let $\mathbf{X}=\{X_c\}_{c\in C}$ be a collection of objects in $\cat{W}$ indexed by $C$. 
for every $\str{c}\in \Str{C}$ define the object $\brak{\str{c}}\in \cat{W}$
by induction on the length of $\str{c}$ by the requirement that
$\langle[]\rangle=\Un_{\cat{W}}$, $\brak{(c)}=X_c$ and $\brak{\str{c}\ast (X_c)}=\brak{\str{c}}\otimes X_c$ for every $w\in W$ and $\str{w}\in \smon{\cat{W}}$.
We define a strict monoidal $\Mm$-category $\EndP{\Mm}(\mathbf{X})$; the set of objects of $\EndP{\Mm}(\mathbf{X})$ is $\Str{C}$ and for every 
for every $\str{c},\str{d}\in \Str{C}$ we set $\smon{W}(\str{c},\str{d})=\cat{W}(\brak{\str{c}},\brak{\str{d}})$
and $\str{c}\otimes \str{d}=\str{s}\ast \str{t}$. 

On the morphisms the tensor product is defined to be:
\[
\xymatrix{\cat{W}(\brak{\str{a}},\brak{\str{c}})\otimes \cat{W}(\brak{\str{b}},\brak{\str{d}}) \ar[r]^-{\otimes} &
    \cat{W}(\brak{\str{a}}\otimes \brak{\str{b}},\brak{\str{c}}\otimes \brak{\str{s}}) \ar[r]^-{\cong} &
    \cat{W}(\brak{\str{a}\ast \str{b}},\brak{\str{c} \ast \str{d}})}
\]
where the last isomorphism is defined by composition with the unique isomorphisms $\arr{\brak{\str{a}\ast \str{b}}}{\brak{\str{a}}\otimes \brak{\str{b}}}$,
$\arr{\brak{\str{c}}\otimes \brak{\str{d}}}{\brak{\str{c}\ast \str{d}}}$ obtained from the associator.
Composition and identities in $\EndP{\Mm}(
\mathbf{X})$ are defined from the ones in $\cat{W}$ in the obvious way.
With this tensor product $\EndP{\Mm}(
\mathbf{X})$ is clearly strict monoidal.

There is a strong monoidal fully faithful $\Mm$-functor $\morp{R_{
        \mathbf{X}}}{\EndP{\Mm}(
    \mathbf{X})}{\cat{W}}$ sending each $\str{c}\in \Str{C}$ to $\brak{\str{c}}$.

Furthermore, if every objects in $\cat{W}$ is isomorphic to a tensor product of elements of $\mathbf{X}$ then $R_{
    \mathbf{X}}$ is essentially surjective and thus an equivalence of monoidal $\Mm$-categories.

In particular if we take $C=\Ob{\cat{W}}$ and $\mathbf{W}=\{w\}_{w\in \cat{W}}$
then $\EndP{\Mm}(\mathbf{W})$ is a strict monoidal $\Mm$-category equivalent to $\cat{W}$ via $R_{\mathbf{W}}$; this particular strict monoidal $\Mm$-category will be denoted by $\EndP{\Mm}(\cat{W})$. 

Note that if $\cat{W}$ is symmetric (as a monoidal $\Mm$-category) then $\EndP{\Mm}(\mathbf{X})$ has a natural symmetric structure that makes the functor $R_{\mathbf{X}}$ into a symmetric monoidal functor.
In particular $\EndP{\Mm}(\mathbf{W})$ is a symmetric strict monoidal category (but not strict symmetric) equivalent to $\cat{W}$.

\section{Graphs}\label{ch:graph}
In this appendix we review the different notions of graph that are used in this work. 
Graphs are defined in many different ways trough-out the literature (see for example \cite{JK11},\cite{BB13},\cite{KW14}).
For the present work, we need to work with directed graphs which are open; informally this means that we do not require that the extremes of each edge end into a node. We chose to work with the definition of (directed) graph given by Kock in \cite{Ko14}; other choices would have been possible and the reader is free to adapt our definitions to the definition of graph that he prefers.

In part \ref{apx:insertion} we recall the operation of graph insertion, that was essential for defining the operad for PROPs.

\subsection{Directed graphs}
A graph will be for use the following amount of data:
\begin{defi}(\cite{Ko14})
A (directed) graph\index{graph (directed)} $G=(A,N,I,O,s,t,p,q)$ is a diagram in $\fSet$ of the form 
\begin{equation}\label{eq:pres.graph}
 \xymatrix{A & \ar[l]_-{s} I \ar[r]^p & N & \ar[l]_-{q} O \ar[r]^t & A}
\end{equation}
such that $s$ and $t$ are injective maps. The set $A$ is called the \emph{set of edges}\index{edge (of a graph)}, while $N$ is the \emph{set of nodes (or nodes)}\index{node!of a graph}.
The intersection $I\cap O$ form the set of \emph{inner edges}\index{edge (of a graph)!inner} of $G$.
A \emph{morphism of graphs}\index{morphism!of graphs} from $G=(A,N,I,O,s,t,p,q)$ to $G'=(A',N',I',O',s',t',p',q')$ is just a collection of maps $(a,n,i,o)$ that make
the following diagram commute
\begin{equation}\label{morp.graph}
 \xymatrix{A\ar[d]^a & \ar[l]_-{s} I\ar[d]^i \ar[r]^p & N\ar[d]^n & \ar[l]_-{q} O \ar[d]^o \ar[r]^t & A \ar[d]^a \\
           A' & \ar[l]_-{s'} I \ar[r]^{p'} & N' & \ar[l]_-{q'} O \ar[r]^{t'} & A'.}
\end{equation}

A morphism of graphs is called \emph{etale}\index{morphism!of graphs!etale}\index{etale!morphism} if the two central squares in (\ref{morp.graph}) are pullbacks.
\end{defi}

\noindent The category of graphs and morphisms between them will be denoted by $\GraK$.

\begin{rmk}
It is clear from the definition that
$\GraK$ is a full subcategory of the category $\Set^{\mathsf{G}}$ where $\mathsf{G}$ is the small category indexing the
diagram of shape (\ref{eq:pres.graph}). The category $\GraK$ has finite coproducts, which are preserved by these inclusions. 
\end{rmk}
\subsection{Residue and ports}
\begin{defi}\label{def:corolla}
For every $n,m\in \N$ let $C^n_m$ be the graph defined by the diagram:
\[
 \xymatrix{\fcar{n+m} & \ar[l] \fcar{n} \ar[r] & \fcar{1} & \ar[l] \fcar{m} \ar[r] & \fcar{n+m}}.
\]
A \emph{corolla}\index{corolla} is a graph $G$ isomorphic to $C^{n}_m$ for $(n,m)\in \N\times \N$; if such an isomorphism exists,
the couple $(n,m)$ is uniquely determined and is called the \emph{valence of $G$}. 
\end{defi}

\begin{defi}
The \emph{residue of a graph}\index{residue!of a graph} $G=(A,N,I,O,s,t,p,q)$ is the corolla
\[
 \xymatrix{A\backslash (I\cap O) & A\backslash O \ar[l]  \ar[r] & \ast & \ar[l] A\backslash I \ar[r] & A\backslash (I\cap O)}
\]
and will be denoted $\resd{G}$. The \emph{valence of $G$} is the valence of $\resd{G}$.
The sets $A\backslash (O\cap I)$, $A\backslash O$ and $A\backslash I$ will be called the \emph{set of ports of $G$}\index{port!of a graph}, the \emph{set of inports of $G$}\index{inport!of a graph} and the \emph{set of exports of $G$}\index{export!of a graph}, we
will denote them by $\port{G}$,$\iport{G}$ and $\oport{G}$ respectively.

For every node $x\in N_G$ the \emph{residue of $x$}\index{residue!of a node} is the corolla:
\[
 \xymatrix{p^{-1}(x)\coprod q^{-1}(x)& \ar[l] p^{-1}(x) \ar[r] & \{x\} & \ar[l] q^{-1}(x) \ar[r] & p^{-1}(x)\coprod q^{-1}(x)}
\]
The \emph{valence of $x$}\index{valence!of a node} is the valence of $\resd{x}$. 

We also define the \emph{set of ports of $x$}\index{ports!of a node}, the \emph{set of inports of $x$}\index{inport!of a node} and and the \emph{set of exports of $x$}\index{export!of a node} as 
 $\port{x}=\port{\resd{x}}$, $\iport{x}=\iport{\resd{x}}$, $\oport{x}=\oport{\resd{x}}$ respectively.
\end{defi}

\subsection{Paths}
\begin{defi}
Let $n$ be a positive integer. An \emph{open linear graph of length $n$}\index{linear graph!open} is a graph of the form
\[
 \xymatrix{\fcar{n+1} & \ar[l]_-{s} I \ar[r]^p & \fcar{n} & \ar[l]_-{q} O \ar[r]^t & \fcar{n+1}}
\]
such that 
\begin{itemize}
\item[-] for $\card{p^{-1}(v)\sqcup q^{-1}(v)}=2$ for every $v\in \fcar{n}$;
\item[-] $I\cup 0=\fcar{n+1}$;
\item[-] $\card{I\cap O}=\fcar{n+1}\backslash{1,n+1}$. 
\end{itemize}

An open linear graph of length $n$ is \emph{monotone}\index{linear graph!open!monotone} if $\card{p^{-1}(v)}=\card{q^{-1}(v)}=1$ for every $v\in \fcar{n}$
and the edge $1$ is the only inport.
An open linear graph of length $0$ is by definition a graph with no nodes and $\fcar{1}$ as set of edges. 
The \emph{monotone open linear graph of length $n$} is the open linear graph $L_n$ represented by
\[
 \xymatrix{\fcar{n+1} & \ar[l]_-{+0} \fcar{n} \ar[r]^{\id} & \fcar{n} & \ar[l]_{\id} \fcar{n} \ar[r]^-{+1} & \fcar{n+1}}.
\]
\end{defi}

\begin{defi}
Let $n$ be a positive integer. A \emph{dead-ends linear graph of length $n$}\index{linear graph!dead-ends} is a graph of the form
\[
 \xymatrix{\fcar{n} & \ar[l]_{\id} \fcar{n} \ar[r]^-{p} & \fcar{n+1} & \ar[l]_-{q} \fcar{n} \ar[r]^{\id} & \fcar{n}}
\]
such that $\leq\card{p^{-1}(v)\sqcup \card{q^{-1}(v)}}=2$ for every $v\in \fcar{n+1}\backslash\{1,n+1\}$ and 
$\card{p^{-1}(1)\sqcup q^{-1}(1)}=\card{p^{-1}(n)\sqcup q^{-1}(n)}=1$. 

An open linear graph of length $n$ is \emph{monotone}\index{linear graph!dead-ends!monotone} if $\card{p^{-1}(v)}$ and $\card{q^{-1}(v)}$ are smaller than $2$ for every $v\in \fcar{n}$
and the node $1$ has one export.

A dead-end linear graph of length $0$ is by definition a graph with set of nodes $\fcar{1}$ and no edges. 

The \emph{monotone dead-ends linear graph of length $n$} is the graph $P_n$ represented by
\[
 \xymatrix{\fcar{n} & \ar[l]_{\id} \fcar{n} \ar[r]^-{+1} & \fcar{n+1} & \ar[l]_-{+0} \fcar{n} \ar[r]^{\id} & \fcar{n}}
\]
\end{defi}

\begin{defi}
Let $n$ be a positive integer. A \emph{semi-open linear graph of length $n$}\index{linear graph!semi-open} is a graph of the form
\[
 \xymatrix{\fcar{n} & \ar[l]_{s} I \ar[r]^p & \fcar{n} & \ar[l]_-{q} O \ar[r]^{r} & \fcar{n}}
\]
such that 
\begin{itemize}
\item[-] for $\card{p^{-1}(v)\sqcup q^{-1}(v)}=2$ for every $v\in \fcar{n}\backslash\{1\}$;
\item[-] $\card{p^{-1}(1)\sqcup q^{-1}(1)}=1$;
\item[-] $I\cap O=\fcar{n}\backslash \{1\}$.
\end{itemize}
\end{defi}

\noindent A semi-open linear graph \emph{starts at a node} if $q^{-1}(1)=1$, otherwise it \emph{ends at a node}.
 
The \emph{monotone semi-open linear graph of length $n$ starting at a node} is the graph:
\[
 \xymatrix{\fcar{n} & \ar[l]_{+0} \fcar{n-1} \ar[r]^-{+1} & \fcar{n} & \ar[l]_-{\id} \fcar{n} \ar[r]^-{\id} & \fcar{n}}
\]
The \emph{monotone semi-open linear graph of length $n$ starting at an edge} is the graph:
\[
 \xymatrix{\fcar{n} & \ar[l]_{\id} \fcar{n} \ar[r]^-{\id} & \fcar{n} & \ar[l]_-{+1} \fcar{n-1} \ar[r]^-{+0} & \fcar{n}}
\]
\begin{defi}
Given a graph $G$ an \emph{open (dead-ends, semi-open) path of length $n$} in $G$ is a morphism from an open (dead-ends, semi-open) linear graph to $G$ which is
injective on nodes and edges. 
A monotone open (dead-ends) path of length $n$ in $G$ is an open (dead-ends) path with source $L_n$ (resp. $P_n$).

For an open path $p$ of length $n$ we will denote by $\stpt{p}$ and $\ept{p}$ the edges $p_A(1)$ and $p_A(n+1)$ respectively;
for a dead-ends path $p$ of length $n$ we will denote by $\stpt{p}$ and $\ept{p}$ the nodes $p_N(1)$ and $p_N(n+1)$;
for a semi-open path of length $n$ starting at a node (starting at an edge) we will denote by 
$\stpt{p}$ (resp. $\ept{p}$) the node $p_N(1)$ and by $\ept{p}$ (resp. $\stpt{p}$) the edge $p_A(1)$.
\end{defi}

\begin{defi}
Let $G$ be a graph $G$; two edges $x,y\in A_G$ are \emph{connected by an open path $p$}\index{connected!edges}  if $\stpt{p}=x$ and $\ept{p}=y$.  

Two nodes $v,u\in N_G$ are \emph{connected by a dead-ends path $p$}\index{connected!nodes} if $\stpt{p}=u$ and $\ept{p}=v$.

A node $v$ and an edge $x$ in $G$ are \emph{connected by a semi-open path $p$ starting at a node (ending at a node)} 
if $\stpt{p}=v$ and $\ept{p}=x$ ($\stpt{p}=x$ and $\ept{p}=v$).  
\end{defi}

\begin{defi}
A graph $G$ is \emph{connected}\index{connected!graph} if it can not be decomposed as a coproduct of smaller graph. 
\end{defi}

\noindent The following proposition is routine to check.
\begin{prop}
A graph $G$ is \emph{connected} if and only if the following two holds:
\begin{itemize}
 \item[-] every two edges $x,y$ in $G$ are connected by an open path;
 \item[-] every two nodes $v,u\in N_G$ are connected by a dead-ends path.
\end{itemize}
\end{prop}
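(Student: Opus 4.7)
The proof proceeds by introducing an equivalence relation $\approx$ on the disjoint union $N_G \sqcup A_G$ that captures connectedness intrinsically. Declare $e \approx v$ whenever the edge $e$ is incident to the node $v$, i.e.\ $e \in s(I)$ with $p(s^{-1}(e)) = v$ or $e \in t(O)$ with $q(t^{-1}(e)) = v$, and take the equivalence relation generated by these pairs. Each $\approx$-class $C$ carves out a subgraph $G_C$ of $G$ by restricting the five sets in (\ref{eq:pres.graph}) to the parts indexed by $C$, and one checks directly that $G = \coprod_C G_C$. Hence $G$ is connected precisely when $N_G \sqcup A_G$ forms a single $\approx$-class.

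For the forward direction, if $x$ and $y$ are two edges of a connected $G$, the relation $x \approx y$ produces a finite alternating chain of edges and nodes with consecutive entries incident. This chain packages, together with the orientation data recorded in $s, t, p, q$, into a morphism from an open linear graph into $G$. If the chain visits any edge or node twice, one excises the intervening loop to shorten it; iterating yields an injective chain, hence an honest open path from $x$ to $y$ in the sense of the definition. The argument for two nodes is identical, replacing open linear graphs by dead-ends linear graphs and reading the chain as a morphism from some $P_k$. For the converse, I argue by contrapositive: if $G = G_1 \sqcup G_2$ is a nontrivial decomposition into two subgraphs both containing an edge (resp.\ both containing a node), then no open path (resp.\ dead-ends path) can connect elements of the two summands, since graph morphisms preserve the incidence data $s, t, p, q$. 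This contradicts the hypothesis; the only remaining splittings isolate purely edge-like components from purely node-like components, a degenerate situation lying outside the scope of the claim.

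The main obstacle is the bookkeeping in the shortening step: the definitions of open and dead-ends linear graphs impose injectivity on the structure maps and prescribed cardinalities on $p^{-1}(v) \sqcup q^{-1}(v)$ at each interior node, so one must verify that after excising loops the alternating sequence indeed fits the profile of an $L_k$ (or $P_k$) and not merely of some arbitrary connected graph. A clean way around this is to fix at the outset a \emph{shortest} alternating chain realizing $x \approx y$ and argue that minimality forces the injectivity and valence conditions on the packaged morphism automatically, so no iterative excision is needed and the path definition is satisfied by inspection.
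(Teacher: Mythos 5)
The paper offers no argument here --- it simply declares the proposition ``routine to check'' --- so there is nothing to compare your proof against except the intended routine verification, and yours is a correct and careful version of it. The decomposition of $G$ into the subgraphs carved out by the incidence-generated equivalence classes is exactly the right intrinsic reformulation of connectedness, and your closing observation is the one point that genuinely needs attention: a minimal alternating chain realizing $x\approx y$ automatically packages into a morphism from an open (resp.\ dead-ends) linear graph because the injectivity of $s$ and $t$ forces each edge of $G$ to be the input of at most one node and the output of at most one node, so an inner edge of the chain, being incident to two \emph{distinct} nodes, is necessarily an input of one and an output of the other; this is what guarantees the valence and inner-edge conditions in the definitions of $L_k$ and $P_k$ without any further case analysis. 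Two remarks. First, in the converse direction it is worth saying explicitly that a morphism out of a connected graph lands in a single coproduct summand (linear graphs being connected), which is the precise form of ``morphisms preserve incidence data.'' Second, the degenerate case you flag is not merely outside the scope of the claim --- it is an actual counterexample to the statement as written: the graph with one isolated edge and one isolated node satisfies both conditions vacuously yet decomposes as a coproduct of two smaller graphs. So the proposition should be read with this case excluded (or with a third clause requiring every node and every edge to be joined by a semi-open path, in the spirit of Proposition \ref{trivialauto}); you were right to isolate it rather than pretend the two stated conditions suffice.
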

\noindent
Every graph $G$ can be decomposed as the disjoint union of connected smaller graphs in a essentially unique way; the components
of this sum are called the \emph{connected components}\index{connected component!of a graph} of $G$.

There are different ways of defining rooted trees, the following is the one that we chose.

\begin{defi}\label{def:tree}
A connected graph $G$ is a \emph{(rooted) tree} if it has exactly one export $r$ and every edge $x$ in $G$ is connected to $r$ by exactly one monotone open path. 
\end{defi}

\begin{defi}\label{deficie}
For every $v,u\in N_G$ we will denote by $\cie{u}{v}$ (\emph{common inner edges})\index{common inner edge} the set of 
monotone dead-ends paths $p$ of length $1$ such that $p_N(1)=u$ and $p_N(2)=v$; that is all the inner edges that start at $u$ and
end at $v$. 
\end{defi}

\begin{defi}
For every $n\in \N$ we will denote by $W_n$ the graph represented by
\[
 \xymatrix{\fcar{n} & \ar[l]_{\id} \fcar{n} \ar[r]^{\id} & \fcar{n} & \ar[l]_{\id} \fcar{n} \ar[r]^{\id} & \fcar{n}}
\]  
\end{defi}

\begin{defi}
A graph $G$ is \emph{acyclic}\index{graph (directed)!acyclic}\index{acyclic!graph} if the set of morphisms $\Gr(W_n,G)$ is empty for every $n>0$. 
\end{defi}

\noindent We will only be interested in acyclic graphs. They can be pictured as in Figure \ref{firstgraph}: 
the circles are the nodes and the segments are the edges; the ports of a node correspond to the edges that has that node
as an extreme: the inports are the ones above the node while the exports are pictured below the node.  

Of course, similar pictures can be drawn for graphs which are not acyclic but then a direction on the edges (which in Figure 
\ref{firstgraph} is implicitly from the top to the bottom) should be specified to distinguish inports and exports.\\
We would also like to emphasise that, despite the pictures we are drawing, our graphs are not planar.

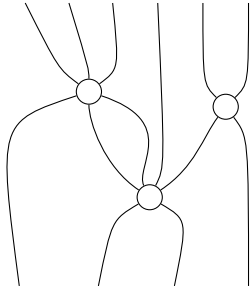
\begin{figure}[!ht]
\begin{tikzpicture}
\node[vert] (a) at (-0.6,0.7) {};
\node[vert] (b) at (0.2,-0.7) {};
\node[vert] (c) at (1.2,0.5) {};
\outports{0,-2}{4}{0.5}
\inports{0,2}{6}{0.3}
\incheckpt{a}{3}{0.2}{0.3}
\outcheckpt{a}{3}{0.6}{-0.4}
\incheckpt{b}{4}{0.2}{0.4}
\outcheckpt{b}{2}{-0.5}{-0.3}
\incheckpt{c}{2}{0.3}{0.3}
\outcheckpt{c}{2}{0.3}{-0.4}
\connectnodess{a}{b}{2/1,3/2}
\connectnodess{c}{b}{1/4}
\connectntoinps{a}{1/1,2/2,3/3}
\connectntoops{a}{1/1}
\connectntoinps{b}{4/3}
\connectntoops{b}{1/3,2/2}
\connectntoinps{c}{5/1,6/2}
\connectntoops{c}{2/4}
\end{tikzpicture} 
\caption{A graphical representation of a graph}\label{firstgraph}
\end{figure}

\subsection{Total orders and unshuffles}\label{not:total orders and unshuffles}
In the upcoming sections we are going to decorate graphs and their nodes with total orders on their ports;
we use this section to fix the notation for permutations, partitions and total orders on finite sets.

A \emph{total order}\index{total order} on a finite set $C$ of cardinality $n$ will be for us a bijection $\morp{\alpha}{\fcar{n}}{C}$.
 Given two total orders $\alpha,\beta$ on a finite set $C$, the \emph{twist}\index{twist!between orders} between $\alpha$ and $\beta$ is the permutation $\otw{\alpha}{\beta}=\beta^{-1}\alpha\cong \Sigma_n$.

Clearly a total order on a set $X$ restricts to a total order on each subset of $X$ in a unique way; the following definition
just express this concept coherently with our definition of total order.

\begin{defi}
Let $\alpha$ be a total order on a finite set $C$ of cardinality $n$ and $D$ be a subset of $C$ of cardinality $m$;
let $\morp{r_{\alpha}}{\fcar{m}}{\fcar{n}}$ be the unique order preserving function with image $\alpha^{-1}(D)$. Then the \emph{restriction
of $\alpha$ to $D$} is the total order $\ror{\alpha}{D}=\alpha r_{\alpha}$.  
\end{defi}

\begin{defi}\label{def:order insertion}
Let $C$ and $D$ be two finite sets, $\morp{\alpha}{\fcar{n}}{C}$ and $\morp{\beta}{\fcar{m}}{D}$ be two total order, and let $d$ be an element of $D$.
The \emph{insertion of $\alpha$ over $\beta$ in $d$}\index{insertion!of order} is the total order $\morp{\beta\circ_d \alpha}{\fcar{n+m-1}}{C\sqcup (D\backslash\{d\})}$ such that
\[
 \beta\circ_i \alpha(j)=
\begin{cases}
\beta(j) & \text{if } j\leq \beta^{-1}(d)  \\
\alpha(i) & \text{if }\beta^{-1}(d) \leq j < \beta^{-1}(d)+n\\
\beta(j-n+1) & \text{if }\beta^{-1}(d)+n \leq j
\end{cases}
\]
\end{defi}

\noindent Let $\morp{\alpha}{\fcar{n}}{\fcar{m}}$ be a function of sets; consider the set $\fcar{m}\times \fcar{n}$ endowed with the lexicographic order: $(a,b)\leq (c,d)$ if and only if $a<c$ or $a=c$ and $b\leq d$ (where $<$ is the usual order on the natural numbers).
Consider the function $\morp{(\alpha,\id)}{\fcar{n}}{\fcar{m}\times \fcar{n}}$. There exists a unique injective order preserving function $\morp{t_\alpha}{\fcar{n}}{\fcar{m}\times \fcar{n}}$. We define the \emph{unshuffling of $\alpha$}\index{unshuffling} as the unique permutation $\morp{\omega_{\alpha}}{\fcar{n}}{\fcar{n}}$
that makes the following diagram commute
\[
\xymatrix{\fcar{n} \ar[r]^{(\alpha,\id)} \ar[dr]_{\omega_{\alpha}} & \fcar{m}\times \fcar{n}\\
	& \fcar{n}.\ar[u]^{t_{\alpha}}}
\]   

\begin{figure}
    \begin{tikzpicture}[scale=0.7]
    \coordinate (a) at (0,0);
    \coordinate (b) at (0,2);
    \centerp{a}{3}{1.3}{0}
    \centerp{b}{6}{1.3}{0}
    \conx{1/2,2/1,3/1,4/3,5/2,6/1}{b-}{}{a-}{}
    \foreach \x in {1,...,6}
    {
        \node[anchor=south] (ld) at (b-\x) {\x};
    }
    \foreach \x in {1,...,3}
    {
        \node[anchor=north] (ld) at (a-\x) {\x};
    }
    \coordinate (c) at (8,0);
    \centerp{c}{6}{1.5}{0}
    \upcopy{c-1,c-2,c-3,c-4,c-5,c-6}{2}
    \conx{1/4,2/1,3/2,4/6,5/5,6/3}{c-}{-up}{c-}{}
    \foreach \x in {1,...,6}
    {
        \node[anchor=south] (lu) at (c-\x-up) {\x};
        \node[anchor=north] (ld) at (c-\x) {\x};
    }
    \end{tikzpicture}
    \caption{A function $\morp{f}{\fcar{6}}{\fcar{3}}$ (on the left) and its unshuffling $\omega_f$ (on the right).}                                  
\end{figure}

\subsubsection*{Partitions and permutations by blocks}\label{sec:permutation} 
Let $n\in \N$ an \emph{ordered $m$-partition of $n$}\index{ordered partition} is a sequence of $m$ natural numbers $(k_1,\dots,k_m)\in \N^m$ such that 
$\sum_{i=1}^{m} k_i=n$;
Every function $\morp{\alpha}{\fcar{n}}{\fcar{m}}$ determines an ordered $m$-partition of $n$, namely $(\card{\alpha^{-1}(1)},\dots,\card{\alpha^{-1}(m)})$, called the \emph{partition associated to $\alpha$} and denoted by $p_{\alpha}$;
this assignment restricts to a bijection between the set of ordered maps from $\fcar{n}$ to $\fcar{m}$ and the set of ordered $m$-partitions of $n$, therefore we will abusively make no distinctions between these two sets.

Every $m$-partition $p$ of $n$, if we think to $\Sigma_{\card{p^{-1}(i)}}$ as the automorphism group of the set $p^{-1}(1)$, determines a subgroup $i_p\colon \prod_{i=1}^{m} \Sigma_{\card{p^{-1}(i)}} \hookrightarrow \Sigma_n$ that we will denote by $\Sigma_p$ and call the \emph{subgroup of $p$-permutations}; when the partition $p$ we are considering is evident from the context, given $\tau_i\in \Sigma_{\card{p^{-1}(i)}}$ for every $i\in \fcar{m}$ we will denote the image of $(\tau_1,\dots,\tau_m)$ by $[\tau_1\dots \tau_m]$.

For every $m$-partition $p$ of $n$  let let $\Sigma_{p}\backslash \Sigma_n$ be the right coset of $\Sigma_{p}$ (i.e. the set of orbits of $\Sigma_n$ with respect to the left action of $\Sigma_{p}$).

Every function $\morp{\alpha}{\fcar{n}}{\fcar{m}}$ can be decompose as $p_f$ pre-composed with a bijection $\sigma$, uniquely determined up to $p$-permutations; $\sigma$ can always be chosen to be the unshuffling of $f$, i.e. $f=p_f \omega_f$.
In other words, the function
\begin{equation}
 \gfun{d}{\Set(\fcar{n},\fcar{m})}{\underset{(k_1,\dots,k_m)\in \N^m}\coprod \left(\Sigma_{k_1}\times \dots \times \Sigma_{k_m}\backslash \Sigma_n\right)}{f}{[\omega_f]_{p_f}}
\end{equation}
is an isomorphism.

Given $p$ an ordered $m$-partition of $n$ and a permutation  $\sigma\in \Sigma_m$ we will denote by $\block{\sigma}\in\Sigma_n$ the unshuffling $\omega_{\sigma p}$; informally $\block{\sigma}$ is the permutation of $\fcar{n}$ which changes the order of the fibers of $p$ according to $\sigma$, leaving the order of the elements inside of each fiber unchanged.

\subsection{Orders on the ports}

\begin{defi}\label{ordertwist}
Given a graph $G$ and a node $v\in N_G$ a \emph{port-order on $v$}\index{port-order!on a node} is an etale morphism $\morp{\lambda}{C^n_m}{G}$
such that $f(1)=v$ (here $1$ stands for the unique node of $C^n_m$) for some $n,m\in \N$.    
\end{defi}

\noindent Note that such a $\lambda$ exists if and only if $C^n_m$ is isomorphic to $\resd{v}$ therefore $n,m$ are uniquely determined
by $v$. In fact, giving a port-order on $v$ is the same as giving a total order on the inports and a total order on the exports of $\resd{v}$. 

\begin{defi}
A \emph{covering order for a graph $G$}\index{covering order} is a collection $\{\lambda_v\}_{v\in N_G}$ such that $\lambda_v$ is a port-order on $v$.  

A \emph{port-order on a graph $G$}\index{port-order!on a graph} is a port-order on the unique node of $\resd{G}$.

A \emph{node order on $G$}\index{node order} is a bijection $\morp{\sigma}{\fcar{n}}{N_G}$, where $n=\card{N_G}$. 
\end{defi}

\subsection{Completely Ordered Coloured Graphs}\label{apx:completely ordered graph}
\begin{defi}\label{def:c-graph}
For every $C\in \Set$ a \emph{$C$-labelling}\index{labelling} for a graph $G$ is a function of sets $\morp{l}{A_G}{C}$. 

A \emph{$C$-coloured graph}\index{coloured graph} (or \emph{$C$-graph} for short) is a couple $(G,l)$ where $G$ is a graph and $l$ is a $C$-labelling for $G$.

Given two $C$-coloured graphs $(G,l)$ and $(G',l')$, a \emph{morphism of $C$-graphs}\index{morphism!of coloured graphs} is a morphism of graphs $\morp{f}{G}{G'}$ preserving
the $C$-labelling, i.e. $l'f=l$.  
\end{defi}

\noindent A $C$-labelling on $\resd{G}$ induces a $C$-labelling on $\resd{G}$. 

Given a function $\morp{f}{C}{D}$ and a $C$-graph $G=(G,l)$ we will denote by $f(G)$ the $D$-graph $(G,fl)$.

\begin{defi}
An $\emph{ordered $C$-graph}$\index{ordered graph} is a triple $(G,\bar{\lambda},\tau)$ where $G$ is a $C$-graph, $\bar{\lambda}$ is a covering order for $G$ and $\tau$ is a port-order on $G$. Given an ordered $C$-graph $G$, its covering order and port order
will be denoted by $\bar{\lambda}^G$ and $\tau_G$ respectively. 

A \emph{$C$-corolla}\index{$C$-corolla} is an ordered $C$-graph whose underlying graph is a corolla.

An \emph{isomorphism of ordered $C$-graphs} from $G$ to $H$ is an isomorphism of $C$-graphs $\morp{f}{G}{H}$ respecting the given orders, that is $\lambda^H_f(v) f=\lambda^G_v$ for every 
$v\in N_G$ and $\tau_H f=\tau_G$.  

A \emph{completely ordered $C$-graph}\index{completely ordered graph} (a c.o. $C$-graph, for short) is a couple $(G,\sigma)$ where $G$ is an ordered $C$-graph and $\sigma$ is a node order on $G$. Given a c.o. $C$-graph $G$, its node order will be denoted by $\sigma_G$.

An \emph{isomorphism of completely ordered $C$-graphs} from $G$ to $H$  is an isomorphism of ordered $C$-graphs such that $\sigma_H f=\sigma_G$.
\end{defi}

\noindent Completely ordered $C$-graphs can be pictured as in Figure \ref{secondgraph}. 
The labels on the edges are elements of $C$. The numbers in the nodes determines the node order. 
the circles are the nodes and the segments are the edges; the inports and exports of the graph are ordered from the left to the right;
the same holds for inports and exports of the nodes.  

\begin{figure}[!ht]
\begin{tikzpicture}[scale=1.2]
\node[vert] (a) at (-0.6,0.7) {2};
\node[vert] (b) at (0.2,-0.6) {1};
\node[vert] (c) at (0.6,0.8) {3};
\outports{0,-2}{4}{0.5}
\inports{0,2}{6}{0.3}
\incheckpt{a}{3}{0.2}{0.6}
\outcheckpt{a}{3}{0.6}{-0.4}
\incheckpt{b}{3}{0.4}{0.4}
\outcheckpt{b}{2}{0.5}{-0.3}
\incheckpt{c}{2}{0.3}{0.6}
\outcheckpt{c}{2}{0.3}{-0.4}
\connectnodes{a}{b}{2/1/c,3/3/d}
\connectnodes{c}{b}{1/2/a}
\connectntoinp{a}{1/1/a,2/2/b,4/3/c}
\connectntoop{a}{1/1/c}
\connectntoop{b}{1/3/d,2/2/d}
\connectntoinp{c}{3/1/c,6/2/d}
\connectntoop{c}{2/4/b}
\end{tikzpicture} 
\caption{A graphical representation of a c.o. acyclic $C$-graph}\label{secondgraph}
\end{figure}
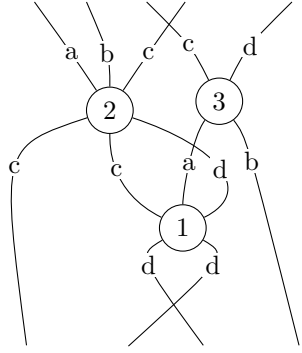

\begin{defi}\label{defpermgr}
Given a permutation $\gamma\in \Sigma_n$ and a completely ordered $C$-graph $G=(G,\sigma)$ with $n$ nodes, we will denote by 
$\gamma^*G$ the completely ordered $C$-graph $(G,\sigma \gamma)$. 
\end{defi}

\noindent The only difference between $G$ and $\gamma^*G$ is in the node order (permuted by $\gamma$).

\subsection{Twists}

Suppose a c.o. $C$-corolla $(D,\bar{\lambda}^D,\tau_D,\sigma_D)$ is given. Since $D$ has only one node $\ast$ there is no choice for $\sigma$ while the collection $\bar{\lambda}^D$ contains a unique element $\lambda_*$, that can be regarded as a port-order on $D$.

Consider now a c.o. $C$-graph $G$ (even though the $C$-labelling will not be relevant for this discussion) 
and two nodes $u,v\in N_G$. Let $l$ be the cardinality of the set of common inner edges $\cie{u}{v}$ (def. \ref{deficie});
the total ordering $\bar{\lambda}^G$ induces two different orders on $\cie{u}{v}$; the first is obtained by restriction from $\lambda_{u}^G$ considering
$\cie{u}{v}$ as a subset of the inports of $u$, let us denote it by $\morp{\gamma_u}{\fcar{l}}{\cie{u}{v}}$;  the second is obtained in the same way from $\lambda_{u}^G$ considering $\cie{u}{v}$ as a subset of
the exports of $v$, denote it by $\morp{\gamma_v}{\fcar{l}}{\cie{u}{v}}$.

\begin{defi}
Given a c.o. graph $G$ and two nodes $u,v\in N_G$ the \emph{twist between $u$ and $v$}\index{twist!between nodes}, denoted by $\tw{u}{v}$ is the twist (\S \ref{not:total orders and unshuffles}) $\tw{\gamma_u}{\gamma_v}\in \Sigma_l$ (where
$l=\card{\cie{u}{v}}$). 
\end{defi}

\noindent In the same way we can express the difference between the port order and the covering order. 

\begin{defi}
Given a c.o. graph $G$ and a node $v$ the set $l(v)=\iport{G}\cap \iport{v}$ comes with two total order: $\iota$ induced from 
the port order $\sigma_G$ and $\iota'$ induced from $\lambda^G_v$; the \emph{input-twist of $G$ in $v$}\index{input-twist} is $\tw{v}{G}_{in}=\tw{\iota'}{\iota}$.
Similarly, there are two induced order on $r(v)=\oport{G}\cap \oport{v}$: $\omega$ induced from $\sigma_G$ and $\omega'$ induced form
$\lambda_v^G$; the \emph{output-twist of $G$ in $v$}\index{output-twist} is $\tw{v}{G}_{out}=\tw{\omega'}{\omega}$.
\end{defi}

\begin{defi}
Given a c.o. $C$-corolla $(D,\bar{\lambda}^D,\tau_D,\sigma_D)$ of valence $(n,m)$, the \emph{twist}\index{twist!of a c.o. corolla} of $D$ is
$(\tw{\ast}{D}_{in},\tw{\ast}{D}_{out})\in \Sigma_n\times \Sigma_m$, where $\ast$ is the unique node of $D$.
\end{defi}

\begin{rmk}
Note that given a $C$-labelled corolla $D$ of valence $(n,m)$, a covering order $\bar{\lambda}^D$ and an element $\alpha\in \Sigma_n \times \Sigma_m$ there is a unique c.o. $C$-corolla structure $(D,\bar{\lambda}^D,\tau_D,\sigma_D)$ with twist $\alpha$
(up to unique isomorphisms of completely ordered graphs).
\end{rmk}
\begin{defi}\label{defi: untwisted and planar}
 A c.o. $C$-graph is \emph{untwisted}\index{untwisted graph} if $\tw{u}{v}$, $\tw{v}{G}_{in}$ and $\tw{v}{G}_{out}$ are equal to the identity for every $v,u\in N_G$.
\end{defi}
  
\subsection{Automorphisms of ordered graphs}
Completely ordered $C$-graphs have always trivial automorphism group. This is not true in general for ordered $C$-graphs, for example
the ordered graphs in Figure \ref{fixedpoint} and \ref{fixedpoint2} at page \pageref{fixedpoint}  have non-trivial automorphisms. However non-trivial automorphisms can only occur
 if the graph has components with no inports and exports, as the following proposition shows.
\begin{prop}\label{trivialauto}
Let $G$ be an ordered $C$-graph such that every node of $G$ is connected (via a semi-open path) to an inport or an export, then the automorphisms
group of $G$ is trivial. 
\end{prop}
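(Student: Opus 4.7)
The plan is to show that every automorphism $f = (f_A, f_N, f_I, f_O)$ of the ordered $C$-graph $G$ is the identity componentwise. First, I would extract the two easy constraints. The port-order condition $\tau_G \circ f = \tau_G$ forces $f_A$ to fix pointwise every element of $A_G \setminus (I_G \cap O_G)$, i.e.\ every port of $G$ (this already includes all isolated or half-dangling edges, so the only remaining edges to track are the inner ones). Also, once I know $f_N(v) = v$ for a node $v$, the covering-order compatibility $\lambda^G_{f_N(v)} \circ f = \lambda^G_v$ collapses to $f \circ \lambda^G_v = \lambda^G_v$ (thinking of $\lambda^G_v$ as the pair of order-bijections $\fcar{n_v} \to \iport{v}$ and $\fcar{m_v} \to \oport{v}$), which forces $f_A$ to fix every inport and export of $v$ pointwise.

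Next I would introduce the distance function $d \colon N_G \to \mathbb{N}_{>0}$, where $d(v)$ is the minimal length of a semi-open path from $v$ to a port of $G$; this is well-defined precisely by the hypothesis. The key combinatorial observation for the induction is that if $d(v) \geq 2$, then any length-$d(v)$ semi-open path from $v$ passes, as its second node, through some $u$ sharing an edge $e$ with $v$ and satisfying $d(u) = d(v) - 1$ — for otherwise we could pre-pend $e$ to a shorter path from $u$ and contradict minimality of $d(v)$.

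I would then prove by induction on $d(v)$ that $f_N(v) = v$ and that $f_A$ fixes every port of $v$. In the base case ($d(v) = 1$), $v$ has some port $x$ which is also a port of $G$, hence fixed by $f_A$. Since $x$ is either in $s_G(I_G)$ with $s_G^{-1}(x) \in p_G^{-1}(v)$ or in $t_G(O_G)$ with $t_G^{-1}(x) \in q_G^{-1}(v)$, the injectivity of $s_G$ (resp.\ $t_G$) plus the fact that $f$ is a morphism of graphs forces $f_I$ (resp.\ $f_O$) to fix the relevant preimage, whence $f_N(v) = v$. The covering-order remark above then pins down $f_A$ on all ports of $v$. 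The inductive step is identical with the shared edge $e$ to a neighbor $u$ of smaller distance playing the role of $x$: $e$ is fixed because it is a port of $u$, and the inductive hypothesis applied to $u$ says $f_A$ fixes all ports of $u$.

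After the induction, $f_N$ is the identity, and every edge of $G$ is either a port of $G$ (already handled) or an inner edge, and any inner edge is an export of some node $u$ and an inport of some node $v$, both now fixed, so the covering-order step fixes it. Finally, injectivity of $s_G$ and $t_G$ together with commutativity of the graph-morphism squares forces $f_I = \mathrm{id}_{I_G}$ and $f_O = \mathrm{id}_{O_G}$. The only mildly delicate point is setting up $d$ and checking that minimality propagates to a strict decrement at each inductive step; everything else is a matter of unravelling definitions and using injectivity of $s_G, t_G$.
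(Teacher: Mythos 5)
Your proposal is correct and follows essentially the same strategy as the paper's proof: induct on the distance from a node to a port of $G$, using the port-order to anchor the base case, the covering orders to propagate fixedness from a fixed node to its ports, and injectivity of $s$ and $t$ to propagate from a fixed edge back to its incident node. The paper runs a simultaneous induction on two distance functions (one for nodes via semi-open paths, one for edges via open paths) where you fold the edge statement into a strengthened inductive hypothesis on nodes, but this is only a difference in bookkeeping.
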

\begin{proof}
It is easy to show that under our hypothesis every edge $e\in A_G$ is connected to an inport or an export. For every $v\in N_G$ let 
$d(v)$ be the minimal length of a semi-open path connecting $v$ to an inport or an export. Similarly, for every $e\in A_G$ let $\delta(e)$
be the minimal length of an open path connecting $e$ to an inport or an export.

Let $f$ be an automorphism of $G$. If $\delta(e)=0$ then $e$ is a port and since $f$ preserves the port-order $f(e)=e$.

We are going to prove by induction on $n\in \N$ that for every $v\in N_G$ ( $e\in A_G$) if $d(v)=n$ (resp. $\delta(e)=n$)
then $f(v)=v$ (resp. $f(e)=e$).

We have already checked the case $n=0$. Suppose that $n>0$ and that the statement holds for every $m<n$. 
Let $v\in A_G$ be such that $d(v)=n$ and let $p$ be a semi-open path of minimal length from $v$ to a port $r$, then $v$ has a port
$c$ (in the image of $p$) such that $\delta{c}<n$; by inductive hypothesis $f(e)=e$, thus $f(v)=v$.  
Let $e\in A_G$ be such that $\delta(e)=n$ and let
$p$ be an open path of minimal length from $v$ to a port $r$, then $e$ is a port of a node $u$ (in the image of $p$) such that $d(u)\leq n$; by inductive hypothesis
$f(u)=u$ and since $f$ preserves the covering orders $f(e)=e$.   
\end{proof}

\subsection{Graph Insertion}\label{apx:insertion}\index{insertion, graph}
In this section we are going to recall the graph insertion operation; 
intuitively this should encode a way of plug a $C$-graph into a node (of another $C$-graph) with the same valence.

Suppose two graphs $H$ and $G$, a node $v\in N_G$ and an isomorphism $\morp{\phi}{\resd{v}}{\resd{H}}$ are given (note that $\phi$
exists if and only if $H$ and $v$ have the same valence).
Via $\phi$ we can define the following sets:
\[
A'= A_H\!\!\!\underset{A_{\resd{v}}}\sqcup\!\!\! A_G,\ \  I'= I_H\!\!\!\underset{I_{\resd{v}}}\sqcup\!\!\! I_G,\ \ O'= O_H\!\!\!
\underset{O_{\resd{v}}}\sqcup\!\!\! O_G.\]
The insertion of $H$ in $v$
is the graph defined by the diagram:
\[
 \xymatrix{A' & \ar[l]_-{s_H \sqcup s_G} I' \ar[r]^-{p_H \sqcup p_G} & N_H \sqcup (N_G\backslash \{v\}) & \ar[l]_-{q_H \sqcup q_G} O' \ar[r]^-{t_H \sqcup t_G} & A'}
\]
and denoted by $G \circ_{\phi} H$.

Note that
\begin{itemize}
 \item[-] for every $u\in N_G\backslash\{v\}$ ( $u\in N_H$) the residue of $u$ as a node of $G$ (resp. of $H$) is canonically isomorphic to the residue of $u$ as a node of $G \circ_{\phi} H$.
 \item[-] the residue of $G\circ_{\phi} H$ is canonically isomorphic to the residue of $G$.
\end{itemize}

The set of $C$-valences (\S \ref{sec:valences and bicollections})\index{valence!$C$-} is isomorphic to $\widetilde{\Val{C}}$, the set of triple $(n,m,l)$ where $n,m\in \N$, and $l$ is a $C$-labelling for the corolla $C^n_m$;
a bijection is explicitly given by
\[
 \gfun{\phi}{\widetilde{\Val{C}}}{\Val{C}}{\mathbf{v}=(n,m,l)}{(\inva{\mathbf{v}},\outva{\mathbf{v}})}
\]
where $\inva{\mathbf{v}}=(l(1),\dots,l(n))$ and $\outva{\mathbf{v}}=(l(n+1),\dots,l(n+m))$.
From now on we will make no distinction between $\Val{C}$ and $\widetilde{\Val{C}}$.  

\begin{figure}[!ht]
\[
\mathbf{v}=(n,m,l) = \ioval{l(1),\dots,l(n)}{l(n+1),\dots,l(m)}=\ioval{\mathbf{v}_{in}}{\mathbf{v}_{out}}
\]
\caption{Representation of a $C$-labelling of $C_m^n$ as a $C$-valence.}
\end{figure}
 
Given a completely ordered $C$-graph $G$, each node $v\in N_G$ comes with an associated $C$-valence $\cval{v}=(n,m,l_v)$ where
$(n,m)$ is the valence of $v$ and $l_v$ is the composition $l_G \lambda_v^G$; this $C$-valence will be called the \emph{$C$-valence of $v$}\index{valence!$C$-!of a node}.

In the same way, the residue $\resd{G}$ come also with a $C$-valence associated: if $(n,m)$ is the valence of $G$ then the $C$-valence 
$(n,m,l_G \tau_G)$ is called the \emph{$C$-valence of $G$}\index{valence!$C$-!of a graph} and denoted by $\cval{G}$.

\begin{defi}\label{def:arity of a graph}
For every c.o. $C$-graph $G$ there exist unique $n\in \N$ and $s_0,s_1,\dots,s_n\in \Val{C}$ such that
\begin{itemize}
 \item[-] $\card{N_G}=n$;
 \item[-] $\cval{\sigma_G(i)}=s_i$ for every $i\in{1,\dots,n}$;
 \item[-] $\cval{G}=s_0$.
\end{itemize}

The $\Val{C}$-signature $(s_1,\dots,s_n;s_0)$ is called the \emph{arity of $G$}\index{arity!of a graph}.
\end{defi}

\begin{defi}
For every $C$-valence $\mathbf{c}$ let $C_{\mathbf{c}}$ be the unique (up to isomorphisms) untwisted $C$-corolla with valence $\mathbf{c}$.  
\end{defi}

\noindent Suppose now that $H$ and $G$ are c.o. $C$-graphs, $v$ is a node of $G$ and that $v$ and $H$ has the same $C$-valence $(n,m,l)$. This automatically induce and isomorphism $\morp{\phi}{\resd{v}}{\resd{H}}$
\[
 \xymatrix{\resd{v} \ar[r]^-{\lambda_v^G} & C^n_m \ar[r]^-{\tau_H} & \resd{H}}
\]
We will denote by $G\circ_v H$ the insertion $G\circ_{\phi} H$.

Note that, since the $C$-valences of $v$ and $H$ are assumed to be the same, the $C$-labellings of $G$ and $H$ induce a labelling
on $G\circ_{\phi} H$.
The port-order $\tau_G$ induces a port order on $G\circ_v H$ and the given covering orders for $G$ and $H$ induce a covering order for $G\circ_v H$.
Let $a,b$ be the number of nodes of $G$ and $H$ respectively. The only thing missing to endow $G\circ_v H$ with a completely ordered $C$-graph structure is a node order, i.e. an isomorphism
$\morp{\sigma_{G\circ_v H}}{\fcar{a+b-1}}{N_H \sqcup N_G\backslash\{v\}}$; let $k=\sigma_G^{-1}(v)$, then $\sigma_{G\circ_v H}$
is defined in the following way
\[
 \sigma_{G\circ_v H}(i)=\begin{cases}\sigma_G(i) & i< k\\
                               \sigma_H(i-k+1) & k \leq i < k+b\\
                               \sigma_G(i-b) & i\geq k+b. \end{cases}
\]
This complete the definition of the structure of completely ordered $C$-graph that $G\circ_v H$ inherits from $G$ and $H$; it will always be considered with this structure.

It is easy to check that the insertion of an untwisted corolla do not change the graph, that is
\[
 G\cong G\circ_v C_{\cval{v}}
\]
for every $v\in N_G$.

The proof of the following proposition is routine.

\begin{prop}\label{prop.gcomp}
Suppose that $G$ is a c.o. $C$-graph, $v$ is a node in $G$ and $H_1$ is a c.o. graph with the same $C$-valence as $v$:
\begin{itemize}
 \item[-] if $G$ and $H_1$ are acyclic then $G\circ_{v} H_1$ is acyclic;
 \item[-] if $u$ is a node of $G$ different from $v$ and $H_2$ is a c.o. graph with the same $C$-valence as $v$ then $(G\circ_v H_1)\circ_u H_2\cong (G\circ_u H_2)\circ_v H_1$; 
 \item[-] if $k$ is a node of $H_1$ and $H_2$ is a graph with the same $C$-valence as $k$ then $(G\circ_v H_1)\circ_k H_2\cong
 (G\circ_v (H_2\circ_k H_1)$.
 \item[-] if $H_1\cong \resd{v}$ then $G\circ_v H_1\cong G$;
 \item[-] let us endow $\resd{H_1}$ with the unique c.o. structure with trivial twist, then $\resd{H_1}\circ_{\ast} H_1=H_1$. 
\end{itemize}
\end{prop}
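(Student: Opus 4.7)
The plan is to verify each of the five statements in turn by unpacking the explicit construction of graph insertion given just before the proposition. Throughout, the underlying sets of $G \circ_v H$ are built as pushouts of the finite sets $A$, $I$, $O$ of edges along the identifications forced by the isomorphism $\phi \colon \resd{v} \to \resd{H}$, so at the level of underlying graphs each claim reduces to a routine manipulation of pushouts in $\fSet$; the only real work is in tracking the covering, port, and node orders.

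For the acyclicity claim, I would argue by contrapositive: a non-trivial morphism $f \colon W_n \to G \circ_v H_1$ (with $n>0$) factors through the pushout, so its image decomposes according to which edges and nodes come from $G \setminus \{v\}$ and which from $H_1$. A cycle contained entirely in $H_1$ contradicts acyclicity of $H_1$, and one contained entirely in $G \setminus \{v\}$ lifts to a cycle in $G$. A mixed cycle must cross $\resd{v}$, i.e.\ traverse at least one inport and one export of $H_1$; shrinking each maximal sub-path lying in $H_1$ to a single edge at $v$ (using $\phi^{-1}$ to identify inports with exports of $v$) produces a cycle of positive length in $G$ passing through $v$, again contradicting acyclicity of $G$. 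This shows $G \circ_v H_1$ is acyclic.

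For statements 2 and 3, I would exhibit canonical isomorphisms at the level of the underlying diagrams in $\fSet$. The pushouts defining $(G \circ_v H_1) \circ_u H_2$ and $(G \circ_u H_2) \circ_v H_1$ both compute the colimit of the span obtained by gluing $G$, $H_1$, $H_2$ along the isomorphisms $\resd{v} \cong \resd{H_1}$ and $\resd{u} \cong \resd{H_2}$; since $u \neq v$ these gluings are independent and either iterated pushout realises the same colimit. For the nested case $(G \circ_v H_1) \circ_k H_2 \cong G \circ_v (H_1 \circ_k H_2)$, associativity of pushouts applied to the three gluings along $\resd{v} \cong \resd{H_1}$ and $\resd{k} \cong \resd{H_2}$ yields the isomorphism. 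In both cases the induced $C$-labelling, port-order, and covering order match by construction because they are inherited component-wise, so only the node order requires checking; this is a direct verification using the explicit formula for $\sigma_{G \circ_v H}$ given in the text, and the fact that the two insertion points end up in the same relative position whichever order one inserts.

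The last two statements are essentially tautologies from the definition. For $G \circ_v C_{\cval{v}} \cong G$, note that $C_{\cval{v}}$ has a single node whose residue is canonically identified with $\resd{v}$ and trivial twist, so the pushout glues back exactly the corolla $\resd{v}$ that was removed; the node-order formula places this single node in position $k = \sigma_G^{-1}(v)$, recovering $\sigma_G$. For $\resd{H_1} \circ_{\ast} H_1 \cong H_1$, the one-node graph $\resd{H_1}$ (with trivial twist) is glued to $H_1$ along the identity of $\resd{H_1}$, and the node order becomes $\sigma_{H_1}$. The only step where I would slow down is the bookkeeping on orders in statement 3, since the node-order formula nests non-trivially; but the computation is mechanical once one fixes the indexing convention $k = \sigma_G^{-1}(v)$ and uses the corresponding index for $k$ inside $H_1$ on the right-hand side. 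No genuinely hard obstacle arises.
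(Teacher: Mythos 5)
Your proposal is correct in substance; note, however, that the paper itself gives no argument here --- it simply declares the proof of Proposition \ref{prop.gcomp} routine --- so there is nothing to compare against, and your sketch is a reasonable elaboration of exactly the verification the author is waving at: unwind the pushout description of $G\circ_\phi H$, check the orders componentwise, and observe that the only non-formal point is acyclicity. Two small things are worth tightening. First, in the mixed-cycle case you should say explicitly why the collapsed cycle in $G$ has positive length: after the cycle leaves $H_1$ through an export of $v$, that edge must be an inport of some node of $G\setminus\{v\}$ before the cycle can re-enter $H_1$ (exports of $v$ are not inports of $v$, since $\resd{v}$ is a corolla), so the collapsed cycle contains at least one node besides $v$ and is genuinely non-trivial. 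Second, the third item as printed contains a typo ($H_2\circ_k H_1$ with an unbalanced parenthesis); the intended statement is $(G\circ_v H_1)\circ_k H_2\cong G\circ_v(H_1\circ_k H_2)$, which is the form you prove, and your bookkeeping of the nested node orders via the explicit formula for $\sigma_{G\circ_v H}$ is the right way to finish it.
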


\subsubsection{Multiple insertion}\label{multins}
Let $(G,\sigma_G)$ be a c.o. $C$-graph with $\card{N_G}=m$; for every $\mu\in \Sigma_m$ we define $\mu^*(G)$ to be the
c.o. graph $(G,\sigma_G\mu)$, that is the same ordered $C$-graph with the node order permuted by $\mu$.
  
Suppose that a sequence of c.o. $C$-graph $\{K_i\}_{i=1}^m$ is given such that $\resd{K_i}=\resd{\sigma_G(i)}$ and $\card{N_{K_i}}=k_i$ for every $i\in \fcar{m}$, such a sequence is
called \emph{insertable over $G$}\index{insertable!sequence of graphs}; define the $C$-graph $G\circ (K_1,\dots,K_m)$ to be 
the iterated insertion:
\[
(\dots(((G\circ_{\sigma_G(1)} K_1)\circ_{\sigma_G(2)} K_2)\dots)\circ_{\sigma_G(m)} K_m)
\]
(note that the order in which we insert the $K_i$'s does not matter, in the light of Proposition \ref{prop.gcomp}).

The number of nodes of $G\circ (K_1,\dots,K_m)$ is $n=\sum_{i=1}^m k_i$.

\begin{defi}\label{permins}
Given a sequence of c.o. graphs $\{K_i\}_{i=1}^m$ a function $\morp{\alpha}{\fcar{n}}{\fcar{m}}$ is
a \emph{permutation for the insertion of $\{K_i\}_{i=1}^m$} if $\card{N_{K_i}}=\card{\alpha^{-1}(i)}$
for every $i\in \fcar{m}$. 
\end{defi}

\noindent In other words, every sequence of graphs $(K_1,\dots,K_m)$ determines an ordered $m$-partition $(\card{N_{K_1}},\dots,\card{N_{K_m}})$
and a permutation for $(K_1,\dots,K_m)$ is a function with the same associated partition (\S \ref{not:total orders and unshuffles}).

Given a sequence $\{K_i\}_{i=1}^m$ insertable over $G$ and a permutation $\alpha$ for the insertion of $\{K_i\}_{i=1}^m$, we define the c.o. $C$-graph $G\circ_{\alpha} (K_1,\dots,K_m)$
to be 
\[
\omega_{\alpha}^*(G\circ (K_1,\dots,K_m))
\]
where $\omega_{\alpha}$ is the unshuffling of $\alpha$ (\S \ref{not:total orders and unshuffles}).

An insertion can also be  regarded as a particular case of multiple insertion, in fact given two c.o. $C$-graph $G$ and $K$
such that $\cval{K}=\cval{\sigma_G(m)}$ for some $m\in \fcar{\card{N_G}}$
\[
 G\circ_{\sigma_G(m)} K \cong G\circ (C_{\sigma_G(1)},\dots, C_{\sigma_G(m-1)},K,C_{\sigma_G(m+1)},\dots, C_{\sigma_G(\card{N_G})});
\]
If $\alpha$ is a permutation for the insertion of $(C_{\sigma_G(1)},\dots, C_{\sigma_G(m-1)},K,C_{\sigma_G(m+1)},\dots, C_{\sigma_G(\card{N_G})})$
over $G$ we will simply write $G\circ_{\sigma_G(m),\alpha} K$ for
\[
 G\circ_{\alpha} (C_{\sigma_G(1)},\dots, C_{\sigma_G(m-1)},K,C_{\sigma_G(m+1)},\dots, C_{\sigma_G(\card{N_G})}).
\]
\subsection{Decomposition of Acyclic Graphs}\label{coa.decomp}

A completely ordered acyclic $C$-graph will often be called simply a \emph{\coa graph}\index{coa graph}.

We first notice that, given an acyclic graph $G$ and two distinct nodes $x,y\in N_G$, there are three possible scenarios:
\begin{itemize}
 \item[-] there are no dead-ends monotone paths between $x$ and $y$; in this case we will say that $x$ and $y$ are \emph{disconnected}\index{disconnected!nodes};
 \item[-] all monotone dead-ends paths $p$ between $x$ and $y$ are such that $p_N(1)=x$ (that is the path ``starts'' at $x$); in this case
we say that \emph{$x$ dominates $y$}; 
 \item[-] \emph{$y$ dominates $x$}.
\end{itemize}
\noindent We are now going to show that every \coa graph can be written as iterated insertions of certain simple \coa graphs 
(called composition graph) over a corolla.
\begin{defi}\label{def:graph comp}
A \emph{composition graph} is a \coa graph $G$ with exactly two nodes.

A \emph{horizontal composition graph} (or \emph{merging graph} or \emph{hc graph} for short) is a composition graph such that the its two nodes have no common inner edges.

A composition graph which is not a horizontal composition graph is called a \emph{vertical composition graph} (or \emph{vc graph} for short).

\end{defi}

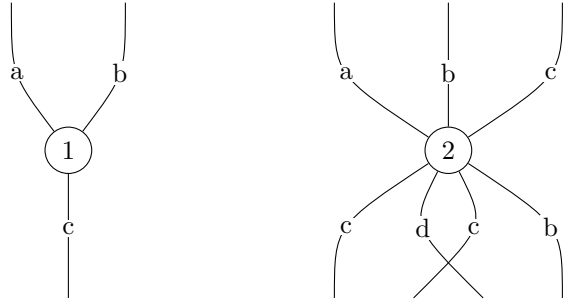
\begin{figure}[!ht]
\centering
 \begin{tikzpicture}
\node (c) at (-2.5,0) [vert] {1};
\newcommand{\inleac}{2}
\newcommand{\outleac}{1}
\foreach \x \y \z in {1/1/a,2/2/b}
   {
    \node[lea] (ilc\x) at ($(c)+(1.5*\x-0.75*\inleac -0.75,2)$) {};
    \node (imc\x) at ($(c)+(1.5*\y-0.75*\inleac-0.75,1)$) {};
    \draw (ilc\x) .. controls (imc\x) .. node[lea] {\z} (c);
   }
\foreach \x \y \z in {1/1/c}
   {
    \node[lea] (olc\x) at ($(c)+(1.5*\x-0.75*\outleac -0.75,-2)$) {};
    \node (omc\x) at ($(c)+(1.5*\y-0.75*\outleac-0.75,-1)$) {};
    \draw (olc\x) .. controls (omc\x) .. node[lea] {\z} (c);
   }
\node (d) at (2.5,0) [vert] {2};
\newcommand{\inlead}{3}
\newcommand{\outlead}{4}
\foreach \x \y \z in {1/1/a,2/2/b,3/3/c}
   {
    \node[lea] (ild\x) at ($(d)+(1.5*\x-0.75*\inlead -0.75,2)$) {};
    \node (imd\x) at ($(d)+(1.5*\y-0.75*\inlead-0.75,1)$) {};
    \draw (ild\x) .. controls (imd\x) .. node[lea] {\z} (d);
   }
\foreach \x \y \z in {1/1/c,2/3/c,3/2/d,4/4/b}
   {
    \node[lea] (old\x) at ($(d)+(1*\x-0.5*\outlead -0.5,-2)$) {};
    \node (omd\x) at ($(d)+(1*\y-0.5*\outlead-0.5,-1)$) {};
    \draw (old\x) .. controls (omd\x) .. node[lea] {\z} (d);
   }
\end{tikzpicture}\hspace{1cm}
\caption{A horizontal composition}
\end{figure}

\begin{figure}
\begin{tikzpicture}[scale=1.5]
\node[vert] (a) at (0,0.7) {1};
\node[vert] (b) at (0.5,-0.7) {2};
\outports{0,-2}{3}{0.5}
\inports{0,2}{5}{0.3}
\incheckpt{a}{3}{0.5}{0.3}
\outcheckpt{a}{3}{0.6}{-0.4}
\incheckpt{b}{4}{0.3}{0.4}
\outcheckpt{b}{2}{-0.5}{-0.3}
\connectnodes{a}{b}{2/3/a,3/2/b}
\connectntoinp{a}{1/1/c,3/3/b,4/2/a}
\connectntoop{a}{1/1/d}
\connectntoinp{b}{2/1/c,5/4/c}
\connectntoop{b}{1/3/a,2/2/d}
\end{tikzpicture}
\caption{A vertical composition graph}
\end{figure}
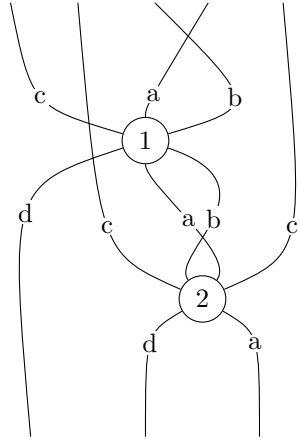

Let $G$ be a \coa $C$-graph an let $x,y\in N_G$ be two vertices with no dead-ends path of length greater than one between them
such that $x$ dominates $y$.
Suppose that $\sigma_G^{-1}(x)<\sigma_G^{-1}(y)$. Define $\iport{x,y}=\iport{y}\cap \cie{x}{y}$ and $\oport{x,y}=\oport{x}\cap \cie{x}{y}$.

Let $H^G_{xy}$ be the c.o. $C$-graph, whose underlying graph is
\[
\xymatrix{A_G\backslash \cie{x}{y} & \ar[l]^s I\backslash\iport{y} \ar[r]^p & N_G/\{x\sim y\} & \ar[l]^q O\backslash\iport{x} \ar[r]^t & A}
\]
and whose $C$-labelling, port order are obtained by restriction from the ones of $G$; the covering order is also induced
by restriction from $G$, except on the node $[x]$; let $i$ and $j$ the minimal elements in $\iport{x,y}$
and $\oport{x,y}$ respectively,  
(with respect to the covering order $\lambda^G$); on the node $[x]$ the order on 
$\iport{[x]}=(\iport{y}\backslash \iport{x,y})\cup \iport{x}$ (resp. $\oport{[x]}=(\oport{x}\backslash \oport{x,y})\cup \oport{y}$
is the restriction of the insertion (def.\ref{def:order insertion}) of the order on $\iport{x}$ over the order $\iport{y}$ in $i$ (resp. of the order on $\oport{y}$ over the order $\oport{x}$ in $j$).

The node order $\sigma_H$
is determined by the requirements that
\[          
\forall u,v\in N_H\backslash [x] \ \ \sigma_H^{-1}(u)\leq \sigma_H^{-1}(v)\ \Leftrightarrow \ \sigma_G^{-1}(u)\leq \sigma_G^{-1}(v)
\]
and 
\[
 \forall u\in N_H\backslash [x]\ \  \sigma_H^{-1}(u)< \sigma_H^{-1}([x]).
\]
\noindent Informally $H^G_{xy}$ is obtained by $G$ by shrinking $x$, $y$ and their common inner edges in one node.

Let $K^G_{xy}$ be the composition graph
\[
\xymatrix@R=8pt@C=1pt{ & \ar[ld]_-{s} \iport{x}\sqcup \iport{y} \ar[rd]_-{p} & & \ar[ld]^-{q} \oport{x}\sqcup\oport{y}
\ar[rd]^-{t} & \\
\port{x}\sqcup\port{y} & & \{x,y\} &  &\port{x}\sqcup\port{y}}
\]
with $C$-colouring, covering order and node order induced from the ones of $G$; the residue of $K^G_{xy}$ has the same ports
of the node $[x]$ of $H_{xy}^G$ and we set the port order on $K^G_{xy}$ equal to the port order of $[x]$ in $H_{xy}^G$. 

Note that $K^G_{xy}$ is a horizontal composition if and only if $x$ and $y$ are disconnected.

Let $\morp{\alpha_{xy}}{\fcar{n}}{\fcar{n-1}}$ be the composition $\sigma_G \pi \sigma_H^{-1}$, where $\morp{\pi}{N_G}{N_H=N_G/\{x\sim y\}}$
is the quotient map.

The following proposition asserts that if we insert $K^G_{xy}$ over $H^G_{xy}$ at $[x]$ we get $G$ again. 
\begin{prop}\label{prop:grapheldecomp}
Let $G$ be a \coa $C$-graph an let $x,y\in N_G$ be two nodes with no monotone death-paths of length greater than one between them.
Then $H^G_{xy}$ is a \coa $C$-graph and 
\[
 G\cong H^G_{xy}\circ_{[x],\alpha_{xy}} K^G_{xy}.
\]
\end{prop}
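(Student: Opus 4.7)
The plan is to unpack both sides as \coa\ $C$-graphs and check that they coincide, treating the underlying graph, the $C$-labelling, the port order, the covering order, and the node order separately. Since $K^G_{xy}$ is a composition graph with valence equal (by construction) to the valence of $[x]$ in $H^G_{xy}$, the formal insertion $H^G_{xy}\circ_{[x]} K^G_{xy}$ is defined once we know $H^G_{xy}$ is a valid \coa\ $C$-graph; so the first task is to establish that.

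First I would verify that $H^G_{xy}$ is a well-defined \coa\ $C$-graph. The displayed diagram gives the underlying graph: injectivity of the new $s,t$ follows from that of $s_G,t_G$ together with the observation that the hypothesis ``no monotone dead-ends paths of length $\geq 2$ between $x$ and $y$'' means that every edge in $\cie{x}{y}$ is an output of $x$ and an input of $y$, so the edges we delete and the ports we discard correspond bijectively. Acyclicity is inherited from $G$: a would-be cycle in $H^G_{xy}$ must pass through $[x]$, and would lift to a cycle in $G$ passing through $x$ and $y$, which is impossible since $G$ is acyclic and the only dead-ends paths between $x$ and $y$ (by hypothesis) have length $1$. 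The $C$-labelling, port order and node order on $H^G_{xy}$ are well-defined by inspection; the only subtle point is the covering order at $[x]$, which is defined as an insertion of orders (def.\ \ref{def:order insertion}) at the minimal positions $i,j$ in $\iport{x,y}$, $\oport{x,y}$.

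Next I would check the underlying graph level of the isomorphism. By the very definition of insertion, $H^G_{xy}\circ_{[x]} K^G_{xy}$ has nodes $(N_H\setminus\{[x]\})\sqcup\{x,y\}\cong N_G$ and edges $(A_H\setminus(I_H\cap O_H))\sqcup A_{K^G_{xy}}$ glued along the ports of $[x]$; since $K^G_{xy}$ carries precisely the ports of $x$ and $y$ and the edges in $\cie{x}{y}$ are reinstated as its inner edges, this is $A_G$. One verifies directly that the source/target maps in the insertion agree with those of $G$. The $C$-labelling and port order on $\resd{G}$ are tautologically the same on both sides since both are obtained by restriction from $l_G$ and $\tau_G$. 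For the covering order, the orders on nodes of $N_G\setminus\{x,y\}$ are unchanged; on $x$ and $y$ the orders are those inherited from $G$ via $K^G_{xy}$; so there is nothing to check there. The identification of the covering order at $[x]$ (inside $H^G_{xy}$, used to glue $K^G_{xy}$ in) with the composite covering coming from $x$ and $y$ in $K^G_{xy}$ is exactly what our definition of $\bar\lambda^{H^G_{xy}}_{[x]}$ as the insertion of the order on $\iport{x}$ into the order on $\iport{y}$ at position $i$ (and dually for exports) was engineered to achieve.

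The main obstacle is the node order, i.e.\ verifying that $\omega_{\alpha_{xy}}^{*}\bigl(H^G_{xy}\circ_{[x]}K^G_{xy}\bigr)$ has node order $\sigma_G$. Here I would argue as follows. By the definition of $\sigma_H$ the node $[x]$ is the last one, i.e.\ $\sigma_H^{-1}([x])=n-1$, so by the insertion rule for node orders the graph $H^G_{xy}\circ_{[x]}K^G_{xy}$ has node order equal to $\sigma_H$ on $\{1,\dots,n-2\}$ followed by $x,y$ in positions $n-1,n$. On the other hand $\alpha_{xy}=\sigma_H^{-1}\circ\pi\circ\sigma_G$ is the function whose fibres over $\{1,\dots,n-2\}$ are singletons (in their $\sigma_G$-order) and whose fibre over $n-1$ is $\{\sigma_G^{-1}(x),\sigma_G^{-1}(y)\}$. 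By definition of the unshuffling (\S\ref{not:total orders and unshuffles}), $\omega_{\alpha_{xy}}$ is precisely the permutation that, applied to a sequence in which the elements of each fibre of $\alpha_{xy}$ appear consecutively in the order dictated by $\alpha_{xy}$, returns them to the original $\sigma_G$-order. Applying this to the sequence $\sigma_H(1),\dots,\sigma_H(n-2),x,y$ returns $\sigma_G(1),\dots,\sigma_G(n)$, which finishes the proof. The rest is bookkeeping: one checks that taking $\omega^{*}_{\alpha_{xy}}$ does not disturb the already-verified data (since it only permutes the node labelling), and combining everything yields the claimed isomorphism.
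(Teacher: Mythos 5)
Your proposal is correct and matches the paper's approach: the paper likewise proves only that $H^G_{xy}$ is acyclic, by the same case split (a cycle avoiding $[x]$ lifts to a cycle in $G$; a cycle through $[x]$ forces a monotone dead-ends path of length $\geq 2$ between $x$ and $y$), and declares the verification of the insertion formula routine. Your additional unpacking of that routine part — in particular the node-order computation via the unshuffling $\omega_{\alpha_{xy}}$, reading $\alpha_{xy}$ as $\sigma_H^{-1}\pi\sigma_G$ — is sound and fills in what the paper omits.
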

\begin{proof}
The proof of the last formula is routine, thus we just prove that $H$ is acyclic.

Suppose there is a loop $\morp{l}{W_n}{H}$; if $[x]$ is not in the image of $l$ then  $l$ would define a loop in $G$
which is in contradiction with the fact that $G$ is acyclic; therefore $[x]$ is in the image of $l$; on the other hand this
would implies the existence of a monotone dead-ends path of length at least $2$ in $G$ between $x$ and $y$ and this is in contradiction with
our hypothesis. This implies that $H$ is acyclic.   
\end{proof}

\begin{prop}
Given a \coa $C$-graph $G$ one of the followings always occurs:
\begin{enumerate}[i.]
 \item $G$ has no dead-ends paths of length greater than $0$;
 \item there exist two distinct nodes $x,y\in N_G$ such that $\cie{x}{y}$ is non-empty and there are no monotone dead-ends path between them
  of length greater than $1$.
\end{enumerate}
\begin{proof}
Suppose that i. do not hold. Then there are two nodes $x,y\in N_G$ such that there exist a monotone death-path from $x$ to $y$. Let $p$ be a
monotone dead-ends path of maximal length $n$ between $x$ and $y$, it is not restrictive to suppose that $p_N(1)=x$; if $n=1$ then ii. holds. Otherwise take $y'$ to be $p_v(2)$, then $x$ and $y'$ are connected and there is no monotone dead-end path from $x$ to $y'$ of length greater than $1$, otherwise
there would be a monotone dead-end path from $x$ to $y$ of length bigger than $n$; therefore $x$ and $y'$ satisfy condition ii.  
\end{proof} 
\end{prop}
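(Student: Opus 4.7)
The plan is to analyze the maximum length $\ell$ of a monotone dead-ends path appearing in $G$. Since $G$ is finite and monotone dead-ends paths are injective on nodes (by the definition of path), this maximum is well-defined and bounded by $|N_G|-1$. If $\ell = 0$, then no monotone dead-ends path has length $\geq 1$; in particular there is no inner edge joining two distinct nodes in the compatible direction, which is exactly case i.

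Suppose instead $\ell \geq 1$ and pick $x, y \in N_G$ together with a monotone dead-ends path $p$ of length $\ell$ from $x$ to $y$. If $\ell = 1$, then $\cie{x}{y}$ is non-empty (it contains the unique inner edge of $p$) and no longer monotone dead-ends path between $x$ and $y$ can exist by the maximality of $\ell$, so the pair $(x,y)$ witnesses case ii. If $\ell \geq 2$, set $y' := p_N(2)$; the first edge of $p$ lies in $\cie{x}{y'}$, so it remains to show that no monotone dead-ends path from $x$ to $y'$ has length $\geq 2$.

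The crux is this last claim, where acyclicity plays an essential role. Suppose for contradiction that $q$ is a monotone dead-ends path from $x$ to $y'$ of length $m \geq 2$. The natural candidate for a contradiction is the concatenation of $q$ with the tail $p|_{[2,\ell+1]}$, which has total length $m + \ell - 1 > \ell$. The subtle point is that a priori this concatenation need not be injective on nodes, hence need not itself be a monotone dead-ends path. Here I would argue: if an interior node $q_N(k)$ of $q$ (with $1 < k < m+1$) coincides with some later node $p_N(j)$ of $p$ (with $3 \leq j \leq \ell+1$), then the portion of $p$ from $y' = p_N(2)$ to $p_N(j) = q_N(k)$ concatenated with the portion of $q$ from $q_N(k)$ back to $y'$ yields a directed cycle through $y'$ in $G$, contradicting acyclicity. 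This rules out any such coincidence (the boundary cases $k=1$ or $k=m+1$ are ruled out by injectivity of $p$, since $x$ and $y'$ already appear in $p$), so the concatenation genuinely is a monotone dead-ends path from $x$ to $y$ of length $> \ell$, contradicting the maximality of $p$. Hence $(x, y')$ witnesses case ii.

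The main obstacle, and the only place where acyclicity is really used, is verifying that the concatenated path is injective on nodes; once that is settled, the rest is straightforward bookkeeping on path lengths.
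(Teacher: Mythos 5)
Your proof is correct and follows essentially the same route as the paper's: take a monotone dead-ends path of maximal length, and if that length exceeds $1$, replace the terminal node by $p_N(2)$ and use maximality to rule out longer paths to the new target. The only difference is that you explicitly justify, via acyclicity, that the concatenated walk used in the maximality contradiction is injective on nodes and hence a genuine path — a point the paper's proof leaves implicit — so your version is, if anything, slightly more careful.
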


\printbibliography
\end{document}